\newtheorem{theorem}{Theorem}
\newtheorem{fact}{Fact}
\newtheorem{proposition}[theorem]{Proposition}
\newtheorem{corollary}[theorem]{Corollary}
\newtheorem{lemma}[theorem]{Lemma}
\newtheorem{assumption}[theorem]{Assumption}
\newtheorem{definition}[theorem]{Definition}
\newtheorem{claim}{Claim}
\newtheorem{remark}[theorem]{Remark}
\newtheorem{problem}{Problem}
\numberwithin{theorem}{section}
\numberwithin{figure}{section}
\numberwithin{equation}{section}
\newcommand{\eqnref}[1]{(\ref{eq:#1})} 
\def\be{\begin{equation} }
\def\ee{ \end{equation}}
\def\ben{\begin{equation*}}
\def\een{\end{equation*}}
\def\bea{\begin{eqnarray}}
\def\eea{\end{eqnarray}}
\def\ee{\end{eqnarray}}
\def\bean{\begin{eqnarray*}}
\def\eean{\end{eqnarray*}}
\newcommand\ignore[1]{}
\def\R{\mathbb{R}} 
\def\Q{\mathbb{Q}} 
\def\N{\mathbb{N}} 
\newcommand{\dotprod}[1]{\left\langle #1\right\rangle}
\newcommand{\abs}[1]{\left|#1\right|}
\newcommand{\norm}[1]{\left\|#1\right\|}
\newcommand{\pnorm}[2]{\norm{#2}_{#1}}
\newcommand{\opnorm}[1]{\pnorm{\text{Op}}{#1}}
\newcommand{\supnorm}[1]{\pnorm{\infty}{#1}}
\newcommand{\lipnorm}[1]{\pnorm{\text{Lip}}{#1}}
\newcommand{\parn}[1]{\left(#1\right)}
\newcommand{\Lnorm}[1]{\norm{#1}_{(L)}}
\renewcommand{\Pr}[1]{\mathbb{P}\left(#1\right)} 
\newcommand{\Ex}[1]{\mathbb{E}\left[#1\right]} 
\newcommand{\Exp}[2]{\mathbb{E}_{#1}\left[#2\right]} 
\def\iid{\stackrel{\makebox[0pt]{\mbox{\normalfont\sffamily\tiny iid}}}{\sim}}
\newcommand{\bigoh}[1]{\sO\left(#1\right)}
\def\sB{\mathcal{B}}
\def\sF{\mathcal{F}}
\def\sG{\mathcal{G}}\def\sH{\mathcal{H}}
\def\sM{\mathcal{M}}\def\sN{\mathcal{N}}\def\sO{\mathcal{O}}
\def\sQ{\mathcal{Q}}
\def\sT{\mathcal{T}}\def\sU{\mathcal{U}}
\newcommand\QED{\ifhmode\allowbreak\else\nobreak\fi
\quad\nobreak$\Box$\medbreak}
\def\eps{\varepsilon}
\def\Coup{\text{Coup}}
\def\Gsoln{G^{\star}}
\def\dist{{\rm dist}}
\def\Grad{\overline{\mbox{Grad}}}
\def\Gradh{\widehat{\mbox{Grad}}}
\def\M{\text{Mrt}}
\def\yhat{\widehat{y}_N}
\def\btheta{{\boldsymbol \theta}}
\def\bthetabar{\overline{\boldsymbol \theta}}
\def\bthetatil{\tilde{\boldsymbol \theta}}
\def\thetabar{\overline{\btheta}}
\def\thetatil{\tilde{\btheta}}
\def\Thetabar{\overline{\Theta}}
\def\Lbar{\overline{\mathcal{L}}}
\def\zbar{\overline{z}}
\def\ybar{\overline{y}}
\def\Mbar{\overline{M}}
\def\gbar{\overline{\Gamma}}
\def\L{\mathcal{L}}
\def\Probspace{\mathcal{P}}
\def\Probspecial{\mathcal{P}^{\star}}
\def\soln{\mu^{\star}}
\newcommand{\fsigma}[2]{\sigma^{(#1)}\parn{#2}}
\newcommand{\boldj}[2]{\boldsymbol{j}_{#1}^{#2}}
\newcommand{\boldN}[2]{\boldsymbol{N}_{#1}^{#2}}
\begin{document}
\title{A mean-field limit for certain deep neural networks}
\author{Dyego Ara\'{u}jo, Roberto I. Oliveira \& Daniel Yukimura }
\thanks{IMPA, Rio de Janeiro, Brazil. \texttt{\{dyego-eu,rimfo,yukimura\}@impa.br}. RIO is supported by CNPq grants 432310/2018-5 (Universal) and 309569/2015-0 (Produtividade em Pesquisa); and FAPERJ grant 224687/2016 (Cientista do Nosso Estado). This research was supported in part by the National Science Foundation under Grant No. NSF PHY-1748958 to the Kavli Institute of Theoretical Physics.}


\begin{abstract}Understanding deep neural networks (DNNs) is a key challenge in the theory of machine learning, with potential applications to the many fields where DNNs have been successfully used.

This article presents a scaling limit for a DNN being trained by stochastic gradient descent. Our networks have a fixed (but arbitrary) number $L\geq 3$ of inner layers; $N\gg 1$ neurons per layer; full connections between layers; and fixed weights (or "random features" that are not trained) near the input and output.  

Our results describe the evolution of the DNN during training in the limit when $N\to +\infty$, which we relate to a mean field model of McKean-Vlasov type. Specifically, we show that network weights are approximated by certain "ideal particles"  whose distribution and dependencies are described by the mean-field model. A key part of the proof is to show existence and uniqueness for our McKean-Vlasov problem, which does not seem to be amenable to existing theory.

Our paper extends previous work on the $L=1$ case by Mei, Montanari and Nguyen; Rotskoff and Vanden-Eijnden; and Sirignano and Spiliopoulos. We also complement recent independent work on $L>1$ by Sirignano and Spiliopoulos (who consider a less natural scaling limit) and Nguyen (who nonrigorously derives similar results).

\smallskip
\noindent {\bf \keywordsname:} deep neural networks; McKean-Vlasov process; scaling limits.
\end{abstract}

\maketitle
\pagebreak

\tableofcontents

\section{Introduction}
\label{sec:intro}
Deep neural networks (also known as deep nets or DNNs) and the "deep learning" methods they implement have revolutionized Machine Learning \cite{Goodfellow2016,lecun2015}. They are behind recent spectacular successes in machine translation, pattern recognition, and two-person game playing, to name but a few fields. Unfortunately, we are far from understanding theoretically what makes DNNs work. In fact, even describing the {\em evolution of a DNN during training} is quite difficult.

Large neural networks are high-dimensional disordered objects with many interacting components. It is natural to use methods from Statistical Mechanics to describe them \cite{Engel2001} . The main goal of this paper is to give a scaling limit for the evolution of certain deep neural networks during training via a {\em mean field model} inspired by Statistical Mechanics. That is, we show that, for a large network, the parameters of a large DNN behave like random particles interacting with each other and with their own probability densities in the way prescribed by a certain McKean-Vlasov process. This will require the following assumptions: complete connections between layers, plus "random features"~that are not learned on the first and last layers of weights. The latter assumption does not seem to be crucial, as we explain below (see \S \ref{sub:intro:contribution} and \S \ref{sub:comparison}). 

Our result extends work done for simpler "shallow" neural networks by Mei, Montanari and Nguyen \cite{Mei2018,MeiArxiv2018} (see also \cite{Mei2019}); Rotskoff and Vanden-Eijnden \cite{Rotskoff2018}; and Sirignano and Spiliopoulos \cite{Sirignano2018,Sirignano2018.2}. A recent (and independent) preprint by Sirignano and Spiliopoulos \cite{Sirignano2019} also gives a limiting description of deep networks. As we will see, that paper studies a different sense of limit: a DNN has "layers of neurons," and \cite{Sirignano2019} considers what happens when layers grow one at a time. By contrast, our methods allow us to consider networks all of whose layers are large, which seems more natural. Our work is closely related to the nonrigorous (and independent) results of Nguyen \cite{Nguyen2019}.

The paper is organized as follows. Section \ref{sec:background} gives presents the context for our result, including related work. Our own work starts in Section \ref{sec:setup}, we present our problem setup. Section \ref{sec:McKeanVlasov} motivates and describes the McKean-Vlasov process used to find the scaling limit of DNNs. The main theorems are stated in Section \ref{sec:rigorous_results}, where we offer additional comparisons with related work. Proofs are outlined in Section \ref{sec:overview} and presented in Sections \ref{sec:apriori_simple} to \ref{sec:coupling}. Some extensions and open problems are described in the Conclusion (Section \ref{sec:conclusion}).\\

\noindent{\bf Notation:} Vectors in Euclidean space $\R^d$ treated as column vectors. We use $|\cdot|$ to denote th Euclidean norm over any such space. The operator norm on the space $\R^{d\times k}$ of $d\times k$ matrices is denoted by $\|\cdot\|$. The sup norm of a continuous function $f:[0,T]\to\R^d$ is denoted by $\|\cdot\|_{\infty}$.

For integers $\ell\leq r$, $[\ell:r]$ denotes the set of all integers $\ell\leq k\leq r$. 

Given a Polish space $(M,\rho)$, we will use $\Probspace(M)$ to denote the set of probability measures over the Borel $\sigma$-field of $(M,\rho)$. Given $\mu\in \Probspace(M)$, we write $Z\sim \mu$ to say that $Z$ is a random element of $M$ with law $\mu$.

\section{Background and related work}\label{sec:background}
This section provides some context for our results and how they relate to the existing literature. In \S \ref{sub:intro:supervised}, we describe the Statistical Learning formulation of supervised learning with deep neural networks. In \S \ref{sub:intro:previouswork}, we discuss the work on "shallow" networks and how it relates to McKean-Vlasov processes. In \S \ref{sub:intro:contribution}, we outline our main results. In \S \ref{sub:intro:background}, give some additional pointers to related literature.  

\subsection{Supervised learning with deep neural networks.}\label{sub:intro:supervised} At a high level, a DNN is just one special type of "parametric function" \[\yhat:\R^{d_X}\times \R^{p_N}\to \R^{d_Y}.\] The interpretation of such a function is that, for each choice of "parameters" or "weights" $\btheta_N\in\R^{p_N}$, $\yhat(\cdot,\btheta_N)$ is a function mapping inputs $x\in\R^{d_X}$ to outputs $y=\yhat(x,\btheta_N)\in\R^{d_Y}$. The number $N$ may be thought of as a measure of the "size" of the network, which is related to the dimension $p_N$ of parameter space. We are especially interested in $N\gg 1$, $p_N\gg 1$. A full definition of the DNNs we consider is given in Section \ref{sec:setup}.

In this paper, we consider DNNs in the context of supervised learning with mean-square loss. One is given "training data" in the form an i.i.d. sample from a distribution $P$ over $(X,Y)\in \R^{d_X}\times \R^{d_Y}$. The goal is to use the data to choose a weight vector $\btheta_N^\star$ that makes the loss (or energy) function
\[\L_N(\btheta):=\frac{1}{2}\Exp{(X,Y)\sim P}{|Y-\yhat(X,\btheta_N)|^2}\]
 as small as possible. In the jargon of the area, minimizing $\L_N$ means that the DNN {\em generalizes well} to unseen data coming from the same distribution as the training data \cite{Anthony2009}.

The trouble with DNNs comes from their sheer size and complexity. The seminal "AlexNet" DNN  \cite{Krizhevsky2017} had $p\approx 10^8$ parameters; recent ones are much larger. In addition, the functions computed in DNNs are complicated layers of compositions of nonlinear functions. When deep nets are trained by gradient-descent type procedures, as is usually the case, many questions can be raised. Does gradient descent typically converge to a global minimum \cite{Venturi2018}, and, if so, does it converge in reasonable time \cite{Du2018,Du2018.2}? How does the shape of the "energy landscape" relate to generalization versus "overfitting" (or finding spurious patterns in data) of the DNN \cite{Zhang2016}? In spite of many recent results, we still lack satisfying answers to these questions.

\subsection{McKean-Vlasov scaling limits}\label{sub:intro:previouswork} Given the above difficulties, it is reasonable to take a step back and ask the simpler question: what is a deep neural network actually doing when it is trained?

Statistical Mechanics offers a natural way to approach large neural networks, be they deep or shallow. Physicists and mathematicians have been using this perspective since the original boom of neural nets; see the books of Engel and van den Broeck \cite{Engel2001} and M\'{e}zard and Montanari \cite{Mezard2009} for more information.

A more recent development has been to describe neural networks being trained to mean-field models of McKean-Vlasov type \cite{McKean1967,Sznitman1991}. As noted above, the papers \cite{Mei2018,Rotskoff2018,Sirignano2018} obtained results of this kind in the case of "shallow" networks, which are simpler than deep nets. 

Such networks contain $N\gg 1$ weights of dimension $d$, and the total dimension of the space of parameters is $p=p_N=Nd\gg 1$. An element $\btheta_N\in \R^{p_N}$ takes the form \[\btheta_N = (\theta_i)_{i=1}^N\mbox{ with each }\theta_i\in\R^d.\]
Assume the DNN is initialized with values $\theta_i$ that are i.i.d. with law $\mu_0$. Suppose also that we train the network by gradient descent in the following sense:
\[\frac{d}{dt}\btheta(t)= -N\nabla\,\L_N(\btheta(t)).\]
(In truth, the network is trained by {\em stochastic} gradient descent, but we ignore the distinction for the time being.)

The authors of \cite{Mei2018,Rotskoff2018,Sirignano2018}  noted that, for shallow networks, one can rewrite this evolution as
\begin{equation}\label{eq:towardsMcKV}\frac{d}{dt}\theta_i(t) = -b(\theta_i(t),\widehat{\mu}_t), 1\leq i\leq N,\end{equation}
where $b$ is a drift term depending on the value of the weight and on the empirical measure of the weights at time $t$:
\[\widehat{\mu}_t:=\frac{1}{N}\sum_{i=1}^N{\boldsymbol\delta}_{\theta_i(t)}.\]
A natural {\em ansatz} is that for large $N$, $\widehat{\mu}_t$ approaches a deterministic measure $\soln_t$ for each $t\geq 0$. If we had $\soln_t=\widehat{\mu}_t$, then the weight trajectories $\theta_{i}$ would be decoupled and thus i.i.d. (as the $\theta_i(0)$ are i.i.d.). So we expect that for large $N$ the weights are nearly i.i.d. Under this ansatz, we also have that for a random index $i$, $\theta_i(t)$ follows $\widehat{\mu}_t\approx \soln_t$. Therefore, one may conjecture that, in the thermodynamic limit, the $\theta_i$ approach i.i.d. particles satisfying
\[(\star)\;\left\{\begin{array}{ll}\thetabar(0)\sim  \mu_0,\\ \dfrac{d\thetabar}{dt}(t) = -b(\thetabar(t),\soln_t), & t\geq 0,\\ \thetabar(t) \sim \soln_t, & t\geq 0.\end{array}\right.\]

Problem $(\star)$ describes a particle $\thetabar(t)$ starting from a random initial state which interacts with its own distribution $\soln_t$. Under fairly reasonable conditions on the function $b$, there exists a unique probability measure $\soln$ over continuous trajectories such that, if $\thetabar(\cdot)\sim \soln$, then $(\star)$ is satisfied, and the $\soln_t$ are the time-$t$ marginals of $\soln_t$. In this paper, measures $\soln$ obtained as solutions to problems of the form $(\star)$ are called McKean-Vlasov measures, although this name is reserved for the case where the evolution of $\thetabar$ includes a diffusion term. More information about McKean-Vlasov problems and their history can be found in the references given in \S \ref{sub:intro:background:McKV}.

If there is a unique McKean-Vlasov measure satisfying $(\star)$, one may then give a coupling argument to show that the trajectories $\theta_i$ are close to an i.i.d. sample of $\soln$. This property is called "propagation of chaos" \cite{Sznitman1991,Kac1959} and is used directly in \cite{MeiArxiv2018}. The densities $p(t,x)$ of the measures $\soln_t$ (if they exist) are weak solutions of the PDE:
\[\partial_t p(t,x) = \nabla_x\cdot (b(x,\soln_t)\,p(t,x)).\]
Montanari, Mei and Nguyen \cite{Mei2018,MeiArxiv2018} have used properties of PDE in the case of shallow learning to study the generalization abilities of shallow networks.

\subsection{Deep neural networks and our contribution}\label{sub:intro:contribution} The main result of this paper, described in sections \ref{sec:setup} to \ref{sec:rigorous_results}, extends the analysis of  \cite{Mei2018,Rotskoff2018,Sirignano2018} to deep neural networks. More precisely, we consider a specific class of DNNs with complete connections and with first and last layers as "frozen random features" (the role of these is elucidated in Remark \ref{rem:timescales} below). These networks consist of units organized in layers. We consider what happens when the DNN is trained by a version of stochastic gradient descent.

As in the papers discussed above, we find that, when $N$ is large, the weights of the DNN are close to certain "ideal particles" derived from a McKean-Vlasov process. However, the picture for DNNs is much more complicated than that of shallow networks. The asymptotic weight distributions are layer-dependent, and weights do {\em not} become independent in the large-$N$ limit. In fact, the fundamental units of our analysis are not individual weights, but rather {\em paths of weights} in the network that go from the input to the output. The specific dependency structure of weights along a path is essential in the analysis. We will also see that the drift term for our McKean-Vlasov process will depend on the solution $\soln$ in a discontinuous fashion, which means that proving existence and uniqueness is a nontrivial task.

Two recent independent papers of Sirignano and Spiliopoulos \cite{Sirignano2019} and Nguyen \cite{Nguyen2019} obtain results related to our work. Reference \cite{Nguyen2019} uses a nonrigorous ansatz to obtain essentially the same process that we do. One difference comes from the fact that \cite{Nguyen2019} uses different scalings for the updates in different layers, in order to avoid the issues discussed in Remark \ref{rem:timescales}.

By contrast, \cite{Sirignano2019} considers a different scaling limit: the number of units in different layers do not all diverge simultaneously, but rather one at a time. The authors of \cite{Sirignano2019} note that letting all layers diverge at the same time brings a host of technicalities. Our main contribution is to tackle these technicalities head on, which will be important for future progress in the field. In fairness, we note that our assumptions ("random features", differentiable activation functions) leave something to be desired, though some of these restrictions may be lifted. A more thorough comparison between our results and those of \cite{Sirignano2019,Nguyen2019} is given in \S \ref{sub:comparison}.

The existence of a limiting process may also contribute to the analysis of learning in the class of networks we study. This program has been partially carried out by Mei et al. \cite{Mei2018} for shallow networks, where the limiting PDEs describe so-called Wasserstein gradient flows in the space of probability measures \cite{Ambrosio2005}. Our limiting process leads to a PDE family that we believe is new and deserving of further study.

\subsection{Additional background}\label{sub:intro:background} We present here a concise and partial overview of the work related to our paper.

\subsubsection{Neural networks, deep nets and generalization} The field of neural network applications is simply too large to survey adequately. Literally dozens of arXiv preprints on the subject appear every week. The landmark "AlexNet"~paper on image recognition \cite{Krizhevsky2017}, the survey \cite{lecun2015} and the book \cite{Goodfellow2016} serve as reasonable introductions to this field. Generalized Adversarial Networks or GANs \cite{goodfellow2014} are one example of DNNs that fall outside the scope of supervised learning.

Statistical Learning Theory was developed to explain generalization in much more general contexts than deep learning. Much of that theory is predicated on assuming that the algorithm under consideration cannot interpolate the training data \cite{Anthony2009}. However, practical applications feature "over parameterized"~deep nets, with $p_N$ much larger than the sample size. In this case, interpolation may well happen \cite{Zhang2016}. This has generated a flurry of related activity on statistical methods that do interpolate: see \cite{belkin2018} for a recent example.

\subsubsection{Other scaling limits for neural networks} We described above McKean-Vlasov limits for deep and shallow networks. Natural Tangent Kernels have been put forward by Jacot, Gabriel and Hongler \cite{Jacot2018} as scaling limits of deep networks when the number of neurons diverges, albeit under a different time scale. Other papers have followed up on this idea \cite{Geiger2019}. However, Chizat and Bach \cite{Chizat2018} argue that this stems from small step sizes and an effective linearization of the loss function around the starting point of the optimization. This idea is also explored by Mei, Misiakiewicz and Montanari \cite{Mei2019}.

\subsubsection{Mean-field models, propagation of chaos and related topics}\label{sub:intro:background:McKV} The kind of models discussed in \S \ref{sub:intro:previouswork} come from a body of work on nonlinear Markov processes, mean-field particle models, and kinetic theory. Here we give only a few references to the area.

McKean's seminal paper \cite{McKean1967} introduces McKean-Vlasov processes. More general nonlinear Markov processes are surveyed in Kolokoltsov's book \cite{Kolokoltsov2010}.  Spohn's book \cite{Spohn2012} covers kinetic theory, Vlasov's equation, and related topics. The term "propagation of chaos" was a concept introduced by Kac \cite{Kac1959} in the context of kinetic theory and made central in Sznitman's St Flour lectures \cite{Sznitman1991}. Rachev and R\"{u}schendorf  \cite{Rachev2006} present methods for proving existence and uniqueness for McKean-Vlasov problems. We will adapt their method to our setting, but this will require circumventing certain discontinuities.

\section{Setup for rigorous results}
\label{sec:setup}
This section describes the sort of deep neural networks and regression task considered in this work.

\subsection{Functions and losses}\label{sub:loss}
Our problem centers around understanding the following regression task:

\begin{problem}[Regression Task]
\label{prob:regression-task}
Let $(X,Y)\in\R^{d_X}\times \R^{d_Y}$ be a random pair endowed with an unknown distribution $P$. The goal is to find a function $f:\R^{d_X}\to \R^{d_Y}$ such that the mean-squared loss,
\[
	\L(f):=\frac{1}{2}\Exp{(X,Y)\sim P}{\abs{Y-f(X)}^2},
\]
is as small as possible.
\end{problem}
In our case, the possible functions $f$ are constrained by the architecture of a Deep Neural Network. This consists of a family of parametric functions indexed by $N$,
\[
	\yhat:\R^{d_X}\times \R^{p_N}\to \R^{d_Y},
\]
where $\R^{p_N}$ is a space of parameters ~$\btheta_N$. In this setting, the regression problem becomes an optimization task over $\R^{p_N}$, with cost function given by
\begin{equation}
\label{eq:defLN}
	\L_N(\btheta_N):=\frac{1}{2}\Exp{(X,Y)\sim P}{|Y-\yhat(X,\btheta_N)|^2}.
\end{equation}

\subsection{The neural network.}\label{sub:theNN} Our parametric functions will be defined as follows.

We will consider networks with $L\geq 3$ hidden layers, where each hidden layer $\ell$ contains $N_\ell$ neurons. To set up the hidden layer functions, we choose dimension parameters:

\noindent{
\begin{tabular}{p{3cm}p{1.8cm}p{1.8cm}p{1.6cm}p{1.8cm}p{3cm}}
$d_0:=d_X, D_0,$&$d_1, D_1,$&$d_2, D_2,$&$\cdots$&$d_L, D_L,$&$d_{L+1}:=d_Y,$
\end{tabular}}
where $d_\ell$ represents the dimension of the input and $D_\ell$ represents the dimension of the parameters for the hidden layer function in the $\ell^\text{th}$ layer. In our setting, the function in the $(L+1)^\text{th}$ (last) layer do not have parameters, which is why we will not need $D_{L+1}$.

To make our analysis simpler, we choose a number $N\in \N\backslash\{0\}$ and set $N_\ell:=N$ for $1\leq \ell\leq L$ (that is, all hidden layers have the same number of neurons). For the innermost and outermost layers, we set $N_0=N_{L+1}:=1$. We also select activation functions:
\[
\left\{
\begin{array}{rcll}
	\sigma^{(\ell)}:&\R^{d_{\ell}}\times \R^{D_\ell}&\to \R^{d_{\ell+1}}&\text{ if }\ell\in[0:L],\\
	\sigma^{(L+1)}:&\R^{d_{L+1}}&\to\R^{d_{L+1}}.&
\end{array}
\right.
\]

Let us describe the vector of parameters or weights. For each $\ell\in[0:L]$ and each pair $(i_\ell,i_{\ell+1})\in [1:N_\ell]\times [1:N_{\ell+1}]$, a parameter (or weight)
\[\theta^{(\ell)}_{i_\ell,i_{\ell+1}}\in \R^{D_\ell},\]
 will go into the function $\sigma^{(\ell)}$. The interpretation is that $\theta^{(\ell)}_{i_\ell,i_{\ell+1}}$ parametrizes the connection between the $(i_\ell)^\text{th}$ neuron in the $\ell^\text{th}$ layer and the $(i_{\ell+1})^\text{th}$ neuron in the $(\ell+1)^\text{th}$ layer.

We can collect all the parameters $\theta^{(\ell)}_{i_\ell,i_{\ell+1}}$ into a single parameter vector:
\[
	\btheta_N:=\Big(\theta^{(\ell)}_{i_\ell,i_{\ell+1}}\,:\,\ell\in[0:L],\;(i_\ell,i_{\ell+1})\in [1:N_\ell]\times [1:N_{\ell+1}]\Big).
\]
This parameter is an element of $\R^{p_N}$, where
\[
	p_N:= \sum_{\ell=0}^{L}\,N_{\ell}N_{\ell+1}D_\ell.
\]

We now explain how function $\yhat:\R^{d_X}\times \R^{p_N}\to \R^{d_Y}$ is parametrized by $\btheta_N$.
\begin{definition}[Parametric Deep Neural Network]\label{def:DNN}
Given $(x,\btheta_N)\in \R^{d_X}\times \R^{p_N}$, we define $\yhat(x,\btheta_N)$ as follows.

First, for $i_1\in [1:N_1]$, we define
\begin{equation}
\label{eq:defz1}
	z^{(1)}_{i_1}(x,\btheta_N) :=
	\fsigma{0}{x,\theta^{(0)}_{1,i_1}}.
\end{equation}
For $\ell\in[1:L]$ and $i_{\ell+1}\in[1:N_{\ell+1}]$, we proceed inductively to define
\begin{equation}
\label{eq:defzell}
	z^{(\ell+1)}_{i_{\ell+1}}(x,\btheta_{N}):=
	\frac{1}{N_\ell}	\sum_{i_\ell=1}^{N_\ell}	\fsigma{\ell}{z^{(\ell)}_{i_\ell}(x,\btheta_{N}), \theta^{(\ell)}_{i_\ell,i_{\ell+1}}}.
\end{equation}
Finally, the output of the network is given by
\begin{equation}
\label{eq:outputNN}
	\yhat(x,\btheta_{N})   :=   \fsigma{L+1}{z_1^{(L+1)}(x,\btheta_{N})}.
\end{equation}
\end{definition}

The interpretation is that $z^{(\ell)}_{i_\ell}$ is the function computed at the $i_\ell^{th}$ neuron of layer $\ell$. Figure \ref{fig:parameters_drawing} gives a representation of the DNN: edges between units in layers $\ell$ and $\ell+1$ represent that the function computed by the network at a neuron in layer $\ell+1$ involves all values at layer $\ell$.

\noindent
\begin{figure}[h]
\input{measureNN_aux/fig1-net_parameters}
\label{fig:parameters_drawing}
\end{figure}

Our model defines a very general version of a neural network with fully connected layers, where the internal units may have dimension greater than $1$.

\begin{remark}[Averages] Notice that, for $\ell>0$, the definition of $z^{(\ell+1)}_{i_{\ell+1}}$ involves averages over $i_\ell$. These averages will effectively lead to a Law of Large Numbers as the number of neurons grows
\end{remark}

\subsection{The backpropagation equations.}\label{sub:backprop} As is standard, the gradients of $\yhat$ with respect to the weights may be computed via backpropagation \cite{Goodfellow2016}. The key idea is to write the derivatives of $\yhat$ with respect to the activations recursively, starting from the output. Since
\[
	\yhat(x,\btheta_N) = \fsigma{L+1}{z_1^{(L+1)}(x,\btheta_N)},
\]
we write
\[
	\frac{\partial \yhat}{\partial z_1^{(L+1)}}(x,\btheta_N) = D\fsigma{L+1}{z_1^{(L+1)}(x,\btheta_N)}.
\]
For $0\leq \ell\leq L$, we have
\[
	\frac{\partial z_{i_{\ell+1}}^{(\ell+1)}}{\partial z_{i_\ell}^{(\ell)}}(x,\btheta_N) =
	\frac{1}{N_\ell} D_z\fsigma{\ell}{z_{i_\ell}^{(\ell)}(x,\btheta_N),\theta^{(\ell)}_{i_\ell,i_{\ell+1}}},
\]
where $D_z\sigma^{(\ell)}$ denotes the partial Fr\'{e}chet derivative of $\sigma^{(\ell)}$ with respect to the first variable.

To compute the derivative of the function with respect to the $z^{(\ell)}_{i_\ell}$, we will establish the following notation:
We denote $\boldN{\ell}{L+1} = (N_\ell,N_{\ell+1},\cdots,N_L,N_{L+1})$. Then we use the notation
\[
	[1:\boldN{\ell}{L+1}] := [1:N_{\ell}]\times[1:N_{\ell+1}]\times\cdots\times[1:N_{L+1}]
\]
to denote a set of multi-indices.
We write the elements of $[1:\boldN{\ell}{L+1}]$ as
$$\boldj{\ell}{L+1} = (j_\ell, j_{\ell+1},\cdots,j_{L+1}),$$
and when joining an index $i_\ell$ with a list $\boldj{\ell}{L+1} $ we write
\[
	\left(i_\ell, \boldj{\ell+1}{L+1}\right) = (i_\ell, j_{\ell+1}, \cdots, j_{L+1})\in [1:\boldN{\ell}{L+1}].
\]

Using this notation, we obtain
\begin{equation}
	\frac{\partial \yhat}{\partial z_{i_\ell}^{(\ell)}}(x,\btheta_N) =
	\left(\frac{1}{\prod_{k=\ell}^{L+1}N_k}\right)   \sum_{\boldj{\ell+1}{L+1}\in [1:\boldN{\ell+1}{L+1}]}
	M^{(\ell)}_{\boldj{\ell+1}{L+1}}(x,\btheta_N),
\end{equation}
where
\begin{align}
\label{eq:defM}
	\nonumber M^{(L+1)}_1(x,\btheta_N) :=& D\fsigma{L+1}{z^{(L+1)}(x,\btheta_N)}\quad\quad\text{ and}\\
	          M^{(\ell)}_{\boldj{\ell}{L+1}}(x,\btheta_N) :=&
	          M^{(\ell+1)}_{\boldj{\ell+1}{L+1}}(x,\btheta_N)\,\cdot
	          D_z\fsigma{\ell}{z_{j_\ell}^{(\ell)}(x,\btheta_N),\theta^{(\ell)}_{j_\ell,j_{\ell+1}}}\\
	\nonumber & \text{for all } \ell\in[0:L], \quad \boldj{\ell}{L+1}\in [1:\boldN{\ell}{L+1}].
\end{align}

Then, for $\ell\in [0:L]$ and $(i_\ell,i_{\ell+1})\in [1:N_\ell]\times [1:N_{\ell+1}]$, we have
\begin{align*}
	\frac{\partial \yhat}{\partial \theta_{i_\ell,i_{\ell+1}}^{(\ell)}}(x,\btheta_N) =&
	\frac{\partial\yhat}{\partial z_{i_{\ell+1}}^{(\ell+1)}}(x,\btheta_N)\,.\,
	\frac{\partial z_{i_{\ell+1}}^{(\ell+1)}}{\partial \theta_{i_\ell,i_{\ell+1}}^{(\ell)}}(x,\btheta_N)\\ =&  
	\frac{\partial\yhat}{\partial z_{i_{\ell+1}}^{(\ell+1)}}(x,\btheta_N)\,\cdot\,
	\parn{\frac{1}{N_\ell}\cdot D_\theta\fsigma{\ell}{z_{i_\ell}^{(\ell)}(x,\btheta_N),\theta^{(\ell)}_{i_\ell,i_{\ell+1}}}},
\end{align*}
where $D_\theta\sigma^{(\ell)}$ denotes the Fr\'{e}chet derivative with respect to the second variable. We may deduce via recursion that
\begin{align}
\label{eq:formulagrad}
	\frac{\partial \yhat}{\partial \theta_{i_\ell,i_{\ell+1}}^{(\ell)}}(x,\btheta_N) =&
	\left(
		\frac{1}{\prod_{k={\ell+2}}^{L+1}N_k}\cdot
		\sum\limits_{\boldj{\ell+2}{L+1}\in[1:\boldN{\ell+2}{L+1}]}M^{(\ell+1)}_{(i_{\ell+1},\boldj{\ell+2}{L+1})}(x,\btheta_N)
	\right)
	\\\nonumber
	&\times
	\left(
		\frac{1}{N_\ell N_{\ell+1}}\cdot
		D_\theta\fsigma{\ell}{z_{i_\ell}^{(\ell)}(x,\btheta_N),\theta^{(\ell)}_{i_\ell,i_{\ell+1}}}
	\right).
\end{align}

\begin{remark}[Expected behavior] Notice that \eqnref{formulagrad} involves an average over the operators $M^{(\ell)}_{[\boldsymbol \cdot]}$. Since $N_{L+1}=1$, this ``averaging" consists of a single term when $\ell=L+1$. For smaller $\ell$, we will replace this average by an integral in the mean-field limit.\end{remark}

\begin{remark}[Time scales]\label{rem:timescales} Formula \eqnref{formulagrad} also suggests that weights $\theta^{(\ell)}_{i_\ell,i_{\ell+1}}$ evolve at a time scale of $1/N_{\ell}N_{\ell+1}$. This is is $1/N$ for $\ell=0,L$ and $1/N^2$ for other $\ell$. This strongly suggests a separation of time scales. Unfortunately, we currently lack the techniques to address this. This is the reason why we keep the weights with superscripts $\ell=0,L$ fixed throughout our training. By contrast, Nguyen \cite{Nguyen2019}, Sirignano, and Spiliopoulos \cite{Sirignano2019} rescale the evolution of the weights of layers $\ell=0,L$ to make all time scales match. Although we have not investigated this carefully, we believe our results can be directly extended to that setting.  \end{remark}

\subsection{Training by stochastic gradient descent}
\label{sub:DNN_definition}
Lastly, we describe the training procedure for our network by {\em Stochastic Gradient Descent} (or SGD). Specifically, weights are updated at discrete time steps according to a point estimate of the gradient of the loss function $\L_N$ obtained from a fresh sample point. This matches the assumption of previous papers such as \cite{Mei2018,MeiArxiv2018,Sirignano2018,Rotskoff2018,Sirignano2019,Nguyen2019}.

We start by defining the function $\Gradh^{(\ell)}_{i_\ell,i_{\ell+1}}:\R^{d_{X}}\times\R^{d_{Y}}\times\R^{p_N}\to \R^{D_\ell}$ as follows:
\begin{equation}
\label{eq:grad-ell}
{\Gradh}^{(\ell)}_{i_\ell,i_{\ell+1}}(X,Y,\btheta_N) =
\begin{cases}
\bold{0}_{\R^{d_\ell}} &\text{if } \ell\in\{0,L\},\\
\big(Y - \yhat(X,\btheta_N)\big)^\dag N_\ell N_{\ell+1}\frac{\partial \yhat}{\partial \theta_{i_\ell,i_{\ell+1}}^{(\ell)}}(X,\btheta_N)
& \ell\in[1:L-1].
\end{cases}
\end{equation}
The full vector of scaled gradients is denoted by
$$\Gradh_N(X,Y,\btheta_N) = \parn{\Gradh^{(\ell)}_{i_\ell,i_{\ell+1}}(X,Y,\btheta_N),\,\substack{\ell\in[0:L],\\
(i_\ell,i_{\ell+1})\in[1:N_\ell]\times[1:N_{\ell+1}]}}.$$

Fix $\eps>0$ and let $\alpha:\R_+\to \R_+$ be a given function. Let $\{X_k,Y_k, k\in\N\}\iid P$ be the dataset. We choose an initial vector $\btheta_N(0)\in \R^{p_N}$ independently of the sample. Then we proceed recursively.

Assume $\btheta_N(s)\in \R^{p_N}$ is defined for $0\leq s\leq k$. For $\ell\in[0:L]$ and $(i_\ell,i_{\ell+1})\in [N_\ell]\times [N_{\ell+1}]$, we define the processes
\begin{equation}
\label{def:sgd-process}
\theta^{(\ell)}_{i_\ell,i_{\ell+1}}(k+1) =  \theta^{(\ell)}_{i_\ell,i_{\ell+1}}(k) - \eps\cdot\alpha(k\eps)\cdot \cdot\Gradh^{(\ell)}_{i_\ell,i_{\ell+1}}\left(X_{k+1},Y_{k+1},\btheta_N(k)\right).
\end{equation}

\begin{remark}[Averaging the increment]
\label{rem:average-grad}
Let $\{\sF_{k},\,\,{k\in\N}\}$ be the filtration generated by $\btheta_N(0)$ and $\{(X_s,Y_s),\, 1\leq s\leq k\}$.
Observe that for $\ell\in[1:L-1]$ and $(i_\ell,i_{\ell+1})\in [N_\ell]\times [N_{\ell+1}]$
\begin{equation}
\Exp{(X_{k+1},Y_{k+1})\sim P}{{\Gradh}^{(\ell)}_{i_\ell,i_{\ell+1}}(X_{k+1},Y_{k+1},\btheta_N)} = N^2\,\frac{\partial \L_N(\btheta_N)}{\partial \theta^{(\ell)}_{i_\ell,i_{\ell+1}}}.
\end{equation}
This means that for $\ell\in[1:L-1]$
\[
	\Ex{\left.\theta^{(\ell)}_{i_\ell,i_{\ell+1}}(k+1) -  \theta^{(\ell)}_{i_\ell,i_{\ell+1}}(k)\right|\sF_{k-1}} =
	-\eps\cdot\alpha(k\eps)\cdot N^2\cdot\frac{\partial \L_N(\btheta_N(k))}{\partial \theta_{i_\ell,i_{\ell+1}}^{(\ell)}}.
\]
In other words, the network weights of the hidden layers follow, on average, the negative gradient of the population loss.
\end{remark}

\begin{remark}[Random features]\label{rem:randomfeatures}As noted in Remark \ref{rem:timescales}, we avoid the problem of time scales in weight updates by keeping fixed $\theta^{(\ell)}$ with $\ell=0,L$. One may interpret that by saying that we require "random features" \cite{Rahmi2008} close to the input and output. Therefore, this is not a completely unnatural assumption. Still, we believe it is inessential, as we could have opted to tune the learning rates at different layers (again as dicussed in Remark \ref{rem:timescales}).\end{remark}

\section{Mean-field picture for the weights}
\label{sec:McKeanVlasov}
We now explain the mean-field behavior we expect from our system in the large-$N$ limit.

\subsection{An ansatz} The basic idea behind our model is that the summations appearing in the gradients of $L_N$ are to be replaced by integrals over conditional distributions. This idea was applied nonrigorously (and independently) by Nguyen \cite{Nguyen2019}. The {\em ansatz} can be summarized as follows:
\begin{itemize}
\item Weights on the same layer are statistically indistinguishable.
\item Gradients of weights in layers $\ell=0,1,2,3,\dots,L-1,L$ depend on averages taken over network paths. Some paths go "backwards" to the input and some go "forward" to the output.
\item Averages over many paths are expected to obey a law of large numbers and become deterministic in the limit. 
\item Weights $\theta^{(\ell)}_{i_\ell,i_{\ell+1}}$ in layers $2\leq \ell\leq L-2$ have lots of paths both ways. Since averages become deterministic integrals, we expect these weights to decouple from the rest of the network. The same holds for weights in layer $\ell=0$ (only forward paths) and $\ell=L$ (only backward paths).  
\item By contrast, a weight $\theta^{(1)}_{i_1,i_{2}}$ has one backwards path through $\theta^{(0)}_{1,i_1}$. So we expect that $\theta^{(1)}_{i_1,i_{2}}$ will depend on $\theta^{(0)}_{1,i_{1}}$ (but will decouple from other weights) even in the thermodynamic limit. A similar reasoning shows that $\theta^{(L-1)}_{i_{L-1},i_{L}}$ should depend on $\theta^{(L)}_{i_L,1}$, but on no other weights.
\end{itemize}

This leads us to consider {\em input-to-output paths} in the network as our basic units in the analysis:
\begin{equation}\label{eq:defpath}
	\theta = \parn{\theta^{(0)}_{1,i_1},\theta^{(1)}_{i_1,i_2},\theta^{(2)}_{i_2,i_3},\dots,\theta^{(L)}_{i_L,i_1}}.
\end{equation}
Figure \ref{fig:unit} illustrates one such path.
\noindent
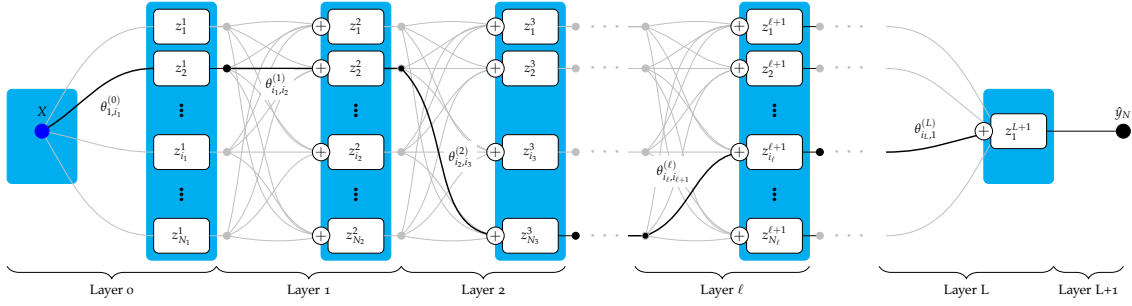
\begin{figure}
\label{fig:unit}
\makebox[\textwidth]{
\resizebox{\textwidth}{!}{
\begin{tikzpicture}
\draw [rounded corners, cyan, fill=cyan] (0,2.2) rectangle (2,4.9);

\node [above] at (1,4) {$X$};

\draw [rounded corners, cyan, fill=cyan] (4,0) rectangle (6,7.4);
\draw [rounded corners, fill=white] (4.2,.2) rectangle (5.8,1.2);
\node at (5, .7) {$z^1_{N_1}$};
\draw [rounded corners, fill=white] (4.2,2.6) rectangle(5.8,3.6 );
\node at (5, 3.1) {$z^1_{i_1}$};
\draw [rounded corners, fill=white] (4.2,5) rectangle(5.8,6);
\node at (5, 5.5) {$z^1_2$};
\draw [rounded corners, fill=white] (4.2,6.2) rectangle(5.8,7.2);
\node at (5, 6.7) {$z^1_1$};

\node [font=\Huge] at (5,4.4) {$\vdots$};
\node [font=\Huge] at (5,2) {$\vdots$};

\begin{scope}[lightgray]
\draw  (1,3.7) to [out = 60,in=180] (4.2,6.7);
\begin{scope}[black,very thick]
\draw  (1,3.7) to [out = 30,in=180] node [midway, below right, fill=white] {$\theta^{(0)}_{1,i_1}$} (4.2,5.5);
\end{scope}
\draw  (1,3.7) to [out = -10,in=180] (4.2,3.1);
\draw  (1,3.7) to [out = -60,in=180] (4.2,0.7);
\end{scope}
\draw [blue, fill=blue] (1,3.7) circle (.2);

\draw [rounded corners, cyan, fill=cyan] (9,0) rectangle (11,7.4);

\draw [rounded corners, fill=white] (9.2,.2) rectangle (10.8,1.2);
\node at (10, .7) {$z^2_{N_2}$};
\draw [rounded corners, fill=white] (9.2,2.6) rectangle(10.8,3.6 );
\node at (10, 3.1) {$z^2_{i_2}$};
\draw [rounded corners, fill=white] (9.2,5) rectangle(10.8,6);
\node at (10, 5.5) {$z^2_2$};
\draw [rounded corners, fill=white] (9.2,6.2) rectangle(10.8,7.2);
\node at (10, 6.7) {$z^2_1$};

\node [font=\Huge] at (10,4.4) {$\vdots$};
\node [font=\Huge] at (10,2) {$\vdots$};

\begin{scope}[lightgray, very thin]
\draw [very thick] (5.8,6.7) -- (6.3,6.7);
\draw  (6.3,6.7) to [out = 10,in=170] (9.2,6.7);
\draw  (6.3,6.7) to [out = -20,in=180] (9.2,5.5);
\draw  (6.3,6.7) to [out = -45,in=180] (9.2,3.1);
\draw  (6.3,6.7) to [out = -75,in=180] (9.2,0.7);
\draw  [fill=lightgray](6.3,6.7) circle (.1);

\draw [thick,black] (5.8,5.5) -- (6.3,5.5);
\draw  (6.3,5.5) to [out = 20,in=170] (9.2,6.7);

\draw  (6.3,5.5) to [out = -20,in=180] (9.2,3.1);
\draw  (6.3,5.5) to [out = -45,in=180] (9.2,0.7);
\draw  [black,fill=black](6.3,5.5) circle (.1);

\draw [thick] (5.8,3.1) -- (6.3,3.1);
\draw  (6.3,3.1) to [out = 45,in=170] (9.2,6.7);
\draw  (6.3,3.1) to [out = 30,in=180] (9.2,5.5);
\draw  (6.3,3.1) to [out = 0,in=180] (9.2,3.1);
\draw  (6.3,3.1) to [out = -20,in=180] (9.2,0.7);
\draw  [fill=lightgray](6.3,3.1) circle (.1);

\draw [thick] (5.8,.7) -- (6.3,.7);
\draw  (6.3,.7) to [out = 75,in=170] (9.2,6.7);
\draw  (6.3,.7) to [out = 45,in=180] (9.2,5.5);
\draw  (6.3,.7) to [out = 20,in=180] (9.2,3.1);
\draw  (6.3,.7) to [out = -10,in=190] (9.2,0.7);
\draw  [fill=lightgray](6.3,0.7) circle (.1);
\begin{scope}[black,very thick]
\draw  (6.3,5.5) to [out = 0,in=180] node [midway, below, fill=white] {$\theta^{(1)}_{i_1,i_2}$} (9.2,5.5);
\end{scope}
\end{scope}

\draw[black,fill=white] (9,6.7) circle (.25) node [font=\large] {$+$};
\draw[black,fill=white] (9,5.5) circle (.25) node [font=\large] {$+$};
\draw[black,fill=white] (9,3.1) circle (.25) node [font=\large] {$+$};
\draw[black,fill=white] (9,0.7) circle (.25) node [font=\large] {$+$};

\begin{scope}[lightgray, very thin]
\draw [thick] (10.8,6.7) -- (11.3,6.7);
\draw  (11.3,6.7) to [out = 10,in=170]  (14.2,6.7);
\draw  (11.3,6.7) to [out = -20,in=180] (14.2,5.5);
\draw  (11.3,6.7) to [out = -45,in=180] (14.2,3.1);
\draw  (11.3,6.7) to [out = -75,in=180] (14.2,0.7);

\draw  [fill=lightgray](11.3,6.7) circle (.1);

\draw [very thick, black] (10.8,5.5) -- (11.3,5.5);
\draw  (11.3,5.5) to [out = 20,in=170] (14.2,6.7);
\draw  (11.3,5.5) to [out = 0,in=180] (14.2,5.5);
\draw  (11.3,5.5) to [out = -20,in=180] (14.2,3.1);

\draw  [fill=black](11.3,5.5) circle (.1);

\draw [thick] (10.8,3.1) -- (11.3,3.1);
\draw  (11.3,3.1) to [out = 45,in=170] (14.2,6.7);
\draw  (11.3,3.1) to [out = 30,in=180] (14.2,5.5);
\draw  (11.3,3.1) to [out = 0,in=180] (14.2,3.1);
\draw  (11.3,3.1) to [out = -20,in=180] (14.2,0.7);
\draw  [fill=lightgray](11.3,3.1) circle (.1);

\begin{scope}[lightgray]
\draw [thick] (10.8,.7) -- (11.3,.7);
\draw  (11.3,.7) to [out = 75,in=170] (14.2,6.7);
\draw  (11.3,.7) to [out = 45,in=180] (14.2,5.5);
\draw  (11.3,.7) to [out = 20,in=180] (14.2,3.1);
\draw  (11.3,.7) to [out = -10,in=190] (14.2,0.7);
\draw  [fill=lightgray](11.3,0.7) circle (.1);
\end{scope}

\begin{scope}[black,very thick]
\draw  (11.3,5.5) to [out = -45,in=180] node [midway, above right, fill=white] {$\theta^{(2)}_{i_2,i_3}$}(14.2,0.7);
\end{scope}

\node [font=\huge] at (17.2,6.7) {$\cdots$};
\node [font=\huge] at (17.2,5.5) {$\cdots$};
\node [font=\huge] at (17.2,3.1) {$\cdots$};
\node [font=\huge] at (17.2,0.7) {$\cdots$};

\end{scope}

\draw [rounded corners, cyan, fill=cyan] (14,0) rectangle (16,7.4);
\draw [rounded corners, fill=white] (14.2,.2) rectangle (15.8,1.2);
\node at (15, .7) {$z^3_{N_3}$};
\draw [rounded corners, fill=white] (14.2,2.6) rectangle(15.8,3.6 );
\node at (15, 3.1) {$z^3_{i_3}$};
\draw [rounded corners, fill=white] (14.2,5) rectangle(15.8,6);
\node at (15, 5.5) {$z^3_2$};
\draw [rounded corners, fill=white] (14.2,6.2) rectangle(15.8,7.2);
\node at (15, 6.7) {$z^3_1$};

\begin{scope}[lightgray]
\draw [thick] (15.8,6.7) -- (16.3,6.7);
\draw  [lightgray,fill=lightgray](16.3,6.7) circle (.1);
\draw [thick] (15.8,5.5) -- (16.3,5.5);
\draw  [lightgray,fill=lightgray](16.3,5.5) circle (.1);
\draw [thick] (15.8,3.1) -- (16.3,3.1);
\draw  [lightgray,fill=lightgray](16.3,3.1) circle (.1);
\draw [thick,black] (15.8,0.7) -- (16.3,0.7);
\draw  [black,fill=black](16.3,0.7) circle (.1);
\end{scope}

\draw[black,fill=white] (14,6.7) circle (.25) node [font=\large] {$+$};
\draw[black,fill=white] (14,5.5) circle (.25) node [font=\large] {$+$};
\draw[black,fill=white] (14,3.1) circle (.25) node [font=\large] {$+$};
\draw[black,fill=white] (14,0.7) circle (.25) node [font=\large] {$+$};

\draw [rounded corners, cyan, fill=cyan] (21,0) rectangle (23,7.4);
\draw [rounded corners, fill=white] (21.2,.2) rectangle (22.8,1.2);
\node at (22, .7) {$z^{\ell+1}_{N_\ell}$};
\draw [rounded corners, fill=white] (21.2,2.6) rectangle(22.8,3.6 );
\node at (22, 3.1) {$z^{\ell+1}_{i_\ell}$};
\draw [rounded corners, fill=white] (21.2,5) rectangle(22.8,6);
\node at (22, 5.5) {$z^{\ell+1}_2$};
\draw [rounded corners, fill=white] (21.2,6.2) rectangle(22.8,7.2);
\node at (22, 6.7) {$z^{\ell+1}_1$};

\draw [thick,black] (22.8,3.1) -- (23.3,3.1);
\draw [thick] (22.8,5.5) -- (23.3,5.5);
\draw [thick] (22.8,6.7) -- (23.3,6.7);
\draw [thick] (22.8,0.7) -- (23.3,0.7);
\draw  [lightgray,fill=lightgray](23.3,6.7) circle (.1);
\draw  [lightgray,fill=lightgray](23.3,5.5) circle (.1);
\draw  [black,fill=black](23.3,3.1) circle (.1);
\draw  [lightgray,fill=lightgray](23.3,0.7) circle (.1);

\node [font=\Huge] at (22,4.4) {$\vdots$};
\node [font=\Huge] at (22,2) {$\vdots$};

\begin{scope}[lightgray, very thin]
\draw [very thick] (17.8,6.7) -- (18.3,6.7);
\draw  (18.3,6.7) to [out = 10,in=170]  (21.2,6.7);
\draw  (18.3,6.7) to [out = -20,in=180] (21.2,5.5);
\draw  (18.3,6.7) to [out = -45,in=180] (21.2,3.1);
\draw  (18.3,6.7) to [out = -75,in=180] (21.2,0.7);
\draw  [fill=lightgray](18.3,6.7) circle (.1);

\draw [thick] (17.8,5.5) -- (18.3,5.5);
\draw  (18.3,5.5) to [out = 20,in=170] (21.2,6.7);
\draw  (18.3,5.5) to [out = 0,in=180] (21.2,5.5);
\draw  (18.3,5.5) to [out = -20,in=180] (21.2,3.1);
\draw  (18.3,5.5) to [out = -45,in=180] (21.2,0.7);
\draw  [fill=lightgray](18.3,5.5) circle (.1);

\draw [thick] (17.8,3.1) -- (18.3,3.1);
\draw  (18.3,3.1) to [out = 45,in=170] (21.2,6.7);
\draw  (18.3,3.1) to [out = 30,in=180] (21.2,5.5);
\draw  (18.3,3.1) to [out = 0,in=180] (21.2,3.1);
\draw  (18.3,3.1) to [out = -20,in=180] (21.2,0.7);
\draw  [fill=lightgray](18.3,3.1) circle (.1);

\draw [very thick,black] (17.8,.7) -- (18.3,.7);
\draw  (18.3,.7) to [out = 75,in=170] (21.2,6.7);
\draw  (18.3,.7) to [out = 45,in=180] (21.2,5.5);
\draw [very thick, black] (18.3,.7) to [out = 20,in=180] node [above left,midway, fill=white] {$\theta^{(\ell)}_{i_\ell,i_{\ell+1}}$} (21.2,3.1);
\draw  (18.3,.7) to [out = -10,in=190] (21.2,0.7);
\draw  [fill=black](18.3,0.7) circle (.1);

\draw[black,fill=white] (21,6.7) circle (.25) node [font=\large] {$+$};
\draw[black,fill=white] (21,5.5) circle (.25) node [font=\large] {$+$};
\draw[black,fill=white] (21,3.1) circle (.25) node [font=\large] {$+$};
\draw[black,fill=white] (21,0.7) circle (.25) node [font=\large] {$+$};

\node [font=\huge] at (24.2,6.7) {$\cdots$};
\node [font=\huge] at (24.2,5.5) {$\cdots$};
\node [font=\huge] at (24.2,3.1) {$\cdots$};
\node [font=\huge] at (24.2,0.7) {$\cdots$};
\end{scope}

\draw [rounded corners, cyan, fill=cyan] (28,2.2) rectangle (30,4.9);
\node [above] at (32,3.9) {$\hat{y}_N$};
\draw [black] (32,3.7) -- (29.8, 3.7);
\draw [black,fill=black] (32,3.7) circle (.2);

\begin{scope}[lightgray, thick]
\draw  (28.4,3.7) to [out = 120,in=0] (25.2,6.7);
\draw  (28.4,3.7) to [out = 150,in=0] (25.2,5.5);
\draw [very thick,black] (28.4,3.7) to [out = 190,in=0] node [midway, above left, fill=white] {$\theta^{(L)}_{i_L,1}$} (25.2,3.1);
\draw  (28.4,3.7) to [out = -120,in=0] (25.2,0.7);
\end{scope}

\draw [rounded corners, black, fill=white] (28.2,4.2) rectangle (29.8,3.2);
\node [black] at (29,3.7) {$z^{L+1}_1$};

\draw[black,fill=white] (28,3.7) circle (.25) node [font=\large] {$+$};

\draw [decorate,decoration={brace,amplitude=10pt},yshift=-5pt]
(6,0) -- (0,0) node [black,midway,yshift=-20pt] {Layer 0};
\draw [decorate,decoration={brace,amplitude=10pt},yshift=-5pt]
(11.3,0) -- (6,0) node [black,midway,yshift=-20pt] {Layer 1};
\draw [decorate,decoration={brace,amplitude=10pt},yshift=-5pt]
(16,0) -- (11.3,0) node [black,midway,yshift=-20pt] {Layer 2};
\draw [decorate,decoration={brace,amplitude=10pt},yshift=-5pt]
(23,0) -- (18,0) node [black,midway,yshift=-20pt] {Layer $\ell$};
\draw [decorate,decoration={brace,amplitude=10pt},yshift=-5pt]
(30,0) -- (25,0) node [black,midway,yshift=-20pt] {Layer L};
\draw [decorate,decoration={brace,amplitude=10pt},yshift=-5pt]
(32,0) -- (30,0) node [black,midway,yshift=-20pt] {Layer L+1};
\end{tikzpicture}
}}
\caption{Visualization of a path}
\end{figure}

If our ansatz is correct, understanding the distribution of weights on a path suffices to describe the law of all weights in the network. We clarify this in the next subsection. 

\subsection{Paths and measures} Note that the vector $\theta$ in (\ref{eq:defpath}) lives in
\begin{equation}
	\R^{D_0}\times \R^{D_1} \times \dots \times \R^{D_L}\cong \R^D\mbox{ ,where }D:= D_0 + D_1 + D_2 + \dots + D_L.
\end{equation}

Given $x\in \R^D$, we write $x= (x^{(\ell)})_{\ell=0}^L$ with each $x^{(\ell)}\in \R^{D_\ell}$. We will need some notation for the high-dimensional measures.

\begin{definition}[Marginals]
Let $\mu$ be a probability measure over $\R^{D}$. Given $i\in [0:L]$, we write $\mu^{(i)}$ to denote the marginal over the $i$-th factor $\R^{D_i}$ of the Cartesian product. That is, if $\Theta\sim \mu$, then $\Theta^{(i)}\sim \mu^{(i)}$. If $i\leq L-1$, we write $\mu^{(i,i+1)}$ for the marginal over the $i$-th and $(i+1)$-th factors. We also use the symbol $\mu^{(i\mid i+1)}(\cdot\mid a^{(i+1)})$ to denote the conditional distribution of $\Theta^{(i)}$ given $\Theta^{(i+1)}=a^{(i+1)}$.
\end{definition}

Note that the heuristic analysis suggests a factorization for the distribution over a path: the measure over a path should factor as
\begin{equation}\label{eq:mufactorizes}\mu = \mu^{(0,1)}\times \mu^{(2)}\times \dots \times \mu^{(L-2)}\times \mu^{(L-1,L)}.\end{equation}

Moreover, if we can compute the law of the weights on a path, the weights over the entire network should factor (approximately) as follows, at any fixed training step.
\begin{enumerate}
\item We expect $\theta^{(\ell)}_{i_\ell,i_{\ell+1}}\sim \mu^{(\ell)}$ independently for all $\ell\neq 1,L-1$ and $(i_\ell,i_{\ell+1})\in [1:N_\ell]\times [1:N_{\ell+1}]$.
\item Conditionally on the above choices, we expect that the weights on layers $\ell=1,L-1$ are conditionally independent, with $\theta^{(1)}_{i_1,i_2}\sim \mu^{(1\mid 0)}(\cdot\mid \theta^{(0)}_{1,i_1})$ and $\theta^{(L-1)}_{i_{L-1},i_L}\sim \mu^{(L-1\mid L)}(\cdot\mid \theta^{(L)}_{i_L,1})$.\end{enumerate}

We will eventually make this precise via the introduction of "ideal particles". It will be crucial that the above independence structure also works at the level of {\em trajectories}. For this we will need the following notation.

\begin{definition}[Measures on trajectories]
\label{def:measuresontraj}
We use symbols such as $\mu_{[0,T]}$ (greek letter with the subscript $[0,T]$) to denote a probability measure over the product space
\[
	C([0,T],\R^{D}) \cong \bigotimes_{i=0}^{L} C([0,T],\R^{D_i}).
\]
As in the previous definition, if $i\in[0:L]$, we write $\mu_{[0,T]}^{(i)}$ to denote the marginal over the $i^{\text{th}}$ factor in the above product (which is a probability measure over $C([0,T],\R^{D_i})$). Moreover, if $i\in[0:L-1]$, we write $\mu_{[0,T]}^{(i,i+1)}$ for the marginal over both $i^{\text{th}}$ and $(i+1)^{\text{th}}$ factors (which is a probability measure over $C([0,T],\R^{D_i}\times\R^{D_{i+1}})$). We employ $\mu_{[0,T]}^{(i\mid j)}(\cdot\mid x_{[0,T]})$ to denote the conditional law of the $i^{\text{th}}$ trajectory given that the $j^{\text{th}}$ trajectory equals $x_{[0,T]}$. Also, we use a subscript $0\leq t\leq T$ to denote the marginal at time $t$, which is a probability measure over $\R^{D}$.

This means, for instance, that if $\Theta \sim \mu_{[0,T]}$, then $\mu_{t}$ is the law of $\Theta(t)$, $\mu^{(i)}_t$ is the law of its $i^{\text{th}}$ coordinate $\Theta^{(i)}(t)$, $\mu^{(i)}_{[0,T]}$ is the law of the $i^{\text{th}}$ coordinate's trajectory $\Theta^{(i)}\in C([0,T], \R^{D_i})$, and $\mu_{[0,T]}^{(i\mid j)}(\cdot\mid x_{[0,T]})$ is the law of $\Theta^{(i)}$ given $\Theta^{(j)}=x_{[0,T]}$.\end{definition}

\subsection{Mean-field representation of network evolution}
\label{sub:McKeanVlasov}
With our heuristic in mind, we define the "mean-field substitutes" for the function $\yhat$ and the gradients that will appear in the analysis. We first describe what happens when we apply our ansatz to approximate the terms $z^{(\ell)}_{i_\ell}$ (\ref{eq:defzell}) and $M^{(\ell)}_{\boldj{\ell}{L+1}}$ (\ref{eq:defM}) in the definition of gradients. The idea is that the summations will converge to integrals over factors of the measure $\mu$.
\subsubsection{The mean-field versions of neuron values.}
We consider a series of mappings taking as input a pair $(x,\mu)\in \R^{d_X}\times \Probspace(\R^D)$. We first define
\begin{equation}
\label{eq:defzbar2}
	\zbar^{(2)}(x,\mu):=
	\int\limits_{\R^{D_0}\times \R^{D_1}}\sigma^{(1)}\parn{\sigma^{(0)}(x,a^{(0)}),a^{(1)}}\,d\mu^{(0,1)}(a^{(0)},a^{(1)}).
\end{equation}

For $2<\ell\leq L-1$, we define:
\begin{equation}
\label{eq:defzbarell}
	\zbar^{(\ell)}(x,\mu):=
	\int\limits _{\R^{D_{\ell-1}}}\sigma^{(\ell-1)}\parn{\zbar^{(\ell-1)}(x,\mu),a^{(\ell-1)}}\,d\mu^{(\ell-1)}(a^{(\ell-1)}).
\end{equation}

The case of $\zbar^{(L)}$ is different: we define it in terms of $x$ and the {\em conditional distribution} $\mu^{(L-1\mid L)}$. Letting $a^{(L)}\in R^{D_L}$, we define:
\begin{equation}
\label{eq:defzbarL}
	\zbar^{(L)}(x,\mu,a^{(L)}):=
	\int\limits_{\R^{D_{L-1}}}\sigma^{(L-1)}\parn{\zbar^{(L-1)}(x,\mu),a^{(L-1)}}\,d\mu^{(L-1\mid L)}(a^{(L-1)}\mid a^{(L)}).
\end{equation}
This definition depends on the choice of regular conditional probability, but this issue will not concern us in what follows.

To finish, we define:
\begin{align}
\label{eq:defzbarL+1}
	&&\zbar^{(L+1)}(x,\mu)&:=
	\int\limits_{\R^{D_{L}}}\sigma^{(L)}\parn{\zbar^{(L)}(x,\mu,a^{(L)}),a^{(L)}}\,d\mu^{(L)}(a^{(L)})\\
	\mbox{ and }&& \ybar(x,\mu)&:=\sigma^{(L+1)}\parn{\zbar^{(L+1)}(x,\mu)}\nonumber.
\end{align}

\begin{remark}
\label{def:lbar}
Once we have the function $\ybar$, we can consider the mean-squared loss associated with a measure $\mu\in\Probspace(\R^D)$:
\[\Lbar(\mu) = \frac{1}{2}\Exp{(X,Y)\sim P}{|Y-\ybar(X,\mu)|^2}.\]
\end{remark}

\subsubsection{Mean-field versions of gradients.}
The next objects to be defined are the mean-field limits of the $M$ operators and the gradients presented in Section \ref{sub:backprop}. The idea here is that dependencies on $z^{(\ell)}_{i_\ell}$ are to be replaced by the corresponding $\zbar^{(\ell)}$.

Given $(x,\mu)\in \R^{d_X}\times \Probspace(\R^D)$, we first define

\begin{equation}
\label{eq:defMbarLplus1}
	\Mbar^{(L+1)}(x,\mu) := D\sigma^{(L+1)}\parn{\zbar^{(L+1)}(x,\mu)}.
\end{equation}

Then, we proceed as follows:
\begin{align}
\Mbar^{(L)}(x,\mu,a^{(L)}) &:= \Mbar^{(L+1)}(x,\mu) \cdot D_z\sigma^{(L)}\parn{\zbar^{(L)}(x,\mu,a^{(L)}), a^{(L)}},
\label{eq:defMbarL-Lminus1}\\
\Mbar^{(L-1)}(x,\mu) &:= {\displaystyle \int\limits_{\R^{D_{L}}}}\Mbar^{(L)}\parn{x,\mu,a^{(L)}}d\mu^{(L)}(a^{(L)})
\label{eq:defMbarLminus1}\\
 &\times{\displaystyle \int\limits_{\R^{D_{L-1}}}} D_z\sigma^{(L-1)}\parn{\zbar^{(L-1)}(x,\mu),a^{(L-1)}}\,d\mu^{(L-1,)}(a^{(L-1)})\nonumber.
\end{align}

Finally, we proceed recursively defining for $\ell\in [2:L-2]$:
\begin{equation}\label{eq:defMbarell}
\Mbar^{(\ell)}(x,\mu) =  \Mbar^{(\ell+1)}(x,\mu)\,\int\limits_{\R^{D_\ell}}\,D_z\sigma^{(\ell)}\parn{\zbar^{(\ell)}(x,\mu),a^{(\ell)}}\,d\mu^{(\ell)}(a^{(\ell)}).
\end{equation}

We may now define the mean-field equivalent of $\frac{d\yhat}{d\theta^{(\ell)}_{i_\ell,i_{\ell+1}}}$ that will dictate the evolution of the weights in the limit. For weights in the layer $\ell=1$, these gradients take the form
\[
	\gbar^{(1)}:\R^{D}\times \R^{d_X}\times \Probspace(\R^D)\to \R^{D_1},
\]
\begin{equation}
\label{eq:defgbar1}
	\gbar^{(1)}(\theta,x,\mu) = \Mbar^{(2)}(x,\mu)\,D_\theta\sigma^{(1)}\parn{\sigma^{(0)}(x,\theta^{(0)}),\theta^{(1)}}.
\end{equation}

For $\ell\in [2:L-2]$, we take
\[
	\gbar^{(\ell)}: \R^{D}\times \R^{d_X}\times \sM\to \R^{D_\ell},
\]
\begin{equation}
\label{eq:defgbarell}
	\gbar^{(\ell)}(\theta,x,\mu) := \Mbar^{(\ell+1)}(x,\mu)\,D_\theta\sigma^{(\ell)}\parn{\zbar^{(\ell)}(x,\mu),\theta^{(\ell)}}.
\end{equation}
Finally, for $\ell=L-1$ we take
\[
	\gbar^{(L-1)}:\R^{D}\times \R^{d_X}\times \sM\to \R^{D_{L-1}},
\]
\begin{equation}
\label{eq:defgbarL-1}
	\gbar^{(L-1)}(\theta,x,\mu) := \Mbar^{(L)}(x,\mu,\theta^{(L)})\,D_\theta\sigma^{(L-1)}\parn{\zbar^{(L-1)}(x,\mu),\theta^{(L-1)}}.
\end{equation}

We can define, analogously to (\ref{eq:grad-ell}), a function as follows:
\begin{equation}
\label{def:Grad-ell}
\Grad^{(\ell)}(X,Y,\mu,\theta) =
\begin{cases}
\bold{0}_{\R^{D_\ell}} &\text{if } \ell\in\{0,L\},\\
\big(Y - \ybar(X,\mu)\big)^\dag\,\gbar^{(\ell)}(\theta,X,\soln_s)
& \ell\in[1:L-1].
\end{cases}
\end{equation}

Now that we have the mean-field version of the gradients, we can define the following McKean-Vlasov problem.

\begin{definition}[McKean-Vlasov problem for DNN]
\label{def:McKeanVlasov}
Let $\mu_0$ be a probability measure over $\R^{D}$. We say that a probability measure $\soln_{[0,T]}$ over $C([0,T],\R^D)$ is a solution to the McKean-Vlasov DNN problem with initial distribution $\mu_0$ if a random trajectory \[\Thetabar\sim \soln_{[0,T]}\] satisfies:
\begin{enumerate}
\item $\Thetabar(0)\sim \mu_0$;
\item for $\ell \in [0:L]$, $\Thetabar^{(\ell)}$ satisfies almost surely
\[
	\Thetabar^{(\ell)}(t) = \Thetabar^{(\ell)}(0) - \displaystyle{\int_0^t}\alpha(s)\,\Exp{(X,Y)\sim P}{\Grad^{(\ell)}(X,Y,\soln_s,\Thetabar_N(s))}ds,
\]
for all $t\in[0,T]$.
\end{enumerate}
\end{definition}

\begin{remark}[Vector notation]
\label{rem:vectorMcKeanVlasov}
With the above notation, define
\[
	\Grad(X,Y,\mu,\theta) = \parn{\Grad^{(\ell)}(X,Y,\mu,\theta),\ell\in[0:L]}\in \R^{D}.
\]
Then we may rewrite conditions in the above definition as saying that, if $\Thetabar\sim \soln_{[0,T]}$, then
\[
\begin{cases}
	\Thetabar(0)\sim \mu_0,\\
	\Thetabar(t) = \Thetabar(0) -
	\displaystyle{\int_0^t}\alpha(s)\cdot\Exp{(X,Y)\sim P}{\Grad(X,Y,\soln_t,\Thetabar(s))} ds &\mbox{a.s.}
\end{cases}
\]
\end{remark}

\begin{remark}
\label{rem:discontinuity}
The presence of a conditional distribution in the definition of $\zbar^{(L)}$ makes the drift term of Definition \ref{def:McKeanVlasov} potentially discontinuous as a function of the measure $\soln$, both in the weak and Wasserstein topologies. To the best of our knowledge, this kind of discontinuity has not been considered before in the literature on McKean-Vlasov problems. The proof of existence for the McKean-Vlasov problem for DNN will require that we identify a closed set $\Probspecial\subset \Probspace(C([0,T],\R^D))$ where $\zbar^{(L)}$ behaves well. This will be discussed in section \ref{sec:apriori_space}.\end{remark}

\section{Statement of rigorous results}
\label{sec:rigorous_results}
\subsection{Technical assumptions.}\label{sub:assumptions}

In order to properly analyse our model, we start by delineating a series of assumptions. These are meant to be as broad as possible, while still allowing
the use of known analytical tools. We start by making a hypothesis on the initialization of the Network.

\begin{assumption}[Initialization]
\label{assump:initialization}
At time $k=0$, the weights of the DNN
\[
	\theta^{(\ell)}_{i_\ell,i_{\ell+1}}(0)\in\R^{D_\ell}\,:\, 0\leq \ell\leq L,\;(i_\ell,i_{\ell+1}) \in [1:\boldN{\ell}{\ell+1}]
\]
are random and independent both from each other and from the i.i.d. sample $(X_k,Y_k)\sim P$ ($k\in\N$). Network weights with the same superscript $\ell$ have a common law given by a probability measure $\mu^{(\ell)}_0\in \Probspace(\R^{D_\ell})$ with finite first moment.
\end{assumption}

Basically, this assumption means that initialization is independent and weights in the same layer share a common law. The next assumption deals with the nature of the activation functions.

\begin{assumption}[Activation functions are bounded with bounded derivatives]
\label{assump:activations} There exist a constant $C>1$ with the following properties.

The data distribution $P$ of $(X,Y)\in\R^{d_X}\times \R^{d_Y}$ is such that $|Y|\leq C$ almost surely. Moreover, the functions $\sigma^{(\ell)}$,  $\alpha$, and their Fr\'{e}chet derivatives are all bounded by $C$ in the appropriate norms. In addition, each Fr\'{e}chet derivative of the $\sigma^{(\ell)}$ is also $C$-Lipschitz. \ignore{More precisely, the following inequalities hold for all $\ell \in [0:L+1]$ and all $(z,\theta),(z',\theta')\in\R^{d_{\ell}}\times\R^{D_\ell}$:
\begin{enumerate}
\item $|\sigma^{(\ell)}(z,\theta)|\leq C$ (where $|\cdot|$ is the Euclidean norm);
\item $\opnorm{D\sigma^{(\ell)}(z,\theta)}\leq C$ (where $\pnorm{\text{Op}}{\cdot}$ is the operator norm);
\item $\opnorm{D\sigma^{(\ell)}(z,\theta) - D\sigma^{(\ell)}(z',\theta')} \leq C\,(|z-z'| + |\theta - \theta'|)$.
\end{enumerate}
Finally, we assume that $D\sigma^{(L+1)}$ is bounded by $C$ with with Lipschitz constant also bounded by $C$ (we do {\em not} assume $\sigma^{(L+1)}$ itselt is bounded).}
\end{assumption}
Assumption \ref{assump:activations} is made for technical convenience, and excludes the popular ReLU nonlinearity. We expect that our results will extend with minor changes to more general activation functions.

\subsection{Main theorems.}

We now state the main results of the paper. The first one states that our McKean-Vlasov problem is well-posed.
\begin{theorem}[Existence and uniqueness of McKean-Vlasov solution; proof in Section \ref{sec:existence}]
\label{thm:independencestructure}
Under the assumptions in Section \ref{sub:assumptions}, the McKean-Vlasov problem in Section \ref{sub:McKeanVlasov} has a unique solution $\soln_{[0,T]}$. This solution has the following structure:
\begin{enumerate}
\item There exist deterministic functions \[\begin{array}{llllll}F^{(1)}&:&\R^{D_0}\times \R^{D_1}&\to& C([0,T],\R^{D_1});\\
F^{(\ell)}&:&\R^{D_\ell}&\to& C([0,T],\R^{D_\ell}), &\text{for } \ell\in [2:L-2]; \\
F^{(L-1)}&:&\R^{D_{L-1}}\times \R^{D_L}&\to&  C([0,T],\R^{D_{L-1}}),\end{array}\] such that, if $\Thetabar\sim \soln_{[0,T]}$, then almost surely, for all $t\in[0,T]$
\begin{align*}
	\Thetabar^{(1)}(t) =& F^{(1)}\parn{\Thetabar^{(0)}(0),\Thetabar^{(1)}(0)}(t),&\\
	\Thetabar^{(\ell)}(t) =& F^{(\ell)}\parn{\Thetabar^{(\ell)}(0)}(t),\quad\quad\quad\text{for }\ell\in [2:L-2],\\
	\Thetabar^{(L-1)}(t) =& F^{(L-1)}\parn{\Thetabar^{(L-1)}(0),\Thetabar^{(L)}(0)}(t).&
\end{align*}
\item As a consequence, $\soln_{[0,T]}$ factorizes into independent components, ie,
\[\soln_{[0,T]} = {\soln}^{(0,1)}_{[0,T]}\times\left(\prod_{\ell=2}^{L-2} {\soln}^{(\ell)}_{[0,T]} \right)\times {\soln}^{(L-1,L)}_{[0,T]}.\]
\end{enumerate}
\end{theorem}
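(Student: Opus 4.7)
The plan is to establish existence and uniqueness by a Picard iteration on a carefully chosen closed subset of path measures, while simultaneously producing the factorization claimed in part (2) of the theorem.

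First observe that $\Grad^{(0)} \equiv 0 \equiv \Grad^{(L)}$ by (\ref{def:Grad-ell}), so for any hypothetical solution, $\Thetabar^{(0)}(t) = \Thetabar^{(0)}(0)$ and $\Thetabar^{(L)}(t) = \Thetabar^{(L)}(0)$ almost surely, for all $t \in [0,T]$. Next I would inspect the defining formulae (\ref{eq:defgbar1}), (\ref{eq:defgbarell}) and (\ref{eq:defgbarL-1}) and notice the key decoupling property: for $\ell\in[2:L-2]$ the drift $\gbar^{(\ell)}(\theta,x,\mu)$ depends on the vector $\theta$ only through the coordinate $\theta^{(\ell)}$ (plus $x$ and $\mu$), while $\gbar^{(1)}$ depends additionally only on $\theta^{(0)}$, and $\gbar^{(L-1)}$ only on $\theta^{(L)}$. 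Consequently, once a path measure $\mu_{[0,T]}$ is fixed for the purpose of evaluating the drift, the evolution of each coordinate $\Thetabar^{(\ell)}$ becomes a standalone ODE depending only on its own initial condition (and, for $\ell\in\{1,L-1\}$, on the boundary initial condition $\Thetabar^{(0)}(0)$ or $\Thetabar^{(L)}(0)$).

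Based on this observation, I would parametrize candidate solutions by deterministic continuous flow maps $F = (F^{(1)}, F^{(2)}, \dots, F^{(L-1)})$ with exactly the signatures stated in the theorem. Given such $F$, I reconstruct a candidate path measure $\soln_{[0,T]}[F]$ by sampling initial weights independently according to $\mu_0^{(0)}\otimes\cdots\otimes \mu_0^{(L)}$ and pushing them along the $F^{(\ell)}$; the resulting $\soln_{[0,T]}[F]$ automatically obeys the factorization of part (2). I then take $\Probspecial$ (from Remark \ref{rem:discontinuity}) to be the set of such measures whose parametrizing tuple $F$ is uniformly bounded and uniformly Lipschitz. Crucially, on $\Probspecial$ the conditional distribution $\soln^{(L-1\mid L)}_t(\cdot \mid a^{(L)})$ appearing in (\ref{eq:defzbarL}) is explicit: it equals the pushforward of $\mu_0^{(L-1)}$ under $b\mapsto F^{(L-1)}(b,a^{(L)})(t)$, thanks to the independence of $\Thetabar^{(L-1)}(0)$ and $\Thetabar^{(L)}(0)$ under $\mu_0$. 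This sidesteps the potential discontinuity flagged in Remark \ref{rem:discontinuity} and converts the $\mu$-dependence of $\zbar^{(L)}$, $\Mbar^{(L)}$ and $\Mbar^{(L-1)}$ into an honest Lipschitz dependence on $F^{(L-1)}$ in sup norm.

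The Picard operator $\Phi$ is then defined by: given $F$, plug $\mu = \soln_{[0,T]}[F]$ into $\Grad^{(\ell)}$, average over $(X,Y)\sim P$, and let $\Phi(F)^{(\ell)}$ be the flow map obtained by integrating the resulting coordinate-wise ODE. Using Assumption \ref{assump:activations} (uniform boundedness and Lipschitz continuity of each $\sigma^{(\ell)}$ and its derivatives, together with $|Y|\leq C$), I would establish by induction on $\ell$ sup-norm bounds and Lipschitz moduli for the auxiliary functionals $\zbar^{(\ell)}$, $\Mbar^{(\ell)}$ and $\ybar$. Chaining these estimates gives that $\Phi$ maps $\Probspecial$ into itself and obeys
\[ \sup_{s\leq t}\|\Phi(F)(s)-\Phi(\tilde F)(s)\|_\infty \leq K\int_0^t \sup_{u\leq s}\|F(u)-\tilde F(u)\|_\infty\, ds, \]
for a constant $K$ depending on $T$, $C$, and $\|\alpha\|_\infty$. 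A standard Gronwall--Picard argument yields a unique fixed point in $\Probspecial$, which defines the claimed solution $\soln_{[0,T]}$.

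The hard part is exactly the control of $\Mbar^{(L)}$ and $\zbar^{(L)}$, because of the conditional distribution $\mu^{(L-1\mid L)}$; within the parametrization above this reduces cleanly to the Lipschitz modulus of $F^{(L-1)}$ and of $\sigma^{(L-1)}$, avoiding any delicate weak-topology arguments. For uniqueness of $\soln_{[0,T]}$ in the whole space $\Probspace(C([0,T],\R^D))$ (not merely in $\Probspecial$), I would argue that any solution of Definition \ref{def:McKeanVlasov} must, by the coordinate-decoupling step combined with the constancy of $\Thetabar^{(0)}$ and $\Thetabar^{(L)}$, automatically be of the form $\soln_{[0,T]}[F]$ for some admissible $F$; hence it lies in $\Probspecial$ and uniqueness of the Picard fixed point concludes the proof, with the factorization in part (2) an immediate byproduct.
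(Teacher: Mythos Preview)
Your approach is in spirit very close to the paper's: restrict to a set of path measures on which the problematic conditional law $\mu^{(L-1\mid L)}$ is explicitly represented via a Lipschitz flow map, show the Picard map preserves this set, and extract the factorization from the coordinate-wise ODE structure. Working directly on the tuple $F$ rather than on measures is a legitimate reformulation, and your treatment of existence within $\Probspecial$ is essentially correct.

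The genuine gap is in your uniqueness step. You write that ``any solution of Definition~\ref{def:McKeanVlasov} must, by the coordinate-decoupling step combined with the constancy of $\Thetabar^{(0)}$ and $\Thetabar^{(L)}$, automatically be of the form $\soln_{[0,T]}[F]$ for some admissible $F$.'' Coordinate decoupling does produce measurable flow maps $G^{(\ell)}$ (this is the paper's Lemma~\ref{lem:constrainstructure}), but it does \emph{not} automatically give that $G^{(L-1)}$ is Lipschitz in $a^{(L)}$. The obstruction is circular: the drift of $\Thetabar^{(L-1)}$ contains $\Mbar^{(L)}(x,\soln_t,a^{(L)})$, which depends on $\zbar^{(L)}(x,\soln_t,a^{(L)})$, which in turn is an integral against the conditional law $\soln_t^{(L-1\mid L)}(\cdot\mid a^{(L)})$. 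A priori this conditional law is only measurable in $a^{(L)}$, so the drift is only measurable in $a^{(L)}$, and standard ODE theory gives no Lipschitz dependence of the flow on that parameter. To break the circle the paper sets up a self-referential fixed-point problem on the space of candidate flow maps $g\in\sU$ (Proposition~\ref{prop:Picardcomplicated} and Claim~\ref{claim:Xiexists}): one writes $\zbar^{(L)}$ as a functional of $g$ itself, shows the resulting operator $\sT$ on $\sU$ has a unique fixed point that is $R$-special, and then identifies $G^{(L-1)}$ with it almost everywhere. This is the substantive content of Lemma~\ref{lem:specialcontainssoln}, and your proposal does not supply it. Without this step you have uniqueness only within $\Probspecial$, not in $\Probspace(C([0,T],\R^D))$.

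A smaller technical point: taking $F^{(L-1)}$ uniformly Lipschitz with a fixed constant will generally not make $\Phi$ a self-map of that class, because integrating an ODE whose drift is $K$-Lipschitz in the parameter yields a flow that is $e^{Kt}$-Lipschitz. The paper handles this by allowing the Lipschitz modulus to grow as $e^{Rt}$ (Definition~\ref{def:Rspecialfunction}) and tuning $R$ so that the class is invariant (Proposition~\ref{prop:Picardsimple}). Your invariant set needs the same refinement.
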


Our second main results shows that we can describe the loss of our DNN with $N\gg 1$ particles in terms of the McKean-Vlasov limiting process.

\begin{theorem}[Approximation of errors; proof in Section \ref{sec:coupling}]\label{thm:error-approx}
Make the assumptions in the previous Section. Given $T>0$, let $\soln_{[0,T]}$ denote the unique solution of the McKean-Vlasov problem described in Subsection \ref{sub:McKeanVlasov}. Also, let $\left\{\btheta_N(k),\, k\in \left[0:\left\lceil\frac{T}{\eps}\right\rceil\right]\right\}$ be steps in the SGD process for the DNN described in Subsection \ref{sub:DNN_definition}.

Then, for all $k\in \left[0:\left\lceil\frac{T}{\eps}\right\rceil\right]$, it holds that:
\begin{equation}
\label{eq:error-general}
\Exp{\stackrel{(X_k,Y_k) \stackrel{\tiny{iid}}{\sim} P}{\btheta_N(0)\sim \mu_0}}{\abs{\L_N(\btheta_N(k)) - \Lbar(\soln_{k\eps})}}\leq C_{\ref{thm:error-approx}}\,\left(\sqrt{\frac{d}{{N}}}+\eps+\sqrt{\eps\,d}\right)
\end{equation}
where $d:=\max\{d_\ell,\, \ell\in[0:L+1]\}$ and $C_{\ref{thm:error-approx}}$ depends only on $C$, $T$, and the number of hidden layers $L$.
\end{theorem}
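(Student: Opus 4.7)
The strategy is a coupling argument. Using Theorem \ref{thm:independencestructure}, for every weight site $(\ell, i_\ell, i_{\ell+1})$ I would define an ideal trajectory $\bar{\theta}^{(\ell)}_{i_\ell,i_{\ell+1}}(\cdot) \in C([0,T], \R^{D_\ell})$ by applying the deterministic maps $F^{(\ell)}$ to the SGD initializations $\theta^{(\ell)}_{i_\ell,i_{\ell+1}}(0)$ (for $\ell \in \{0,L\}$ the frozen initial value serves as the trajectory). By the factorization statement of Theorem \ref{thm:independencestructure}, the joint law of these ideal trajectories matches $\soln_{[0,T]}$ at the level of paths, and by construction they share initial conditions with the SGD iterates. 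The central quantity to control is
\[
\Delta_N(k) := \max_{\ell,\, (i_\ell, i_{\ell+1})} \mathbb{E}\!\left[\left|\theta^{(\ell)}_{i_\ell, i_{\ell+1}}(k) - \bar{\theta}^{(\ell)}_{i_\ell, i_{\ell+1}}(k\eps)\right|\right].
\]

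Comparing the SGD update \eqref{def:sgd-process} with the integrated McKean-Vlasov equation in Remark \ref{rem:vectorMcKeanVlasov}, the per-step increment of the difference decomposes into three pieces: (a) a discretization error of size $O(\eps^2)$ per step from approximating the integral of $\alpha(s)\,\Grad$ by its left-endpoint value; (b) a mean-field error from replacing the normalized sums $(\prod_k N_k)^{-1}\sum M^{(\ell)}_{\boldsymbol{j}}$ appearing inside $\Gradh$ by their integral counterparts $\Mbar^{(\ell)}$; and (c) a martingale noise term obtained by replacing the single-sample stochastic gradient at $(X_{k+1}, Y_{k+1})$ by its conditional expectation given $\sF_k$. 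Using the a priori weight and gradient bounds from Section \ref{sec:apriori_simple} together with Assumption \ref{assump:activations}, and exploiting Lipschitz continuity of $\Grad^{(\ell)}$ in its last two arguments along the restricted set $\Probspecial$, a discrete Gr\"onwall argument applied to $\Delta_N(k)$ yields
\[
\Delta_N(k) \leq C_T\!\left(\sqrt{d/N} + \eps + \sqrt{\eps\,d}\right) \quad \text{for all } k\eps \leq T,
\]
where (a) accumulates to $O(\eps)$, (c) accumulates to $O(\sqrt{\eps\,d})$ by $L^2$ martingale orthogonality, and (b) produces the $\sqrt{d/N}$ fluctuation. To transfer this weight-level bound to the loss, I would expand $|\hat{y}_N(X,\btheta_N(k)) - \ybar(X,\soln_{k\eps})|$ layer by layer using the recursions \eqref{eq:defzell} and \eqref{eq:defzbarell}, carrying out the same three-term bookkeeping at each stage under the Lipschitz control from Assumption \ref{assump:activations}, and then combining with $|Y|\leq C$ to dominate the difference of mean-squared losses $\L_N$ and $\Lbar$.

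The main obstacle lies in step (b). The backpropagation quantities $M^{(\ell)}_{\boldsymbol{j}}(x,\btheta_N)$ are products along input-to-output paths of length $L+1-\ell$, so the normalized sums are multilinear functionals of the weights rather than straight empirical averages; concentration cannot be obtained from a single application of the LLN but must be proved inductively over layers, systematically exploiting the path-based factorization of $\soln_{[0,T]}$ provided by Theorem \ref{thm:independencestructure} to reduce nested sums to averages of nearly-independent terms. Compounding this, the discontinuity flagged in Remark \ref{rem:discontinuity}---stemming from the conditional law $\mu^{(L-1\mid L)}$ appearing in $\zbar^{(L)}$---means that the Lipschitz estimates driving the Gr\"onwall step must be carried out in a topology tailored to $\Probspecial$ rather than the usual weak or Wasserstein topology, and one must verify that the relevant empirical measures remain in $\Probspecial$ uniformly over $[0,T]$. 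Once these concentration and regularity obstacles are settled uniformly in $k\eps \leq T$, summing the three independent contributions delivers \eqref{eq:error-general} with a constant depending only on $C$, $T$, and $L$.
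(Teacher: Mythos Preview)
Your overall architecture---build ideal particles from the $F^{(\ell)}$ of Theorem~\ref{thm:independencestructure}, couple them to SGD at time zero, decompose the discrepancy into discretization $+$ martingale noise $+$ mean-field fluctuation, and close with Gr\"onwall---is exactly the paper's strategy, and your identification of the multilinear path-sum concentration in (b) as the technical heart is accurate.

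There is one genuine misstep. You propose to drive the Gr\"onwall recursion by ``Lipschitz continuity of $\Grad^{(\ell)}$ in its last two arguments along the restricted set $\Probspecial$'' and flag as an obstacle that ``one must verify that the relevant empirical measures remain in $\Probspecial$ uniformly over $[0,T]$.'' This verification would fail: the empirical weight measures along SGD are discrete and do not carry the deterministic functional relation $\Theta^{(L-1)}(t)=F_\mu(\Theta^{(L-1)}(0),\Theta^{(L)}(0))(t)$ that Definition~\ref{def:Rspecialmeasure} demands. The paper avoids this entirely by inserting an intermediate \emph{continuous-time gradient descent} process $\bthetatil_N$ (same finite-$N$ loss $\L_N$, same initialization, but deterministic flow). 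The three-term split becomes
\[
|\L_N(\btheta_N(k))-\Lbar(\soln_{k\eps})|\leq \underbrace{|\L_N(\btheta_N)-\L_N(\bthetatil_N)|}_{\text{SGD vs.\ CTGD}}+\underbrace{|\L_N(\bthetatil_N)-\L_N(\bthetabar_N)|}_{\text{CTGD vs.\ ideal}}+\underbrace{|\L_N(\bthetabar_N)-\Lbar(\soln)|}_{\text{ideal vs.\ mean-field}},
\]
and the two Gr\"onwall arguments (one for each of the first two terms) use only the \emph{finite-$N$} Lipschitz bound $\|\Gradh_N(a_N)-\Gradh_N(b_N)\|_{(L)}\leq K\|a_N-b_N\|_{(L)}$, never the measure-space regularity of $\Grad$. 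The set $\Probspecial$ enters only in the third term, where it is automatic because the ideal particles are built from $\soln_{[0,T]}\in\Probspecial$. Once you reroute the argument through $\bthetatil_N$ and the $\|\cdot\|_{(L)}$ norm, your (a), (c) land in the first comparison and your (b) in the second and third, exactly as you budgeted.
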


In fact, our proofs also give a "microscopic" approximation of SGD, whereby we approximate individual weights of the network. 
\begin{theorem}[Approximation by ideal particles; proof in Section \ref{sec:coupling}]\label{thm:microscopic} Under the assumptions in Section \ref{sub:assumptions}, consider $\soln_{[0,T]}$ as in Theorem \ref{thm:independencestructure}, and let \[\left\{\btheta_N(k),\, k\in \left[0:\left\lceil\frac{T}{\eps}\right\rceil\right]\right\}\]
be as in Theorem \ref{thm:error-approx}. Then one can couple the above evolution with certain "ideal particles"
\[\left\{\bthetabar_N(t)\in\R^{p_N},\, t\in [0,T]\right\}\]
so that:
\begin{enumerate}
\item the trajectories $\thetabar^{(\ell)}_{i_\ell,i_{\ell+1},[0,T]}$ for $\ell\in [0:L]\backslash\{1,L-1\}$ and $(i_\ell,i_{\ell+1})\in [1:N_\ell]\times [1:N_{\ell+1}]$ are all independent, and each $\thetabar^{(\ell)}_{i_\ell,i_{\ell+1},[0,T]}\sim {\soln}^{(\ell)}_{[0,T]}$;
\item conditionally on the above, $\thetabar^{(\ell)}_{i_\ell,i_{\ell+1},[0,T]}$ for $\ell\in \{1,L-1\}$ and $(i_\ell,i_{\ell+1})\in [1:N_\ell]\times [1:N_{\ell+1}]$ are independent, with each \[\thetabar^{(1)}_{i_1,i_{2},[0,T]}\sim {\soln}_{[0,T]}^{(1\mid 0)}(\cdot\mid  \thetabar^{(0)}_{1,i_{1},[0,T]})\] and each \[\thetabar^{(L-1)}_{i_{L-1},i_{L},[0,T]}\sim {\soln}_{[0,T]}^{(L-1\mid L)}(\cdot\mid  \thetabar^{(L)}_{i_L,1,[0,T]});\]
\item finally, for each choice of input-to-output path $i_0,i_1,\dots,i_L,i_{L+1}$ with $i_{0}=i_{L+1}=0$ and $i_\ell\in [1:N_\ell]$ for $\ell\in [1:L]$, we have:
\[(\thetabar^{(0)}_{i_\ell,i_{\ell+1},[0,T]}\,:\, \ell\in [0:L])\sim \soln_{[0,T]},\]
and for all $k\in [0: \lceil T/\eps\rceil ]$:
\[\Exp{\stackrel{(X_k,Y_k) \stackrel{\tiny{iid}}{\sim} P}{\btheta_N(0)\sim \mu_0}}{\sum_{\ell=0}^L|\theta^{(\ell)}_{i_\ell,i_{\ell+1}}(k) - \thetabar^{(\ell)}_{i_\ell,i_{\ell+1}}(k\eps)|}\leq C_{\ref{thm:microscopic}}\,\left(\sqrt{\frac{d}{{N}}}+\eps+\sqrt{\eps\,d}\right),\]
where $d:=\max\{d_\ell,\, \ell\in[0:L+1]\}$ and $C_{\ref{thm:error-approx}}$ depends only on $C$, $T$, and the number of hidden layers $L$.
\end{enumerate}\end{theorem}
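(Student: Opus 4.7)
The plan is to construct the coupling explicitly using the deterministic maps $F^{(\ell)}$ produced by Theorem \ref{thm:independencestructure}, then reduce the quantitative bound to a discrete Gronwall estimate against three controlled error sources. First, I would couple initial conditions by setting $\bthetabar_N(0) := \btheta_N(0)$. Because Assumption \ref{assump:initialization} makes the initial weights independent within each layer with common law $\mu_0^{(\ell)}$, the tuple $(\bthetabar^{(\ell)}_{i_\ell,i_{\ell+1}}(0) : \ell, (i_\ell, i_{\ell+1}))$ automatically has the product structure demanded in items (1)--(2). I then define the ideal trajectories for $t \in [0,T]$ by
\begin{align*}
\bthetabar^{(1)}_{i_1,i_2}(t) &:= F^{(1)}\bigl(\bthetabar^{(0)}_{1,i_1}(0), \bthetabar^{(1)}_{i_1,i_2}(0)\bigr)(t), \\
\bthetabar^{(\ell)}_{i_\ell,i_{\ell+1}}(t) &:= F^{(\ell)}\bigl(\bthetabar^{(\ell)}_{i_\ell,i_{\ell+1}}(0)\bigr)(t), \qquad \ell \in [2:L-2], \\
\bthetabar^{(L-1)}_{i_{L-1},i_L}(t) &:= F^{(L-1)}\bigl(\bthetabar^{(L-1)}_{i_{L-1},i_L}(0), \bthetabar^{(L)}_{i_L,1}(0)\bigr)(t),
\end{align*}
with $\bthetabar^{(0)}$ and $\bthetabar^{(L)}$ held constant (these layers are frozen during training). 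Conditions (1) and (2) then follow from the independence of the initial conditions and the deterministic nature of the $F^{(\ell)}$, while the joint path-marginal statement in (3) is immediate from the second part of Theorem \ref{thm:independencestructure}: for each input-to-output path the five-tuple $(\bthetabar^{(0)}_{1,i_1},\bthetabar^{(1)}_{i_1,i_2},\ldots,\bthetabar^{(L)}_{i_L,1})$ is built by applying the $F^{(\ell)}$'s to a draw from $\soln_0$.

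To control $|\theta^{(\ell)}_{i_\ell,i_{\ell+1}}(k) - \bthetabar^{(\ell)}_{i_\ell,i_{\ell+1}}(k\eps)|$, I would write this difference as a telescoping sum of one-step discrepancies and split each one-step discrepancy into three pieces: (i) a time-discretization error arising because the ideal particle follows the continuous ODE of Definition \ref{def:McKeanVlasov} while SGD takes an Euler step (contribution $O(\eps)$ per step, total $O(\eps)$ after rescaling by the step count); (ii) a sampling-noise error from replacing $\Exp{(X,Y)\sim P}{\Grad^{(\ell)}}$ by the stochastic gradient built from $(X_{k+1},Y_{k+1})$, which by Remark \ref{rem:average-grad} is a martingale difference under $\sF_k$ and hence after summing $k \leq T/\eps$ increments contributes $L^2$-size $O(\sqrt{\eps\,d})$ via Doob's inequality; (iii) a mean-field error coming from the fact that $\Gradh^{(\ell)}$, through the backpropagation recursion \eqnref{formulagrad}, contains nested averages of the $M^{(\ell)}$ and $z^{(\ell)}$ operators over input-to-output paths, which must be compared with the integrals against $\soln_t$ appearing in the definition of $\Grad^{(\ell)}$. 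Using the a priori bounds from Sections \ref{sec:apriori_simple}--\ref{sec:apriori_space} together with the uniform boundedness and Lipschitz continuity of the activations (Assumption \ref{assump:activations}), I would induct on $\ell$ from the output layer inward to show that each such nested average is within $O(\sqrt{d/N})$ of its mean-field counterpart in expectation.

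A discrete Gronwall argument then combines these three error sources with the Lipschitz property of $\Grad$ in its $\theta$ argument to yield the claimed $C(\sqrt{d/N} + \eps + \sqrt{\eps\,d})$ bound. The main obstacle will be (iii), for two reasons. First, the discontinuity of $\zbar^{(L)}$ in $\mu$ noted in Remark \ref{rem:discontinuity} means that the Lipschitz/stability estimates for the mean-field drift can only hold on the restricted subclass $\Probspecial$; consequently the empirical measure of the actual network must be shown to lie in (a neighborhood of) $\Probspecial$ with high probability throughout training, which is precisely the role of the a priori bounds. Second, individual weights participate in $O(N^{L-1})$ input-to-output paths, so one has to leverage carefully the path-product ansatz --- namely that a given empirical average is effectively taken over $N$ near-independent summands once one conditions on the correct adjacent-layer trajectories --- to recover the single-neuron $O(\sqrt{1/N})$ concentration at every layer of the recursion without an exponential blow-up in $L$. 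The coupling above, in which the ideal particles on layers $1$ and $L-1$ are constructed conditionally on their neighbors, is designed precisely so that this conditional independence can be exploited layer by layer.
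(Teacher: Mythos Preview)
Your construction of the ideal particles via the maps $F^{(\ell)}$ is exactly what the paper does (Definition \ref{def:idealparticles}), and items (1)--(3) follow from it just as you say. The quantitative argument is also close in spirit, but the paper organizes it through an intermediate \emph{continuous-time gradient descent} process $\bthetatil_N$ (Section \ref{sec:SGDtoCTGD}): one first compares SGD to CTGD (absorbing your errors (i) and (ii) simultaneously via a martingale-plus-Gronwall argument in the $\Lnorm{\cdot}$ norm, Proposition \ref{prop:SGDtoCTGD}), and then compares CTGD to the ideal particles (your error (iii), Lemma \ref{lem:error-bound-ctgd-ideal}). This two-stage split is not essential, but it decouples the stochastic/discretization analysis from the mean-field analysis cleanly.

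The substantive gap is in your handling of (iii). You anticipate having to show that ``the empirical measure of the actual network must be shown to lie in (a neighborhood of) $\Probspecial$ with high probability throughout training.'' The paper never does this, and does not need to: the discontinuity of $\zbar^{(L)}$ in Remark \ref{rem:discontinuity} is purely an obstacle for the \emph{existence and uniqueness} of $\soln_{[0,T]}$, not for the coupling. The key device is Lemma \ref{lem:nearlygrad}, which compares the McKean-Vlasov drift (evaluated at $\soln_t$, which \emph{is} special) to the finite-$N$ gradient $N^2\nabla \L_N$ \emph{evaluated at the ideal particles $\bthetabar_N(t)$}, not at the SGD iterate. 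Because the ideal particles have i.i.d.\ structure along layers by construction, each $z^{(\ell)}_{i_\ell}(x,\bthetabar_N(t))$ is within $O(\sqrt{d/N})$ of $\zbar^{(\ell)}(x,\soln_t)$ by a forward (not backward) induction on $\ell$ using conditional i.i.d.\ averaging (Lemma \ref{lem:zmeanfield}); the $M^{(\ell)}$ terms are then handled by writing them as products of bounded Lipschitz factors in the $z^{(k)}$'s. No empirical-measure argument, no $\Probspecial$-neighborhood, and no path-counting over $N^{L-1}$ paths is required: the gradient is a convex combination over multi-indices, so bounding the MGF for a single multi-index suffices. Once Lemma \ref{lem:nearlygrad} is in hand, a Gronwall argument using the Lipschitz property of $\nabla\L_N$ in the $\Lnorm{\cdot}$ norm closes the comparison between $\bthetatil_N$ and $\bthetabar_N$.
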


\begin{remark}[Propagation of chaos: weights versus paths]\label{rem:propagationofchaos} One key difference between our system and and other interacting particle systems of mean-field type (eg. as popularized by \cite{Sznitman1991}) is that network weights do {\em not} become independent in the limit, as even ideal particles have nontrivial dependencies. However, disjoint {\em paths of weights} will become independent. This implies a propagation of chaos for weight vectors over randomly chosen paths:a constant number of them will become asymptotically independent. This is in keeping with the point made in Section \ref{sec:setup} where we noted that paths are the fundamental units of our system.\end{remark}

\subsection{More comments on related results} \label{sub:comparison} As noted, there are two recent preprints that tackle similar problems to our main results. Nguyen \cite{Nguyen2019} has essentially the same ansatz and the same end result as we do, but he uses nonrigorous methods. 

By contrast, Sirignano and Spiliopoulos \cite{Sirignano2019} consider networks with $L\geq 3$ and $N_\ell$ units in each layer, but then make the $N_\ell$ diverge one at a time. As they observe in \cite[\S 2.2]{Sirignano2019}, to consider all $N_\ell$ diverging simultaneously, one alternative would be to write an empirical measure for the weights, and then write the dynamics of SGD as a closed dynamics over empirical measures. This would be the natural extension of work on the shallow case \cite{MeiArxiv2018,Rotskoff2018,Sirignano2018}. However, Sirignano and Spiliopoulos note that there are difficulties with this and related ideas. 

Our own work seems to identify one important source of the above difficulties. The most natural measure to consider is over paths of weights. Then, however the closed evolution obtained turns out to involve conditional measures, and conditioning is a discontinuous operation. In our approach, we have {\em not} tried to write a single empirical measure for all weights. Rather, our approach follows the ideas outlined in the next section.

\section{Proof overview}\label{sec:overview}
In this section we present an outline for the proofs of our main results, which are further described in the subsequent sections. We start by summarizing the proof for existence and uniqueness of our limiting McKean-Vlasov problem (Theorem \ref{thm:independencestructure}). Next, we overview the results from sections \ref{sec:SGDtoCTGD} to \ref{sec:coupling}, showing that our strategy for training DNNs approaches the trajectories obtained from our McKean-Vlasov problem (Theorems \ref{thm:error-approx} and \ref{thm:microscopic}). 

\subsection{Existence and uniqueness of the McKean-Vlasov problem for DNNs}\label{sub:overview-existence}

Our main strategy for proving existence and uniqueness for the problem described in Definition \ref{def:McKeanVlasov} follows a standard path through a fixed point problem. First, we define an operator $\psi$ on the space of probability measures over $C([0,T],\R^D)$, such that fixed points correspond to a solution of the McKean-Vlasov problem. We then show that iterates of this operator contract the Wasserstein metric, proving existence and uniqueness of a fixed point. 

The details are more complicated: technical reasons will force us to consider a "special" subset of probability measures over $C([0,T],\R^D)$. From Definition \ref{def:McKeanVlasov}, we consider the natural candidate for the map $\psi$. For $\nu_{[0,T]}$, a measure over $C([0,T],\R^D)$, we define $\psi(\nu_{[0,T]})$ as the law of the process $\Thetabar\in C([0,T],\R^D)$  which solves:
\begin{equation}\label{eq:psi-iteration}
  \begin{cases}
  \Thetabar(0) \sim \mu_0;\\
  \Thetabar(t) = \Thetabar(0) - \displaystyle{\int_0^t}\alpha(s)\Exp{(X,Y)\sim P}{\Grad^{(\ell)}(X,Y,\Thetabar(t),\nu_t)} ds,& t\in[0,T].
  \end{cases}
\end{equation}

Section \ref{sec:apriori_simple} is where is where we make our first restriction to the set of measures. Solutions for our McKean-Vlasov problem follow Lipschitz trajectories through time (see Lemma \ref{lem:trivialsoln}). This property is also enough to guarantee that $\psi$ is well-defined, i.e. that the system (\ref{eq:psi-iteration}) has a unique solution (Corollary \ref{cor:mappingwelldefined}). Lemma \ref{lem:constrainstructure}, where the above statement is in fact proved, is a consequence of the Picard-Lindel\"{o}f Theorem. 

Even though our map is well defined, standard ODE results are not enough to ensure that we can iterate our map and find a fixed point. This is mostly due to the particularity of the dependencies we have for the outermost layers. Therefore, we need to make sure that the output law from the map $\psi$ also behaves well, i.e. that it still maintains Lipschitz continuity in the initial conditions.

With this in mind, we further restrict our set of measure to ensure good behavior. In Section \ref{sec:apriori_space}, we consider measures $\mu_{[0,T]}\in \Probspace(C([0,T],\R^D))$ that are "special" (cf. Definition \ref{def:Rspecialmeasure}) A special measure has a "special" function $F_\mu$ (Definition \ref{def:Rspecialfunction}) with certain quantitative Lipschitz properties rsuch that, if $\Theta\sim \mu_{[0,T]}$, for all $t\in[0,T]$
\[\Theta^{(L-1)}(t) = F_\mu\parn{\Theta^{(L-1)}(0),\Theta^{(L)}(0)}(t)\mbox{ almost surely.}\]
This set of measures is topologically closed (Theorem \ref{thm:Probspecialisclosed}).

A series of results then show that this set of special measures is closed under $\psi$ (Lemma \ref{lem:specialclosedpsi}), and closed under the Wasserstein $L^1$ metric (Theorem \ref{thm:Probspecialisclosed}). One key dea is to show that, for special measures, $\zbar^{(L)}$ is also Lipschitz continuous with respect to the Wasserstein metric (Lemma \ref{lem:estimatezbarL}). So the discontinuity issues in our problem disappar. We will also need a series of technical estimates on the Cauchy problem for the ODEs considered above.

In Section \ref{sec:existence} we finally show that $\psi$ is a contraction on the set of special measures defined in Section \ref{sec:apriori_space}. Since this set is invariant for $\psi$, contains all solutions and is topologically closed, existence and uniqueness follow. We emphasize that finding this "special set of measures" is the key idea that makes the fixed point method work in our setting.

\subsection{Approximation of loss function and ideal particles} The McKean-Vlasov problem is our guess for the process that describes solutions. To show that it is the right guess, we must couple discrete-time SGD to the continuous-time "ideal particles" described in Theorem \ref{thm:microscopic}, and compare the corresponding values of the loss function.

The first step, done in Section \ref{sec:SGDtoCTGD}, is to compare SGD to a continuous-time gradient flow for network weights. This same idea appears in \cite{MeiArxiv2018}, but the details here are more complicated.

The second step is to introduce the "ideal particles" from Theorem \ref{thm:microscopic} to which we will compare the continuous-time network weights. Section \ref{sec:ideal} carries our the first steps of this task. A crucial point here is that this: the specific dependency structure of the McKean-Vlasov solution allows us to "glue" the ideal particles to the edges of the DNN so that one "sees" the McKean-Vlasov solution along each path. In this way, we can reduce the problem of finding the mean-field approximation of the DNN to a coupling of real and ideal weights.

Finally, Section \ref{sec:coupling} shows that ideal particles give good approximations to network weights and allow one to compare the loss of the true DNN to a loss associated to the McKean-Vlasov problem.

\section{Simple properties of the McKean-Vlasov problem}
\label{sec:apriori_simple}
In this section, we prove some preliminary facts and estimates that any solution to our McKean-Vlasov problem must satisfy. In Section \ref{sub:trivialsoln}, we start with certain trivial properties that restrict which measures could potentially be solutions. In Section \ref{sub:towardsfixedpoint}, we show that, if we apply the "natural" iteration to a potential solution, then the corresponding map is well-defined and satisfies certain properties.

\subsection{A preliminary result.}\label{sub:trivialsoln}
Our first lemma is fairly straightforward. We start with the following definition.

\begin{definition}[Preliminary set of measures]\label{def:prelimset}Let $\mu_0 = \prod_{\ell=0}^{L}\mu^{(\ell)}_0\in\Probspace(\R^D)$ be defined as the product of the measures in Assumption \ref{assump:initialization}. Given $R>0$, we define\[\Probspace_R\subset \Probspace(C([0,T],\R^D))\] as the set of all measures $\nu_{[0,T]}$ with the following properties: if $\Thetabar_{[0,T]}\sim \nu_{[0,T]}$, then:
\begin{enumerate}
\item $\Thetabar(0)\sim \mu_0$;
\item $\Thetabar^{(0)}_{[0,T]}$ and $\Thetabar^{(L)}_{[0,T]}$ are a.s. constant;
\item each $\Thetabar^{(\ell)}_{[0,T]}$ is an $R$-Lipschitz as a function of time.
\end{enumerate}
\end{definition}

\begin{lemma}
\label{lem:trivialsoln}
Under assumptions in section \ref{sub:assumptions}, there exists a constant $R_{\ref{lem:trivialsoln}} = R_{\ref{lem:trivialsoln}}(C,L) > 0$ such that if  $\soln_{[0,T]}$ is a solution of the McKean-Vlasov problem in Definition \ref{def:McKeanVlasov}, then it must belong to $\Probspace_R$ for any $R > R_{\ref{lem:trivialsoln}}$.
\end{lemma}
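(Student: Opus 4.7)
The plan is to verify each of the three defining properties of $\Probspace_R$ in turn, the first two being essentially immediate from the definitions, and the third requiring a uniform bound on the drift field.

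First, property (1) is just the initial condition in Definition \ref{def:McKeanVlasov}, so there is nothing to prove. Property (2) follows directly from the fact that $\Grad^{(\ell)}(X,Y,\mu,\theta) \equiv 0$ for $\ell \in \{0,L\}$, as specified in \eqref{def:Grad-ell}. Indeed, substituting this into the integral equation in Definition \ref{def:McKeanVlasov} immediately yields $\Thetabar^{(\ell)}(t) = \Thetabar^{(\ell)}(0)$ for all $t\in[0,T]$, almost surely, for $\ell\in\{0,L\}$.

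The substance of the lemma is property (3). For this I would prove, by a downward induction on $\ell$, a uniform bound of the form $\|\Mbar^{(\ell)}(x,\mu,\cdot)\| \leq C^{L+2-\ell}$ valid for all $x$, $\mu$, and all values of the conditioning argument (when applicable). The base case $\ell=L+1$ gives $\|\Mbar^{(L+1)}(x,\mu)\| \leq C$ by the bound on $D\sigma^{(L+1)}$ in Assumption \ref{assump:activations}. The inductive step uses \eqref{eq:defMbarL-Lminus1}--\eqref{eq:defMbarell}: each $\Mbar^{(\ell)}$ is obtained from $\Mbar^{(\ell+1)}$ by multiplying by (integrals of) $D_z\sigma^{(\ell)}$, each of which is bounded by $C$; integration against a probability measure does not inflate this bound. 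Together with the trivial estimate $|\ybar(x,\mu)| \leq C$ (since $\sigma^{(L+1)}$ is bounded) and $|Y|\leq C$ a.s., this gives $|Y-\ybar(X,\mu)| \leq 2C$. Combining with $\|D_\theta \sigma^{(\ell)}\|\leq C$ yields a uniform bound
\[
  \bigl|\Grad^{(\ell)}(X,Y,\mu,\theta)\bigr| \leq 2C \cdot C^{L+2-\ell} \cdot C \leq C_0(C,L),\qquad \ell\in[1:L-1],
\]
for some constant $C_0$ depending only on $C$ and $L$.

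With this drift bound in hand, the integral equation in Definition \ref{def:McKeanVlasov} gives, for every $s<t$ in $[0,T]$,
\[
  |\Thetabar^{(\ell)}(t) - \Thetabar^{(\ell)}(s)|
  \leq \int_s^t \alpha(u)\,\Exp{(X,Y)\sim P}{\bigl|\Grad^{(\ell)}(X,Y,\soln_u,\Thetabar(u))\bigr|}\,du
  \leq C \cdot C_0 \cdot (t-s),
\]
where we used $\alpha\leq C$. Setting $R_{\ref{lem:trivialsoln}} := C \cdot C_0$ proves the Lipschitz property for all $\ell\in[1:L-1]$, and combined with property (2) this yields it for $\ell\in\{0,L\}$ trivially. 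I do not anticipate any serious obstacle here: the argument is a routine recursive boundedness estimate, whose only slight subtlety is that $\zbar^{(L)}$ is defined via a conditional distribution, but this plays no role for a pure sup-norm bound since we only need that the integrand in its definition is pointwise bounded by $C$.
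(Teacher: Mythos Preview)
Your proposal is correct and follows exactly the same approach as the paper's proof, which is a one-paragraph argument noting that properties (1) and (2) are immediate from Definition \ref{def:McKeanVlasov} and that the Lipschitz property (3) follows from the boundedness of $\alpha$ and $\Grad^{(\ell)}$ by a constant depending only on $C$ and $L$. You simply fill in the recursive bound on $\Mbar^{(\ell)}$ that the paper leaves implicit (and which appears later in the paper as part of Lemma \ref{lem:Mbar-lipschitz}).
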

\begin{proof}
This is an easy consequence of the definition of the problem. Indeed, the initial distribution of $\Thetabar(0)$ and the fact that $\Thetabar^{(0)}_{[0,T]}$ and $\Thetabar^{(L)}_{[0,T]}$ are constant are part of the definition. The Lipschitz property comes from the fact that the functions $\alpha$ and $\Grad^{(\ell)}$ are bounded by a constant depending on $C$ and $L$.
\end{proof}

\subsection{A mapping and its structure}\label{sub:towardsfixedpoint}

The next step is to define a map $\psi:\Probspace_R\to \Probspace_R$ with the following property:
\[\soln_{[0,T]}\in \Probspace_R \mbox{ solves the McKean-Vlasov problem} \Leftrightarrow \psi(\soln_{[0,T]})=\soln_{[0,T]}.\]

A natural candidate to be the map $\psi$ is as follows: Given $\nu_{[0,T]}$, define $\psi(\nu_{[0,T]})$ as the law of the process $\Thetabar\in C([0,T],\R^D)$  which satisfies:
\[\begin{cases}
\Thetabar(0) \sim \mu_0;\\
\Thetabar(t) = \Thetabar(0) - \displaystyle{\int_0^t}\alpha(s)\Exp{(X,Y)\sim P}{\Grad^{(\ell)}(X,Y,\Thetabar(t),\nu_t)} ds,& t\in[0,T].
\end{cases}\]
The following lemma shows that this operation is well-defined and has some additional structure.

\begin{lemma}
\label{lem:constrainstructure}
Make the assumptions in Section \ref{sub:assumptions}. Let $\nu_{[0,T]}\in\Probspace_R,$ where $R\geq R_{\ref{lem:trivialsoln}}$. Then, one can find measurable mappings:
\[
\begin{array}{rll}
 G_\nu^{(1)}:&\R^{D_{0}}\times \R^{D_1}\to C([0,T],\R^{D_{1}})\\
  \mbox{ with }&G_\nu^{(1)}(\theta^{(0)},\theta^{(1)})(0)=\theta^{(1)},&\forall (\theta^{(0)},\theta^{(1)})\in \R^{D_{0}}\times \R^{D_1};\\
 G^{(\ell)}_\nu :&\R^{D_{\ell}}\to C([0,T],\R^{D_{\ell}}),  & \forall\ell\in [2:L-2],\\
 \mbox{ with }&G_\nu^{(\ell)}(\theta^{(\ell)})(0)=\theta^{(\ell)}, & \forall \theta^{(\ell)}\in \R^{D_\ell};\\
 G^{(L-1)}_\nu :&\R^{D_{L-1}}\times \R^{D_L}\to C([0,T],\R^{D_{L-1}})\\
 \mbox{ with }&G_\nu^{(L-1)}(\theta^{(L-1)},\theta^{(L)})(0)=\theta^{(L-1)}, & \forall (\theta^{(L-1)},\theta^{(L)})\in \R^{D_{L-1}}\times \R^{D_L}
 \end{array}
 \]
 with the following properties:
\begin{enumerate}
 \item The range of each $G^{(\ell)}_{\nu}$ consists of $R$-Lipschitz trajectories.
 \item For $\mu_0$-almost every choice of $\theta=(\theta^{(\ell)}:\,\ell\in[0:L])$, the
unique solution to the initial value problem
\begin{equation}\label{eq:existenceproblem}
\begin{cases}
\Thetabar(0) = \theta,\\
\dfrac{d\Thetabar}{dt}(t) =  -\alpha(t)\cdot\Ex{\Grad^{(\ell)}(X,Y,\Thetabar(t),\nu_t)}, & t\in[0,T]
\end{cases}
\end{equation}
is given by
\begin{equation}
\begin{array}{rclrcl}
\Thetabar^{(0)}(t) &=& \theta^{(0)} \quad\mbox{ (constant)}& \Thetabar^{(L-1)}(t) &=& G_\nu^{(L-1)}(\theta^{(L-1)},\theta^{(L)})(t)\\
\Thetabar^{(1)}(t) &=& G_\nu^{(1)}(\theta^{(0)},\theta^{(1)})(t)&\Thetabar^{(L)}(t) &= & \theta^{(L)}\quad\mbox{ (constant)}\\
\Thetabar^{(\ell)}(t)&=& G^{(\ell)}_\nu(\theta^{(\ell)})(t) \quad\quad\text{ for }&\ell\in [2:L-2].
\end{array}\\
\end{equation}
\end{enumerate}
\end{lemma}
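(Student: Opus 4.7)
The plan is to exploit the block-decoupled structure of the drift that is implicit in the formulas for $\Grad^{(\ell)}$ and $\gbar^{(\ell)}$. Since $\Grad^{(0)}\equiv 0$ and $\Grad^{(L)}\equiv 0$, the components $\Thetabar^{(0)}$ and $\Thetabar^{(L)}$ are automatically constant in time, equal to $\theta^{(0)}$ and $\theta^{(L)}$. Inspecting (\ref{eq:defgbar1}), (\ref{eq:defgbarell}), (\ref{eq:defgbarL-1}), the vector-field components depend on $\theta$ only as follows: $\gbar^{(1)}$ depends on $(\theta^{(0)},\theta^{(1)})$; $\gbar^{(\ell)}$ for $\ell\in[2:L-2]$ depends only on $\theta^{(\ell)}$; $\gbar^{(L-1)}$ depends on $(\theta^{(L-1)},\theta^{(L)})$. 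Because $\Thetabar^{(0)}$ and $\Thetabar^{(L)}$ are frozen at their initial values, the system (\ref{eq:existenceproblem}) splits into a collection of independent ODEs, one per nontrivial layer, with $\theta^{(0)}$ (resp.\ $\theta^{(L)}$) entering only as a parameter in the equation for layer $1$ (resp.\ $L-1$).

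The first main step is to solve each decoupled ODE via Picard--Lindel\"of. For each $\ell$, the drift $h_\ell(t,u):=-\alpha(t)\,\Ex{\Grad^{(\ell)}(X,Y,\Thetabar_{\text{frozen}}(\theta,u),\nu_t)}$ (where $u$ takes the role of $\Thetabar^{(\ell)}(t)$ and the rest of the $\Thetabar$-vector is either $u$'s frozen companions or irrelevant) is Lipschitz in $u$ uniformly in $t$, with a constant depending only on $C$ and $L$. This follows from Assumption \ref{assump:activations}: the error factor $Y-\ybar(X,\nu_t)$ is bounded by $2C$; the factor $\Mbar^{(\ell+1)}(X,\nu_t)$ (or $\Mbar^{(L)}(X,\nu_t,\theta^{(L)})$ in the $\ell=L-1$ case) is a product of bounded operator-valued functions and is therefore uniformly bounded; and the map $\theta^{(\ell)}\mapsto D_\theta\sigma^{(\ell)}(\cdot,\theta^{(\ell)})$ is $C$-Lipschitz. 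Measurability of $h_\ell$ in $t$ follows from measurability of $\alpha$ and of $t\mapsto \nu_t$. This yields existence and uniqueness of a global solution on $[0,T]$ for every initial datum, and we define
\[
 G_\nu^{(1)}(\theta^{(0)},\theta^{(1)}),\quad G_\nu^{(\ell)}(\theta^{(\ell)})\ (\ell\in[2:L-2]),\quad G_\nu^{(L-1)}(\theta^{(L-1)},\theta^{(L)})
\]
as the corresponding solution trajectories; they take values in $\theta^{(\ell)}$ at $t=0$ by construction.

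The second step is to verify the two claimed properties. The $R$-Lipschitz property of each trajectory is immediate from the uniform bound $|h_\ell(t,u)|\le R_{\ref{lem:trivialsoln}}(C,L)$ obtained from Assumption \ref{assump:activations}, provided we chose $R\ge R_{\ref{lem:trivialsoln}}$. Measurability of the maps $G_\nu^{(\ell)}$ in the initial data (and parameter, for $\ell\in\{1,L-1\}$) follows from the standard fact that, for an ODE with globally Lipschitz, uniformly bounded and measurable-in-$t$ right-hand side, the flow is a measurable function of the initial data; indeed Picard iteration produces a Cauchy sequence of jointly measurable maps converging uniformly on $[0,T]$. Composing yields measurability with values in $C([0,T],\R^{D_\ell})$.

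The main subtlety I anticipate is in the case $\ell=L-1$, where the drift involves $\Mbar^{(L)}(x,\nu_t,\theta^{(L)})$ and hence the conditional measure $\nu_t^{(L-1\mid L)}(\,\cdot\mid\theta^{(L)})$ (Remark \ref{rem:discontinuity}). At this stage, however, $\nu$ is \emph{fixed}, so all we need is that, for the given $\nu\in\Probspace_R$, the map $(t,\theta^{(L)})\mapsto \Mbar^{(L)}(x,\nu_t,\theta^{(L)})$ is jointly measurable and bounded, which is true for any fixed choice of regular conditional probability representing $\nu_t^{(L-1\mid L)}$. Discontinuity in $\nu$ is irrelevant here and will only have to be confronted in Section \ref{sec:apriori_space} when one studies the dependence of $\psi(\nu)$ on $\nu$; for the present lemma, solving the Cauchy problem for a single $\nu$ is enough.
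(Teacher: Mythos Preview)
Your approach is essentially the same as the paper's: exploit the block decoupling of the drift, freeze the $0$-th and $L$-th coordinates, and invoke Picard--Lindel\"of layer by layer; measurability via Picard iterates and the $R$-Lipschitz property via the uniform bound on the drift are also exactly what the paper does.

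There is one point you gloss over that the paper treats more carefully. In the $\ell=L-1$ equation, the drift contains $\zbar^{(L)}(x,\nu_t,\theta^{(L)})$, which is built from the \emph{family} of conditional laws $\nu_t^{(L-1\mid L)}(\cdot\mid\theta^{(L)})$ indexed by $t$. Saying ``jointly measurable for any fixed choice of regular conditional probability'' is not quite enough: for different $t$ you are conditioning in different marginal measures $\nu_t$, and a priori nothing forces these RCP choices to fit together measurably (let alone continuously) in $t$. The paper resolves this precisely by using the defining property of $\Probspace_R$: since $\Theta^{(L)}(t)=\Theta^{(L)}(0)$ a.s.\ under $\nu_{[0,T]}$, one may rewrite
\[
\zbar^{(L)}(x,\nu_t,\theta^{(L)})=\Ex{\sigma^{(L-1)}\big(\zbar^{(L-1)}(x,\nu_t),\Theta^{(L-1)}(t)\big)\,\big|\,\Theta^{(L)}(0)=\theta^{(L)}},
\]
so that for $\mu_0^{(L)}$-a.e.\ $\theta^{(L)}$ the map $t\mapsto \zbar^{(L)}(x,\nu_t,\theta^{(L)})$ is continuous (by bounded convergence, since $t\mapsto\Theta^{(L-1)}(t)$ is continuous). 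This is also what produces the ``$\mu_0$-almost every $\theta$'' qualifier in the lemma. Once you insert this step, your argument matches the paper's.
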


An immediate consequence of the Lemma \ref{lem:constrainstructure} is that the map $\psi$ discussed above is well-defined.

\begin{corollary}[$\psi$ is well defined; proof omitted]
\label{cor:mappingwelldefined}
In the setting of Lemma \ref{lem:constrainstructure}, there exists  $R_{\ref{lem:trivialsoln}}>0$ such that, if $R\geq R_{\ref{lem:trivialsoln}}$, then there exists a map $\psi:\Probspace_{R}\to \Probspace_R$ defined as follows: Given $\nu_{[0,T]}\in\Probspace_R$, then $\psi(\nu_{[0,T]})$ is the unique measure with $\mu_0$ as its time $0$ marginal, and therefore if $\Thetabar \sim \psi(\nu_{[0,T]})$, then almost surely $\Thetabar$ solves the initial value problem \ref{eq:existenceproblem}.
\end{corollary}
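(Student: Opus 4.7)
The plan is to invoke Lemma \ref{lem:constrainstructure} to obtain, from each $\nu_{[0,T]}\in\Probspace_R$, a Borel-measurable map from initial parameter vectors to full trajectories, and to then define $\psi(\nu_{[0,T]})$ as the pushforward of $\mu_0$ along this map. This reduces the corollary to essentially a bookkeeping exercise.

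Concretely, fix $R\geq R_{\ref{lem:trivialsoln}}$ and $\nu_{[0,T]}\in\Probspace_R$. First I would assemble the measurable maps $G_\nu^{(1)}$, $G_\nu^{(\ell)}$ (for $\ell\in[2:L-2]$), and $G_\nu^{(L-1)}$ produced by Lemma \ref{lem:constrainstructure} into a single Borel map $G_\nu:\R^D\to C([0,T],\R^D)$. On input $\theta=(\theta^{(0)},\ldots,\theta^{(L)})$, this map returns the trajectory whose zeroth and $L$th coordinates are the constants $\theta^{(0)}$ and $\theta^{(L)}$, whose first and $(L-1)$st coordinates are $G_\nu^{(1)}(\theta^{(0)},\theta^{(1)})$ and $G_\nu^{(L-1)}(\theta^{(L-1)},\theta^{(L)})$, and whose intermediate coordinates are $G_\nu^{(\ell)}(\theta^{(\ell)})$ for $\ell\in[2:L-2]$. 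Measurability of $G_\nu$ follows from the joint measurability of its components (viewed as functions into the separable Banach space $C([0,T],\R^D)$), and part (2) of the lemma guarantees that $G_\nu(\theta)$ is the unique solution of the IVP \eqnref{existenceproblem} for $\mu_0$-almost every $\theta$.

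Next I would set $\psi(\nu_{[0,T]}):=(G_\nu)_{\ast}\mu_0$. By construction this is a probability measure on $C([0,T],\R^D)$, its time-zero marginal is $\mu_0$, and a sample from it almost surely solves \eqnref{existenceproblem}. Uniqueness among measures with these two properties is immediate from the $\mu_0$-a.s.\ uniqueness of the IVP solution: any such measure must assign, conditionally on its time-zero value $\theta$, full mass to $G_\nu(\theta)$, and so must coincide with $(G_\nu)_\ast\mu_0$.

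Finally I would verify that $\psi(\nu_{[0,T]})\in\Probspace_R$ by checking the three conditions in Definition \ref{def:prelimset}: condition (1) is the pushforward property just used; condition (2) holds because the zeroth and $L$th coordinates of $G_\nu(\theta)$ are constant in $t$ by the explicit form in Lemma \ref{lem:constrainstructure}(2), in agreement with $\Grad^{(0)}=\Grad^{(L)}=0$ from (\ref{def:Grad-ell}); and condition (3) is precisely part (1) of Lemma \ref{lem:constrainstructure}. There is no real obstacle here, because all the substantive measurability and Lipschitz content has been pushed into the preceding lemma; the only mildly delicate point worth confirming explicitly is joint measurability of $G_\nu$ when viewed as a map into the path space.
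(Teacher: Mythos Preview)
Your proposal is correct and is precisely the natural elaboration of what the paper leaves implicit: the paper states the corollary as an ``immediate consequence'' of Lemma~\ref{lem:constrainstructure} and omits the proof entirely, so your pushforward construction $(G_\nu)_\ast\mu_0$ and the verification of Definition~\ref{def:prelimset} are exactly the details being suppressed. There is nothing to compare here; you have simply written out what the authors deemed routine.
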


\begin{proof}[Proof of Lemma \ref{lem:constrainstructure}] The result stated in the lemma follows from the fact that the ODE (\ref{eq:existenceproblem}) has a unique solution for any starting state. To clarify this statement, a few steps are required.

First, we observe that $\Thetabar^{(\ell)}(t)=\Thetabar^{(\ell)}(0)$ holds for $\ell\in \{0,L\}$ as a consequence of Definition \ref{def:Grad-ell}.

Next, we remark that, for any $\ell\in [0:L]$ and {\em any} measure $\eta_{[0,T]}\in\Probspace(C([0,T],\R^D))$, the time-$t$ marginals $\eta^{(\ell)}_t$ depend continously on $t$ (in the weak topology). To see this, it suffices to note that, for any bounded continuous $f:\R^{D_{\ell}}\to \R$,
\[
\int\limits_{\R^{D_\ell}}\,f\parn{a^{(\ell)}}\,d\eta_t^{(\ell)}(a^{(\ell)}) = \Exp{\Theta\sim \eta_{[0,T]}}{f\parn{\Theta^{(\ell)}(t)}},
\]
and the integrand in the RHS is a bounded continuous function of $t$. Therefore, the expectation changes continuously with $t$ by Bounded Convergence.

One can use this to show that the quantities $\zbar^{(\ell)}(x,\nu_t)$ for $\ell\in [2:L-2]$ defined in (\ref{eq:defzbarell}) all vary continuously with $t$. The situation for $\ell=L$ is different: we have the conditional measure $\nu^{(L-1\mid L)}_t$ showing up. However, since $\Theta^{(L)}(t)=\Theta^{(L)}(0)$ a.s., we have that
\begin{equation}
\nu_t^{(L-1\mid L)}(\cdot\mid \Theta^{(L)}(t)) = \Pr{\Theta^{(L-1)}(t)\in \cdot\mid \Theta^{(L)}(0)}\,\mbox{a.s., }\,\,\forall t\in [0,T],
\end{equation}
and we obtain
\[\zbar^{(L)}(x,\mu_t,\Thetabar^{(L)}(t)) =\Ex{\fsigma{L-1}{\zbar^{(L-1)}(x,\mu_t),\Thetabar^{(L-1)}(t)}\mid \Thetabar^{(L)}(0)}.\]
Therefore, on almost every choice $\theta^{(L)}$ of $\Thetabar^{(L)}(0)$, the function $\zbar^{(L)}(x,\mu_t,\Thetabar^{(L)}(t))$ is continuous in $t$. We may also show that $\zbar^{(L+1)}(x,\nu_t)$ is continuous in $t$. Using the same reasoning, we may see that the $\Mbar^{(\ell)}$ all depend continuously on $t$.

Let us now write our system of ODEs in (\ref{eq:existenceproblem}) for each $\ell\in [1:L-1]$. Let us consider first the case where $\ell=1$. Using that $\Thetabar^{(0)}(0)=\Thetabar^{(0)}(t)$ holds a.s. for $0\leq t\leq T$, the ODE for $\Thetabar^{(1)}$ takes the following form in a measure-one set\footnote{It is useful to revisit the Definitions \ref{def:Grad-ell}, \ref{eq:defgbar1},\ref{eq:defgbarell},\ref{eq:defgbarL-1} to understand equations below}:
\[
\frac{d}{dt}\Thetabar^{(1)}(t) =- \alpha(t)\,\Ex{(Y-\ybar(X,\nu_t))^\dag\, \Mbar^{(2)}(x,\nu_t)\,D_\theta\fsigma{1}{\sigma^{(0)}(X,\Thetabar^{(0)}(0)),\Thetabar^{(1)}(t)}}.
\]
Since the function inside the expected value in the RHS is bounded and continuous in $t$, the RHS is itself continuous in $t$. Moreover, it is also a globally Lipschitz function of $\Thetabar^{(0)}(0)$ and $\Thetabar^{(1)}(t)$, with a constant that comes from the boundedness of $\Mbar^{(2)}$ and the assumption that $\sigma^{(1)}$ has Lipschitz derivative. The Picard-Lindel\"{o}f Theorem implies that the trajectory $\Thetabar^{(1)}$ is a deterministic function $G_\nu^{(1)}$ of $\Thetabar^{(0)}(0)$ (which appears as a parameter in the RHS) and of the initial condition $\Thetabar^{(1)}(0)$. Under our conditions, the solution to the ODE is the uniform limit of Euler-Maruyama discretizations. The recursive construction of these discretizations means that each of them is measurable with respect to $(\Thetabar^{(0)}(0),\Thetabar^{(1)}(0))$, and the same is true of the limit $G_\nu^{(1)}$. Finally, the Lipschitz property of trajectories comes from the fact that the RHS of the ODE is bounded.

A similar reasoning  applies to each $\ell\in [2:L-2]$. In this case the differential equation is
\[\frac{d}{dt}\Thetabar^{(\ell)}(t) = -\alpha(t)\,\Ex{(Y-\ybar(X,\nu_t))^\dag\, \Mbar^{(\ell+1)}(x,\nu_t)\,D_\theta\sigma^{(\ell)}(\zbar^{(\ell)}(X,\nu_t),\Thetabar^{(\ell)}(t))}.\]
Again the RHS is continuous in $t$ and Lipschitz in $\Thetabar^{(\ell)}(t)$, which means that the solution is a deterministic function $G_\nu^{(\ell)}$ of the initial condition $\Thetabar^{(\ell)}(0)$. The function is measurable for being a limit of discretizations, as we argued above. The Lipschitz property of trajectories also follows naturally.

For $\ell=L-1$, using that $\Thetabar^{(L)}(t)=\Thetabar^{(L)}(0)$ a.s. for all $0\leq t\leq T$, we have the ODE:
\begin{align}
\label{eq:ODETheta-L}
\nonumber\frac{d}{dt}\Thetabar^{(L-1)}(t) =& -\alpha(t)\,\mathbb{E}\left[(Y-\ybar(X,\nu_t))^\dag\cdot\Mbar^{(L)}\parn{x,\nu_t,\Thetabar^{(L)}(0)}\right.\\
&\quad\quad\quad\quad\quad\quad\cdot\,\left.D_\theta\fsigma{L-1}{\zbar^{(L-1)}(X,\nu_t),\Thetabar^{(L-1)}(t)}\right].
\end{align}
The RHS is continuous in $t$ and Lipschitz in $\Thetabar^{(L-1)}(t)$, so the same reasoning we used for $\ell=1$ still works and we can find a $G_\nu^{(L-1)}$ with the desired properties.

Finally, the fact that \ref{eq:existenceproblem} has a unique solution for almost every initial position clearly implies that $\psi(\nu_{[0,T]})$ is uniquely specified.\end{proof}

\section{Potential solutions of the McKean-Vlasov problem}
\label{sec:apriori_space}
In this section we continue the work of Section \ref{sec:apriori_simple}. Corollary \ref{cor:mappingwelldefined} shows that there is a well-defined map $\psi$ whose fixed points are solutions of the McKean Vlasov problem. It is then natural to iterate that map to obtain existence and uniqueness of solutions.

To make this work, we will need some analytic properties of the functions $G_{\nu}^{(\ell)}$ appearing in Lemma \ref{lem:constrainstructure}. For $\ell\neq L-1$ one could show  that these functions are Lipschitz, which is enough for our purposes. However, the function $G_{\nu}^{(L-1)}$ is a bit more delicate because, in principle, it could be that the dependence on $\Thetabar^{(L)}(0)$ is not "nice". This is related to Remark \ref{rem:discontinuity}.

To circumvent such problems, we will show that we can further constrain the set of potential solutions of our problem. We will define the set of $R$-special measures (Definition \ref{def:Rspecialmeasure}) so that a mesure $\nu_{[0,T]}$ in this set will have special Lipschitz properties in its $(L-1)^\text{th}$ coordinate. The results in this section will imply that this set is well-behaved with respect to $\psi$ and that the search for solutions can be narrowed down to this set.

To make all of this precise, we need two new definitions. The first defines the property we want the $(L-1)^\text{th}$ coordinate process to obey.

\begin{definition}[$R$-special function]\label{def:Rspecialfunction}Given $R>0$, we say that \[F:\R^{D_{L-1}}\times \R^{D_L}\to C([0,T],\R^{D_{L-1}})\] is {\em $R$-special} if:
\begin{enumerate}
\item For any $a=(a^{(L-1)},a^{(L)})\in \R^{D_{L-1}}\times \R^{D_L}$, the trajectory
\[t\in [0,T]\mapsto F(a)(t)\in \R^{D_{L-1}}\]
is $R$-Lipschitz in $t$, and $F(a)(0)=a^{(L-1)}$.
\item For any $t\in [0,T]$, the mapping
\[a=(a^{(L-1)},a^{(L)})\in \R^{D_{L-1}}\times \R^{D_L}\mapsto F(a)(t)\in \R^{D_{L-1}}\]
is $e^{Rt}$-Lipschitz as a function of $a$.\end{enumerate}
\end{definition}

The second definition defines the set of probability measures we will work with.

\begin{definition}[$R$-special measure]\label{def:Rspecialmeasure} A measure \[\mu_{[0,T]}\in \Probspace(C([0,T],\R^D))\] is said to be $R$-special if $\mu_{[0,T]}\in \Probspace_R$ and there exists an $R$-special function $F_\mu$ such that, if $\Theta\sim \mu_{[0,T]}$, then almost surely, for all $t\in[0,T]$:
\[\Theta^{(L-1)}(t) = F_\mu\parn{\Theta^{(L-1)}(0),\Theta^{(L)}(0)}(t).\]
The set of all $R$-special measures is denoted by $\Probspecial_R$.\end{definition}

Our first result on $R$-special measures is that the discontinuity issues raised in Remark \ref{rem:discontinuity} will not matter when we consider these measures. In fact, we canestimate the difference between the values of $\zbar^{(L)}$ for different measures via their Wasserstein distance. That will be accomplished by the next Lemmas.

\begin{lemma}[Proof in \S \ref{sub:proofredefiningzL}]
\label{lem:redefiningzL}
When $\nu_{[0,T]}\in \Probspecial_R$, we have that, for $\mu_0^{(L)}$-almost every $a^{(L)}\in \R^{D_L}$ and $t\in [0,T]$:
\[
\zbar^{(L)}(x,\nu_t,a^{(L)}) =\int\limits_{\R^{D_{L-1}}}\fsigma{L-1}{\zbar^{(L-1)}(x,\nu_t),F_\nu\parn{\tilde{a}^{(L-1)},a^{(L)}}(t)}\,d\mu_0^{(L-1)}\parn{\tilde{a}^{(L-1)}}.
\]
\end{lemma}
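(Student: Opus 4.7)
The plan is to unpack the definition of $\zbar^{(L)}$ in (\ref{eq:defzbarL}) and show that, for $\nu_{[0,T]} \in \Probspecial_R$, the conditional measure $\nu_t^{(L-1\mid L)}(\cdot \mid a^{(L)})$ has a simple description: it is the pushforward of $\mu_0^{(L-1)}$ under the map $\tilde a^{(L-1)} \mapsto F_\nu(\tilde a^{(L-1)}, a^{(L)})(t)$. Once this is established, the identity in the statement follows by a change-of-variables in the integral defining $\zbar^{(L)}$.

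Concretely, let $\Theta \sim \nu_{[0,T]}$. First I would record two structural facts that we already have available. Since $\nu_{[0,T]} \in \Probspace_R$ (Definition \ref{def:prelimset}, inherited from Definition \ref{def:Rspecialmeasure}), the trajectory $\Theta^{(L)}$ is almost surely constant in $t$, so $\Theta^{(L)}(t) = \Theta^{(L)}(0)$ a.s. for every $t \in [0,T]$. Since $\Theta(0) \sim \mu_0 = \prod_{\ell=0}^{L} \mu_0^{(\ell)}$, the components $\Theta^{(L-1)}(0)$ and $\Theta^{(L)}(0)$ are independent, with respective laws $\mu_0^{(L-1)}$ and $\mu_0^{(L)}$. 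Second, because $\nu_{[0,T]}$ is $R$-special, Definition \ref{def:Rspecialmeasure} gives almost surely
\[
\Theta^{(L-1)}(t) = F_\nu\parn{\Theta^{(L-1)}(0), \Theta^{(L)}(0)}(t) \qquad \forall t \in [0,T].
\]

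Combining these, for any bounded Borel $\varphi : \R^{D_{L-1}} \to \R$ and any bounded Borel $h : \R^{D_L} \to \R$, a direct computation using independence of $\Theta^{(L-1)}(0)$ and $\Theta^{(L)}(0)$ yields
\[
\Ex{\varphi(\Theta^{(L-1)}(t))\, h(\Theta^{(L)}(t))} = \int_{\R^{D_L}} h(a^{(L)}) \left( \int_{\R^{D_{L-1}}} \varphi\parn{F_\nu(\tilde a^{(L-1)}, a^{(L)})(t)}\, d\mu_0^{(L-1)}(\tilde a^{(L-1)}) \right) d\mu_0^{(L)}(a^{(L)}).
\]
Since the inner integral, viewed as a function of $a^{(L)}$, is Borel measurable (Fubini combined with measurability of $F_\nu$, which can be taken from the discretization argument used for $G_\nu^{(L-1)}$ in the proof of Lemma \ref{lem:constrainstructure}), this identifies it as a regular conditional expectation of $\varphi(\Theta^{(L-1)}(t))$ given $\Theta^{(L)}(t) = a^{(L)}$ for $\mu_0^{(L)}$-a.e.\ $a^{(L)}$. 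Thus, for $\mu_0^{(L)}$-almost every $a^{(L)}$ and every $t \in [0,T]$,
\[
\int_{\R^{D_{L-1}}} \varphi(a^{(L-1)})\, d\nu_t^{(L-1\mid L)}(a^{(L-1)} \mid a^{(L)}) = \int_{\R^{D_{L-1}}} \varphi\parn{F_\nu(\tilde a^{(L-1)}, a^{(L)})(t)}\, d\mu_0^{(L-1)}(\tilde a^{(L-1)}).
\]
Substituting $\varphi(\,\cdot\,) = \sigma^{(L-1)}(\zbar^{(L-1)}(x,\nu_t), \,\cdot\,)$ into this identity and plugging into \eqnref{defzbarL} delivers the claim.

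The main obstacle, and the only subtle point, is the quantifier over $t$: the displayed identity above is obtained for each fixed $t$ on a $t$-dependent full-measure set of $a^{(L)}$, whereas the lemma asserts a single $\mu_0^{(L)}$-null exceptional set that works simultaneously for all $t \in [0,T]$. To handle this, I would choose a countable dense set $\{t_n\} \subset [0,T]$, obtain the identity simultaneously on a single full-measure set of $a^{(L)}$ for all $t_n$, and then extend to all $t \in [0,T]$ by continuity. Continuity in $t$ of both sides follows from the $R$-Lipschitz property of $F_\nu(\tilde a^{(L-1)}, a^{(L)})(\cdot)$ built into Definition \ref{def:Rspecialfunction}, together with continuity of $\zbar^{(L-1)}(x,\nu_t)$ in $t$ (established in the proof of Lemma \ref{lem:constrainstructure}) and Lipschitz continuity of $\sigma^{(L-1)}$, combined with dominated convergence on the right-hand side.
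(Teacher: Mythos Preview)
Your proof is correct and follows essentially the same route as the paper: identify $\nu_t^{(L-1\mid L)}(\cdot\mid a^{(L)})$ as the pushforward of $\mu_0^{(L-1)}$ under $\tilde a^{(L-1)}\mapsto F_\nu(\tilde a^{(L-1)},a^{(L)})(t)$ and substitute into \eqnref{defzbarL}. The paper's version is terser (it writes the conditional law directly as $\int \boldsymbol{\delta}_{F_\nu(a^{(L-1)},a^{(L)})(t)}\,d\mu_0^{(L-1)}(a^{(L-1)})$ without the test-function verification), and your extra care with the uniform-in-$t$ null set, while valid, can be shortcut by observing that the explicit pushforward formula itself furnishes a regular conditional distribution valid for every $t$ and every $a^{(L)}$.
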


\begin{lemma}[Proof in \S \ref{sub:proofestimatezbarL}]\label{lem:estimatezbarL}When $\tilde{\nu}_{[0,T]},\nu_{[0,T]}\in \Probspecial_R$, we have that, for all $t\in [0,T]$ and $x\in\R^{d_X}$:
\[\int\limits_{\R^{D_L}}\abs{\zbar^{(L)}(x,\tilde{\nu}_t,a^{(L)}) - \zbar^{(L)}(x,\nu_t,a^{(L)})}\,d\mu^{(L)}_0\parn{a^{(L)}}\leq C\cdot(e^{Rt}+1)\,W(\tilde{\nu}_{[0,t]},\nu_{[0,t]}).\]\end{lemma}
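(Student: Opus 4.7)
The plan is to apply Lemma \ref{lem:redefiningzL} to rewrite both $\zbar^{(L)}$ terms in the integrand. Since the formula in that lemma expresses $\zbar^{(L)}(x,\mu_t,a^{(L)})$ as an integral against the fixed measure $\mu_0^{(L-1)}$ (the same for both $\tilde\nu$ and $\nu$), after subtracting the two expressions the uniform Lipschitz bound on $\sigma^{(L-1)}$ from Assumption \ref{assump:activations} yields
\[|\zbar^{(L)}(x,\tilde\nu_t,a^{(L)})-\zbar^{(L)}(x,\nu_t,a^{(L)})|\leq C\,|\zbar^{(L-1)}(x,\tilde\nu_t)-\zbar^{(L-1)}(x,\nu_t)| + C\int|F_{\tilde\nu}(\tilde a^{(L-1)},a^{(L)})(t)-F_\nu(\tilde a^{(L-1)},a^{(L)})(t)|\,d\mu_0^{(L-1)}(\tilde a^{(L-1)}).\]
Integrating in $a^{(L)}$ against $\mu_0^{(L)}$ reduces the lemma to bounding each of these two contributions.

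The first contribution is handled by a routine induction on $\ell\in[2:L-1]$ using the recursive definitions (\ref{eq:defzbar2})--(\ref{eq:defzbarell}), the Lipschitz and boundedness assumptions on the $\sigma^{(\ell)}$, and Kantorovich--Rubinstein duality applied to the marginals $\nu_t^{(\ell-1)}$ (and to the joint $\nu_t^{(0,1)}$ at the base step). Since no conditional distribution appears for $\ell<L$, the resulting bound $|\zbar^{(L-1)}(x,\tilde\nu_t)-\zbar^{(L-1)}(x,\nu_t)|\leq C'\,W(\tilde\nu_{[0,t]},\nu_{[0,t]})$ follows with a constant depending only on $C$ and $L$.

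The core step, where the $e^{Rt}$ factor originates, is the bound on the integral of the $F$ difference. Here I would invoke the $R$-special property of both measures: for $\Thetabar\sim\mu_{[0,T]}\in\Probspecial_R$ one has $F_\mu(\Thetabar^{(L-1)}(0),\Thetabar^{(L)}(0))(t)=\Thetabar^{(L-1)}(t)$ almost surely. Choose any coupling $\pi$ of $\tilde\nu_{[0,t]}$ and $\nu_{[0,t]}$ with $(\tilde\Theta,\Theta)\sim\pi$ and set $\tilde a:=(\tilde\Theta^{(L-1)}(0),\tilde\Theta^{(L)}(0))$ and $b:=(\Theta^{(L-1)}(0),\Theta^{(L)}(0))$; by construction each of $\tilde a$ and $b$ has marginal law $\mu_0^{(L-1)}\times\mu_0^{(L)}$. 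Inserting $F_\nu(b)(t)$ via the triangle inequality and combining the $R$-special identities $F_{\tilde\nu}(\tilde a)(t)=\tilde\Theta^{(L-1)}(t)$ and $F_\nu(b)(t)=\Theta^{(L-1)}(t)$ with the $e^{Rt}$-Lipschitz estimate on $F_\nu$ in its initial condition from Definition \ref{def:Rspecialfunction} gives
\[|F_{\tilde\nu}(\tilde a)(t)-F_\nu(\tilde a)(t)|\leq|\tilde\Theta^{(L-1)}(t)-\Theta^{(L-1)}(t)|+e^{Rt}|\tilde a-b|.\]
Taking expectations (the left side integrates against $\mu_0^{(L-1)}\times\mu_0^{(L)}$ by the marginal property of $\tilde a$) and then minimizing over couplings $\pi$ produces the bound of order $(1+e^{Rt})W(\tilde\nu_{[0,t]},\nu_{[0,t]})$, since the Wasserstein infimum simultaneously controls the path-wise term $|\tilde\Theta^{(L-1)}(t)-\Theta^{(L-1)}(t)|$ and the initial-state term $|\tilde a-b|$.

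Combining the two contributions and absorbing all numerical constants and activation-Lipschitz constants into a single $C$ yields the stated inequality. The main obstacle is the coupling argument of the third paragraph: this is the only place where the $R$-special structure is genuinely used, and it is precisely the mechanism that converts the otherwise discontinuous dependence of $\zbar^{(L)}$ on the conditional law $\mu^{(L-1\mid L)}$ into a Wasserstein-Lipschitz estimate, thereby resolving the discontinuity flagged in Remark \ref{rem:discontinuity}.
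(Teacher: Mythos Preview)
Your proposal is correct and follows essentially the same coupling argument as the paper: rewrite $\zbar^{(L)}$ via Lemma~\ref{lem:redefiningzL}, reduce to bounding $\int|F_{\tilde\nu}(a)(t)-F_\nu(a)(t)|\,d\mu_0^{(L-1,L)}(a)$, then use an optimal coupling of $\tilde\nu_{[0,t]}$ and $\nu_{[0,t]}$ together with the $R$-special identities and the $e^{Rt}$-Lipschitz property of one of the $F$'s. You are in fact slightly more careful than the paper, which silently drops the $|\zbar^{(L-1)}(x,\tilde\nu_t)-\zbar^{(L-1)}(x,\nu_t)|$ contribution when applying the Lipschitz bound on $\sigma^{(L-1)}$; that term is $O(W(\tilde\nu_{[0,t]},\nu_{[0,t]}))$ and harmless, but your explicit treatment of it is cleaner.
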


The next two Lemmas are two of our main results about $R$-special measures. Lemma \ref{lem:specialclosedpsi} shows that this set is closed under the mapping $\psi$ while Lemma \ref{lem:specialcontainssoln} that all solutions must be $R$-special.

\begin{lemma}[Special measures are closed under $\psi$; Proof in \S \ref{sub:proofpsispecial}]
\label{lem:specialclosedpsi} There exists a value $R_{\ref{lem:specialclosedpsi}}>0$ depending only on $C$ and $L$ such that, for all $R\geq R_{\ref{lem:specialclosedpsi}}$, the mapping $\psi$ introduced in Corollary \ref{cor:mappingwelldefined} satisfies $\psi(\Probspecial_R)\subset \Probspecial_R$.
\end{lemma}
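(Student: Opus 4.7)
The plan is to take $F_{\psi(\nu)} := G_\nu^{(L-1)}$ produced by Lemma~\ref{lem:constrainstructure} as the candidate special function and verify that it is $R$-special once $R$ is chosen sufficiently large. The first item of Definition~\ref{def:Rspecialfunction}, namely $R$-Lipschitz-in-time and the correct initial condition, is already built into Lemma~\ref{lem:constrainstructure} provided $R\geq R_{\ref{lem:trivialsoln}}$, since the drift is bounded by a constant that depends only on $C$ and $L$. Similarly Corollary~\ref{cor:mappingwelldefined} gives $\psi(\nu_{[0,T]})\in\Probspace_R$, so the task reduces to establishing the second item of Definition~\ref{def:Rspecialfunction}: the map $a=(a^{(L-1)},a^{(L)})\mapsto G_\nu^{(L-1)}(a)(t)$ must be $e^{Rt}$-Lipschitz.

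The key input is that $\nu\in\Probspecial_R$ together with Lemma~\ref{lem:redefiningzL} gives a clean pointwise Lipschitz estimate on $\zbar^{(L)}(x,\nu_t,a^{(L)})$ in its last argument. Indeed, since $F_\nu$ is $e^{Rt}$-Lipschitz in $a^{(L)}$ and $\sigma^{(L-1)}$ has Lipschitz constant at most $C$, differentiating under the integral (or directly estimating) yields
\[
|\zbar^{(L)}(x,\nu_t,a^{(L)}) - \zbar^{(L)}(x,\nu_t,b^{(L)})| \;\leq\; C\,e^{Rt}\,|a^{(L)}-b^{(L)}|.
\]
Combined with Assumption~\ref{assump:activations}, this in turn gives a Lipschitz bound of the form $|\Mbar^{(L)}(x,\nu_t,a^{(L)})-\Mbar^{(L)}(x,\nu_t,b^{(L)})|\leq K\,e^{Rt}\,|a^{(L)}-b^{(L)}|$ for a constant $K=K(C)$, using boundedness of $\Mbar^{(L+1)}$ and the fact that $D_z\sigma^{(L)}$ is $C$-Lipschitz in both arguments.

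With these estimates in hand, I would write the ODE \eqnref{ODETheta-L} in the schematic form
\[
\tfrac{d}{dt}\Thetabar^{(L-1)}(t) \;=\; -\alpha(t)\,H\bigl(t,\Thetabar^{(L-1)}(t),\Thetabar^{(L)}(0)\bigr),
\]
where the dependence of $H$ on its second argument is $K_1$-Lipschitz (with $K_1=K_1(C)$, using boundedness of $Y-\ybar$ and of $\Mbar^{(L)}$, together with the Lipschitz derivative of $\sigma^{(L-1)}$) and the dependence on its third argument is $K_2 e^{Rt}$-Lipschitz (by the previous paragraph). Fix two initial conditions $a,b\in\R^{D_{L-1}}\times\R^{D_L}$ and set $h(t):=|G_\nu^{(L-1)}(a)(t)-G_\nu^{(L-1)}(b)(t)|$. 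Then
\[
h(t)\leq |a^{(L-1)}-b^{(L-1)}| + \int_0^t C\,\bigl(K_1\,h(s) + K_2\,e^{Rs}|a^{(L)}-b^{(L)}|\bigr)\,ds,
\]
and Gr\"onwall's inequality produces
\[
h(t)\leq e^{CK_1 t}\,|a^{(L-1)}-b^{(L-1)}| + \frac{CK_2}{R-CK_1}\,\bigl(e^{Rt}-e^{CK_1 t}\bigr)\,|a^{(L)}-b^{(L)}|.
\]

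The final, somewhat delicate, step is to choose $R$ so that the right-hand side is dominated by $e^{Rt}\,(|a^{(L-1)}-b^{(L-1)}|+|a^{(L)}-b^{(L)}|)$ for all $t\in[0,T]$. This forces $R\geq CK_1$ and $CK_2\leq R-CK_1$, i.e. $R\geq C(K_1+K_2)$; taking $R_{\ref{lem:specialclosedpsi}}:=\max\{R_{\ref{lem:trivialsoln}},\,C(K_1+K_2)\}$, any $R\geq R_{\ref{lem:specialclosedpsi}}$ works. The self-referential nature of this bound — the Lipschitz constant we are trying to prove appears on the right-hand side through $e^{Rt}$ — is the main subtlety, but it closes because the $R$ needed to absorb the $e^{Rs}$ term in the Gr\"onwall integral depends only on $C,L$ and not on $R$ itself. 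Once this is done, measurability of $F_{\psi(\nu)}$ is inherited from Lemma~\ref{lem:constrainstructure}, and the proof is complete.
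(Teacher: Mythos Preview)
Your proposal is correct and matches the paper's proof essentially line for line: the paper packages your Gr\"onwall argument as a standalone Proposition~\ref{prop:Picardsimple}, applied after verifying (Claim~\ref{claim:Hexists}) that the drift $H$ is uniformly bounded, $V$-Lipschitz in $a^{(L-1)}$, and $V e^{Rt}$-Lipschitz in $a^{(L)}$ via Lemma~\ref{lem:redefiningzL} --- precisely the estimates you derive. The only cosmetic difference is that the paper first bounds $|a^{(L-1)}-b^{(L-1)}|,|a^{(L)}-b^{(L)}|\leq |a-b|$ before running Gr\"onwall, which yields the $e^{Rt}$-Lipschitz bound in the Euclidean norm directly with the choice $R\geq 2V$ rather than $R\geq C(K_1+K_2)$.
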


\begin{lemma}[All solutions are special; Proof in \S \ref{sub:proofspecialcontainssoln}]
\label{lem:specialcontainssoln}
There exists a value $R_{\ref{lem:specialcontainssoln}}$ depending only on $C$ and $L$ such that, for all $R\geq R_{\ref{lem:specialcontainssoln}}$, any solution to the McKean-Vlasov problem belongs to $\Probspecial_{R}$.
\end{lemma}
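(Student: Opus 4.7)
The plan is to take any solution $\soln_{[0,T]}$ of the McKean-Vlasov problem and directly exhibit an $R$-special function $F_\soln$ witnessing the $R$-special structure of the $(L-1)$-th coordinate, once $R$ is chosen large enough. Since $\soln = \psi(\soln)$, \lemref{constrainstructure} applied with $\nu = \soln$ supplies a measurable map
\[G^{(L-1)}_\soln : \R^{D_{L-1}} \times \R^{D_L} \to C([0,T], \R^{D_{L-1}})\]
with $G^{(L-1)}_\soln(a)(0) = a^{(L-1)}$ and such that, a.s., $\Thetabar^{(L-1)}(t) = G^{(L-1)}_\soln(\Thetabar^{(L-1)}(0), \Thetabar^{(L)}(0))(t)$. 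I will take $F_\soln := G^{(L-1)}_\soln$. That $\soln \in \Probspace_R$ for any $R \geq R_{\ref{lem:trivialsoln}}$ is precisely \lemref{trivialsoln}; the uniform-boundedness-of-drift argument used there also immediately gives the Lipschitz-in-$t$ part of the definition of an $R$-special function, as soon as $R$ is large enough.

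The substantive task is the $e^{Rt}$-Lipschitz property in the initial pair $a = (a^{(L-1)}, a^{(L)})$. Fix $a, a'$ and write $\eta(t) = F_\soln(a)(t)$, $\eta'(t) = F_\soln(a')(t)$. The ODE (\ref{eq:ODETheta-L}) is $K_1$-Lipschitz in the local variable $\eta$ for some $K_1 = K_1(C,L)$ coming from Assumption \ref{assump:activations}. Its dependence on $a^{(L)}$ passes through $\Mbar^{(L)}(X, \soln_t, a^{(L)})$, which in turn depends on $a^{(L)}$ both directly (in a clean Lipschitz way) and via the conditional quantity $\zbar^{(L)}(X, \soln_t, a^{(L)})$. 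The key observation is that, precisely because $\soln$ is a solution, $\Thetabar^{(L)}$ is a.s.\ constant in time, so the conditional law at time $t$ can be written as
\[\soln_t^{(L-1\mid L)}(\,\cdot\,\mid a^{(L)}) = \bigl(\tilde a^{(L-1)} \mapsto G^{(L-1)}_\soln(\tilde a^{(L-1)}, a^{(L)})(t)\bigr)_{\#}\, \mu_0^{(L-1)}.\]
This is the same identity as in \lemref{redefiningzL}, but I obtain it here directly from \lemref{constrainstructure} applied to $\soln$, without any a priori appeal to $\Probspecial_R$.

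Using this formula, $\zbar^{(L)}(x, \soln_t, \cdot)$ is Lipschitz in $a^{(L)}$ with constant at most $C\, u_2(t)$, where $u_2(t)$ is the Lipschitz constant of $G^{(L-1)}_\soln$ in $a^{(L)}$, uniform in $\tilde a^{(L-1)}$. Writing $u(t)$ for the full Lipschitz constant of $F_\soln$ in $a$ (so $u_2(t) \leq u(t)$), integrating the ODE, applying the triangle inequality, and using $|\eta(s) - \eta'(s)| \leq u(s)\,|a-a'|$ yields the self-referential estimate
\[u(t) \leq 1 + K_2\, t + (K_1 + K_2) \int_0^t u(s)\,ds,\]
for some $K_2 = K_2(C, L)$ combining the direct and indirect dependence of $\Mbar^{(L)}$ on $a^{(L)}$. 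Standard Gr\"onwall then gives $u(t) \leq (1 + K_2 t)\,e^{(K_1 + K_2)t} \leq e^{(K_1 + 2K_2)t}$, the last step using $1 + x \leq e^x$. Setting $R_{\ref{lem:specialcontainssoln}} := \max\{R_{\ref{lem:trivialsoln}},\, K_1 + 2K_2\}$ then makes $F_\soln$ an $R$-special function for every $R \geq R_{\ref{lem:specialcontainssoln}}$, placing $\soln$ in $\Probspecial_R$.

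The main obstacle is exactly the self-referential character of this Gr\"onwall argument: the very Lipschitz constant one is trying to control appears on the right-hand side, through the conditional term $\zbar^{(L)}$. This is why \lemref{estimatezbarL} cannot be invoked out of the box (its hypothesis is membership in $\Probspecial_R$, the very conclusion we seek), and why deriving the conditional-measure identity directly from \lemref{constrainstructure} applied to $\soln$ is what unlocks the argument.
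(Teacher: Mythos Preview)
Your strategy --- prove directly that $G^{(L-1)}_\soln$ is $e^{Rt}$-Lipschitz in $a$ via a self-referential Gr\"onwall inequality --- is the right one, and is more direct than the paper's route, which recasts the equation for $\Thetabar^{(L-1)}$ as a fixed-point problem $\sT(g)=g$ on an auxiliary function space and invokes Banach (\propref{Picardcomplicated}). But your Gr\"onwall step as written has a real gap: you define $u(t)$ to be the Lipschitz constant of $a\mapsto F_\soln(a)(t)$, yet \lemref{constrainstructure} gives \emph{no} regularity of $G^{(L-1)}_\soln$ in the $a$-variable beyond measurability. Nothing prevents $u(s)=+\infty$ on $(0,t]$, in which case the integral inequality is vacuous and Gr\"onwall yields nothing. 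This is precisely the self-referential obstruction you flag in your last paragraph, and merely writing down the inequality does not resolve it.

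The cure is to replace $u$ by the scale-$\epsilon$ modulus
\[\Delta_\epsilon(t):=\frac{1}{\epsilon}\sup_{|a-b|\leq \epsilon}\bigl|F_\soln(a)(t)-F_\soln(b)(t)\bigr|,\]
which \emph{is} finite a priori: since $F_\soln(a)(0)=a^{(L-1)}$ and trajectories are $R$-Lipschitz in time (the part of \lemref{constrainstructure} you do have), one gets the crude bound $\Delta_\epsilon(t)\leq 1+2Rt/\epsilon<\infty$. Your pushforward identity for $\zbar^{(L)}$ and the same triangle-inequality bookkeeping then give $\Delta_\epsilon(t)\leq 1+K_2 t+(K_1+K_2)\int_0^t\Delta_\epsilon(s)\,ds$; now Gr\"onwall legitimately applies and the bound $\Delta_\epsilon(t)\leq e^{Rt}$, uniform in $\epsilon$, delivers the Lipschitz property. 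This is exactly the device the paper uses (hidden inside step~4 of the proof of \propref{Picardcomplicated}); with this correction your direct argument goes through and bypasses the fixed-point machinery.
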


Finally, we mention an important technical result, to be proved in Appendix \ref{sec:Probspecialisclosed}.

\begin{theorem}[Special measures form a closed set]
\label{thm:Probspecialisclosed}
The set $\Probspecial_R$ is closed under weak convergence (and thus also under Wasserstein $L^1$ metric).
\end{theorem}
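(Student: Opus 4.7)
Since Wasserstein $L^1$ convergence implies weak convergence, it suffices to show that $\Probspecial_R$ is closed under weak convergence. Fix a sequence $\mu_n \in \Probspecial_R$ with $\mu_n \to \mu$ weakly in $\Probspace(C([0,T],\R^D))$. The first step is the easy observation that $\Probspace_R$ itself is weakly closed: the time-$0$ marginal being $\mu_0$ and the requirements that $\Theta^{(0)},\Theta^{(L)}$ be constant and that $\Theta^{(\ell)}$ be $R$-Lipschitz all amount to $\mu$ being supported on a closed subset of $C([0,T],\R^D)$, so the Portmanteau theorem gives $\mu \in \Probspace_R$. The work lies in producing a witness function $F_\mu$ for the $(L-1)$-th coordinate.

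To build $F_\mu$, I will apply Arzel\`{a}--Ascoli to the sequence of witnesses $F_n := F_{\mu_n}$ viewed as maps
\[
F_n : \R^{D_{L-1}} \times \R^{D_L} \longrightarrow \bigl(C([0,T],\R^{D_{L-1}}),\|\cdot\|_\infty\bigr).
\]
By Definition \ref{def:Rspecialfunction}, the family $\{F_n\}$ is uniformly $e^{RT}$-Lipschitz in $a$, and for each fixed $a$ the set $\{F_n(a)\}_n$ consists of $R$-Lipschitz paths with common initial value $a^{(L-1)}$, hence is relatively compact in $C([0,T],\R^{D_{L-1}})$ by the classical Arzel\`{a}--Ascoli theorem. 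Applied on each closed ball $\bar B(0,j)\subset \R^{D_{L-1}}\times\R^{D_L}$, Arzel\`{a}--Ascoli combined with a standard diagonal extraction yields a subsequence $F_{n_k}$ converging uniformly on compact sets to some limit $F$. Passing the Lipschitz inequalities to the limit (and using $F_{n_k}(a)(0)=a^{(L-1)}$ for all $n_k$), the limit $F$ is itself $R$-special.

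It remains to verify the pathwise identity $\Theta^{(L-1)}(t) = F(\Theta^{(L-1)}(0),\Theta^{(L)}(0))(t)$ almost surely under $\Theta\sim\mu$, so that $F_\mu := F$ works. Here I invoke the Skorokhod representation theorem: on a common probability space, realize $\mu_{n_k},\mu$ as the laws of $\tilde\Theta_k,\tilde\Theta$ with $\tilde\Theta_k \to \tilde\Theta$ a.s.\ in sup norm. Setting $a_k := (\tilde\Theta_k^{(L-1)}(0),\tilde\Theta_k^{(L)}(0))$ and $a := (\tilde\Theta^{(L-1)}(0),\tilde\Theta^{(L)}(0))$, the uniform-on-compacts convergence of $F_{n_k}\to F$ together with the equi-Lipschitz bound
\[
\|F_{n_k}(a_k) - F(a)\|_\infty \leq e^{RT}\,|a_k - a| + \|F_{n_k}(a) - F(a)\|_\infty
\]
gives $F_{n_k}(a_k) \to F(a)$ a.s.\ in sup norm. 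Since also $\tilde\Theta_k^{(L-1)} \to \tilde\Theta^{(L-1)}$ a.s.\ in sup norm, and $\tilde\Theta_k^{(L-1)} = F_{n_k}(a_k)$ a.s.\ (outside a single null set obtained by countable union), the desired identity $\tilde\Theta^{(L-1)} = F(a)$ follows, proving $\mu\in\Probspecial_R$.

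The main obstacle is extracting a coherent limit function $F_\mu$ from the family $\{F_{\mu_n}\}$: weak convergence of the measures alone gives no direct control over the witness functions. This is exactly where the quantitative $e^{Rt}$-Lipschitz-in-$a$ requirement in Definition \ref{def:Rspecialfunction} is essential: without it, the equicontinuity needed for Arzel\`{a}--Ascoli would be missing and one could not hope to push the pathwise identity to the limit in a topology as weak as weak convergence.
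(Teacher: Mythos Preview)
Your proposal is correct and follows essentially the same approach as the paper: Arzel\`{a}--Ascoli on the witness functions $F_{\mu_n}$ (using the $e^{RT}$-Lipschitz-in-$a$ bound for equicontinuity and the $R$-Lipschitz-in-$t$ bound with fixed initial value for pointwise precompactness) to extract a subsequential limit $F$, followed by Skorokhod representation to pass the identity $\Theta^{(L-1)}=F_{\mu_n}(\Theta^{(L-1)}(0),\Theta^{(L)}(0))$ to the limit. The only cosmetic difference is that the paper packages the diagonal extraction into a general Ascoli--Arzel\`{a} statement for metric spaces in which closed balls are compact, whereas you do the diagonal over balls $\bar B(0,j)$ by hand; the content is identical.
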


The remainder of the section contains the proofs of the above Lemmas.

\subsection{Proof of a formula for $\zbar^{(L)}$ with a $R$-special measure}
\label{sub:proofredefiningzL}
\begin{proof}[Proof of Lemma \ref{lem:redefiningzL}] 
Let $\Theta \sim \nu_{[0,T]}\in\Probspecial_R$. Then $\Theta^{(L)}$ is a.s. constant and
\[\Pr{\forall t\in [0,T]\,:\,
	\Theta^{(L-1)}(t) = F_\nu\parn{\Theta^{(L-1)}(0),\Theta^{(L)}(0)}(t)}=1.
\]
We also have
\[
	(\Theta^{(L-1)}(0),\Theta^{(L)}(0)) = (\Theta^{(L-1)}(0),\Theta^{(L)}(t))\sim \mu_0^{(L-1,L)} = \mu_0^{(L-1)}\times \mu_0^{(L)}.
\]
Therefore,
\[
	\mu_t^{(L-1\mid L)}(\cdot\mid a^{(L)}) =
	\int\limits_{\R^{D_{L-1}}}\,{\boldsymbol \delta}_{F_\nu(a^{(L-1)},a^{(L)})(t)}\,d\mu_0^{(L-1)}(a^{(L-1)})\mbox{ for all $t\in[0,T]$, a.s., }
\]
where ${\boldsymbol \delta_\cdot}$ is a Dirac delta. Plugging this into Equation \eqnref{defzbarL} which defines $\zbar^{(L)}$ finishes the proof.
\end{proof}

\subsection{Differences of $\zbar^{(L)}$ for $R$-special measures}
\label{sub:proofestimatezbarL}
We now show how to estimate differences in $\zbar^{(L)}$ for $R$-special measures.
\begin{proof}[Proof of Lemma \ref{lem:estimatezbarL}]
Recall from Lemma \ref{lem:redefiningzL} that:
\[\zbar^{(L)}(x,\nu_t,a^{(L)}) =\int\limits_{\R^{D_{L-1}}}\fsigma{L-1}{\zbar^{(L-1)}(x,\mu),F_\nu\parn{\tilde{a}^{(L-1)},a^{(L)}}(t)}\,d\mu_0^{(L-1)}(\tilde{a}^{(L-1)})\]
and similarly for $\zbar^{(L)}(x,\tilde{\nu}_t,a^{(L)})$. Since $\sigma^{(L-1)}$ is $C$-Lipschitz, then
\begin{align*}
&\abs{\zbar^{(L)}(x,\nu_t,a^{(L)}) - \zbar^{(L)}(x,\tilde{\nu}_t,a^{(L)})} \leq \\
&\quad\quad\quad C\cdot\int\limits_{R^{D_{L-1}}}\abs{F_\nu\parn{\tilde{a}^{(L-1)},a^{(L)}}(t) - F_{\tilde{\nu}}\parn{\tilde{a}^{(L-1)},a^{(L)}}(t)}\,d\mu^{(L-1)}_0(\tilde{a}^{(L-1)}).
\end{align*}
It follows that, in order to prove the Lemma, it suffices to show that:
\begin{equation}
\label{eq:goalestimatezbarL}
\mbox{\bf Goal: } \int\limits_{\R^{D_{L-1}}\times \R^{D_{L}}}|F_\nu(a)(t) - F_{\tilde{\nu}}(a)(t))|\,d\mu^{(L-1,L)}_0(a)\leq (e^{Rt}+1)\cdot W(\nu_{[0,t]},\tilde{\nu}_{[0,t]}),
\end{equation}
where as usual we write $a=(a^{(L-1)},a^{(L)})\in\R^{D_{L-1}}\times \R^{D_L}$.

To achieve this goal, let $\Theta\sim \nu_{[0,t]}$, $\tilde{\Theta}\sim \tilde{\nu}_{[0,t]}$ be a coupling of $ \nu_{[0,t]}$ and $\tilde{\nu}_{[0,t]}$ that achieves Wasserstein distance:
\[
\Ex{\sup_{0\leq r\leq t}|\Theta(r) - \tilde{\Theta}(r)|}=W(\nu_{[0,t]},\tilde{\nu}_{[0,t]}).
\]
The quantity in the LHS of (\ref{eq:goalestimatezbarL}) may be rewritten as:
\[
(I):=\Ex{\abs{F_\nu\parn{\Theta^{(L-1)}(0),\Theta^{(L)}(0)}(t) - F_{\tilde{\nu}}\parn{\Theta^{(L-1)}(0),\Theta^{(L)}(0)}(t)}}.
\]
Moreover, because the two measures are special, we have the almost-sure identities:
\begin{align*}
F_\nu\parn{\Theta^{(L-1)}(0),\Theta^{(L)}(0)}(t) =& \Theta^{(L-1)}(t);\\
F_{\tilde{\nu}}\parn{\tilde{\Theta}^{(L-1)}(0),\tilde{\Theta}^{(L)}(0)}(t) =& \tilde{\Theta}^{(L-1)}(t).
\end{align*}
We also know that $F_{\tilde{\nu}}$ is an $R$-special function, which implies that $F_{\tilde{\nu}}(\cdot)(t)$ is $e^{Rt}$-Lipschitz (as per Definition \ref{def:Rspecialfunction}). In particular, we have that:
\[
\abs{F_{\tilde{\nu}}\parn{\tilde{\Theta}^{(L-1)}(0),\tilde{\Theta}^{(L)}(0)}(t) - F_{\tilde{\nu}}\parn{\Theta^{(L-1)}(0),\Theta^{(L)}(0)}(t)}\leq e^{Rt}\cdot\abs{\Theta(0) - \tilde{\Theta}(0)}.
\]
We conclude that:
\begin{align*}
(I) \leq & \Ex{\abs{F_\nu\parn{\Theta^{(L-1)}(0),\Theta^{(L)}(0)}(t)-F_{\tilde{\nu}}\parn{\tilde{\Theta}^{(L-1)}(0),\tilde{\Theta}^{(L)}(0)}(t)}} \\
   &\quad +  \Ex{\abs{F_{\tilde{\nu}}\parn{\Theta^{(L-1)}(0),\Theta^{(L)}(0)}(t)-F_{\tilde{\nu}}\parn{\tilde{\Theta}^{(L-1)}(0),\tilde{\Theta}^{(L)}(0)}(t)}}\\
  \leq & \Ex{\abs{\Theta^{(L-1)}(t)-\tilde{\Theta}^{(L-1)}(t)}} + e^{Rt}\cdot\Ex{\abs{\Theta(0) - \tilde{\Theta}(0)}}\\
  \leq & (e^{Rt}+1)\cdot\Ex{\sup_{0\leq r\leq t}\abs{\Theta(r) - \tilde{\Theta}(r)}}= (e^{Rt}+1)\cdot W(\nu_{[0,t]},\tilde{\nu}_{[0,t]}),
\end{align*}
which achieves Goal \eqref{eq:goalestimatezbarL}.
\end{proof}

\subsection{Proof that special measures are closed under $\psi$.}\label{sub:proofpsispecial}
Now we move on to prove Lemma \ref{lem:specialclosedpsi}, which shows that the map $\psi$ maps $\Probspecial_R$ onto itself, assuming $R$ is large enough.

\begin{proof}[Proof of Lemma \ref{lem:specialclosedpsi}] Let $\nu_{[0,T]}\in\Probspecial_{R}$ for some $R$ to be specified later. Lemma \ref{lem:constrainstructure} indicates that there exists a function $G_{\nu}^{(L-1)}:\R^{D_{L-1}}\times \R^{D_L}\to C([0,T],\R^{D_{L-1}})$ such that, if $\Thetabar\sim \psi(\nu_{[0,T]})$, then almost surely $\Thetabar^{(L-1)}(t)=G_{\nu}^{(L-1)}\parn{\Thetabar^{(L-1)}(0),\Thetabar^{(L)}(0)}(t)$ for all $t\in[0,T]$. Our goal is to show that:
\begin{equation}
\label{eq:goalspecialclosedpsi}
\mbox{{\bf Goal:} } G_{\nu}^{(L-1)}(\cdot)= G(\cdot)\qquad\mbox{ $\mu_0^{(L-1,L)}$-a.e., for some $R$-special function $G$,}
\end{equation}
which clearly suffices to prove the Lemma.

To do this, we investigate the ODE (\ref{eq:ODETheta-L}) satisfied by $\Thetabar^{(L-1)}$ under the assumption that $\nu_{[0,T]}$ is $R$-special. We will need the following result, which is a direct consequence of Picard iteration.

\begin{proposition}[Proof in \S  \ref{sub:proofPicardsimple}]
\label{prop:Picardsimple}
Fix constants $R\geq 2V_{\ref{prop:Picardsimple}} > 0$.
Given $(a^{(L-1)},a^{(L)})$ consider the initial value problem
\begin{equation}
\label{eq:H-ODE}
\left\{\begin{array}{rll}
x(0) =&a^{(L-1)}\\
\dfrac{dx}{dt}(t) =& H\parn{t,x(t),a^{(L)}} & \text{for }t\in [0,T],
\end{array}\right.
\end{equation}
where $H:[0,T]\times \R^{D_{L-1}}\times \R^{D_{L}}\to \R^{D_{L-1}}$ is a continuous map satisfying the following properties:
\begin{enumerate}
\item $H$ is uniformly bounded by $V_{\ref{prop:Picardsimple}}$;
\item for each fixed $t\in [0,T]$ and $a^{(L-1)}\in \R^{D_{L-1}}$, the map
\[a^{(L)}\in \R^{D_{L}}\mapsto H(t,a^{(L-1)},a^{(L)})\] is $V_{\ref{prop:Picardsimple}}\cdot e^{Rt}$-Lipschitz;
\item for each fixed $t\in [0,T]$ and $a^{(L)}\in \R^{D_{L}}$,
\[a^{(L-1)}\in \R^{D_{L}}\mapsto H(t,a^{(L-1)},a^{(L)})\] is $V_{\ref{prop:Picardsimple}}$-Lipschitz.
\end{enumerate}
Then the map
\[G:\R^{D_{L-1}}\times \R^{D_{L}}\to C([0,T],\R^{D_{L-1}})\]
that associates to each $(a^{(L-1)},a^{(L)})$, the solution of \ref{eq:H-ODE} is an $R$-Special function.
\end{proposition}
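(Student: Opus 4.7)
The plan is to follow the standard Picard--Lindel\"of and Gronwall route. The Lipschitz-in-$x$ hypothesis (condition~3) gives unique solutions of \eqnref{H-ODE}, while the joint hypotheses on boundedness and on Lipschitz dependence of $H$ on $a^{(L)}$ feed into a Gronwall estimate for the sensitivity of solutions to initial data. The crucial observation is that the buffer $R \geq 2V_{\ref{prop:Picardsimple}}$ is tight: it is precisely what is needed to close the Gronwall estimate with exponent exactly $Rt$, as demanded by Definition \ref{def:Rspecialfunction}.

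For each fixed $a^{(L)}$, the map $H(t,\cdot,a^{(L)})$ is continuous in $t$, uniformly bounded by $V_{\ref{prop:Picardsimple}}$, and uniformly $V_{\ref{prop:Picardsimple}}$-Lipschitz in its second argument. Picard--Lindel\"of thus produces a unique $C^1$ solution $x(t) = G(a^{(L-1)},a^{(L)})(t)$ on $[0,T]$ with $x(0) = a^{(L-1)}$; realising $G$ as a uniform limit of Picard iterates makes joint (Borel) measurability in $(a^{(L-1)},a^{(L)})$ transparent. The first clause of Definition \ref{def:Rspecialfunction}, that $t \mapsto G(a)(t)$ is $R$-Lipschitz, is immediate from $|H|\leq V_{\ref{prop:Picardsimple}}\leq R$: one has $\abs{G(a)(t)-G(a)(s)} \leq V_{\ref{prop:Picardsimple}}\,|t-s| \leq R\,|t-s|$ for all $0\leq s\leq t\leq T$.

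For the second (main) clause, fix two initial data $a,\tilde a$ and set $A:=\abs{a^{(L-1)}-\tilde a^{(L-1)}}$, $B:=\abs{a^{(L)}-\tilde a^{(L)}}$, $u(t):=\abs{G(a)(t)-G(\tilde a)(t)}$. Subtracting the integral forms of the two ODEs, adding and subtracting $H(s,G(\tilde a)(s),a^{(L)})$, and applying the two Lipschitz hypotheses on $H$ gives
\[
	u(t) \;\leq\; A \;+\; V_{\ref{prop:Picardsimple}}\,B\,\frac{e^{Rt}-1}{R} \;+\; V_{\ref{prop:Picardsimple}}\int_0^t u(s)\,ds.
\]
I would close this estimate with the self-consistent ansatz $u(t) \leq (A+B)e^{Rt}$: plugging this bound into the RHS and using $V_{\ref{prop:Picardsimple}}/R \leq 1/2$ collapses the RHS to
\[
	A + \tfrac{A+2B}{2}(e^{Rt}-1) \;=\; (A+B)e^{Rt} - \bigl[\tfrac{A}{2}(e^{Rt}-1) + B\bigr] \;\leq\; (A+B)e^{Rt},
\]
so that, by a standard comparison-of-integral-inequalities argument (bootstrap on $t^\star := \sup\{t : u(s)\leq(A+B)e^{Rs}\,\forall s\leq t\}$), the ansatz propagates from $t=0$ to all of $[0,T]$. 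This yields exactly the $e^{Rt}$-Lipschitz-in-$a$ property required by the second clause of Definition \ref{def:Rspecialfunction}.

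The proof is essentially routine; the only real obstacle is bookkeeping. A naive application of Gronwall's lemma to the displayed estimate produces an exponent like $(R/2 + V_{\ref{prop:Picardsimple}})t$ or even $Rt/2$ multiplied by a growing factor, strictly worse than $Rt$; squeezing out the precise constant $e^{Rt}$ is what the buffer $R \geq 2V_{\ref{prop:Picardsimple}}$ is there for, and the self-consistent ansatz above is how it is exploited.
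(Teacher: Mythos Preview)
Your approach is essentially the paper's: both derive the same integral inequality and close it with a Gronwall/comparison argument, exploiting $R\geq 2V_{\ref{prop:Picardsimple}}$ to land on exponent exactly $Rt$. The paper packages the estimate via the uniform modulus $\Delta_\epsilon(t):=\sup_{|a-b|\leq\epsilon}|G(a)(t)-G(b)(t)|/\epsilon$ and solves the comparison ODE $\xi' = V(\xi+e^{Rt})$, $\xi(0)=1$ explicitly, checking that $\xi(t)=\tfrac{V}{R-V}e^{Rt}+\bigl(1-\tfrac{V}{R-V}\bigr)e^{Vt}\leq e^{Rt}$ when $R\geq 2V$; your supersolution ansatz is a shortcut to the same endpoint.

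One slip: your bound $u(t)\leq (A+B)e^{Rt}$ does \emph{not} give the $e^{Rt}$-Lipschitz property in the Euclidean norm on $\R^{D_{L-1}}\times\R^{D_L}$ that Definition~\ref{def:Rspecialfunction} demands, since $A+B\geq\sqrt{A^2+B^2}=|a-\tilde a|$; as written you only obtain $\sqrt{2}\,e^{Rt}$-Lipschitz. Run the identical argument with the ansatz $\psi(t)=|a-\tilde a|\,e^{Rt}$ instead: using $A,B\leq|a-\tilde a|$ and $V_{\ref{prop:Picardsimple}}/R\leq\tfrac12$, the right-hand side of your integral inequality (with $\psi$ in place of $u$) is at most $|a-\tilde a|+\tfrac12|a-\tilde a|(e^{Rt}-1)+\tfrac12|a-\tilde a|(e^{Rt}-1)=|a-\tilde a|e^{Rt}=\psi(t)$, so $\psi$ is a supersolution and the comparison (Gronwall on $u-\psi$) gives $u\leq\psi$ directly.
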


Given this Proposition, Lemma \ref{lem:specialclosedpsi} follows from the following result.

\begin{claim}
\label{claim:Hexists}
Let $\nu_{[0,T]}\in\Probspecial_R$. Then there exists a constant $V_{\ref{claim:Hexists}}=V_{\ref{claim:Hexists}}(C,L)$ and a continuous function
\[
H:[0,T]\times \R^{D_{L-1}}\times \R^{D_{L}}\to \R^{D_{L-1}}\mbox{ (possibly depending on $\nu_{[0,T]}$)}
\]
satisfying properties (1) - (3) in the proposition above with $V_{\ref{prop:Picardsimple}} = V_{\ref{claim:Hexists}}$, and so that, if $\Thetabar\sim \psi(\nu_{[0,T]})$, then almost surely, for all $t\in[0,T]$
\begin{equation}
\Thetabar^{(L-1)}(t) = \Thetabar^{(L-1)}(0) + \int_0^{t}\,H\parn{s,\Thetabar^{(L-1)}(s),\Thetabar^{(L)}(0)}\,ds.
\end{equation}
\end{claim}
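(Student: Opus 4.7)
The natural candidate for $H$ is read off directly from the ODE \eqref{eq:ODETheta-L} satisfied by $\bar\Theta^{(L-1)}$ under $\psi(\nu_{[0,T]})$: set
\[
H(t,a^{(L-1)},a^{(L)}) := -\alpha(t)\,\Ex{\bigl(Y-\ybar(X,\nu_t)\bigr)^{\dag}\,\Mbar^{(L)}(X,\nu_t,a^{(L)})\,D_\theta\fsigma{L-1}{\zbar^{(L-1)}(X,\nu_t),a^{(L-1)}}}.
\]
With this definition, the required integral representation for $\bar\Theta^{(L-1)}$ is immediate from \eqref{eq:ODETheta-L} and the fact that $\bar\Theta^{(L)}(t)=\bar\Theta^{(L)}(0)$ almost surely. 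So the whole content of the claim is to verify the three analytic properties with a suitable constant $V_{\ref{claim:Hexists}}=V_{\ref{claim:Hexists}}(C,L)$, plus joint continuity in $t$.

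First I would handle boundedness and continuity. Assumption \ref{assump:activations} bounds $\alpha$, $|Y|$, and each $\sigma^{(\ell)}$, $D\sigma^{(\ell)}$ by $C$; from the recursive definitions \eqref{eq:defzbar2}--\eqref{eq:defzbarL+1} and \eqref{eq:defMbarLplus1}--\eqref{eq:defMbarell} these bounds propagate to $\ybar$, $\zbar^{(\ell)}$ and $\Mbar^{(\ell)}$, giving $|H|\le V_{\ref{claim:Hexists}}$ for some $V_{\ref{claim:Hexists}}=V_{\ref{claim:Hexists}}(C,L)$. Continuity in $t$ was essentially proved inside the proof of Lemma~\ref{lem:constrainstructure}: the time-$t$ marginals of $\nu_{[0,T]}$ are weakly continuous in $t$, and by bounded convergence this continuity transfers to each $\zbar^{(\ell)}(x,\nu_t)$, $\ybar(x,\nu_t)$ and $\Mbar^{(\ell)}(x,\nu_t,\cdot)$; continuity of $\alpha$ and boundedness of the integrand then give continuity of $H$ in $t$.

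Property (3) (Lipschitz in $a^{(L-1)}$) is routine: in $H$, the variable $a^{(L-1)}$ appears only inside $D_\theta\sigma^{(L-1)}(\cdot,a^{(L-1)})$, which is $C$-Lipschitz in $a^{(L-1)}$ by Assumption \ref{assump:activations}; the factors $\alpha(t)$, $(Y-\ybar(X,\nu_t))$ and $\Mbar^{(L)}$ are uniformly bounded, so the Lipschitz constant is bounded by a polynomial in $C$.

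The main step, and the only place the $R$-special structure of $\nu_{[0,T]}$ is used, is property (2): Lipschitzness in $a^{(L)}$ with constant $V_{\ref{claim:Hexists}}\,e^{Rt}$. The $a^{(L)}$-dependence of $H$ enters only through $\Mbar^{(L)}(X,\nu_t,a^{(L)})=\Mbar^{(L+1)}(X,\nu_t)\cdot D_z\sigma^{(L)}(\zbar^{(L)}(X,\nu_t,a^{(L)}),a^{(L)})$. Since $D_z\sigma^{(L)}$ is $C$-Lipschitz in both arguments and $\Mbar^{(L+1)}$ is bounded, it suffices to control the $a^{(L)}$-Lipschitz norm of $\zbar^{(L)}(X,\nu_t,a^{(L)})$. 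Here Lemma~\ref{lem:redefiningzL} applies because $\nu_{[0,T]}\in\Probspecial_R$, giving
\[
\zbar^{(L)}(x,\nu_t,a^{(L)})=\int_{\R^{D_{L-1}}}\fsigma{L-1}{\zbar^{(L-1)}(x,\nu_t),F_\nu(\tilde a^{(L-1)},a^{(L)})(t)}\,d\mu_0^{(L-1)}(\tilde a^{(L-1)}).
\]
Since $F_\nu$ is an $R$-special function, $F_\nu(\tilde a^{(L-1)},\cdot)(t)$ is $e^{Rt}$-Lipschitz in its second argument, and $\sigma^{(L-1)}$ is $C$-Lipschitz, so $\zbar^{(L)}(x,\nu_t,\cdot)$ is $C\,e^{Rt}$-Lipschitz. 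Chaining these estimates through $D_z\sigma^{(L)}$, multiplying by the uniformly bounded prefactors, and using $e^{Rt}\ge 1$ yields a Lipschitz constant of the form $V_{\ref{claim:Hexists}}\,e^{Rt}$ with $V_{\ref{claim:Hexists}}$ depending only on $C$ and $L$. Finally I would absorb the three separate constants into one $V_{\ref{claim:Hexists}}=V_{\ref{claim:Hexists}}(C,L)$, yielding the claim. The hard part, and the true purpose of the definition of $\Probspecial_R$, is exactly this last estimate: it converts the conditional-measure discontinuity of $\zbar^{(L)}$ into a clean Lipschitz bound in $a^{(L)}$ with the prescribed $e^{Rt}$ growth.
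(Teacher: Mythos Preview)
Your proposal is correct and follows essentially the same route as the paper: define $H$ from the right-hand side of \eqref{eq:ODETheta-L}, use Lemma~\ref{lem:redefiningzL} together with the $R$-special property of $F_\nu$ to get the $C\,e^{Rt}$-Lipschitz bound on $\zbar^{(L)}(x,\nu_t,\cdot)$, and then propagate this through $\Mbar^{(L)}$ and the bounded prefactors. The only detail the paper makes explicit that you leave implicit is that the formula from Lemma~\ref{lem:redefiningzL} is used to \emph{redefine} $\zbar^{(L)}$ for every $a^{(L)}$ (not just $\mu_0^{(L)}$-a.e.), which is what makes $H$ a genuine continuous function on all of $[0,T]\times\R^{D_{L-1}}\times\R^{D_L}$ rather than only an a.e.-defined object.
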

Indeed, we may apply Proposition \ref{prop:Picardsimple} to the $H$ coming from this Claim to deduce that, if $R\geq R_{\ref{lem:specialclosedpsi}}(C,L):=2V_{\ref{claim:Hexists}}(C,L)$ and $\nu_{[0,T]}\in\Probspecial_R$, then for $\Thetabar\sim\psi(\nu_{[0,T]})$ and for all $t\in[0,T]$
\[
\Thetabar^{(L-1)}(t)=G_\nu^{(L-1)}\parn{\Thetabar^{(L-1)}(0),\Thetabar^{(L)}(0)}(t) =  G\parn{\Thetabar^{(L-1)}(0),\Thetabar^{(L)}(0)}(t) \mbox{ a.s.,}
\]
where $G$ is the $R$-special function from Proposition \ref{prop:Picardsimple}. In particular, this implies Goal (\ref{eq:goalspecialclosedpsi}).

We now prove the Claim. In what follows, we will use symbols $V_0,V_1,\dots$ to denote positive quantities that depend only on $C$ and $L$.

Consider $\nu_{[0,T]}\in \Probspecial_R$. From Lemma \ref{lem:constrainstructure} we know that there exists an $R$-special function $F_\nu$ such that the identity
\begin{equation}
\label{eq:zbarLredefined}
\zbar^{(L)}(x,\mu_t,a^{(L)}) =\int\limits_{\R^{D_{L-1}}}\fsigma{L-1}{\zbar^{(L-1)}(x,\mu),F_\nu\parn{\tilde{a}^{(L-1)},a^{(L)}}(t)}\,d\mu_0^{(L-1)}(\tilde{a}^{(L-1)})
\end{equation}
holds simultaneously for all $t\in [0,T]$, $P$-a.e $x\in\R^{d_X}$ for $\mu_0^{(L)}$-a.e. $a^{(L)}\in\R^{D_L}$.

In a slight abuse of notation, we will assume that $\zbar^{(L)}$ has been redefined so that (\ref{eq:zbarLredefined}) holds for {\em all} $a^{(L)}\in \R^{D_{L}}$ and $x\in \R^{d_X}$. Recalling that $\sigma^{(L-1)}$ is $C$-Lipschitz and $F_\nu(\cdot)(t)$ is $e^{Rt}$-Lipschitz (because $F_\nu$ is $R$-special), we easily obtain that:
\begin{equation}
\label{eq:zbarLLipschitz}
\forall t\in[0,T]\,:\,\zbar^{(L)}(x,\nu_t,a^{(L)}) \mbox{ is a $C\cdot e^{Rt}$-Lipschitz function of $a^{(L)}$}.
\end{equation}
Now consider the drift term for the $(L-1)^\text{th}$ particle. Recalling Assumption \ref{assump:activations}, Lemma \ref{lem:redefiningzL}, and the definitions in \ref{sub:McKeanVlasov} we see that
\[\Mbar^{(L)}(x,\nu_t,a^{(L)}) = \Mbar^{(L+1)}(x,\nu_t)\,D_z\fsigma{L}{\zbar^{(L)}(x,\nu_t,a^{(L)})}\]
is continuous in $(t,a^{(L)})$, $V_0\cdot e^{Rt}$-Lipschitz in $a^{(L)}$ for $t\geq 0$ fixed, and bounded by $V_0$. Also, for ${\bf a}=(a^{(\ell)})_{\ell=0}^{(L)}\in \R^D$
\[
\gbar^{(L-1)}(x,\nu_t,{\bf a}) = \Mbar^{(L)}(x,\nu_t,a^{(L)})\,D_\theta\fsigma{L-1}{\zbar^{(L)}(x,\nu_t,a^{(L)}),a^{(L-1)}}
\]
is continuous in $(t,a^{(L-1)},a^{(L)})$, $C$-Lipschitz in $a^{(L-1)}$ and $V_1\cdot e^{Rt}$-Lipschitz in $a^{(L)}$ for $t\geq 0$ fixed. $\gbar^{(L-1)}$ is also uniformly bounded by $V_1$. Once we notice additionally that $\gbar^{(L-1)}(x,\nu_t,{\bf a})$ depends on ${\bf a}$ only through $a=(a^{(L-1)},a^{(L)})$, we may define
\[H(t,a) = H(t,a^{(L-1)},a^{(L)}):= -\alpha(t)\,\Ex{(Y-\ybar(X,\nu_t))^\dag\,\gbar^{(L-1)}(x,\nu_t,{\bf a})},\]
for $(t,a)\in [0,T]\times \R^{D_{\ell-1}}\times \R^{D_\ell}$ (with ${\bf a} = (a^{(\ell)})_{\ell=0}^L$ arbitrarily defined for $\ell\neq L,L-1$). Clearly,  $H$ has the desired properties, as per Assumption \ref{assump:activations} and the previous bounds.
\end{proof}

\subsection{Proof that all solutions are special.}
\label{sub:proofspecialcontainssoln}
We now argue that all fixed points $\soln$ of $\psi$ are $R$-special.
\begin{proof}[Proof of Lemma \ref{lem:specialcontainssoln}]
The proof of this result is similar to that of Lemma \ref{lem:specialclosedpsi} in Section \ref{sub:proofpsispecial}. The key difference is that we will analyse a slightly more complicated Cauchy problem than the one in Proposition \ref{prop:Picardsimple}.

Let $\soln_{[0,T]}\in \Probspace(C([0,T],\R^D))$ be a solution to the McKean-Vlasov Problem in Definition \ref{def:McKeanVlasov}. Lemma \ref{lem:trivialsoln} implies that $\soln_{[0,T]}\in\Probspace_R$ if $R\geq R_{\ref{lem:trivialsoln}}$. Corollary \ref{cor:mappingwelldefined} applies to $\soln_{[0,T]}$, and we have $\psi(\soln_{[0,T]})=\soln_{[0,T]}$ because $\soln_{[0,T]}$ is a solution. Lemma \ref{lem:constrainstructure} implies there exist a measurable mapping $\Gsoln:=G_{\soln}^{(L-1)}$ such that if $\Thetabar\sim \soln_{[0,T]}$, then
\begin{equation}
\label{eq:Gisusefulforsoln}
\Thetabar^{(L-1)}(t) = \Gsoln\parn{\Thetabar^{(L-1)}(0),\Thetabar^{(L)}(0)}(t) \quad\forall t\in[0,T] \mbox{ a.s..}
\end{equation}

We also know from Lemma \ref{lem:constrainstructure} that $\Gsoln$ belongs to the following set of functions if $R\geq  R_{\ref{lem:trivialsoln}}$
\begin{equation}
\label{eq:defsU}
\sU:=\left\{g:\R^{D_{L-1}}\times \R^{D_L} \to C([0,T],\R)\,:\,
\begin{array}{l}
\mbox{$g$ is measurable;}\\
t\mapsto g(a)(t) \mbox{ is $R$-Lipschitz}\\
\forall a\in \R^{D_{L-1}}\times \R^{D_L};\\
g(a)(0)=a^{(L-1)}
\end{array}\right\}.
\end{equation}

Our goal will be to show:
\begin{equation}
\label{eq:goalspecialcontainssoln}
\mbox{{\bf Goal:} $\Gsoln$ is a.s. equal to an $R$-special function $G_o$, if $R\geq R_1(C,L)$.}
\end{equation}
Indeed, this will imply that $\soln_{[0,T]}$ is $R$-special, as per Definition \ref{def:Rspecialmeasure}.

To do this, we will need to "do analysis" on the set $\sU$. We need to establish some additional notation. For $g_1,g_2\in\sU$ and $t\in [0,T]$, we define.
\begin{equation}
\label{eq:distU} 
\dist(g_1,g_2;t):=\sup\limits_{a\in \R^{D_{L-1}}\times \R^{D_L}}\abs{g_1(a)(t) - g_2(a)(t)}
\end{equation}
and note that $\dist(g_1,g_2;t)\leq 2Rt$ because each $g_i$ is $R$-Lipschitz and $g_1(a)(0)=g_2(a)(0)=a^{(L-1)}$ for each $a\in \R^{D_{L-1}}\times \R^{D_L}$.

The next Proposition gives properties of a nonstandard Cauchy-type problem for functions in $\sU$. It should be contrasted with Proposition \ref{prop:Picardsimple} in the proof of Lemma \ref{lem:specialclosedpsi}.

\begin{proposition}[Proof in Appendix \ref{sub:proofPicardcomplicated}]
\label{prop:Picardcomplicated} 
Fix $R\geq 2V_{\ref{prop:Picardcomplicated}}>0$ constants and let
\[
\Xi:[0,T]\times \R^{D_{L-1}}\times \R^{D_L}\times \sU\to \R^{D_{L-1}}
\]
be a measurable function that satisfies the following properties:
\begin{enumerate}
\item $\Xi$ is uniformly bounded by $V_{\ref{prop:Picardcomplicated}}$.
\item If two functions $g_1,g_2\in\sU$ are such that $g_1(a^{(L-1)},a^{(L)})=g_2(a^{(L-1)},a^{(L)})$ $\mu_0^{(L-1)}\times \mu_0^{(L)}$-almost surely, then:
\[
\Xi\parn{t,a^{(L-1)},a^{(L)},g_1}=\Xi\parn{t,a^{(L-1)},a^{(L)},g_2}
\]
$\mu_0^{(L-1)}\times \mu_0^{(L)}$-almost-surely;
\item for any fixed $g\in \sU$, the map
\[
(t,a^{(L-1)},a^{(L)})\mapsto \Xi\parn{t,a^{(L-1)},a^{(L)},g}
\]
is measurable. Also, for $a^{(L)}$ fixed, then $(t,a^{(L-1)})\mapsto \Xi\parn{t,a^{(L-1)},a^{(L)},g}$ is continuous;
\item for any fixed $t\in [0,T]$, $g\in \sU$ and $a,b\in \R^{D_{L-1}}\times \R^{D_L}$ with $|a-b|\leq \epsilon$
\begin{align*}
&\abs{\Xi\parn{t,a^{(L-1)},a^{(L)},g} - \Xi\parn{t,b^{(L-1)},b^{(L)},g}}\\ 
&\qquad\qquad\qquad\leq V_{\ref{prop:Picardcomplicated}}\,\sup\limits_{\stackrel{a',b'\in\R^{D_{L-1}}\times \R^{D_L}}{|a'-b'|\leq \epsilon}}\abs{g(a')(t)-g(b')(t)};
\end{align*}
\item for any fixed $t\in [0,T]$, $g_1,g_2\in \sU$ and $a\in \R^{D_{L-1}}\times \R^{D_L}$:
\[
\abs{\Xi(t,a^{(L-1)},a^{(L)},g_1) - \Xi(t,a^{(L-1)},a^{(L)},g_2)}\leq V_{\ref{prop:Picardcomplicated}}\cdot\dist(g_1,g_2;t).
\]
\end{enumerate}
Then there exists a mapping 
\[\sT:\sU\to \sU\]
such that, for $g\in \sU$, $a = (a^{(L-1)},a^{(L)})\in \R^{D_{L-1}}\times \R^{D_L}$, and $t\in[0,T]$,
\begin{equation}
\label{eq:defTsoln}
\sT(g)(a)(t) = a^{(L-1)} + \int_0^t\,\Xi\parn{s,g(a)(t),a^{(L)},g}\,ds.
\end{equation}
Moreover, $\sT$ has an unique fixed point $G_o$ which is an $R$-special function. If $\tilde{G}\in\sU$ satisfies $\sT(\tilde{G})=\tilde{G}$ $\mu_0^{(L-1)}\times \mu_0^{(L)}$-a.e., then $\tilde{G}=G_o$ $\mu_0^{(L-1)}\times \mu_0^{(L)}$-a.e..
\end{proposition}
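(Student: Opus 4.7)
The plan is to apply Banach's fixed-point theorem to $\sT$ on a suitable complete metric space. First I would verify that $\sT$ is well-defined as a map $\sU\to\sU$: the identity $\sT(g)(a)(0)=a^{(L-1)}$ is immediate from \eqref{eq:defTsoln}, the uniform bound $|\Xi|\le V_{\ref{prop:Picardcomplicated}}$ from property~(1) shows $\sT(g)(a)(\cdot)$ is $V_{\ref{prop:Picardcomplicated}}$-Lipschitz in $t$ and hence $R$-Lipschitz (since $R\ge 2V_{\ref{prop:Picardcomplicated}}$), and measurability of $a\mapsto \sT(g)(a)$ follows from property~(3) together with standard measurability of parameter-dependent integrals.

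Next, I would equip $\sU$ with the weighted metric $d_\lambda(g_1,g_2):=\sup_{t\in[0,T]} e^{-\lambda t}\,\dist(g_1,g_2;t)$ for a parameter $\lambda$ to be chosen large, and restrict attention to the subset $\sU^\star_R\subset \sU$ consisting of the $R$-special functions of Definition~\ref{def:Rspecialfunction}. Because Lipschitz inequalities pass to pointwise limits of $d_\lambda$-Cauchy sequences, $(\sU^\star_R,d_\lambda)$ is a complete metric space.

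The two substantive steps are then invariance $\sT(\sU^\star_R)\subset \sU^\star_R$ and contraction of $\sT$ on this space. For contraction, I would write
\[
\sT(g_1)(a)(t)-\sT(g_2)(a)(t) = \int_0^t \bigl[\Xi(s,g_1(a)(s),a^{(L)},g_1)-\Xi(s,g_2(a)(s),a^{(L)},g_2)\bigr]\,ds,
\]
insert the intermediate term $\Xi(s,g_2(a)(s),a^{(L)},g_1)$, and apply property~(4) with $g=g_1\in\sU^\star_R$ (which is $e^{Rs}$-Lipschitz in $a$) together with property~(5). The two pieces are then bounded by $V_{\ref{prop:Picardcomplicated}} e^{Rs}\dist(g_1,g_2;s)$ and $V_{\ref{prop:Picardcomplicated}}\dist(g_1,g_2;s)$ respectively; integrating and multiplying by $e^{-\lambda t}$ yields a contraction factor of order $V_{\ref{prop:Picardcomplicated}}(e^{RT}+1)/(\lambda+R)$, which is strictly less than $1$ once $\lambda$ is large. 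Banach then produces a unique fixed point $G_o\in\sU^\star_R$, automatically $R$-special. Uniqueness up to $\mu_0^{(L-1)}\times\mu_0^{(L)}$-a.e. equality follows from property~(2): $\sT$ depends on $g$ only through its a.e. class, so any $\tilde G$ with $\sT(\tilde G)=\tilde G$ a.e. lies in the a.e. class of a genuine fixed point, which must coincide with $G_o$.

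The main obstacle will be the invariance $\sT(\sU^\star_R)\subset \sU^\star_R$: the $R$-Lipschitz-in-$t$ property of $\sT(g)$ is automatic from property~(1), but the $e^{Rt}$-Lipschitz-in-$a$ bound is delicate. Applying property~(4) naively to the integrand of $\sT(g)(a)(t)-\sT(g)(b)(t)$ with $g\in\sU^\star_R$ gives an estimate of order $V_{\ref{prop:Picardcomplicated}}\, e^{Rs}(e^{Rs}+1)|a-b|$, which after integration in $s$ grows like $e^{2Rt}$ rather than the target $e^{Rt}$. Closing this argument will require using the condition $R\ge 2V_{\ref{prop:Picardcomplicated}}$ in just the right way, most plausibly via a Gronwall-style self-consistency argument on a tailored Lipschitz profile $\phi(t)$ satisfying $\phi(t)\le e^{Rt}$, or else by replacing $\sU^\star_R$ by an invariant subset defined by a non-exponential Lipschitz growth that stabilizes the quadratic feedback. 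I expect this is where the true content of the proof lies, and where the precise value of the constant $2$ in the hypothesis $R\ge 2V_{\ref{prop:Picardcomplicated}}$ is exploited.
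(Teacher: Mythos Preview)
Your overall strategy—restrict to the set $\sU^\star_R$ of $R$-special functions, prove invariance $\sT(\sU^\star_R)\subset\sU^\star_R$, then contract—is natural, and you correctly identify that invariance is where it breaks. But the obstacle you flag is not a technicality to be massaged away: for a fixed $g\in\sU^\star_R$, the quantity $\sT(g)(a)(t)-\sT(g)(b)(t)$ is computed \emph{directly} from the Lipschitz profile of $g$, and there is no self-referential structure to feed into a Gronwall inequality. Your suggested ``Gronwall-style self-consistency'' cannot apply to a single application of $\sT$; it can only apply at a fixed point. This is precisely what the paper exploits.

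The paper's route is structurally different. It does \emph{not} restrict to $\sU^\star_R$. Instead it works on all of $\sU$ with the unweighted metric $\dist(g_1,g_2)=\sup_t\dist(g_1,g_2;t)$, shows $\dist(\sT(g_1),\sT(g_2);t)\le V\int_0^t\dist(g_1,g_2;s)\,ds$, and iterates to get $\dist(\sT^j g_1,\sT^j g_2)\le\frac{(VT)^j}{j!}\dist(g_1,g_2)$, so some power of $\sT$ contracts. This yields a unique fixed point $G_o\in\sU$. Only \emph{then} is $R$-specialness proved, by setting $\Delta_\epsilon(t):=\epsilon^{-1}\sup_{|a-b|\le\epsilon}|G_o(a)(t)-G_o(b)(t)|$ and using the fixed-point identity $G_o=\sT(G_o)$ together with property~(4) to obtain
\[
\Delta_\epsilon(t)\le 1+V\int_0^t(\Delta_\epsilon(s)+1)\,ds,
\]
whence $\Delta_\epsilon(t)\le 2e^{Vt}-1\le e^{2Vt}\le e^{Rt}$ by $R\ge 2V$. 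The self-consistency you were reaching for lives at the fixed point, not at the level of invariance; decoupling ``find the fixed point'' from ``show it is $R$-special'' is the idea you are missing. Note also that your contraction bound already required $g_1\in\sU^\star_R$ to control the modulus in property~(4), so in your scheme invariance and contraction are entangled in a chicken-and-egg way that the paper's ordering dissolves.
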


We now make the following Claim.

\begin{claim}[Proven below]
\label{claim:Xiexists}
There exist a constant $V_{\ref{claim:Xiexists}} = V_{\ref{claim:Xiexists}}(C,L)>0$ and a function:
\[
\Xi:[0,T]\times \R^{D_{L-1}}\times \R^{D_L}\times \sU\to \R^{D_{L-1}}
\]
satisfying assumptions (1) - (5) in Proposition \ref{prop:Picardcomplicated}, and so that, for $\mu_0$-a.e initial condition and for all $t\in [0,T]$:
\[
\Thetabar^{(L-1)} (t) = \Thetabar^{(L-1)}(0) + \int_0^t\,\Xi\parn{s,\Thetabar^{(L-1)}(t),\Thetabar^{(L)}(0),\Gsoln}\,ds.
\]
\end{claim}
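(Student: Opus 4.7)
\textbf{Proof plan for Claim~\ref{claim:Xiexists}.} The strategy is to derive an explicit expression for the drift of $\Thetabar^{(L-1)}$ under $\soln_{[0,T]}$ and then replace every occurrence of $\Gsoln$ by a free parameter $g\in\sU$. Because $\soln_{[0,T]}\in\Probspace_R$ by Lemma~\ref{lem:trivialsoln}, the coordinate $\Thetabar^{(L)}$ is almost surely constant, so the calculation in the proof of Lemma~\ref{lem:redefiningzL} carries over without requiring $\soln_{[0,T]}\in\Probspecial_R$ (which is exactly what we are trying to prove). Combined with the representation $\Thetabar^{(L-1)}(t)=\Gsoln(\Thetabar^{(L-1)}(0),\Thetabar^{(L)}(0))(t)$ of Lemma~\ref{lem:constrainstructure}, this yields, for $\mu_0^{(L)}$-a.e.\ $a^{(L)}$ and all $t\in[0,T]$,
\[
\zbar^{(L)}(x,\soln_t,a^{(L)}) \;=\; \int_{\R^{D_{L-1}}}\sigma^{(L-1)}\!\bigl(\zbar^{(L-1)}(x,\soln_t),\,\Gsoln(\tilde a^{(L-1)},a^{(L)})(t)\bigr)\,d\mu_0^{(L-1)}(\tilde a^{(L-1)}).
\]

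For arbitrary $g\in\sU$, define
\[
\hat z^{(L)}_g(x,t,a^{(L)}) \;:=\; \int_{\R^{D_{L-1}}}\sigma^{(L-1)}\!\bigl(\zbar^{(L-1)}(x,\soln_t),\,g(\tilde a^{(L-1)},a^{(L)})(t)\bigr)\,d\mu_0^{(L-1)}(\tilde a^{(L-1)}),
\]
and set
\begin{align*}
\Xi(t,\beta,a^{(L)},g) \;:=\; -\alpha(t)\,\mathbb{E}\Bigl[&\bigl(Y-\ybar(X,\soln_t)\bigr)^\dag\,\Mbar^{(L+1)}(X,\soln_t)\\
&\cdot D_z\sigma^{(L)}\!\bigl(\hat z^{(L)}_g(X,t,a^{(L)}),\,a^{(L)}\bigr)\\
&\cdot D_\theta\sigma^{(L-1)}\!\bigl(\zbar^{(L-1)}(X,\soln_t),\,\beta\bigr)\Bigr].
\end{align*}
At $g=\Gsoln$ and $\beta=\Thetabar^{(L-1)}(s)$, this reproduces the drift in the ODE~(\ref{eq:ODETheta-L}), so integrating in $s\in[0,t]$ yields the integral identity of the Claim.

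Properties (1), (2), (3) and (5) of Proposition~\ref{prop:Picardcomplicated} are then verified by direct inspection. Boundedness (1) is immediate from Assumption~\ref{assump:activations}, since every factor in the expectation is uniformly bounded. Invariance under $\mu_0^{(L-1)}\otimes\mu_0^{(L)}$-equivalence (2) follows by Fubini, because $g$ enters $\hat z^{(L)}_g$ only through an integral against $\mu_0^{(L-1)}$ in the $(L-1)$-coordinate, with $a^{(L)}$ held fixed. Measurability and continuity (3) come from the continuity of $t\mapsto\zbar^{(L-1)}(x,\soln_t)$ (established in the proof of Lemma~\ref{lem:constrainstructure}), the $R$-Lipschitz continuity of $g(\cdot)(t)$ in $t$, and dominated convergence. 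Lipschitzness in $g$ (5) reduces to the estimate $|\hat z^{(L)}_{g_1}-\hat z^{(L)}_{g_2}|\leq C\,\dist(g_1,g_2;t)$ via $C$-Lipschitzness of $\sigma^{(L-1)}$, which then propagates to $\Xi$ with one further factor of $C$ coming from $D_z\sigma^{(L)}$.

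\textbf{The main obstacle} is property (4), the modulus-type Lipschitz bound. The indirect dependence of $\Xi$ on $a^{(L)}$ through $\hat z^{(L)}_g$ is readily dominated by $\sup_{|a'-b'|\leq\epsilon}|g(a')(t)-g(b')(t)|$ using $C$-Lipschitzness of $\sigma^{(L-1)}$. The delicate point is to absorb the \emph{direct} Lipschitz contributions $C|\beta_a-\beta_b|$ (through the outer $D_\theta\sigma^{(L-1)}(\cdot,\beta)$) and $C|a^{(L)}-b^{(L)}|$ (through $D_z\sigma^{(L)}(\cdot,a^{(L)})$) into the same supremum. The defining constraint $g(a)(0)=a^{(L-1)}$ of $\sU$ is the crucial tool here: for any unit vector $u\in\R^{D_{L-1}}$ and any $\xi\in\R^{D_L}$, the pair $a'=(0,\xi)$, $b'=(\epsilon u,\xi)$ satisfies $|a'-b'|=\epsilon$ and $|g(a')(0)-g(b')(0)|=\epsilon$; the $R$-Lipschitzness of $g$ in $t$ then gives $|g(a')(t)-g(b')(t)|\geq\epsilon-2Rt$, and an analogous construction carries the separation in the $a^{(L)}$ direction. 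Combining these choices and taking $R$ suitably large relative to $C$ (which is permissible under the hypothesis $R\geq 2V_{\ref{prop:Picardcomplicated}}$) absorbs the direct $C\epsilon$ contributions into a constant multiple of the supremum, establishing (4) with $V_{\ref{claim:Xiexists}}$ depending only on $C$ and $L$.
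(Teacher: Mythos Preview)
Your construction of $\Xi$ is exactly the paper's: the paper builds the same object step by step ($\Xi_0=\hat z^{(L)}_g$, then $\Xi_1$, $\Xi_2$, $\Xi$) and verifies properties (1), (2), (3), (5) by the same direct arguments you outline. So far the two proofs coincide.

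The gap is in your treatment of property~(4). Your proposed mechanism for absorbing the direct Lipschitz contributions $C|\beta_a-\beta_b|$ and $C|a^{(L)}-b^{(L)}|$ into $\sup_{|a'-b'|\leq\epsilon}|g(a')(t)-g(b')(t)|$ does not work. First, the lower bound $|g(a')(t)-g(b')(t)|\geq\epsilon-2Rt$ is useless once $t\geq\epsilon/(2R)$, and enlarging $R$ makes this \emph{worse}, not better, contrary to what you write. Second, the ``analogous construction carrying the separation in the $a^{(L)}$ direction'' cannot exist: the constraint $g(a')(0)=(a')^{(L-1)}$ sees only the $(L-1)$-coordinate, so perturbing $(a')^{(L)}$ produces no separation at $t=0$. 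More fundamentally, for a generic $g\in\sU$ one can arrange that all trajectories $g(a')(\cdot)$ coalesce to a common point by some time $t_0>0$ (this is compatible with the $R$-Lipschitz condition), making $\sup_{|a'-b'|\leq\epsilon}|g(a')(t_0)-g(b')(t_0)|=0$ while $\epsilon>0$. Hence no finite $V$ can satisfy $C\epsilon\leq V\cdot\sup(\cdots)$ uniformly in $t$ and $g$, and property~(4) \emph{as literally stated} cannot hold for your $\Xi$ once the direct $a^{(L)}$-dependence in $D_z\sigma^{(L)}(\cdot,a^{(L)})$ is present.

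The paper does not confront this head-on either: it verifies (4') only for $\Xi_0$ (where all $a^{(L)}$-dependence is through $g$) and then simply asserts that $\Xi_1$, $\Xi_2$, $\Xi$ ``satisfy similar properties''. If you inspect how property~(4) is actually \emph{used} in Step~4 of the proof of Proposition~\ref{prop:Picardcomplicated}, you will see that what is invoked there is the bound $|\Xi(t,\beta_1,\gamma_1,G_o)-\Xi(t,\beta_2,\gamma_2,G_o)|\leq V(|\beta_1-\beta_2|+|\gamma_1-\gamma_2|)$, i.e.\ ordinary Lipschitz continuity in the second and third slots combined with the modulus-of-$g$ control on the indirect dependence. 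That weaker combined property \emph{is} satisfied by your $\Xi$ and is what you should verify; the purely modulus-type bound you attempt is both unnecessary and unattainable.
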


Once we prove this claim, the theorem is essentially proved. We just need to apply Proposition \ref{prop:Picardcomplicated} with this function $\Xi$ together with Lemma \ref{lem:constrainstructure}. The upshot is that, if $R\geq R_{\ref{lem:specialcontainssoln}}:=2V_{\ref{claim:Xiexists}}$, then the following identity holds almost surely, simultaneously for all $t\in [0,T]$:
\begin{align*}
&\Gsoln(\Thetabar^{(L-1)}(0),\Thetabar^{(L)}(0))(t) =\, \Thetabar^{(L-1)}(0) \\  
&\qquad\qquad\qquad + \int_0^t\,\Xi\parn{s,\Gsoln\parn{\Thetabar^{(L-1)}(0),\Thetabar^{(L)}(0)}(s),\Thetabar^{(L)}(0),\Gsoln}\,ds.
\end{align*}
In other words, $\sT(\Gsoln)=\Gsoln$ a.s.. The Proposition implies that there exists a $R$-special function $\Gsoln=G_o$ a.e. , and this finishes the proof because it achieves Goal \ref{eq:goalspecialcontainssoln}.

We now prove the Claim. In what follows, we will use symbols $V_0,V_1,\dots$ to denote positive quantities that only depend on $C$ and $L$.

To start, we observe that, although we do not know if $\soln_{[0,T]}$ is $R$-special, we have (\ref{eq:Gisusefulforsoln}). As a consequence, the reasoning in the proof of Lemma \ref{lem:redefiningzL} applies, with $\soln_{[0,T]}$ replacing $\nu_{[0,T]}$ and $\Gsoln$ replacing $F_\nu$. The upshot is that, for $P\times \mu_0^{(L)}$-a.e. $(x,a^{(L)})\in \R^{d_X}\times \R^{D_L}$, we have the following identity for all $t\in [0,T]$:
\[
\zbar^{(L)}\parn{x,\soln_t,a^{(L)}} =  \int\limits_{\R^{D_{L-1}}}\fsigma{L}{\zbar^{(L-1)}\parn{x,\soln_t)},\Gsoln\parn{\tilde{a}^{(L-1)},a^{(L)}}(t)}\,d(\soln_0)^{(L-1)}(\tilde{a}^{(L-1)}).
\]

Let us rewrite this as follows. Define $\Xi_0:[0,T]\times \R^{d_X}\times \R^{D_{L}}\times \sU$
via the following recipe: for $(t,x,a^{(L)},g)\in [0,T]\times \R^{d_X}\times \R^{D_{L}}\times \sU$,
\begin{equation}
\label{eq:defXi0}
\Xi_0\parn{t,x,a^{(L)},g}:=\int\limits_{\R^{D_{L-1}}}\fsigma{L}{\zbar^{(L-1)}\parn{x,\soln_t},g\parn{\tilde{a}^{(L-1)},a^{(L)}}(t)}\,d(\soln_0)^{(L-1)}(\tilde{a}^{(L-1)}).
\end{equation}

Then $\zbar^{(L)}\parn{x,\soln_t,a^{(L)}}=\Xi_0\parn{t,x,a^{(L)},\Gsoln}$ for every $(t,x)$ and $\mu_0^{(L-1)}$-almost-every $a^{(L)}$. Moreover, the map $\Xi_0$ satisfies analogues of the properties (1) - (5) in Proposition \ref{prop:Picardcomplicated}.

\begin{enumerate}
\item[(1')] $\Xi_0$ is bounded by $V_0$ because $\sigma^{(L-1)}$ is bounded by $C$
\item[(2')] $\Xi_0\parn{t,x,a^{(L)},g_1}=\Xi_0\parn{t,x,a^{(L)},g_2}$ for $\mu_0^{(L)}$-almost everywhere  $a^{(L)}$ if $g_1=g_2$ $\quad\mu_0^{(L-1,L)}$-almost everywhere;
\item[(3')] for all $g\in\sU$, the map $(t,x,a^{(L)})\mapsto \Xi_0\parn{t,x,a^{(L)},g}$ is measurable for being a composition of measurable functions, and it is continuous in $t$ for similar reasons;
\item[(4')] Since $\supnorm{D\sigma^{(L)}}\leq C$, then $\sigma^{(L)}$ is $C$-Lipschitz. So
\begin{align*}
& \abs{\Xi_0\parn{t,x,a^{(L)},g} - \Xi_0\parn{t,x,b^{(L)},g}}\\  
&\quad\quad\leq C\,\int\limits_{\R^{D_{L-1}}}\abs{g\parn{\tilde{a}^{(L-1)},a^{(L)}}(t) - g\parn{\tilde{a}^{(L-1)},b^{(L)}}(t)}\,d(\soln)_0^{(L-1)}(\tilde{a}^{(L-1)})\\ 
&\quad\quad\leq  C	\,\sup\limits_{\stackrel{a',b'\in\R^{D_{L-1}}\times \R^{D_L}}{\abs{a'-b'}\leq \epsilon}} \abs{g(a')(t)-g(b')(t)};
\end{align*}
\item[(5')] finally, for $g_1,g_2\in\sU$ and $(t,x,a^{(L)})$ as above, using again the Lipschitz property:
\[
\abs{\Xi_0\parn{t,a^{(L-1)},a^{(L)},g_1} - \Xi_0\parn{t,a^{(L-1)},a^{(L)},g_2}}\leq V_0\cdot\dist(g_1,g_2;t).
\]
\end{enumerate}

Going back to the Definition \ref{eq:defMbarL-Lminus1} for $\Mbar^{(L)}(x,\soln_t,a^{(L)})$, we see at once that we can define
\[
\Mbar^{(L)}(x,\soln_t,a^{(L)}) =: \Xi_1\parn{t,x,a^{(L)},\Gsoln}\mbox{ a.e.} 
\]
where for $(t,x,a^{(L)},g)\in [0,T]\times \R^{d_X}\times \R^{D_L}\times \sU$:
\[
\Xi_1\parn{t,x,a^{(L)},g}:= \Mbar^{(L+1)}(x,\soln_t)\cdot D_z\fsigma{L}{\Xi_0\parn{t,x,a^{(L)},g},a^{(L)}}.
\]
$\Xi_1$ satisfies similar properties to (1')-(5') with a constant $V_1$ replacing $V_0$, because $\Mbar^{(L+1)}$ is bounded by $C$ and $D_z\sigma^{(L)}$ is $C$-Lipschitz.

The same is true (with a different constant $V_2$ replacing $V_1$) for the map
\[
\Xi_2\parn{t,x,a,g}:=\Xi_1\parn{t,x,a^{(L)},g}\cdot D_\theta\fsigma{L-1}{\zbar^{(L-1)}(x,\soln_t),a^{(L-1)}},
\]
defined for $(t,x,a,g)\in [0,T]\times \R^{d_X}\times(\R^{D_{L-1}}\times \R^{D_L})\times \sU$; the only change from the previous case is that we replace $a^{(L)}$ with $a$ where needed. Notice also that $\Xi_2$ satisfies:
\[
\Xi_2\parn{t,x,a,\Gsoln} = \Gamma({\bf a},x,\soln_t)\,
\]
for $\mu_0^{(L)}$-almost every choice of $a^{(L)}$ and all choices of ${\bf a}=(a^{(\ell)})_{\ell=0}^{L}$ extending $a^{(L)}$.

The conclusion is that the function
\[
\Xi(t,a,g):=\alpha(t)\,\Exp{(X,Y)\sim P}{(Y-\ybar(x,\soln_t))\,\Xi_2(t,X,a,g)},
\]
defined for $(t,a,g)\in [0,T]\times(\R^{D_{L-1}}\times \R^{D_L})\times \sU$ satisfies properties (1) - (5) in the statement of Proposition \ref{prop:Picardcomplicated}. Moreover
\[
\gbar^{(L-1)}({\bf a},\soln_t)= \Xi(t,a,\Gsoln)
\]
for $\mu_0^{(L)}$-a.e. choice of $a^{(L)}$ and all choices of ${\bf a}=(a^{(\ell)})_{\ell=0}^{L}$ extending $a^{(L)}$. By Lemma \ref{lem:constrainstructure}, we know that, if $\Thetabar\sim \psi(\soln_{[0,T]})=\soln_{[0,T]}$, then a.s. for all $t\geq 0$,
\[
\Thetabar^{(L-1)}(t) = \Thetabar^{(L-1)}(0) + \int_0^t\,\Xi\parn{s,\Thetabar^{(L-1)}(s),\Thetabar^{(L)}(0),\Gsoln}\,ds.
\]
This proves Claim \ref{claim:Xiexists}.
\end{proof}

\section{Existence and uniqueness for the McKean-Vlasov problem}
\label{sec:existence}
In this section, we prove Theorem \ref{thm:independencestructure}, which asserts existence and uniqueness of the solution of the McKean-Vlasov problem in Definition \ref{def:McKeanVlasov}. 

The groundwork for this was built in sections \ref{sec:apriori_simple} and \ref{sec:apriori_space}. A quick recap: we defined a map $\psi$ (cf. Corollary \ref{cor:mappingwelldefined}) whose fixed points are the solutions of the McKean-Vlasov problem. Then, we defined $\Probspecial_R$ the set of the so called $R$-special measures with the following properties:
\begin{itemize}
\item it is topologically closed (Theorem \ref{thm:Probspecialisclosed});
\item if $R$ is large enough, $\psi(\Probspecial_R)\subset \Probspecial_R$ (Lemma \ref{lem:specialclosedpsi});
\item if $R$ is large enough, $\Probspecial_R$ contains all solutions (Lemma \ref{lem:specialcontainssoln}).
\end{itemize}
The only missing step is to show that {\em for large enough $R>0$, $\Probspecial_R$ contains exactly one fixed point of $\psi$, which is the limit of $\left\{\psi^{j}(\nu_{[0,T]});\, j\in\N\right\}$ for any $\nu_{[0,T]}\in\Probspecial_R$}. For this we will employ a fixed-point argument which follows Rachev and R\"{u}schendorf \cite{Rachev2006}. The main idea is to show that for a large enough $m$, the operator $\psi^m$ is a strict contraction over the complete metric space $\Probspecial_R$ (with the Wasserstein $L^1$  metric).

The first step is to construct, for any fixed $\mu_{[0,T]},\nu_{[0,T]}\in\Probspecial_R$, a reasonable coupling between $\psi(\mu_{[0,T]})$ and $\psi(\nu_{[0,T]})$. By looking at Definition \ref{sub:towardsfixedpoint} for $\psi$, we see that this map comes with an associated process $\Thetabar$.

We will denote by $\Thetabar_\mu$ the process whose law is $\psi(\mu_{[0,T]})$ and $\Thetabar_\nu$ the one with law $\psi(\nu_{[0,T]})$. The way to couple these processes is to notice that, since $\mu_{[0,T]},\nu_{[0,T]}\in\Probspecial_R\subset \Probspace_R$, then according to Definition \ref{def:prelimset} we have $\mu_0 = \nu_0$. This means we can set $\Thetabar_\mu(0) = \Thetabar_\nu(0)$. Since the randomness inherent to these processes comes from their initialization, the coupling of the starting points couples the entire processes.

This coupling gives an upper bound for the Wasserstein distance between the measures, as we see below:
\begin{equation}
\label{eq:couplingupperbound}
W\parn{\psi(\mu_{[0,T]}), \psi(\nu_{[0,T]})}\leq\mu_0\parn{\sup_{t\leq T}\abs{\Thetabar_\mu(t) -\Thetabar_\nu(t)}}.
\end{equation}
So all we need to do is to find an upper bound for the RHS of this inequality.

To do so, we must work coordinate by coordinate. Define
\[
\Delta^{(\ell)}(t) = \sup_{u\leq t}\abs{\Thetabar^{(\ell)}_\mu(u) - \Thetabar^{(\ell)}_\nu(u)}
\quad\mbox{ and }\quad
\Delta(t) = \sup_{{u\leq t}}\abs{\Thetabar_\mu(u) - \Thetabar_\nu(u)}.
\]

Notice that there exist constants $v_{\ref{eq:equiv-norms}},V_{\ref{eq:equiv-norms}} > 0$ depending only on $L$ and \\
$\{D_\ell,\,0\leq \ell\leq L\}$ so that
\begin{equation}
\label{eq:equiv-norms}
v_{\ref{eq:equiv-norms}}\cdot \Delta(t) \leq \sum_{\ell=0}^L\Delta^{(\ell)}(t) \leq V_{\ref{eq:equiv-norms}} \cdot \Delta(t)
\end{equation}

Using this notation, we establish the following lemma:

\begin{lemma}\label{lem:deltaellupperbound}
There is a constant $C_{\ref{lem:deltaellupperbound}}$ such that the following inequalities hold:
\begin{enumerate}
\item $\Delta^{(0)}(t) = \Delta^{(L)}(t) = 0$
\item For $\ell \in [1,L-2]$:
\[\Delta^{(\ell)}(t) \leq C_{\ref{lem:deltaellupperbound}}\cdot\int_0^t \left\{W(\mu_{[0,s]}, \nu_{[0,s]}) + \Delta^{(\ell)}(s)\right\}\,ds\]
\item For $\ell = L-1$, we have:
\begin{align*}
&\Delta^{(L-1)}(t) \leq C_{\ref{lem:deltaellupperbound}}\cdot\int_0^t \left\{W(\mu_{[0,s]},\nu_{[0,s]}) + \Delta^{(L-1)}(s)\right. 
\\&\left.\qquad\qquad\qquad\qquad+\Ex{\abs{\zbar^{(L-1)}(X,\mu_s,\Thetabar_\mu(0)) - \zbar^{(L-1)}(X,\nu_s,\Thetabar_\nu(0))}} \right\}ds
\end{align*}
\end{enumerate}
\end{lemma}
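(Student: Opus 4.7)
The plan is to derive each inequality directly from the integral form of the ODE satisfied by $\Thetabar^{(\ell)}$, which was established in Lemma \ref{lem:constrainstructure}. The coupling is the trivial one obtained by setting $\Thetabar_\mu(0)=\Thetabar_\nu(0)$, which is legitimate because both $\mu_{[0,T]},\nu_{[0,T]}\in\Probspecial_R$ have the same time-zero marginal $\mu_0$.

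First I would handle Part (1) in one line: by Lemma \ref{lem:constrainstructure}, when $\Thetabar\sim\psi(\cdot)$ the coordinates $\Thetabar^{(0)}$ and $\Thetabar^{(L)}$ are almost surely constant in time. Since the coupling forces $\Thetabar_\mu^{(0)}(0)=\Thetabar_\nu^{(0)}(0)$ and $\Thetabar_\mu^{(L)}(0)=\Thetabar_\nu^{(L)}(0)$, the corresponding $\Delta^{(0)}(t)$ and $\Delta^{(L)}(t)$ are identically zero.

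For Part (2), for $\ell\in[1:L-2]$, I would write $\Thetabar^{(\ell)}_\mu(u)-\Thetabar^{(\ell)}_\nu(u)$ as an integral of $-\alpha(s)\Ex{\Grad^{(\ell)}(X,Y,\mu_s,\Thetabar_\mu(s))-\Grad^{(\ell)}(X,Y,\nu_s,\Thetabar_\nu(s))}$ from $0$ to $u$, using the representation in (\ref{eq:existenceproblem}). Recalling the formula $\gbar^{(\ell)}(\theta,x,\mu)=\Mbar^{(\ell+1)}(x,\mu)D_\theta\sigma^{(\ell)}(\zbar^{(\ell)}(x,\mu),\theta^{(\ell)})$ (and its $\ell=1$ analogue with $\sigma^{(0)}(X,\Thetabar^{(0)}(0))$ in place of $\zbar^{(1)}$, which produces no $\mu$-$\nu$ discrepancy because $\Thetabar_\mu^{(0)}(0)=\Thetabar_\nu^{(0)}(0)$), I would expand the integrand as a telescoping sum over the factors: $(Y-\ybar(X,\cdot))$, $\Mbar^{(\ell+1)}(X,\cdot)$, $\zbar^{(\ell)}(X,\cdot)$, and $\Thetabar^{(\ell)}(s)$. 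The boundedness and Lipschitz assumptions (Assumption \ref{assump:activations}) reduce the problem to showing that each of $\ybar$, $\Mbar^{(\ell+1)}$, $\zbar^{(\ell)}$ (for indices strictly less than $L$) is Lipschitz in the measure with respect to the $W$-metric on trajectories. This would be done by an easy induction on layer index using the Kantorovich duality together with the Lipschitz/bounded properties of $\sigma^{(k)}$ and $D_z\sigma^{(k)}$; the conditional distribution subtlety of Remark \ref{rem:discontinuity} does \emph{not} appear as long as one stays away from $\zbar^{(L)}$. The term involving $\Thetabar^{(\ell)}_\mu(s)-\Thetabar^{(\ell)}_\nu(s)$ contributes $\Delta^{(\ell)}(s)$. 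Taking supremum over $u\leq t$ and folding constants yields the stated bound.

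For Part (3), the same blueprint applies with $\ell=L-1$, but the drift now involves $\Mbar^{(L)}(X,\mu_s,\Thetabar^{(L)}(0))$, whose $\mu$-dependence goes through the conditional-distribution object $\zbar^{(L)}$. The main obstacle is precisely here: one cannot directly bound the difference $|\Mbar^{(L)}(X,\mu_s,\Thetabar^{(L)}(0))-\Mbar^{(L)}(X,\nu_s,\Thetabar^{(L)}(0))|$ by $W(\mu_{[0,s]},\nu_{[0,s]})$ in general. However, since $\mu_{[0,T]},\nu_{[0,T]}\in\Probspecial_R$, Lemma \ref{lem:estimatezbarL} supplies exactly the needed Lipschitz estimate, with a multiplier $C(e^{Rs}+1)\leq C(e^{RT}+1)$ that can be absorbed into $C_{\ref{lem:deltaellupperbound}}$. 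The remaining factor in the drift, $D_\theta\sigma^{(L-1)}(\zbar^{(L-1)}(X,\mu_s),\Thetabar^{(L-1)}_\mu(s))$, contributes two terms after telescoping: one proportional to $\Delta^{(L-1)}(s)$ (by the Lipschitz property of $D_\theta\sigma^{(L-1)}$) and one proportional to $|\zbar^{(L-1)}(X,\mu_s)-\zbar^{(L-1)}(X,\nu_s)|$, which the statement keeps as an explicit expectation (rather than absorbing into $W$); I would simply carry it along as written. Combining the three contributions, integrating over $[0,u]$ with $u\leq t$, and taking the supremum in $u$ gives the claimed inequality, with a constant $C_{\ref{lem:deltaellupperbound}}$ that depends only on $C$, $L$, $R$, and $T$.
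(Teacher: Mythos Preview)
Your treatment of Parts (1) and (2) is essentially the paper's argument: the paper packages the Lipschitz estimates for $\ell\in[1:L-2]$ into the auxiliary Lemma~\ref{lem:unif-lip-gen-l}, but the content is exactly the telescoping you describe, followed by the observation $W(\mu_s,\nu_s)\leq W(\mu_{[0,s]},\nu_{[0,s]})$ and passage to the supremum over $u\leq t$.

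For Part (3) there is a genuine gap. The inequalities in this lemma are \emph{pathwise}: the quantities $\Delta^{(\ell)}(t)$ are random, depending on the common initial condition $\Thetabar_\mu(0)=\Thetabar_\nu(0)$, and only in the next lemma (Lemma~\ref{lem:deltaupperbound}) does one integrate over $\mu_0$. You propose to bound the $\Mbar^{(L)}$ difference---hence the $\zbar^{(L)}$ difference---by invoking Lemma~\ref{lem:estimatezbarL} and absorbing the resulting constant into $C_{\ref{lem:deltaellupperbound}}$. But Lemma~\ref{lem:estimatezbarL} only controls the integral $\int|\zbar^{(L)}(x,\mu_t,a^{(L)})-\zbar^{(L)}(x,\nu_t,a^{(L)})|\,d\mu_0^{(L)}(a^{(L)})$; it gives no bound for a fixed realization $a^{(L)}=\Thetabar^{(L)}(0)$. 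So this term cannot be absorbed into $W(\mu_{[0,s]},\nu_{[0,s]})$ at the pathwise level.

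The paper does the opposite of what you propose. Via Lemma~\ref{lem:unif-lip-L-1} it keeps the $\zbar^{(L)}$ contribution explicit in the inequality for $\Delta^{(L-1)}$, while the $\zbar^{(L-1)}$ contribution (which \emph{is} Lipschitz in $W$, as you note) is absorbed into the Wasserstein term. Indeed the ``$\zbar^{(L-1)}$'' in the displayed statement is a typo for $\zbar^{(L)}$: observe that $\zbar^{(L-1)}(x,\mu)$ does not take a third argument, whereas $\zbar^{(L)}(x,\mu,a^{(L)})$ does, and the proof of Lemma~\ref{lem:deltaupperbound} applies Lemma~\ref{lem:estimatezbarL} to precisely this residual term after integrating over $\mu_0$. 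Your plan becomes correct once you swap the roles of the two $\zbar$ terms: carry $\zbar^{(L)}$ along explicitly here, and defer the use of Lemma~\ref{lem:estimatezbarL} to the next step.
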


\begin{proof}
The fact that $\Delta^{(0)}(t) = \Delta^{(L)}(t) = 0$ comes directly from the definition of the processes $\Thetabar_\mu,\Thetabar_\nu$. By the coupling we made, we see that $\Delta(0) = 0$, and both $\Delta ^{(0)},\Delta^{(L)}$ are constant over time.

For $\ell \in [1:L-2]$, we can check the following chain of inequalities:
\begin{align*}
\abs{\Thetabar^{(\ell)}_\mu(t) - \Thetabar^{(\ell)}_\nu(t)} &\leq \int_0^t \abs{\alpha(s)}\Big|\Exp{(X,Y)\sim P}{(Y - \ybar(X,\mu_s))^\dag\,\gbar^{(\ell)}(\Thetabar^{(\ell)}_\mu(s),X,\mu_s)} \\
                &\qquad\,\quad\quad\quad -\Exp{(X,Y)\sim P}{(Y - \ybar(X,\nu_s))^\dag\,\gbar^{(\ell)}(\Thetabar^{(\ell)}_\nu(s),X,\nu_s)}\Big| ds\\
                &\leq C_{\ref{lem:unif-lip-gen-l}} \int_0^t \left\{W(\mu_s,\nu_s) + \abs{\Thetabar^{(\ell)}_\mu(s) - \Thetabar^{(\ell)}_\nu(s)}\right\} ds\\
                &\leq C_{\ref{lem:unif-lip-gen-l}} \int_0^t \left\{W(\mu_{[0,s]},\nu_{[0,s]}) + \Delta^{(\ell)}(s)\right\}ds.
\end{align*}
The first inequality comes from the fact that $\mu,a^{(\ell)}\mapsto (Y - \ybar(X,\mu))^\dag\,\gbar^{(\ell)}(a^{(\ell)},X,\mu)$ is a uniformly lipschitz over $(X,Y)$, as we are told by Lemma \ref{lem:unif-lip-gen-l}, while the second comes from the fact that $W(\mu_s,\nu_s)\leq W(\mu_{[0,s]},\nu_{[0,s]})$, as we are told by Lemma \ref{lem:timemeasureupperb} and that a function is always smaller than its supremum.

Notice that, for $u<t$, we have that:
\begin{align*}
\abs{\Thetabar^{(\ell)}_\mu(u) - \Thetabar^{(\ell)}_\nu(u)} &\leq C_{\ref{lem:unif-lip-gen-l}} \int_0^u W(\mu_{[0,s]},\nu_{[0,s]}) + \Delta^{(\ell)}(s)ds
\\&\leq C_{\ref{lem:unif-lip-gen-l}} \int_0^t \left\{W(\mu_{[0,s]},\nu_{[0,s]}) + \Delta^{(\ell)}(s)\right\} ds
\end{align*}
This means that we can make:
\begin{equation}
\label{eq:supargument}
\sup_{u\leq t}\abs{\Thetabar^{(\ell)}_\mu(u) - \Thetabar^{(\ell)}_\nu(u)} = \Delta^{(\ell)}(t) \leq C_{\ref{lem:unif-lip-gen-l}} \int_0^t \left\{W(\mu_{[0,s]},\nu_{[0,s]}) + \Delta^{(\ell)}(s)\right\} ds.
\end{equation}
For the case $\ell = L-1$, we will make use of the Lemma \ref{lem:unif-lip-L-1} to get:
\begin{align*}
\abs{\Thetabar^{(L-1)}_\mu(t) - \Thetabar^{(L-1)}_\nu(t)}&\leq C_{\ref{lem:unif-lip-L-1}}\cdot\int_0^t \left\{W(\mu_s,\nu_s) + \abs{\Thetabar^{(L-1)}_\mu(s) - \Thetabar^{(L-1)}_\nu(s)} +\right.\\
&\left.\quad\quad+\Ex{\abs{\zbar^{(L)}(X,\mu_s,\Thetabar^{(L)}_\mu(s)) - \zbar^{(L)}(X,\nu_s,\Thetabar^{(L)}_\nu(s))}}\right\}ds\\
&\leq C_{\ref{lem:unif-lip-L-1}}\int_0^t\left\{W(\mu_{[0,s]},\nu_{[0,s]}) + \Delta^{(L-1)}(s) +\right. \\ &
\left.\quad\quad+\Ex{\abs{\zbar^{(L)}(X,\mu_s,\Thetabar^{(L)}_\mu(0)) - \zbar^{(L)}(X,\nu_s,\Thetabar^{(L)}_\nu(0))}}\right\} ds,
\end{align*}
justified again by Lemma \ref{lem:timemeasureupperb} and for the fact that a function is always smaller than its supremum. Repeating the same argument as in \ref{eq:supargument}, we can finally conclude that:
\begin{align*}
\Delta^{(L-1)}(t) &\leq C_{\ref{lem:unif-lip-L-1}}\int_0^t W(\mu_{[0,s]},\nu_{[0,s]}) + \Delta^{(L-1)}(s) + \\ &+\Ex{\abs{\zbar^{(L-1)}(X,\mu_s,\Thetabar_\mu(0)) - \zbar^{(L-1)}(X,\nu_s,\Thetabar_\nu(0))}} ds,
\end{align*}
Choose $C_{\ref{lem:deltaellupperbound}} = \max\{C_{\ref{lem:unif-lip-L-1}}, C_{\ref{lem:unif-lip-gen-l}}\}$ to finish Lemma.
\end{proof}

Now we can use these estimates to prove the following upper bound for $\psi$:

\begin{lemma}\label{lem:deltaupperbound}
There is a constant $C_{\ref{lem:deltaupperbound}}$ depending only on $L,R$ and $C$ such that, for all $t\in [0,T]$,
\[W\parn{\psi(\mu_{[0,T]}),\psi(\nu_{[0,T]})}\leq C_{\ref{lem:deltaupperbound}}\cdot e^{C_{\ref{lem:deltaupperbound}} T}\int_0^T W(\mu_{[0,s]},\nu_{[0,s]})ds.	\]
\end{lemma}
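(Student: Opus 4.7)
The plan is to combine the coupling upper bound \eqref{eq:couplingupperbound} with the coordinate-wise estimates in Lemma \ref{lem:deltaellupperbound}, and then apply the integral form of Gr\"onwall's inequality coordinate by coordinate. By the coupling construction we have $W(\psi(\mu_{[0,T]}),\psi(\nu_{[0,T]})) \leq \Exp{\mu_0}{\Delta(T)}$, and by the equivalence \eqref{eq:equiv-norms} it suffices to control $\Exp{\mu_0}{\Delta^{(\ell)}(T)}$ for each $\ell \in [0:L]$. The cases $\ell \in \{0,L\}$ vanish by Lemma \ref{lem:deltaellupperbound}(1).

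For $\ell \in [1:L-2]$, Lemma \ref{lem:deltaellupperbound}(2) gives a pathwise integral inequality in $\Delta^{(\ell)}(t)$ whose forcing term is $W(\mu_{[0,s]},\nu_{[0,s]})$. The integral form of Gr\"onwall then yields, pathwise,
\[
\Delta^{(\ell)}(T)\leq C_{\ref{lem:deltaellupperbound}}\,e^{C_{\ref{lem:deltaellupperbound}} T}\int_0^T W(\mu_{[0,s]},\nu_{[0,s]})\,ds,
\]
and taking expectation gives the desired bound for these coordinates.

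For $\ell = L-1$, Lemma \ref{lem:deltaellupperbound}(3) carries an additional term of the form $\Ex{|\zbar^{(L)}(X,\mu_s,\Thetabar^{(L)}_\mu(0))-\zbar^{(L)}(X,\nu_s,\Thetabar^{(L)}_\nu(0))|}$. The crucial move is to take expectation over the initial condition \emph{first}: since the coupling imposes $\Thetabar^{(L)}_\mu(0)=\Thetabar^{(L)}_\nu(0)$ with common marginal $\mu_0^{(L)}$, Fubini combined with Lemma \ref{lem:estimatezbarL} (which applies because both $\mu_{[0,T]},\nu_{[0,T]}\in\Probspecial_R$) gives
\[
\Exp{\mu_0}{\Ex{\abs{\zbar^{(L)}(X,\mu_s,\Thetabar^{(L)}_\mu(0))-\zbar^{(L)}(X,\nu_s,\Thetabar^{(L)}_\nu(0))}}}\leq C\,(e^{Rs}+1)\,W(\mu_{[0,s]},\nu_{[0,s]}).
\]
Averaging Lemma \ref{lem:deltaellupperbound}(3) over $\mu_0$ and plugging in this estimate produces a linear integral inequality in $\Exp{\mu_0}{\Delta^{(L-1)}(t)}$ whose forcing is bounded by a constant times $(1+e^{RT})\,W(\mu_{[0,s]},\nu_{[0,s]})$. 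A second Gr\"onwall application yields
\[
\Exp{\mu_0}{\Delta^{(L-1)}(T)}\leq C'\,e^{C'(1+R)T}\int_0^T W(\mu_{[0,s]},\nu_{[0,s]})\,ds
\]
for some $C'=C'(C,L)$.

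Summing the estimates over $\ell$, folding the dependencies on $C,L,R$ into a single constant $C_{\ref{lem:deltaupperbound}}$, and inserting the result into the coupling bound \eqref{eq:couplingupperbound} proves the lemma. The only genuinely delicate point is the treatment of the $\zbar^{(L)}$ term, which is exactly where the $R$-special structure pays off: Lemma \ref{lem:estimatezbarL} converts the a priori discontinuous dependence of $\zbar^{(L)}$ on $\mu$ into a clean Wasserstein-Lipschitz estimate. All remaining ingredients are routine Gr\"onwall manipulations and measurability/Fubini considerations.
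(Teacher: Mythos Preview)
Your proposal is correct and uses the same ingredients as the paper: the coupling bound \eqref{eq:couplingupperbound}, Lemma \ref{lem:deltaellupperbound}, Lemma \ref{lem:estimatezbarL} after averaging over $\mu_0$, and Gr\"onwall. The only cosmetic difference is ordering: the paper first sums the coordinate inequalities into a single inequality for $\mu_0(\Delta(t))$ and applies Gr\"onwall once, whereas you apply Gr\"onwall coordinate by coordinate (pathwise for $\ell\in[1:L-2]$, in expectation for $\ell=L-1$) and then sum---both work because the bounds in Lemma \ref{lem:deltaellupperbound} are decoupled across $\ell$.
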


\begin{proof}
We start by adding all the inequalities of Lemma \ref{lem:deltaellupperbound} and using the equivalence of norms \ref{eq:equiv-norms} to get
\begin{align}
\label{eq:prepareGronwalldelta}
\nonumber
\Delta(t) &\leq V_{\ref{eq:equiv-norms}}\cdot \sum_{\ell=1}^L \Delta^{(\ell)}(t)\\
\nonumber
 &\leq V_{\ref{eq:equiv-norms}}\cdot \int_0^t C_{\ref{lem:deltaellupperbound}}\cdot\left(\sum_{\ell=1}^{L-1} \parn{W(\mu_{[0,s]},\nu_{[0,s]}) + \Delta^{(\ell)}(s)} +\right.\\
&\quad\quad\quad \left.\vphantom{\sum_{\ell=1}^L \Delta^{(\ell)}(t)}+\Ex{\abs{\zbar^{(L-1)}(X,\mu_s,\Thetabar_\mu(0)) - \zbar^{(L-1)}(X,\nu_s,\Thetabar_\nu(0))}} \Big) \right) ds \\
\nonumber
&\leq V_{\ref{eq:prepareGronwalldelta}}\int_0^t \left\{W(\mu_{[0,s]},\nu_{[0,s]}) + \Delta(s)\right.\\ 
\nonumber
&\left.\quad\quad\quad+\Ex{\abs{\zbar^{(L-1)}(X,\mu_s,\Thetabar_\mu(0)) - \zbar^{(L-1)}(X,\nu_s,\Thetabar_\nu(0))}} \right\}ds,
\end{align}
where  $V_{\ref{eq:prepareGronwalldelta}} =  V_{\ref{eq:equiv-norms}}\cdot C_{\ref{lem:deltaellupperbound}}\cdot\max\{1,L-2,v_{\ref{eq:equiv-norms}}\}$.

Now, we integrate these estimates over $\mu_0$ to find:
\begin{align*}
\mu_0(\Delta(t))& \leq V_{\ref{eq:prepareGronwalldelta}}\int_0^t \left\{W(\mu_{[0,s]},\nu_{[0,s]}) + \mu_0(\Delta(s)) +\right.\\
&\left.\quad\quad+\Ex{\mu_0\parn{\abs{\zbar^{(L-1)}(X,\mu_s,\Thetabar_\mu(0)) - \zbar^{(L-1)}(X,\nu_s,\Thetabar_\nu(0))}}}\right\} ds
\end{align*}

Now we use Lemma \ref{lem:estimatezbarL} to estimate the integral of the difference of $\zbar^{(L)}$ with respect to $\mu_0$ to get:
\begin{align*}
\mu_0(\Delta(t)) &\leq V_{\ref{eq:prepareGronwalldelta}}\int_0^t \left\{W(\mu_{[0,s]},\nu_{[0,s]}) + \mu_0(\Delta(s)) + C(e^{RT} +1) W(\mu_{[0,s]},\nu_{[0,s]})\right\} ds\\
&\leq C_{\ref{lem:deltaupperbound}} \int_0^t \left\{W(\mu_{[0,s]},\nu_{[0,s]}) + \mu_0(\Delta(s))\right\}ds,
\end{align*}
where $C_{\ref{lem:deltaupperbound}} = V_{\ref{eq:prepareGronwalldelta}}\cdot \max\{1,C(e^{RT} +1)\}$. 

Finally, we apply Gronwall's Lemma and the Observation \ref{eq:couplingupperbound} to get:
\[
W\parn{\psi(\mu_{[0,t]}),\psi(\nu_{[0,t]})}\leq\mu_0(\Delta(t))\leq C_{\ref{lem:deltaupperbound}}\cdot e^{C_{\ref{lem:deltaupperbound}}t}\int_0^t W(\mu_{[0,s]},\nu_{[0,s]})ds.
\]
\end{proof}

The above inequality may be combined with a Gronwall-style iteration to obtain: 
\[
W\parn{\psi^m(\mu_{[0,T]}),\psi^m(\nu_{[0,T]})}\leq \frac{\parn{C_{\ref{lem:deltaupperbound}} T\cdot e^{C_{\ref{lem:deltaupperbound}} T}}^m}{(m-1)!}\cdot W(\mu_{[0,T]},\nu_{[0,T]})
\]
for all $m\in\N$. In particular, for large enough $m$, the operator $\psi^m$ is a contraction over $\Probspecial_R$. This implies the existence and uniqueness of a measure $\soln_{[0,T]}\in\Probspecial_R$ such that $\psi(\soln_{[0,T]}) = \soln_{[0,T]}$ and $\psi^m(\mu_{[0,T]})\to \soln_{[0,T]}$ when $m\to +\infty$.

To complete the proof of Theorem \ref{thm:independencestructure}, we need  to argue that $\soln_{[0,T]}$ has the independence structure described there. As it turns out, this is a mere consequence of Lemma \ref{lem:constrainstructure}. Since $\soln_{[0,T]} = \psi(\soln_{[0,T]})$, we have that $\Thetabar_{[0,T]}\sim \soln_{[0,T]}$ must satisfy, for all $t\in[0,T]$
\begin{alignat*}{2}
\Thetabar^{(0)}(t) =& \Thetabar^{(0)}(0) \quad\quad\mbox{ (constant)}\\
\Thetabar^{(1)}(t) =& G^{(1)}_{\soln}\parn{\Thetabar^{(0)}(0),\Thetabar^{(1)}(0)}(t) &&=: F^{(1)}\parn{\Thetabar^{(0)}(0),\Thetabar^{(1)}(0)}(t)\\
\Thetabar^{(\ell)}(t) =& G^{(\ell)}_{\soln}\parn{\Thetabar^{(\ell)}(0)}(t) &&=: F^{(\ell)}\parn{\Thetabar^{(\ell)}(0)}(t)\quad\quad \ell\in[2,L-2]\\
\Thetabar^{(L-1)}(t) =& G^{(L-1)}_{\soln}\parn{\Thetabar^{(L-1)}(0),\Thetabar^{(L)}(0)}(t) &&=: F^{(L-1)}\parn{\Thetabar^{(L-1)}(0),\Thetabar^{(L)}(0)}(t)\\
\Thetabar^{(L)}(t) =& \Thetabar^{(L)}(0) \quad\quad\mbox{ (constant)}.
\end{alignat*}
This implies that $\soln_{[0,T]}$ admits the decomposition:
\[\soln_{[0,T]} = {\soln}^{(0,1)}_{[0,T]} \times\left(\prod_{\ell=2}^{L-2}{\soln}^{(\ell)}_{[0,T]}\right)\times{\soln}^{(L-1,L)}_{[0,T]},\]
thereby completing the proof.

\section{From SGD to continuous-time gradient flow}
\label{sec:SGDtoCTGD}
In this section we show that the Stochastic Gradient Descent (SGD) process used for training our neural network is close to a continuous time version. This is the first step in our connection between SGD and the McKean-Vlasov process.

We start by defining a norm in $\R^{p_n}$ the space of parameters. Given $\btheta_N \in \R^{p_N}$, we define:
\begin{equation}
\label{eq:normParam}
\Lnorm{\btheta_N} = \sup_{\ell \in [0:L]}\left\{\frac{1}{N_\ell\cdot N_{\ell+1}}\sum_{i_{\ell+1} = 1}^{N_{\ell+1}}\sum_{i_\ell = 1}^{N_\ell}\abs{\theta^{(\ell)}_{i_\ell,i_{\ell+1}}}\right\}.
\end{equation}
This norm computes maximum layer average of the parameters.

Now, recall the definition for the SGD. Given an i.i.d sample $(X_k,Y_k)_{k\in\N}\iid P$, and some initial $\btheta_N(0)$, the SGD process is defined iteratively, through
\begin{equation}\label{def:recapSGD}
  \theta_{i_\ell,i_{\ell+1}}^{(\ell)}(k) = \theta_{i_\ell,i_{\ell+1}}^{(\ell)}(k-1) - \eps\,\alpha(k\eps)\, \cdot\Gradh_{i_\ell,i_{\ell+1}}^{(\ell)}(X_k,Y_k,\btheta_N(k-1)).
\end{equation}

When $\eps$ is small, we will show that this process is close to the usual Continuous Time Gradient Descent (CTGD) obtained from the problem of minimizing the parametric Mean-Squared Loss
\[
	\L_N(\btheta_N)=\frac{1}{2}\Exp{(X,Y)\sim P}{\abs{Y-\yhat(X,\btheta_N)}^2},
\]
over the parameters in the inner layers, i.e. with superscripts $\ell\in[1:L-1]$. This process $\bthetatil_N \in C([0,T], \R^{p_N})$ is described by the following system of differential equations:\\
\begin{equation}\label{eq:deriv-thetatil}
  \frac{\partial \thetatil^{(\ell)}_{i_\ell,i_{\ell+1}}}{\partial t}\,(t) =
  \begin{cases}
  -\alpha(t)\,N^2\,\frac{\partial \L_N}{\partial \thetatil^{(\ell)}_{i_\ell,i_{\ell+1}}}\,(\bthetatil_N(t)),& \substack{\ell\in[1:L-1],\\(i_\ell,i_{\ell+1})\in[1:N_\ell]\times [1:N_{\ell+1}]}\\
	0& \ell\in\{0,L\}
\end{cases}.
\end{equation}

By recalling Remark \ref{rem:average-grad}, we see that this process can be written in the integral form as
\begin{equation}
\label{def:ctgd-process}
  \thetatil^{(\ell)}_{i_\ell, i_{\ell+1}}(t) = \thetatil^{(\ell)}_{i_\ell, i_{\ell+1}}(0) - \int_0^t\,\alpha(s)\,\Exp{(X,Y)\sim P}{\Gradh^{(\ell)}_{i_\ell, i_{\ell+1}}(X,Y,\bthetatil_N(s))}\,ds.
\end{equation}

For the SGD, we can unroll the iteration and obtain
\begin{equation}
  \theta^{(\ell)}_{i_\ell, i_{\ell+1}}(k) = \theta^{(\ell)}_{i_\ell, i_{\ell+1}}(0) - \sum_{r=1}^k\,\eps\,\alpha(r\eps)\,\Gradh^{(\ell)}_{i_\ell, i_{\ell+1}}(X_r,Y_r,\btheta_N(r-1)).
\end{equation}

Finally, we observe that the two processes are coupled by setting the same initial configurations, i.e. taking $\bthetatil_N(0) = \btheta_N(0)$. Now, fixed these two processes we prove the following result.

\begin{proposition}[SGD is close to CTGD]
\label{prop:SGDtoCTGD}
  Under the Assumptions of Section \ref{sub:assumptions}, the SGD process $\btheta_N$ and the CTGD process $\bthetatil_N$ are close when $\eps\rightarrow 0$ with high probability.
  More specifically, define:
  \[
  \Delta\thetatil^{(\ell)}_{i_\ell,i_{\ell+1}}(k) = \theta^{(\ell)}_{i_\ell,i_{\ell+1}}(k) - \thetatil^{(\ell)}_{i_\ell,i_{\ell+1}}(k\eps)\qquad\text{for }\substack{\ell\in[0:L],\\(i_\ell,i_{\ell+1})\in[1:N_\ell]\times [1:N_{\ell+1}]}
  \]
  and
  \[
  \Delta\thetatil_N(k) = \parn{\Delta\thetatil^{(\ell)}_{i_\ell,i_{\ell+1}}(k),\quad \substack{\ell\in[0:L],\\(i_\ell,i_{\ell+1})\in[1:N_\ell]\times [1:N_{\ell+1}]}}.
  \]

  Then there exist a constant $C_{\ref{prop:SGDtoCTGD}} = C_{\ref{prop:SGDtoCTGD}}(C,L,T)$ (where $d=\max\{d_\ell\,:\, \ell\in [1:L-1]\}$ is the largest dimension of an inner layer), such that for any $k\in[0:\lceil T/\eps\rceil]$
  \begin{equation}\label{bound:SGD-CTGD}
    \Lnorm{\Delta\thetatil_N(k)} \leq C_{\ref{prop:SGDtoCTGD}} \parn{\eps +\sqrt{\eps\,d} +  \sqrt{\eps}\,u}
  \end{equation}
  holds with probability higher than $1 - e^{-u^2}$.
\end{proposition}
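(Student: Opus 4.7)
My approach is to decompose $\Delta\thetatil_N(k)$ into a discretization error, a Lipschitz drift term, and a noise martingale, then close the recursion with a discrete Gronwall inequality. Coupling via $\btheta_N(0)=\bthetatil_N(0)$, for each $\ell\in[1:L-1]$ and $(i_\ell,i_{\ell+1})$ I would write
\begin{align*}
\Delta\thetatil^{(\ell)}_{i_\ell,i_{\ell+1}}(k) = \; & R^{(\ell)}_{i_\ell,i_{\ell+1}}(k) + M^{(\ell)}_{i_\ell,i_{\ell+1}}(k) \\
& + \sum_{r=1}^{k} \eps\,\alpha(r\eps)\,\Ex{\Gradh^{(\ell)}_{i_\ell,i_{\ell+1}}(X_r,Y_r,\bthetatil_N((r-1)\eps)) - \Gradh^{(\ell)}_{i_\ell,i_{\ell+1}}(X_r,Y_r,\btheta_N(r-1)) \mid \sF_{r-1}},
\end{align*}
where $R$ is the (deterministic) Riemann-vs-integral error for the CTGD trajectory and $M^{(\ell)}_{i_\ell,i_{\ell+1}}(k)=\sum_{r=1}^{k}\eps\alpha(r\eps)[\Gradh - \Ex{\Gradh\mid\sF_{r-1}}]$, evaluated at $(X_r,Y_r,\btheta_N(r-1))$, is the noise martingale. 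For $\ell\in\{0,L\}$ there is nothing to prove since $\Gradh^{(\ell)}\equiv 0$.

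\textbf{Lipschitz bound in $\Lnorm{\cdot}$.} The central technical ingredient is to show that, for some $C'=C'(C,L)$,
$$\big|\Gradh^{(\ell)}_{i_\ell,i_{\ell+1}}(X,Y,\btheta_N) - \Gradh^{(\ell)}_{i_\ell,i_{\ell+1}}(X,Y,\btheta_N')\big| \leq C'\,\Lnorm{\btheta_N - \btheta_N'}\text{ a.s.,}$$
uniformly in $(\ell,i_\ell,i_{\ell+1})$. This is exactly what the $\mathcal{L}$-norm is designed for: each $z^{(\ell+1)}_{i_{\ell+1}}$ and each $M^{(\ell)}_{\boldj{\ell}{L+1}}$ in the backprop formulas \eqnref{defzell}--\eqnref{formulagrad} depends on the weights in a given layer only through a $1/N_\ell$-scaled sum, which the $\mathcal{L}$-norm controls directly. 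A straightforward induction up the layers, using the uniform boundedness and Lipschitzness of $\sigma^{(\ell)}$ and its derivatives (Assumption \ref{assump:activations}), then gives the estimate with a constant independent of $N$. The discretization error $R$ is $O(\eps T)$ per weight because $\alpha(s)\,\Ex{\Gradh(\bthetatil_N(s))}$ is uniformly Lipschitz in $s$ (boundedness of $\alpha$, of $\Gradh$, and of $\dot{\bthetatil}_N$ from \eqnref{deriv-thetatil}).

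\textbf{Martingale estimate.} Each increment of $M^{(\ell)}_{i_\ell,i_{\ell+1}}(k)\in\R^{D_\ell}$ has norm at most $2C\eps$ by the uniform bound on $\Gradh$, so its quadratic variation over $k\leq T/\eps$ steps is $O(\eps T)$. To obtain $\sqrt{\eps d}$ rather than $\sqrt{\eps d\log N}$, I would avoid a per-weight union bound and work with the layer averages $A^{(\ell)}(k)=\frac{1}{N_\ell N_{\ell+1}}\sum_{i_\ell,i_{\ell+1}}|M^{(\ell)}_{i_\ell,i_{\ell+1}}(k)|$. Orthogonality of martingale differences yields $\Ex{A^{(\ell)}(k)}\leq C''\sqrt{\eps T\,d}$, with $d$ entering through the inner products in \eqnref{formulagrad}. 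Applying a McDiarmid bounded-differences inequality to $A^{(\ell)}(k)$ seen as a function of the i.i.d.\ samples $(X_r,Y_r)_{r\leq k}$---whose bounded-differences constants again leverage the $1/N$-averaging so that swapping a single sample shifts $A^{(\ell)}(k)$ by at most $O(\eps)$, with squared total $O(\eps^2 T/\eps) = O(\eps T)$---then delivers $A^{(\ell)}(k)\leq C'''(\sqrt{\eps d}+\sqrt{\eps}\,u)$ with probability $\geq 1-e^{-u^2}/(L+1)$. A union bound over the $L+1$ layers yields $\Lnorm{M(k)}\leq C''''(\sqrt{\eps d}+\sqrt{\eps}\,u)$ on a single event of probability $\geq 1-e^{-u^2}$.

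\textbf{Gronwall closure.} Combining the three pieces on this good event and invoking the $\mathcal{L}$-Lipschitz bound of $\Gradh$ gives
$$\Lnorm{\Delta\thetatil_N(k)} \leq C_1\eps + C_2(\sqrt{\eps d}+\sqrt{\eps}\,u) + C_3\sum_{r=0}^{k-1}\eps\,\Lnorm{\Delta\thetatil_N(r)},$$
and discrete Gronwall over $k\in[0:\lceil T/\eps\rceil]$ delivers (\ref{bound:SGD-CTGD}) with $C_{\ref{prop:SGDtoCTGD}}$ proportional to $e^{C_3 T}$. The hardest steps are the $\mathcal{L}$-norm Lipschitz estimate for $\Gradh$ and the extraction of $\sqrt{\eps d}$ without a $\sqrt{\log N}$ penalty from the martingale; both crucially exploit the $1/N$-averaging built into the DNN definition rather than brute-force per-weight union bounds.
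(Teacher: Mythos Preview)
Your overall architecture---decompose into discretization error, Lipschitz drift, and noise martingale, then close via discrete Gronwall---matches the paper's proof, including the crucial $N$-free $\Lnorm{\cdot}$-Lipschitz bound for $\Gradh_N$ (this is Lemma~\ref{lem:grad-lnorm}). The genuine divergence is in how you concentrate the martingale layer average without a $\sqrt{\log N}$ penalty. The paper does not use McDiarmid: it works with the moment generating function, handles the $\sup_{r\le k}$ via optional stopping on the submartingale $e^{\xi\Lnorm{\M_N}}$, and then---the key trick---applies Jensen to the convex map $v\mapsto e^{\xi v}$ to push the exponential inside the layer average, so that
\[
\Ex{e^{\xi\Lnorm{\M_N(k)}}}\le \sum_{\ell}\frac{1}{N_\ell N_{\ell+1}}\sum_{i_\ell,i_{\ell+1}}\Ex{e^{\xi|\M^{(\ell)}_{i_\ell,i_{\ell+1}}(k)|}}.
\]
Every summand on the right is bounded by the single-weight vector-Azuma bound $5^d e^{C\xi^2 k}$, the average collapses, and $N$ disappears; only the harmless factor $L$ from the layer sum remains.

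Your McDiarmid route is a legitimate alternative, but two steps are stated too quickly. First, the $O(\eps)$ bounded-differences constant is not immediate: swapping $(X_r,Y_r)$ perturbs $\btheta_N(r),\btheta_N(r+1),\ldots$ and hence every later martingale increment, so you need a separate SGD-stability argument (itself a mini-Gronwall: the per-weight perturbation at step $r$ propagates with factor $(1+C'\eps)$ per step, stays $O(\eps e^{C'T})$, and the accumulated effect across $O(T/\eps)$ later increments is $O(\eps)$). Second, your Gronwall recursion needs the martingale bound at \emph{all} intermediate $r\le k$, not just the terminal $k$, and McDiarmid at a single $k$ does not give this; a union over $k$ would cost $\sqrt{\log(T/\eps)}$. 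The fix is to apply McDiarmid to $\sup_{r\le k}A^{(\ell)}(r)$, which inherits the same $O(\eps)$ bounded differences (and whose expectation is controlled by Doob's $L^2$ maximal inequality), or simply to adopt the paper's optional-stopping device. Both gaps are fixable, but they are real content that should be spelled out.
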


\begin{proof}
We want to show that the SGD works as a discretization of the continuous process. This is accomplished by comparing the processes inside intervals of size $\eps$. Fix $k\in [1:\lceil T/\eps\rceil]$, $\ell\in[0:L]$, and $(i_\ell, i_{\ell+1})\in [N_\ell]\times[N_{\ell+1}]$, and observe that
  \begin{align}
  \label{eq:sgd-ctgd-estimate}
    |\Delta\thetatil^{(\ell)}_{i_\ell,i_{\ell+1}}(k)| =& \left| \sum_{r=1}^{k}
    \,\int_{(r-1)\eps}^{r\eps}\,\left( \alpha(r\eps)
    \,\Gradh^{(\ell)}_{i_\ell, i_{\ell+1}}(X_r,Y_r,\btheta_N(r-1))
    \vphantom{\Exp{(X,Y)\sim P}{\Gradh^{(\ell)}_{i_\ell, i_{\ell+1}}(X,Y,\bthetatil_N(s))}}\right.\right.\\
    \nonumber&\qquad\qquad\qquad \left.\vphantom{\sum_{r=1}^{k}\,\int_{(r-1)\eps}^{r\eps}}\left.- \alpha(s)\, \Exp{(X,Y)\sim P}{\Gradh^{(\ell)}_{i_\ell, i_{\ell+1}}(X,Y,\bthetatil_N(s))}\right) ds\,\right|.
  \end{align}

  To make the estimation we first define an auxiliar process, consider for each choice of indices
  \begin{align*}
    \M^{(\ell)}_{i_\ell, i_{\ell+1}}(k) = \sum_{r=1}^k\, \alpha(r\eps)\,&\left( \Gradh^{(\ell)}_{i_\ell, i_{\ell+1}}(X_r,Y_r,\btheta_N(r-1))
    \right. \\ &\qquad\qquad \left.-
    \Exp{(X,Y)\sim P}{\Gradh^{(\ell)}_{i_\ell, i_{\ell+1}}(X,Y,\btheta_N(r-1))} \right).
  \end{align*}
Notice that, according to Remark \ref{rem:average-grad}, the above process is a martingale with respect to the filtration $\{\sF_k,\,k\in\N\}$, where
\[\sF_k = \sigma\left(\vphantom{\int}\btheta_N(0), (X_1,Y_1),\dots,(X_k,Y_k)\right)\]
When adding and subtracting the crossed term $\alpha(r\eps)\,\Exp{(X,Y)\sim P}{\Gradh^{(\ell)}_{i_\ell, i_{\ell+1}}(X,Y,\btheta_N(r-1)}$ inside each integral of (\ref{eq:sgd-ctgd-estimate}), the estimate can be broken into two separate pieces
  \begin{align}
    \nonumber\abs{\Delta\thetatil^{(\ell)}_{i_\ell,i_{\ell+1}}(k)} \leq \eps\,|\M^{(\ell)}_{i_\ell, i_{\ell+1}}(k)|
        + \sum_{r=1}^k\,\int_{(r-1)\eps}^{r\eps}\,\mathbb{E}_{(X,Y)\sim P}\left[\left|
        \alpha(r\eps) \Gradh^{(\ell)}_{i_\ell, i_{\ell+1}}(X,Y,\btheta_N(r-1))\right.\right.\\
        - \left.\left.\vphantom{\mathbb{E}_{(X,Y)\sim P}}\alpha(s)\,\Gradh^{(\ell)}_{i_\ell, i_{\ell+1}}(X,Y,\bthetatil_N(s))\right|\right] ds.
  \end{align}

  Next, we apply the $\Lnorm{\cdot}$ to obtain the following:
  \begin{align}
    \label{eq:sgd-ctgd-estimate-2}
    \nonumber\Lnorm{\Delta\bthetatil_N(k)} \leq \overset{(I)}{\overbrace{\eps\Lnorm{\M_N(r)}}} + \overset{(II)}{\overbrace{\sum_{r=1}^k\,\int_{(r-1)\eps}^{r\eps}\,\mathbb{E}_{(X,Y)\sim P}\left[\left\|
    \alpha(r\eps) \Gradh_N(X,Y,\btheta_N(r-1))\right.\right.}}\\
    - \left.\left.\vphantom{\mathbb{E}_{(X,Y)\sim P}}\alpha(s)\,\Gradh_N(X,Y,\bthetatil_N(s))\right\|_{(L)}\right] ds.
  \end{align}
where $(I)$ will be controlled using martingale techniques and $(II)$ will be controlled using Lipschitz arguments. We start with $(II)$.

By invoking Lemma \ref{lem:grad-lnorm}, we see that the function $\alpha\cdot \Gradh_N$ is Lipschitz with respect to the $\Lnorm{\cdot}$. This means that
\[
    (II)\leq \sum_{r=1}^k\,\int_{(r-1)\eps}^{r\eps}\, C_{\ref{lem:grad-lnorm}}\,\left( |r\eps - s| +
    \|\btheta_N(r-1) - \bthetatil_N(s)\|_{(L)} \right)\, ds,
\]

  Next, we use the fact that the CTGD process $t\,\longrightarrow\,\bthetatil_N(t)$ is Lipschitz in time. To see why this holds, notice that, fixed $s < t$ in $[0,T]$ we have
  \begin{align}
  \label{eq:btheta-lip}
    \nonumber\abs{\bthetatil^{(\ell)}_{i_\ell,i_{\ell+1}}(t) - \bthetatil^{(\ell)}_{i_\ell,i_{\ell+1}}(s)} =& \abs{ \int_s^t \alpha(r)\,\Exp{(X,Y)\sim P}{\Gradh^{(\ell)}_{i_\ell, i_{\ell+1}}(X,Y,\bthetatil_N(r))}\,dr } \\
    \leq& C_{\ref{eq:btheta-lip}}\cdot|t-s|,
  \end{align}
  where $C_{\ref{eq:btheta-lip}} = C\cdot\supnorm{\Gradh^{(\ell)}_{i_\ell,i_{\ell+1}}}$. Using this last fact we obtain
  \begin{align}
  \label{eq:sgd-ctgd-estimate-3}
        \nonumber (II)\leq& \sum_{r=1}^k\,\int_{(r-1)\eps}^{r\eps	}\,C_{\ref{lem:grad-lnorm}}\,\parn{	\eps + \eps\cdot C_{\ref{eq:btheta-lip}} + \Lnorm{\Delta\bthetatil_N(r-1)}}\, ds\\
        \leq&  k\,\eps^2\,C_{\ref{lem:grad-lnorm}}\,(1+C_{\ref{eq:btheta-lip}}) + C_{\ref{lem:grad-lnorm}}\,\eps\,\sum_{r=1}^{k-1}\Lnorm{\Delta\bthetatil_N(r)}\\
        \nonumber\leq& T\,\eps\,C_{\ref{eq:sgd-ctgd-estimate-3}} +
        C_{\ref{lem:grad-lnorm}}\,\eps\,\sum_{r=0}^{k-1}\Lnorm{\Delta\bthetatil_N(r)},
  \end{align}
  where $C_{\ref{eq:sgd-ctgd-estimate-3}} = C_{\ref{lem:grad-lnorm}}\,(1+C_{\ref{eq:btheta-lip}})$.

Now we can plug this result back into \ref{eq:sgd-ctgd-estimate-2} to obtain
\[
\Lnorm{\Delta\bthetatil_N(k)} \leq \eps\, \sup_{r\leq k} \left\{\Lnorm{\M_N(r)} + \right\} +  T\,\eps\,C_{\ref{eq:sgd-ctgd-estimate-3}} + C_{\ref{lem:grad-lnorm}}\,\eps\,\sum_{r=0}^{k-1}\Lnorm{\Delta\bthetatil_N(r)},
\]
where $\M_N(r) = \parn{\M^{(\ell)}_{i_\ell, i_{\ell+1}}(r),\,\substack{\ell\in[0:L],\\(i_\ell,i_{\ell+1})\in[1:N_\ell]\times[1:N_{\ell+1}]}}$ is written with the same notation used for parameters. By applying a discrete Gronwall inequality, we obtain
  \begin{equation}
  \label{eq:after-gronwall}
  \Lnorm{\Delta\bthetatil_N(k)} \leq \eps\cdot e^{T\,C_{\ref{lem:grad-lnorm}}}\parn{\sup_{r\leq k}\left\{\Lnorm{\M_N(r)}\right\} + T\,C_{\ref{eq:sgd-ctgd-estimate-3}}}
  \end{equation}

Now we will employ a martingale argument to control $\sup_{r\leq k}\left\{\Lnorm{\M_N(r)}\right\}$. The first thing to notice is that, since every $\M^{(\ell)}_{i_\ell,i_{\ell+1}}$ is martingale, then $\Lnorm{\M_N}$ is a submartingale. For any given $\xi>0$, $e^{\xi\Lnorm{\M_N}}$ is also a submartingale. We want to upper bound
\[
(a) = \Pr{\sup_{r\leq k}\Lnorm{\M_N(r)} > u}
\]
Define a stopping time $\tau = \inf\left\{r\leq k,\, \Lnorm{\M_N(r)} > u\right\} \wedge k$. Then notice that
\[\left[\sup_{r\leq k}\Lnorm{\M_N(r)} > u\right] = \left[\Lnorm{\M_N(\tau) > u}\right].\]

By employing a Cramér-Chernoff argument, we get:
\[(a)\leq\inf_{\xi\in\R_+}\left\{ e^{-\xi\cdot u}\Ex{e^{\xi \Lnorm{\M_N(\tau)}}}\right\}\leq \inf_{\xi\in\R_+}\left\{e^{-\xi\cdot u}\Ex{e^{\xi \Lnorm{\M_N(k)}}}\right\},\]
where the second inequality is the Optional Stopping theorem for submartingales.

Computing the Expectation in the RHS above, we find
\begin{align}
  \label{eq:martlMGF}
    \nonumber
    \Ex{\exp\parn{\xi \Lnorm{\M_N(k)}}} = &
    \Ex{\exp\parn{\xi \sup_{\ell \in [0:L]}\left\{\frac{1}{N_\ell\cdot N_{\ell+1}}\sum_{i_{\ell+1} = 1}^{N_{\ell+1}}\sum_{i_\ell = 1}^{N_\ell}\abs{\M^{(\ell)}_{i_\ell,i_{\ell+1}}(k)}\right\}}}\\
	\leq & \sum_{\ell \in [0:L]}\left( \frac{1}{N_\ell\cdot N_{\ell+1}}\sum_{i_{\ell+1} = 1}^{N_{\ell+1}}\sum_{i_\ell = 1}^{N_\ell}\Ex{\exp\parn{\xi \abs{\M^{(\ell)}_{i_\ell,i_{\ell+1}}(r)}}}\right),
  \end{align}
where we applied Jensen's Inequality.
Now, we observe that each of the martingales $\M^{(\ell)}_{i_{\ell},i_{\ell+1}}$ has bounded increments. To see why that holds, just recall that according to Lemma \ref{lem:grad-lnorm}, $\alpha\cdot\Gradh^{(\ell)}_{i_\ell,i_{\ell+1}}$ is $C_{\ref{lem:grad-lnorm}}$-bounded, meaning:
  \begin{align*}
   |\M^{(\ell)}_{i_\ell, i_{\ell+1}}(r) - \M^{(\ell)}_{i_\ell, i_{\ell+1}}(r-1)| \leq&
        \left|\alpha(r\eps)\cdot\left( \Gradh^{(\ell)}_{i_\ell, i_{\ell+1}}(X_r,Y_r,\btheta_N(r-1))\qquad\qquad\qquad
        \vphantom{\Exp{(X,Y)\sim P}{\Gradh^{(\ell)}_{i_\ell}}}\right.\right.\\
        &\left.\left.-\Exp{(X,Y)\sim P}{\Gradh^{(\ell)}_{i_\ell, i_{\ell+1}}(X,Y,\bthetatil_N(r-1))} \right)\right|\\
        \leq& 2\cdot C_{\ref{lem:grad-lnorm}}.
  \end{align*}

This means that we can use Lemma \ref{lem:azuma} to get
\[
\Ex{\exp\parn{\xi \abs{\M^{(\ell)}_{i_\ell,i_{\ell+1}}(k)}}} \leq 5^d \cdot e^{k\cdot 4C_{\ref{lem:grad-lnorm}}^2 \xi^2}.
\]
Plug that back into \ref{eq:martlMGF} to get
\[
\Ex{\exp\parn{\xi \Lnorm{\M_N(k)}}} \leq L\cdot 5^d e^{k\,4C_{\ref{lem:grad-lnorm}}^2 \xi^2}.
\]
We complete the Cramér-Chernoff argument to find:
\[
\Pr{\sup_{r\leq k}\Lnorm{\M_N(r)} > u}\leq L\cdot 5^d\cdot \exp\parn{\frac{-u^2}{8C_{\ref{lem:grad-lnorm}}^2 \cdot k}}
\]
This statement can be rearranged to show that
\[
\sup_{r\leq k}\Lnorm{\M_N(r)} \leq C_{\ref{lem:grad-lnorm}} \sqrt{8\cdot \frac{T}{\epsilon}} \parn{\sqrt{\log(L) + d\log(5)} + u}
\]
holds with probability higher than $1-e^{-u^2}$.

By plugging this back into (\ref{eq:after-gronwall}), we see that there is a constant $C_{\ref{prop:SGDtoCTGD}} = C_{\ref{prop:SGDtoCTGD}}(C,T,L)$ such that
\[
\Lnorm{\Delta\bthetatil_N(k)} \leq C_{\ref{prop:SGDtoCTGD}} \parn{\eps + \sqrt{\eps\,d} + u\cdot\sqrt{\eps}}
\]
holds with probability higher than $1-e^{-u^2}$, which proves the lemma.
\end{proof}

\section{Ideal particles and the gradient flow}
\label{sec:ideal}
In this section we introduce the {\em ideal particles} to which we will compare the weights  $\btheta_N(\cdot)$ in our network. It will be crucial to use the structure of the McKean-Vlasov measure, as described in Theorem \ref{thm:independencestructure}.

\begin{definition}[Ideal particles]
\label{def:idealparticles} Given the initial positions $\btheta_N(0)$ of the particles in the DNN (sampled as in Assumption \ref{assump:initialization}), we define
\[
\bthetabar_N(t):= \parn{\thetabar^{(\ell)}_{i_\ell,i_{\ell+1}}(t)\,:\, \substack{\ell\in [0:L],\\ (i_\ell,i_{\ell+1})\in [1:N_\ell]\times [1:N_{\ell+1}]}}\quad t\in [0,T]
\]
as follows. Recalling the maps $\left\{F^{(\ell)},\,\ell\in[1:L-1]\right\}$ in Theorem \ref{thm:independencestructure}, we set for $0\leq t\leq T$:
\begin{align*}
\thetabar^{(0)}_{1,i_1}(t) :=& \theta^{(0)}_{1,i_1}(0),&& i_1\in [1:N];\\
\thetabar^{(1)}_{i_1,i_2}(t) :=& F^{(1)}\parn{\theta_{1,i_1}^{(0)}(0),\theta_{i_1,i_2}^{(1)}(0)}(t),&& (i_1,i_2)\in [1:N]^2;\\
\thetabar^{(\ell)}_{i_\ell,i_{\ell+1}}(t) :=& F^{(\ell)}\parn{\theta_{i_{\ell},i_{\ell+1}}^{(\ell)}(0)}(t),&& (i_\ell,i_{\ell+1})\in [1:N]^2,\,\ell\in [2:L-2];\\
\thetabar^{(L-1)}_{i_{L-1},i_{L}}(t) :=& F^{(L-1)}\parn{\theta^{(L-1)}_{i_{L-1},i_{L}}(0),\theta^{(L)}_{i_{L},1}(0)}(t),&& (i_{L-1},i_{L})\in [1:N]^2;\\
\thetabar^{(L)}_{i_L,1}(t) :=& \theta_{i_Ll,1}^{(L)}(0),&& i_L\in [1:N].
\end{align*}\end{definition}

Note the following immediate proposition related to the McKean-Vlasov process (see our discussion in Section \ref{sec:McKeanVlasov}). Recall that $\soln_{[0,T]}$ is the law of the unique solution to our McKean-Vlasov problem.

\begin{proposition}[McKean-Vlasov measure along paths] For all choices of \[(i_1,i_2,\dots,i_L)\in [1:N]^L,\]
\[\parn{\thetabar^{(0)}_{1,i_{1}}(\cdot),\thetabar^{(1)}_{i_1,i_{2}}(\cdot),\dots,\thetabar^{(L-1)}_{i_{L-1},i_{L}}(\cdot),\thetabar^{(L)}_{i_L,1}(\cdot)}\sim \soln_{[0,T]}.\]
\end{proposition}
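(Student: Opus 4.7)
The claim follows by unpacking the definitions and invoking Theorem~\ref{thm:independencestructure}. Fix a path $(i_1,\dots,i_L)\in [1:N]^L$ and consider the initial values along it:
\[
\Xi_0 := \parn{\theta^{(0)}_{1,i_1}(0),\theta^{(1)}_{i_1,i_2}(0),\theta^{(2)}_{i_2,i_3}(0),\dots,\theta^{(L-1)}_{i_{L-1},i_L}(0),\theta^{(L)}_{i_L,1}(0)}\in \R^D.
\]
By Assumption~\ref{assump:initialization}, these $L+1$ random variables are mutually independent, and the $\ell^{\text{th}}$ one has law $\mu^{(\ell)}_0$. Consequently $\Xi_0\sim \mu_0=\prod_{\ell=0}^L\mu_0^{(\ell)}$.

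The plan is to show that the trajectory $\bthetabar_{\text{path}}(\cdot):=(\thetabar^{(0)}_{1,i_1}(\cdot),\dots,\thetabar^{(L)}_{i_L,1}(\cdot))$ obtained from Definition~\ref{def:idealparticles} by plugging $\Xi_0$ into the maps $F^{(\ell)}$ has exactly the same joint distribution as a sample $\Thetabar\sim\soln_{[0,T]}$. To do this, recall that Theorem~\ref{thm:independencestructure} identifies, for any such $\Thetabar$, the almost sure representation
\begin{align*}
\Thetabar^{(0)}(t) &= \Thetabar^{(0)}(0),\\
\Thetabar^{(1)}(t) &= F^{(1)}\parn{\Thetabar^{(0)}(0),\Thetabar^{(1)}(0)}(t),\\
\Thetabar^{(\ell)}(t) &= F^{(\ell)}\parn{\Thetabar^{(\ell)}(0)}(t), \qquad \ell\in[2:L-2],\\
\Thetabar^{(L-1)}(t) &= F^{(L-1)}\parn{\Thetabar^{(L-1)}(0),\Thetabar^{(L)}(0)}(t),\\
\Thetabar^{(L)}(t) &= \Thetabar^{(L)}(0),
\end{align*}
with $\Thetabar(0)\sim\mu_0$. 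In particular, the law $\soln_{[0,T]}$ is the pushforward of $\mu_0$ under the measurable map $\Phi:\R^D\to C([0,T],\R^D)$ that sends $(a^{(0)},\dots,a^{(L)})$ to the trajectory whose $\ell^{\text{th}}$ coordinate is given by the formulas above.

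On the other hand, comparing Definition~\ref{def:idealparticles} with $\Phi$, we see that $\bthetabar_{\text{path}}(\cdot)=\Phi(\Xi_0)$ on the nose, using the same functions $F^{(\ell)}$ and the same measurable recipe. Since $\Xi_0\sim \mu_0$, we conclude that $\bthetabar_{\text{path}}(\cdot)=\Phi(\Xi_0)\sim \Phi_*\mu_0=\soln_{[0,T]}$, which is exactly the claim.
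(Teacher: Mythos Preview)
Your proof is correct and follows essentially the same approach as the paper's sketch: you use the structure of $\soln_{[0,T]}$ from Theorem~\ref{thm:independencestructure} to express it as the pushforward $\Phi_*\mu_0$, and you observe (via Assumption~\ref{assump:initialization}) that the initial weights along any path have law $\mu_0$. Your write-up simply makes explicit the pushforward map $\Phi$ and the comparison with Definition~\ref{def:idealparticles}, which is exactly what the paper's two-sentence sketch is pointing to.
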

\begin{proof}[Sketch of proof] This comes from two observations. The first one is the structure of $\soln_{[0,T]}$ described in Theorem \ref{thm:independencestructure}. The second is that  the initial law of the McKean Vlasov process $\soln$ is equal to the starting measure for the $\btheta_N$ along a path.\end{proof}

Our main goal in this section will be to show that {\em the time evolution of the ideal weights $\bthetabar_N(t)$ nearly follows a continuous-time gradient flow with respect to $L_N(\bthetabar_N(t))$.} More precisely, we will prove the following result.

\begin{lemma}[Ideal weight evolution tracks gradient flow]
\label{lem:nearlygrad}
For each $1\leq \ell\leq L-1$, each time $t\in [0,T]$ and each pair $(i_\ell,i_{\ell+1})\in [1:N]^2$, the quantity:
\[\Delta_{\partial}\thetabar^{(\ell)}_{i_\ell,i_{\ell+1}}(t) := \frac{d}{dt}\thetabar^{(\ell)}_{i_\ell,i_{\ell+1}}(t) + N^2\alpha(t)\,\nabla L_N(\bthetabar_N(t))\in \R^{D_{\ell}}\]
satisfies, for all $\xi>0$ and $0\leq t\leq T$,
\[\log\Ex{\exp\left[\xi|\Delta_\partial\thetabar^{(\ell)}_{i_\ell,i_{\ell+1}}(t)|\right|}\leq Ld\log 5+ \frac{C_{\ref{lem:nearlygrad}}\,\xi^2}{2N},\]
where $d=\max\{d_i\,:\, i\in [1:L]\}$ is the largest dimension of an inner layer and $C_{\ref{lem:nearlygrad}}=C_{\ref{lem:nearlygrad}}(C,L)$.
This implies that the inequality
\[\abs{\delta\thetabar^{(\ell)}_{i_\ell,i_{\ell+1}}} \leq \sqrt{\frac{C_{\ref{lem:nearlygrad}}}{N}}\parn{u + \sqrt{Ld\log(5)}}\]
holds with probability greater than $1-e^{-u^2}$
\end{lemma}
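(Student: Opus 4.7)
The plan is to rewrite $\Delta_\partial \thetabar^{(\ell)}_{i_\ell,i_{\ell+1}}(t)$ as the gap between a mean-field drift and a DNN-style drift, both evaluated on the ideal particle configuration. By Definition \ref{def:idealparticles} together with Definition \ref{def:McKeanVlasov}, the derivative $\frac{d}{dt}\thetabar^{(\ell)}_{i_\ell,i_{\ell+1}}(t)$ equals $-\alpha(t)\,\mathbb{E}_{(X,Y)\sim P}[\Grad^{(\ell)}(X,Y,\soln_t,\thetabar^{\mathrm{path}}(t))]$, where $\thetabar^{\mathrm{path}}(t)$ is any input-to-output path of ideal weights containing the edge $(i_\ell,i_{\ell+1})$. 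Meanwhile, Remark \ref{rem:average-grad} gives $N^2\alpha(t)\,\nabla_{\theta^{(\ell)}_{i_\ell,i_{\ell+1}}} L_N(\bthetabar_N(t)) = \alpha(t)\,\mathbb{E}_{(X,Y)\sim P}[\Gradh^{(\ell)}_{i_\ell,i_{\ell+1}}(X,Y,\bthetabar_N(t))]$. Hence $\Delta_\partial \thetabar^{(\ell)}_{i_\ell,i_{\ell+1}}(t)$ is an expectation over $(X,Y)$ of the difference between the DNN backpropagation term evaluated on the ideal particles and its mean-field counterpart evaluated on $\soln_t$.

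The core step is to show, pointwise in $x \in \R^{d_X}$, that the DNN quantities computed with the ideal weights concentrate around their mean-field analogues at scale $\sqrt{d/N}$: specifically, $z^{(\ell)}_{i_\ell}(x,\bthetabar_N(t)) \approx \zbar^{(\ell)}(x,\soln_t)$ for $\ell \in [2:L-1]$, the analogous conditional-measure statement for $z^{(L)}$, and $M^{(\ell)}_{\boldj{\ell}{L+1}}(x,\bthetabar_N(t)) \approx \Mbar^{(\ell)}(x,\soln_t)$ for the backpropagation operators. Each of the recursive definitions (\ref{eq:defzell}) and (\ref{eq:defM}) is a $\frac{1}{N}$-normalized sum over an index whose corresponding ideal weights are either i.i.d.\ (layers $[2:L-2]$) or conditionally i.i.d.\ given the frozen random features in layer $0$ or $L$ (layers $1, L-1$). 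Conditioning on those frozen features and on the already-concentrated activations from the previous layer, each fresh average is a sum of $N$ bounded, conditionally independent vector-valued terms, so Lemma \ref{lem:azuma} supplies a sub-Gaussian MGF bound of the form $5^{d_{\ell+1}}\exp(c\xi^2/N)$. Assumption \ref{assump:activations} is what allows the concentration to propagate from one layer to the next without degrading the rate, since the $\sigma^{(\ell)}$ and their Fr\'{e}chet derivatives are bounded and Lipschitz; the multiplicative constants picked up at each layer depend only on $C$ and $L$.

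With those pointwise approximations in hand, the plan is to expand $\Gradh^{(\ell)}_{i_\ell,i_{\ell+1}}(X,Y,\bthetabar_N(t)) - \Grad^{(\ell)}(X,Y,\soln_t,\thetabar^{\mathrm{path}}(t))$ as a telescoping sum over the factors entering the backpropagation formula (\ref{eq:formulagrad}): the residual $(Y-\yhat)$ vs.\ $(Y-\ybar)$, the averaged $M^{(\ell+1)}$ vs.\ $\Mbar^{(\ell+1)}$, and the activation derivative evaluated at $z^{(\ell)}_{i_\ell}$ vs.\ $\zbar^{(\ell)}$. Each telescoped term is a product of a uniformly bounded Lipschitz factor with one of the previously established sub-Gaussian deviations. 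Integrating out $(X,Y)$ inside the MGF (using Jensen's inequality to pull the expectation through $\exp(\xi|\cdot|)$) preserves the sub-Gaussian form and yields the claimed bound $\log \mathbb{E}[\exp(\xi|\Delta_\partial \thetabar^{(\ell)}_{i_\ell,i_{\ell+1}}(t)|)] \leq Ld\log 5 + C\xi^2/(2N)$, with the $Ld\log 5$ term absorbing the product of $5^{d_\ell}$ net factors across the $L$ layers traversed by the backpropagation. The Cram\'{e}r--Chernoff step then gives the stated tail.

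The main obstacle I expect is the $\ell = L-1$ case and the handling of $z^{(L)}$, since the mean-field object $\zbar^{(L)}(x,\soln_t,a^{(L)})$ is defined through the conditional law $\soln^{(L-1\mid L)}$ and is a priori only a discontinuous functional of $\soln_t$ (cf.\ Remark \ref{rem:discontinuity}). The key is to perform the concentration after conditioning on the frozen feature $\thetabar^{(L)}_{i_L,1}(0)$, using Lemma \ref{lem:redefiningzL} to rewrite $\zbar^{(L)}$ as an integral against the unconditional marginal $\mu_0^{(L-1)}$. This expresses the target quantity as a genuine average of bounded, conditionally i.i.d.\ quantities over the $N$ layer-$(L-1)$ ideal particles sharing index $i_L$, which is exactly the structure needed to invoke Lemma \ref{lem:azuma} at that layer.
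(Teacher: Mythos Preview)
Your overall architecture matches the paper's proof in \S\ref{sub:proofnearlygrad}: reduce via convexity of the MGF to fixed $(X,Y)$, establish sub-Gaussian concentration of the forward activations $z^{(\ell)}_{i_\ell}$ around $\zbar^{(\ell)}$ (this is Lemma~\ref{lem:zmeanfield}, including the conditional handling of $z^{(L)}$ that you correctly flag), and telescope the drift difference through its trilinear product structure. There is, however, one concrete error. You write that $M^{(\ell)}_{\boldj{\ell}{L+1}}(x,\bthetabar_N(t)) \approx \Mbar^{(\ell)}(x,\soln_t)$ and that ``each of the recursive definitions (\ref{eq:defzell}) and (\ref{eq:defM}) is a $\tfrac{1}{N}$-normalized sum.'' The recursion (\ref{eq:defM}) contains no averaging: $M^{(\ell)}_{\boldj{\ell}{L+1}}$ for a \emph{fixed} multi-index depends on the specific ideal weights $\thetabar^{(k)}_{j_k,j_{k+1}}$ along a single backward path, whereas $\Mbar^{(\ell)}$ integrates those weights out against $\soln_t$. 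For $\ell+1\leq L-1$ the difference is generically $O(1)$, not $O(N^{-1/2})$, so no sub-Gaussian bound at scale $N^{-1/2}$ can hold for an individual $M^{(\ell)}_{\boldj{\ell}{L+1}}$.

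The object that does concentrate is the one you correctly name in your third paragraph: the \emph{averaged} backpropagation operator actually entering (\ref{eq:formulagrad}), namely $\widetilde{M}^{(\ell+1)}_{i_{\ell+1}} := (\prod_{k\geq\ell+2} N_k)^{-1}\sum_{\boldj{\ell+2}{L+1}} M^{(\ell+1)}_{(i_{\ell+1},\boldj{\ell+2}{L+1})}$. This satisfies a genuine $\tfrac{1}{N}$-normalized backward recursion $\widetilde{M}^{(\ell)}_{i_\ell} = \tfrac{1}{N}\sum_{i_{\ell+1}}\widetilde{M}^{(\ell+1)}_{i_{\ell+1}}\, D_z\sigma^{(\ell)}(z^{(\ell)}_{i_\ell},\thetabar^{(\ell)}_{i_\ell,i_{\ell+1}})$, and one shows $|\widetilde{M}^{(\ell)}_{i_\ell}-\Mbar^{(\ell)}|$ is sub-Gaussian at scale $N^{-1/2}$ by a backward induction exactly parallel to the forward induction of Lemma~\ref{lem:zmeanfield}. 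For what it is worth, the paper's written argument in \S\ref{sub:proofnearlygrad} also fixes the multi-index and then bounds $\|A_3-\overline{A_3}\|$ by $\sum_k|\Delta z^{(k)}_{j_k}|$, which relies on the same mis-identification (it treats $\Mbar^{(\ell+1)}$ as a product along the path $(j_k)$ rather than an integral); the lemma is correct, but the comparison of a single-path $M^{(\ell+1)}$ to $\Mbar^{(\ell+1)}$ requires the averaged operator.
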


\subsection{Comparing ideal and real $z$'s} To prove Lemma \ref{lem:nearlygrad}, we will need to first control the quantities $z^{(\ell)}(x,\bthetabar_N(t))$ that appear in the formulae for the gradients.

\begin{lemma}[The $z^{(\ell)}$ are close to mean-field values]
\label{lem:zmeanfield} 
Given $x\in\R^{d_X}$, $0\leq t\leq T$, $\ell\in [2:L+1]\backslash\{L\}$ and  $i_\ell\in [1:N]$, let $\soln_{[0,T]}$ denote the unique solution of the McKean-Vlasov problem and write:
\[
\Delta z^{(\ell)}_{i_\ell}(x,\bthetabar_N(t),\soln_t):=z^{(\ell)}_{i_\ell}(x,\bthetabar_N(t)) - \zbar^{(\ell)}_{i_\ell}(x,\soln_t)\in\R^{d_\ell}.
\]
Then for all $\xi\in \R_+$,
\[
\log \Ex{\exp\left\{\xi\,\abs{\Delta z^{(\ell)}_{i_\ell}(x,\bthetabar_N(t),\soln_t)}\right\}}\leq (\ell-1)\,d\log 5 + \frac{K_\ell\,\xi^2}{2N},
\]
with $d:=\max\{d_i\,:\, i\in [1:L]\}$ is the largest dimension of an inner layer and $K_\ell:= \sum_{i=2}^{\ell} (4C^{2})^{i}$, with $C$ that comes from Assumption \ref{assump:activations}). The same bound holds for the MGF of:
\[
\Delta z^{(L)}_{i_L}(x,\bthetabar_N(t),\soln_t):=z^{(L)}_{i_L}(x,\bthetabar_N(t)) - \zbar^{(L)}(x,\soln_t, \btheta^{(L)}_{i_{L}, 1}).
\]
\end{lemma}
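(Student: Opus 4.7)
The plan is to induct on $\ell$ from $\ell = 2$ up to $\ell = L+1$, moving forward through the network. At each step I would decompose $\Delta z^{(\ell+1)}_{i_{\ell+1}}$ as a ``propagation'' piece (carrying forward the error from layer $\ell$ through the Lipschitz action of $\sigma^{(\ell)}$) plus a ``fluctuation'' piece (the deviation of the finite empirical average over the i.i.d.\ layer-$\ell$ ideal weights from its mean). The base case $\ell=2$ is pure fluctuation: by Definition \ref{def:idealparticles} and Theorem \ref{thm:independencestructure}, for each fixed $i_2$ the pairs $(\thetabar^{(0)}_{1,i_1}(t),\thetabar^{(1)}_{i_1,i_2}(t))$ are i.i.d.\ across $i_1$ with marginal $\soln_t^{(0,1)}$, so $z^{(2)}_{i_2}$ is a true i.i.d.\ average of bounded $\R^{d_2}$-valued vectors with mean $\zbar^{(2)}(x,\soln_t)$. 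A $1/2$-net on the unit sphere of $\R^{d_2}$ (of cardinality at most $5^{d_2}$) combined with Lemma \ref{lem:azuma} gives the desired MGF bound.

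For the inductive step $\ell+1\in[3:L-1]$, I would insert $\pm\sigma^{(\ell)}\bigl(\zbar^{(\ell)},\thetabar^{(\ell)}_{i_\ell,i_{\ell+1}}\bigr)$ inside the sum defining $z^{(\ell+1)}_{i_{\ell+1}}$. The Lipschitz property of $\sigma^{(\ell)}$ in its first argument bounds the propagation piece by $C\cdot N^{-1}\sum_{i_\ell}|\Delta z^{(\ell)}_{i_\ell}|$, and the fluctuation piece is a centered i.i.d.\ average of vectors of norm at most $C$, because the $\thetabar^{(\ell)}_{\cdot,i_{\ell+1}}$ are i.i.d.\ across $i_\ell$ for $\ell\in[2:L-2]$. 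To bound the MGF of the propagation piece I would use Jensen's inequality together with exchangeability of the $\Delta z^{(\ell)}_{i_\ell}$ across $i_\ell$ to reduce to the MGF of a single $|\Delta z^{(\ell)}_{1}|$, to which the inductive hypothesis applies; the fluctuation piece is controlled by Lemma \ref{lem:azuma} with a net argument. Multiplying the two MGFs yields the recursion $K_{\ell+1}=K_\ell+(4C^2)^{\ell+1}$ together with one additional $5^d$ factor per layer.

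The case $\ell=L$ is the one that forces the slightly different statement. For fixed $i_L$, the weights $\thetabar^{(L-1)}_{i_{L-1},i_L}$ are only \emph{conditionally} i.i.d.\ across $i_{L-1}$ given $\theta^{(L)}_{i_L,1}(0)$, with conditional law $\soln^{(L-1\mid L)}(\cdot\mid\theta^{(L)}_{i_L,1}(0))$. By Lemma \ref{lem:redefiningzL}, the conditional mean of each summand in the fluctuation term is exactly $\zbar^{(L)}(x,\soln_t,\theta^{(L)}_{i_L,1})$, which is precisely why the $a^{(L)}$-argument appears in the definition of $\Delta z^{(L)}_{i_L}$. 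Conditioning on $\theta^{(L)}_{i_L,1}(0)$, applying the same propagation/fluctuation decomposition and Lemma \ref{lem:azuma} conditionally, and then taking outer expectations recovers an MGF bound of the same form. The case $\ell=L+1$ then proceeds analogously: condition on $\theta^{(L)}_{i_L,1}(0)$ for the propagation piece (to be able to use the $\ell=L$ bound on $\Delta z^{(L)}_{i_L}$), and exploit that the $\thetabar^{(L)}_{i_L,1}$ are i.i.d.\ across $i_L$ for the fluctuation piece.

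The hardest part will be the bookkeeping around the three different independence regimes --- unconditional i.i.d.\ for $\ell\in[2:L-2]$, conditional i.i.d.\ at $\ell=L$, and the mixed situation at $\ell=L+1$ --- and making sure that the factors of $5^d$ compound to exactly $(\ell-1)d\log 5$ while the variance constants accumulate geometrically into $K_\ell=\sum_{i=2}^\ell (4C^2)^i$. Each individual invocation of Lemma \ref{lem:azuma}, Jensen, or Lipschitzness is routine; the delicacy lies in drawing the dependency diagram correctly at each layer.
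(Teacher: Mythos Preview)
Your proposal is correct and follows essentially the same route as the paper: an induction on $\ell$ with the same intermediate term $u^{(\ell)}_{i_\ell}=\frac{1}{N}\sum_{i_{\ell-1}}\sigma^{(\ell-1)}(\zbar^{(\ell-1)},\thetabar^{(\ell-1)}_{i_{\ell-1},i_\ell})$ splitting $\Delta z^{(\ell)}$ into a Lipschitz propagation piece (handled by Jensen plus the inductive hypothesis) and an i.i.d.\ fluctuation piece (handled by the vector MGF/net lemma), with the conditional-i.i.d.\ modification at $\ell=L$ exactly as you describe. The one point worth making explicit when you write it up is why ``multiplying the two MGFs'' is legitimate: the paper does this by conditioning on the $\sigma$-field $\sG_{\ell-2}$ generated by all ideal weights in layers $0,\dots,\ell-2$, so that the propagation bound $\tfrac{C}{N}\sum_{i_{\ell-1}}|\Delta z^{(\ell-1)}_{i_{\ell-1}}|$ is $\sG_{\ell-2}$-measurable while the fluctuation term depends only on the layer-$(\ell-1)$ weights and is therefore independent of $\sG_{\ell-2}$.
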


\begin{proof}We prove this result by considering four cases. The first one is $\ell=2$. The second is $2< \ell\leq L-1$, which we handle via induction from $\ell=2$. The statements for $\ell=L,L+1$ have similar proofs, which we sketch at the end.\\

\underline{Case $\ell=2$.} The formulae for $z^{(1)}$ and $z^{(2)}$ imply:
\[z^{(2)}_{i_2}(x,\bthetabar_N(t)) = \frac{1}{N}\sum_{i_1=1}^N\fsigma{1}{\fsigma{0}{x,\thetabar_{1,i_1}^{(0)}(t)},\thetabar_{i_1,i_2}^{(1)}(t)}.\]

The definition of the ideal particles implies that, for a fixed $i_2$, the pairs
\[(\thetabar_{1,i_1}^{(0)}(t),\thetabar_{i_1,i_2}^{(1)}(t))\]
are i.i.d. with common law $(\soln)_t^{(0,1)}$. If we recall the formula for $\zbar^{(2)}$ in (\ref{eq:defzbar2}), we obtain:
\[\Ex{z^{(\ell)}_{i_\ell}(x,\bthetabar_N(t))} = \int\limits_{\R^{D_0}\times \R^{D_1}}\,\fsigma{1}{\fsigma{0}{x,a^{(0)}},a^{(1)}}\,d(\soln)_t^{(0,1)} = \zbar^{(2)}(x,\soln_t).\]
We conclude that $\Delta z^{(\ell)}_{i_\ell}(x,\bthetabar_N(t),\soln_t)$
is a centered i.i.d. sum of $N$ terms bounded by $C$. Lemma \ref{lem:MGF} finishes the proof of the case $\ell=2$.

\underline{Inductive step: $2<\ell\leq L-1$.} From \eqnref{defzell} we know that:
\begin{equation}
\label{eq:redefzell}
z^{(\ell)}_{i_\ell}(x,\bthetabar_N(t)) = \frac{1}{N}\sum_{i_{\ell-1}=1}^{N}\fsigma{\ell-1}{z_{i_{\ell-1}}^{(\ell-1)}\parn{x,\bthetabar_N(t)},\thetabar^{(\ell-1)}_{i_{\ell-1},i_\ell}(t)},
\end{equation}
whereas from \eqnref{defzbarell}
\[\zbar^{(\ell)}(x,\soln_t) = \int_{\R^{D_{\ell-1}}}\fsigma{\ell-1}{\zbar^{(\ell-1)}\parn{x,\soln_t},a^{(\ell-1)}}\,d\mu^{(\ell-1)}_t(a^{(\ell-1)}).\]
In order to compare these quantities, we introduce an intermediate term.
\begin{equation}
\label{eq:defuell}
u^{(\ell)}_{i_\ell}\parn{x,\bthetabar_N(t),\soln_t} = \frac{1}{N}\sum_{i_{\ell-1}=1}^{N}\fsigma{\ell-1}{\zbar^{(\ell-1)}\parn{x,\soln_t},\theta^{(\ell-1)}_{i_{\ell-1},i_\ell}(t)}.
\end{equation}
We define:
\begin{align}
\label{def:Delta_1}
\Delta_1:=& z^{(\ell)}_{i_\ell}\parn{x,\bthetabar_N(t)} - u^{(\ell)}\parn{x,\bthetabar_N(t),\soln_t};\\
\label{eq:defDelta2}
\Delta_2:= & u^{(\ell)}\parn{x,\bthetabar_N(t),\soln_t} - \zbar^{(\ell)}\parn{x,\soln_t},
\end{align}
we have
\[
\abs{\Delta z^{(\ell)}_{i_\ell}\parn{x,\bthetabar_N(t),\soln_t}}=\abs{\Delta_1 + \Delta_2}\leq \abs{\Delta_1} + \abs{\Delta_2}.
\]
Consider the $\sigma$-field 
\[\sG_{\ell-2} = \sigma\parn{\thetabar_{i_k,i_{k+1}}^{(k)}(t),\quad t\in[0,T],\quad 0\leq k\leq \ell-2}.\] 
Because $2\leq \ell-1\leq L-2$, the definition of the ideal particles implies that, conditionally on $\sG_{\ell-2}$, the $\thetabar^{(\ell-1)}_{i_{\ell-1},i_\ell}(t)$ are i.i.d. with law ${\soln_t}^{(\ell-1)}$, and:
\[\Ex{z^{(\ell)}_{i_\ell}(x,\bthetabar_N(t))\mid\sG_{\ell-2}}=u^{(\ell)}_{i_\ell}(x,\bthetabar_N(t),\soln_t).\]
Since the $z$'s are bounded by $C$ in norm, the upshot is that we may apply Lemma \ref{lem:MGF} conditionally on $\sG_{\ell-2}$ and obtain:
\[
\log \Ex{e^{\xi|\Delta_1|}\mid\sG_{\ell-2}}\leq d\log 5 + \frac{2C^2\xi^2}{N}\qquad\mbox{ almost surely.}
\]
In particular, since $\Delta_2$ is $\sG_{\ell-1}$-measurable,
\begin{align*}
\log \Ex{e^{\xi\,\abs{\Delta z^{(\ell)}_{i_\ell}(x,\bthetabar_N(t),\soln_t)}}} \leq &  \log \Ex{\Ex{e^{\xi|\Delta_1|}\mid\sG_{\ell-2}}\,e^{\xi\,|\Delta_2|}}]\\ 
\leq & d\log 5 + \frac{2C^2\xi^2}{N} + \log \Ex{e^{\xi\,\Delta_2}}.
\end{align*}
We now consider $\Delta_2$ Equations (\ref{eq:redefzell}) and (\ref{eq:defuell}) and the fact that $\sigma^{(\ell)}$ is $C$-Lipschitz (by Assumption \ref{assump:activations}) imply:
\[
\Delta_2\leq \frac{C}{N}\,\sum_{i_{\ell-1}=1}^N\,\abs{\Delta z^{(\ell-1)}_{i_{\ell-1}}\parn{x,\bthetabar_N(t),\soln_t}}.
\]
As a consequence,

\begin{align*}
\log \Ex{\exp\left\{\xi\abs{\Delta_2}\right\}} \,\,\leq &\,\, \log \Ex{\exp\left\{\xi\,\frac{C}{N}\,\sum_{i_{\ell-1}=1}^N\,\abs{\Delta z^{(\ell-1)}_{i_{\ell-1}}(x,\bthetabar_N(t),\soln_t)}\right\}}\\ 
\mbox{(by convexity)} \quad\leq &\,\, \frac{1}{N}\sum_{i_{\ell-1}=1}^N\log \Ex{\exp\left\{C\xi\,\abs{\Delta z^{(\ell-1)}_{i_{\ell-1}}(x,\bthetabar_N(t),\soln_t)}\right\}}\\
\mbox{(induction hyp.)} \quad\leq &\,\, (\ell-2)\,d\log 5 + \frac{4C^2K_{\ell-1}\,\xi^2}{2N}.
\end{align*}

We conclude that:
\[
\log \Ex{e^{\xi\,\abs{\Delta z^{(\ell)}_{i_\ell}(x,\bthetabar_N(t),\soln_t)}}}\leq (\ell-1)\,d\log 5 + \frac{\xi^2}{2N}\,(4C^2 + 4C^2K_{\ell-1}).
\]
This is the desired inequality because $K_{\ell} = 4C^2 + K_{\ell-1}\,4C^2$.

\underline{Cases $\ell=L,L+1$.} The proof of the case $\ell=L$ is similar to the above, so we simply outline the differences. To bound
\[\Delta z^{(L)}_{i_L}(x,\bthetabar_N(t),\soln_t):=z^{(L)}_{i_L}(x,\bthetabar_N(t)) - \zbar^{(L)}(x,\soln_t,\thetabar^{(L)}_{i_L}(t)),\]
we again introduce an intermediate term
\begin{equation}
\label{eq:defuL}
u^{(L)}_{i_L}\parn{x,\bthetabar_N(t),\soln_t} = \frac{1}{N}\sum_{i_{L-1}=1}^{N}\fsigma{L-1}{\zbar^{(L-1)}\parn{x,\soln_t},\thetabar^{(L-1)}_{i_{L-1},i_L}(t)}.
\end{equation}
and define $\Delta_1$ and $\Delta_2$ defined in analogy with (\ref{def:Delta_1}) and (\ref{eq:defDelta2}) respectively.

The MGF of $\Delta_2$ can be analyzed as above. The main difference is in the analysis of $\Delta_1$. Before, the terms in $u_{i_\ell}^{(\ell)}$ had a law $\mu^{(\ell)}$ independent of $i_L$. Now their law of each term depends on the value of $\thetabar_{i_L}(0)$. From Theorem \ref{thm:independencestructure} and Definition \ref{def:idealparticles}, we know that we may write:
\[
\thetabar^{(L-1)}_{i_{L-1},i_L}(t) = F^{(L-1)}\parn{\thetabar_{i_{L-1},i_L}^{(L-1)}(0),\thetabar^{(L)}_{i_L,1}(0)}(t)
\]
and moreover $\thetabar^{(L)}_{i_L,1}(0) = \thetabar^{(L)}_{i_L,1}(t)$ a.s.. Therefore,
\[
u^{(L)}_{i_L}\parn{x,\bthetabar_N(t),\soln_t} = \frac{1}{N}\sum_{i_{L-1}=1}^{N}h^{(L-1)}\parn{x,\soln_t,\thetabar_{i_{L-1},i_L}^{(L-1)}(0),\thetabar^{(L)}_{i_L,1}(0)}\quad\mbox{ a.s.}
\]
for a certain bounded function $h^{(L)}$, and we may bound $\Delta_1$ with a MGF computation as above.

The case $\ell=L+1$ is similar to $\ell<L$: we are back to a setting where the terms in  $u^{(L+1)}_{1}$ have a common law. We omit the details. \end{proof}

\subsection{Gradients and ideal particles}\label{sub:proofnearlygrad} We now prove Lemma \ref{lem:nearlygrad}.
\begin{proof}[Proof of Lemma \ref{lem:nearlygrad}] We will only do the proof for $\ell< L-1$; the argument for $\ell=L-1$ is only slightly different (see Remark \ref{rem:differentwhere} below).

Fix some $\ell\in [1:L-1]$ and $(i_\ell,i_{\ell+1})\in [1:N]^2$. The term $N^2\alpha(t)\,L_N(\bthetabar_N(t))$ is equal to the average of
\begin{equation}\label{eq:idealaverageof}\alpha(t)\,\left(Y-\yhat(X,\bthetabar_N(t))\right)^\dag\,M^{(\ell+1)}_{(i_{\ell+1},\boldj{\ell+2}{L+1})}(X,\bthetabar_N)\,
		     D_\theta\fsigma{\ell}{z_{i_\ell}^{(\ell)}(X,\bthetabar_N),\thetabar^{(\ell)}_{i_\ell,i_{\ell+1}}(t)}\end{equation}
over choices of $(X,Y)\sim P$ and of multi-indices $\boldj{\ell+2}{L+1}$. By Definition \ref{def:McKeanVlasov} of our McKean-Vlasov process, the time derivative of $\thetabar^{(\ell)}_{i_\ell,i_{\ell+1}}(t)$ equals the average of:
\begin{equation}
\label{eq:idealgradientaverageof}
-\alpha(t)\,\left(Y-\ybar(X,\soln_t)\right)^\dag \Mbar^{(\ell+1)}(X,\soln_t)
D_\theta\fsigma{\ell}{\zbar^{(\ell)}(X,\soln_t),\thetabar^{(\ell)}_{i_\ell,i_{\ell+1}}(t)}
\end{equation}
		     over $(X,Y)\sim P$.

\begin{remark}\label{rem:differentwhere}When $\ell=L-1$, the term $\Mbar^{(\ell+1)}$ in (\ref{eq:idealgradientaverageof}	) would also depend on $\thetabar^{(L)}_{i_L}(0)$. That is the only difference between the cases $\ell<L-1$ and $\ell=L-1$.\end{remark}

The upshot is that $\Delta_\partial\thetabar^{(\ell)}_{i_\ell,i_{\ell+1}}(t)$ is a convex combination of terms of the form (\ref{eq:idealgradientaverageof}) $+$ (\ref{eq:idealaverageof}). Since the moment generating function is convex, it suffices to show that, for {\em any} choice of $(X,Y)\in\R^{d_X}\times \R^{d_Y}$ and {\em any} choice of the multi-index $\boldj{\ell+2}{L+1}$, it holds that
\begin{equation}\label{eq:tediousinductiontrilinear}\mbox{\bf Goal: } \log \Ex{\exp(\xi\,|\mbox{(\ref{eq:idealgradientaverageof}) $+$ (\ref{eq:idealaverageof})}|)}\leq d\log 5\,L + \kappa(C,L)\,\frac{\xi^2}{2N}.\end{equation}

From now on, we fix $(X,Y)$ and the multi-index and analyze the LHS of (\ref{eq:tediousinductiontrilinear}). Unfortunately, this requires some tedious calculations. For the sake of brevity, we will be somewhat cavalier in our notation. We will introduce new random elements $A_1,\overline{A_1},\dots$ and a trilinear mapping $\sQ$ below, without specifying their domains and ranges explicitly. All of these ranges are vector spaces will also use $\|\cdot\|$ to denote the norms over any of these spaces. We will also use $V=V(C,L)$ to denote a constant that depends only on $C$ and $L$, which may change from line to line.

Define:
\begin{eqnarray}\label{eq:defAprooftrack}A_1&:=& Y-\yhat(X,\bthetabar_N(t));\\
\label{eq:defAbarprooftrack}\overline{A_1}&:=&Y-\ybar(X,\soln_t);\\
A_2&:=& D_\theta\fsigma{\ell}{z_{i_\ell}^{(\ell)}(X,\btheta_N),\thetabar^{(\ell)}_{i_\ell,i_{\ell+1}}(t)};\\
\overline{A_2}&:=& D_\theta\fsigma{\ell}{\zbar^{(\ell)}(X,\soln_t),\thetabar^{(\ell)}_{i_\ell,i_{\ell+1}}(t)};\\
A_3&:=& M^{(\ell+1)}_{(i_{\ell+1},\boldj{\ell+2}{L+1})}(X,\bthetabar_N(t));\\
\label{eq:defCbarprooftrack}\overline{A_3}&:=& \Mbar^{(\ell+1)}(X,\soln_t). \end{eqnarray}
One can see by inspection that
\[\mbox{(\ref{eq:idealgradientaverageof}) $+$ (\ref{eq:idealaverageof})} = \alpha(t)\,(\sQ(A_1,A_2,A_3) - \sQ(\overline{A_1},\overline{A_2},\overline{A_3}))\]
where $\sQ$ is a trilinear mapping satisfying:
\[\|\sQ(a,b,c)\|\leq \|a\|\,\|b\|\,\|c\|\]
for all $a,b,c$ in the appropriate spaces (and with the appropriate norms). The function $\alpha$ and the random elements in equations (\ref{eq:defAprooftrack}) through (\ref{eq:defCbarprooftrack}) are all uniformly bounded by constants depending only on $C$ and $L$, so we obtain:
\begin{align}
\nonumber 
\abs{\mbox{(\ref{eq:idealgradientaverageof}) $+$ (\ref{eq:idealaverageof})}} \leq & \supnorm{\alpha}\,\abs{Q(A_1-\overline{A_1},A_2,A_3)}\\ 
\nonumber 
& + \supnorm{\alpha}\abs{Q(\overline{A_1},A_2-\overline{A_2},A_3)}  + \supnorm{\alpha}\abs{Q(\overline{A_1},\overline{A_2},A_3-\overline{A_3})}\\ \label{eq:threetermsprooftrack}
\leq & V_{\ref{eq:threetermsprooftrack}}\,\parn{\norm{A_1-\overline{A_1}} + \norm{A_2-\overline{A_2}} + \norm{A_3-\overline{A_3}}},
\end{align}
with $V_{\ref{eq:threetermsprooftrack}}=V_{\ref{eq:threetermsprooftrack}}(C,L)>0$.
Let us now consider each of the terms of the RHS of (\ref{eq:threetermsprooftrack}). Note that
\begin{align*}
A_1 - \overline{A_1} =&\ybar(X,\soln_t) -\yhat(X,\bthetabar_N(t))\\ 
=& \fsigma{L+1}{\zbar^{(L+1)}(x,\soln_t)} - \fsigma{L+1}{z_{1}^{(L+1)}(x,\bthetabar_N(t))}.
\end{align*}
Since $\fsigma{L+1}{\cdot}$ is $C$-Lipschitz, we obtain:
\[
\norm{A_1 - \overline{A_1}} \leq C\,\abs{\Delta z^{(L+1)}_{1}(X,\bthetabar_N(t),\soln_t)},
\]
with $\Delta z^{(L+1)}$ defined in Lemma \ref{lem:zmeanfield}.
Similarly,
\[
A_2 - \overline{A_2} = D_\theta\fsigma{\ell}{z_{i_\ell}^{(\ell)}(X,\btheta_N),\thetabar^{(\ell)}_{i_\ell,i_{\ell+1}}(t)} -  
D_\theta\fsigma{\ell}{\zbar^{(\ell)}(X,\soln_t),\thetabar^{(\ell)}_{i_\ell,i_{\ell+1}}(t)},
\]
and $D_\theta\fsigma{\ell}{\cdot}$ is $C$-Lipschitz, so:
\[
\norm{A_2 - \overline{A_2}} \leq C\,\abs{\Delta z^{(\ell)}_{i_\ell}(X,\bthetabar_N(t),\soln_t)}.
\]
To compare $A_3$ and $\overline{A_3}$, we use the formulas (\ref{eq:defM}) and (\ref{eq:defMbarell}) This shows that the two terms $A_3$ and $\overline{A_3}$ are products of $L-\ell+1\leq L+1$ terms of the form:
\[
D_z\fsigma{\ell}{z_{j_k}^{k}(x,\btheta_N),\thetabar^{(\ell)}_{j_k,j_{k+1}}(t)}\mbox{ vs. }D_z\fsigma{\ell}{\zbar^{(k)}(x,\soln_t),\thetabar^{(k)}_{j_k,j_{k+1}}(t)},
\]
respectively. Each of these terms is $C$-Lipschitz and  $C$-Uniformly Bounded. By comparing the two products as we did for $\sQ$ -- that is, changing terms in the product one by one --, we obtain:
\[
\norm{A_3 - \overline{A_3}} \leq  C\,\abs{\Delta z^{(\ell+1)}_{i_{\ell+1}}(X,\bthetabar_N(t),\soln_t)} + C\,\sum_{k=\ell+2}^{L+1}\abs{\Delta z^{(k)}_{j_k}(X,\bthetabar_N(t),\soln_t)}.
\]
We deduce from (\ref{eq:threetermsprooftrack}) that:
\begin{align*}
\abs{\mbox{(\ref{eq:idealgradientaverageof}) $+$ (\ref{eq:idealaverageof})}} \leq & C\,\abs{\Delta z^{(\ell)}_{i_\ell}(X,\bthetabar_N(t),\soln_t)} + C\,\abs{\Delta z^{(\ell+1)}_{i_{\ell+1}}(X,\bthetabar_N(t),\soln_t)}\\ 
& + C\,\sum_{k=\ell+2}^{L+1}\abs{\Delta z^{(k)}_{j_k}(X,\bthetabar_N(t),\soln_t)}.
\end{align*}
Set $V':=V(L-\ell+1)$. By H\"{o}lder's inequality and Lemma \ref{lem:zmeanfield}, for all $\xi>0$:
\begin{align*}
&\log\Ex{\exp(\xi\,\abs{\mbox{(\ref{eq:idealgradientaverageof}) $+$ (\ref{eq:idealaverageof})}})}\leq \frac{1}{L-\ell+1}\log \Ex{\exp(V'\xi\,\abs{\Delta z^{(\ell)}_{i_\ell}(X,\bthetabar_N(t))})}\\ 
& \quad\quad+ \frac{1}{L-\ell+1}\log \Ex{\exp(V'\xi\,\abs{\Delta z^{(\ell+1)}_{i_{\ell+1}}(X,\bthetabar_N(t))})}\\
& \quad\quad+ \frac{1}{L-\ell+1}\sum_{k=\ell+2}^{L+1}\log \Ex{\exp(V'\xi\,\abs{\Delta z^{(k)}_{j_k}(X,\bthetabar_N(t))})} \\ 
&\hphantom{\log\Ex{\exp(\xi\,\abs{\mbox{(\ref{eq:idealgradientaverageof}) $+$ (\ref{eq:idealaverageof})}})}}\leq Ld\log 5 + C_{\ref{lem:nearlygrad}}\,\frac{\xi^2}{2N},
\end{align*}
where $C_{\ref{lem:nearlygrad}} = C_{\ref{lem:nearlygrad}}(C,L) > 0$, completing the proof.
\end{proof}

\section{Loss approximation and ideal particles}
\label{sec:coupling}
This section combines the elements of the previous two sections to prove Theorems \ref{thm:error-approx} (on approximation of the loss function) and \ref{thm:microscopic} (on approximating weights by ideal particles). We will focus on the first proof most of the time, and then argue that Theorem \ref{thm:microscopic} follows from the same calculations.

\subsection{Loss function approximation} To prove Theorem \ref{thm:error-approx}, we will control the successive approximations of the process developed in the previous sections. 
\begin{align}\label{eq:master-inequality}
&\Exp{\stackrel{(X_k,Y_k) \stackrel{\scalebox{.5}{\mbox{\text{iid}}}}{\sim} P}{\btheta_N(0)\sim \mu_0}}{\abs{\L_N(\btheta_N(k)) - \Lbar(\soln_{k\eps})}}\leq
\overset{{( I )}}{\overbrace{\Exp{(X_k,Y_k) \stackrel{\tiny{iid}}{\sim} P}{\abs{\L_N(\btheta_N(k)) - \L_N(\thetatil_N(\eps k))}}}}\\
&\quad\quad+\overset{{( II )}}{\overbrace{\Exp{\thetatil_N(0)\sim \mu_0}{\abs{\L_N(\thetatil_N(\eps\,k)) - \L_N(\thetabar_N(\eps k)}}}}
+\overset{{( III )}}{\overbrace{\Exp{\thetabar_N(0)\sim \mu_0}{\abs{\L_N(\thetabar_N(\eps k)) - \Lbar(\soln_{k\eps})}}}},\nonumber
\end{align}
where
\begin{itemize}
\item $\btheta_N(\cdot)$ are the weights in the original DNN, which evolve in discrete time;
 \item  $\thetatil_N(\cdot)$ is the continuous time gradient descent process from Section \ref{sec:SGDtoCTGD}; 
 \item $\thetabar_N(\cdot)$ is the ideal particle process introduced in Section \ref{sec:ideal}; and
 \item $\soln$ is the solution to the McKean-Vlasov problem in Definition \ref{def:McKeanVlasov}.
 \end{itemize} 
 
 We deal with term $(I)$ in the following lemma:

\begin{lemma}\label{lem:error-sgd-ctgd}
Let $\btheta_N$ be the process from Definition \ref{def:sgd-process}, and $\thetatil_N$ the process appearing in Definition \ref{def:ctgd-process}, both sharing the same initial contidion $\btheta_N(0) = \thetatil_N(0)$. Then, no matter which initial condition is picked, it holds for all $k\in\left[0:\left\lceil\frac{T}{\epsilon}\right\rceil\right]$:
\begin{equation}
\label{eq:error-bound-sgd-ctgd}
\mathbb{E}_{(X_r,Y_r)\iid P}\Big[\abs{\L_N\parn{\btheta_N(k)} - \L_N\parn{\thetatil_N(\eps\cdot k)}}\Big] \leq C_{\ref{eq:error-bound-sgd-ctgd}}\cdot \parn{\eps+ \sqrt{\eps\,d}} 
\end{equation}
where $d:=\max\{d_i,1\leq i\leq L-1\}$ and $C_{\ref{eq:error-bound-sgd-ctgd}}$ depends only on $L$ and $C$.
\end{lemma}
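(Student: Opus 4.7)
The plan is to combine the high-probability pathwise bound from Proposition \ref{prop:SGDtoCTGD} with a Lipschitz-in-$\Lnorm{\cdot}$ estimate for the loss $\L_N$, and then integrate the sub-Gaussian tail to pass from the high-probability statement to the expectation bound.

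First I would establish that $\L_N$ is Lipschitz with respect to $\Lnorm{\cdot}$, with a constant $K=K(C,L)$ that does \emph{not} depend on $N$. Since $|Y-\yhat(X,\btheta_N)|\leq 2C$ by Assumption \ref{assump:activations}, a standard factorization $|a|^2-|b|^2=(a-b)\cdot(a+b)$ reduces this to showing that $\btheta_N\mapsto \yhat(X,\btheta_N)$ is uniformly $K$-Lipschitz in $\Lnorm{\cdot}$. The proof is by induction on the layer index $\ell$, exploiting the averaging structure of \eqnref{defzell}: perturbing all weights in layer $\ell$ by displacements $\delta^{(\ell)}_{i_\ell,i_{\ell+1}}$ changes $z^{(\ell+1)}_{i_{\ell+1}}$ by at most a constant times $\frac{1}{N_\ell}\sum_{i_\ell}|\delta^{(\ell)}_{i_\ell,i_{\ell+1}}|$, which is controlled by the $\ell$-th term defining $\Lnorm{\cdot}$. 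The change in the downstream $z$'s and in $\yhat$ then propagates through bounded Lipschitz derivatives of the $\sigma^{(\ell)}$. This is morally the same computation underlying Lemma \ref{lem:grad-lnorm}, and should be available (or easily adapted) from that lemma.

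Second, Proposition \ref{prop:SGDtoCTGD} provides, for every $u\geq 0$ and every $k\in[0:\lceil T/\eps\rceil]$, the sub-Gaussian tail bound
\[
\Pr{\Lnorm{\Delta\thetatil_N(k)} > C_{\ref{prop:SGDtoCTGD}}\parn{\eps+\sqrt{\eps d}+\sqrt{\eps}\,u}}\leq e^{-u^2}.
\]
Combining this with the Lipschitz property from step one gives
\[
\abs{\L_N(\btheta_N(k))-\L_N(\thetatil_N(\eps k))}\leq K\,\Lnorm{\Delta\thetatil_N(k)},
\]
so the same tail bound (with $K\cdot C_{\ref{prop:SGDtoCTGD}}$ in place of $C_{\ref{prop:SGDtoCTGD}}$) holds for the loss difference.

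Finally I would integrate the tail: for a nonnegative random variable $W$ satisfying $\Pr{W>a+bu}\leq e^{-u^2}$ for all $u\geq 0$, one has $\Ex{W}\leq a+b\int_0^\infty e^{-u^2}\,du = a+b\sqrt{\pi}/2$. Applied with $a = K C_{\ref{prop:SGDtoCTGD}}(\eps+\sqrt{\eps d})$ and $b = K C_{\ref{prop:SGDtoCTGD}}\sqrt{\eps}$, and using $\sqrt{\eps}\leq \sqrt{\eps d}$ (so that the $b$-term is absorbed), this yields
\[
\Exp{(X_r,Y_r)\iid P}{\abs{\L_N(\btheta_N(k))-\L_N(\thetatil_N(\eps k))}}\leq C_{\ref{eq:error-bound-sgd-ctgd}}\parn{\eps+\sqrt{\eps d}},
\]
as claimed. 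The main obstacle is the first step: while Lemma \ref{lem:grad-lnorm} already deals with a closely related Lipschitz property for gradients, one must verify that the right averaging structure of $\Lnorm{\cdot}$ is what makes the Lipschitz constant $N$-free, rather than something weaker that would scale with $N$. Everything else is a routine tail-integration.
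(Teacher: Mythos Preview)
Your proposal is correct and follows essentially the same route as the paper: Lipschitz continuity of $\L_N$ in $\Lnorm{\cdot}$ (which the paper states as Lemma~\ref{lem:L_N-lipschitz}, proved via exactly the layer-wise averaging argument you sketch), then Proposition~\ref{prop:SGDtoCTGD}, then tail integration (the paper's Lemma~\ref{lem:expected-upper-bound}). The only cosmetic difference is that you point to Lemma~\ref{lem:grad-lnorm} as the source of the Lipschitz bound, whereas the paper has recorded the needed statement directly as Lemma~\ref{lem:L_N-lipschitz}; your concern about $N$-dependence is addressed there.
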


\begin{proof}
Notice that according to Lemma \ref{lem:L_N-lipschitz} in the Appendix, we have that the function $\L_N$ is Lipschitz. Therefore, it holds that:
\begin{equation}
\mathbb{E}_{(X_r,Y_r)\iid P}\Big[\abs{\L_N\parn{\btheta_N(k)} - \L_N\parn{\thetatil_N(\eps\cdot k)}}\Big] \leq
C_{\ref{lem:L_N-lipschitz}} \Exp{(X_k,Y_k)\iid P}{\Lnorm{\btheta_N(k) - \thetatil_N(\eps\cdot k)}}
\end{equation}

Now, we will use the result in proposition \ref{prop:SGDtoCTGD} together with the elementary lemma \ref{lem:expected-upper-bound} to get:

\[\Exp{(X_k,Y_k)\iid P}{\Lnorm{\btheta_N(k) - \thetatil_N(\eps k)}} \leq
 \cdot C_{\ref{prop:SGDtoCTGD}} \parn{\eps +\sqrt{\eps\,d} + \frac{\sqrt{\eps \pi}}{2}}.\]\end{proof}

The next step is to find an upper bound for $(II)$. That task is accomplished in the next lemma:
\begin{lemma}
\label{lem:error-bound-ctgd-ideal}
Let $\thetatil_N$ be the process described in Definition \ref{def:ctgd-process} and $\thetabar_N$ the process appearing in Definition \ref{def:idealparticles}, both sharing an initial condition
$\thetatil_N(0) = \thetabar_N(0)\sim \mu_0$ as in Assumption \ref{assump:initialization}. Then it holds for all $t\in [0,T]$:
\begin{equation}
\label{eq:error-bound-ctgd-ideal}
\mathbb{E}_{\mu_0}\Big[\abs{\L_N\parn{\thetatil_N(t)} - \L_N\parn{\thetabar_N(t)}}\Big] \leq \frac{C_{\ref{eq:error-bound-ctgd-ideal}}\sqrt{d}}{\sqrt{N}}
\end{equation}
for some constant $C_{\ref{eq:error-bound-ctgd-ideal}}$ which depends only on $T, L$ and $C$. Here $d:=\max\{d_i,1\leq i\leq L-1\}$ as in the previous Lemma.\end{lemma}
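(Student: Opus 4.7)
The plan is to reduce the loss-function difference to a weight-vector difference via the Lipschitz property of $\L_N$, and then to control the weight-vector difference by a Gronwall-type argument comparing the two ODEs.

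First I would invoke the Lipschitz estimate (Lemma \ref{lem:L_N-lipschitz} cited in the proof of Lemma \ref{lem:error-sgd-ctgd}), which reduces the goal to bounding
\[
\mathbb{E}_{\mu_0}\bigl[\,\Lnorm{\thetatil_N(t)-\thetabar_N(t)}\,\bigr].
\]
The CTGD process $\thetatil_N$ satisfies $\frac{d}{dt}\thetatil_N(t)=-\alpha(t)\,N^{2}\nabla \L_N(\thetatil_N(t))$ on inner layers and is constant on layers $\ell\in\{0,L\}$. By Lemma \ref{lem:nearlygrad}, the ideal particles $\thetabar_N$ satisfy
\[
\frac{d}{dt}\thetabar_N(t)=-\alpha(t)\,N^{2}\nabla \L_N(\thetabar_N(t))+\Delta_\partial\thetabar_N(t),
\]
and are also constant on layers $\{0,L\}$, with the matching initial condition. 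Subtracting and integrating,
\[
\thetatil_N(t)-\thetabar_N(t)= \int_0^t\!\!-\alpha(s)N^{2}\bigl[\nabla \L_N(\thetatil_N(s))-\nabla \L_N(\thetabar_N(s))\bigr]ds-\int_0^t\Delta_\partial\thetabar_N(s)\,ds.
\]

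Next I would apply $\Lnorm{\cdot}$ and bound each piece. Lemma \ref{lem:grad-lnorm} (used in Proposition \ref{prop:SGDtoCTGD}) asserts that $\alpha\cdot\Gradh_N$ is Lipschitz in $\Lnorm{\cdot}$ with a constant depending only on $C,L$; since $\alpha(s)N^{2}\nabla \L_N$ equals the population average of $\alpha\cdot\Gradh_N$, the same Lipschitz bound transfers. For the noise term, the quantity $\Lnorm{\Delta_\partial\thetabar_N(s)}$ is a supremum over $\ell$ of layer-averages of $|\Delta_\partial\thetabar^{(\ell)}_{i_\ell,i_{\ell+1}}(s)|$. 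From the MGF bound in Lemma \ref{lem:nearlygrad}, each summand has $\mathbb{E}|\Delta_\partial\thetabar^{(\ell)}_{i_\ell,i_{\ell+1}}(s)|\leq C'\sqrt{d/N}$, so by linearity of expectation combined with $\mathbb{E}[\sup_\ell Y_\ell]\leq\sum_\ell\mathbb{E}[Y_\ell]$,
\[
\mathbb{E}\,\Lnorm{\Delta_\partial\thetabar_N(s)}\;\leq\;C''\sqrt{\tfrac{d}{N}},
\]
with $C''$ depending only on $C$ and $L$. Putting these two estimates together yields
\[
\mathbb{E}\,\Lnorm{\thetatil_N(t)-\thetabar_N(t)}\;\leq\;C_1\!\!\int_0^t\!\mathbb{E}\,\Lnorm{\thetatil_N(s)-\thetabar_N(s)}\,ds\;+\;C''T\sqrt{\tfrac{d}{N}},
\]
and Gronwall's inequality gives $\mathbb{E}\,\Lnorm{\thetatil_N(t)-\thetabar_N(t)}\leq C_{\ref{eq:error-bound-ctgd-ideal}}\sqrt{d/N}$ for $t\in[0,T]$, which finishes the proof after multiplying by the Lipschitz constant of $\L_N$.

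The main obstacle is the verification that the noise term $\Delta_\partial\thetabar_N(s)$ really contributes only $O(\sqrt{d/N})$ in the $\Lnorm{\cdot}$-norm; here I expect two small subtleties. First, the conditional independence structure guaranteed by Theorem \ref{thm:independencestructure} must be used exactly as in the proof of Lemma \ref{lem:zmeanfield}, so that the summands along different edges of a given layer are sufficiently independent for the pointwise bound from Lemma \ref{lem:nearlygrad} to hold simultaneously (the lemma is stated edge-by-edge, but we need it uniformly over $i_\ell,i_{\ell+1}$). Second, the case $\ell=L-1$ is slightly different (cf. Remark \ref{rem:differentwhere}) because $\Mbar^{(L)}$ depends on the fixed outer weight $\thetabar^{(L)}_{i_L,1}(0)$; this dependence is, however, bounded and enters only through the drift, so it does not alter the above estimate.
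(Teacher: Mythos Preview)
Your proposal is correct and essentially parallel to the paper's proof, but the order of operations differs in a way worth noting. The paper works \emph{pathwise}: it bounds each $|\Delta_\partial\thetabar^{(\ell)}_{i_\ell,i_{\ell+1}}(s)|$ by $\sqrt{C_{\ref{lem:nearlygrad}}/N}\,(u+\sqrt{Ld\log 5})$ on an event of probability $\geq 1-e^{-u^2}$, applies Gronwall on that event to get a high-probability bound on $\Lnorm{\thetatil_N(t)-\thetabar_N(t)}$, and only at the very end converts this to an expectation via the tail-integration Lemma \ref{lem:expected-upper-bound}. You instead take expectation \emph{before} Gronwall, which is slightly more direct: the MGF bound of Lemma \ref{lem:nearlygrad} gives $\mathbb{E}|\Delta_\partial\thetabar^{(\ell)}_{i_\ell,i_{\ell+1}}(s)|\leq C'\sqrt{d/N}$ for each fixed edge and time, and linearity of expectation plus the crude bound $\mathbb{E}[\sup_\ell]\leq\sum_\ell\mathbb{E}$ handles the $\Lnorm{\cdot}$-norm with no need for any uniformity over $(i_\ell,i_{\ell+1})$. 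Your first stated ``subtlety'' is therefore not an obstacle at all in your route---the edge-by-edge statement of Lemma \ref{lem:nearlygrad} is exactly what you need, and no simultaneous control is required. The paper's pathwise route does implicitly use a uniform-in-edge high-probability bound (though it glosses over the union-bound cost), so in this respect your expectation-first argument is the cleaner of the two. For the Lipschitz step, the paper cites Remark \ref{rem:lipschitz-y_N} componentwise rather than Lemma \ref{lem:grad-lnorm}, but these are equivalent for the present purpose.
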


\begin{proof}
This Lemma is a direct consequence of Lemma \ref{lem:nearlygrad}. Recall from (\ref{eq:deriv-thetatil}) that for $\ell \in [1:L-1]$
\[\frac{d\thetatil^{(\ell)}_{i_\ell,i_{\ell+1}}(t)}{dt} = - N^2\alpha(t)\frac{\partial \L_N(\thetatil_N(s))}{\partial\thetatil^{(\ell)}_{i_\ell,i_{\ell+1}}}.\]
We use this to derive the following inequality:
\begin{align}
\label{eq:a-b-inequality}
\abs{\thetatil^{(\ell)}_{i_\ell,i_{\ell+1}}(t) - \thetabar^{(\ell)}_{i_\ell,i_{\ell+1}}(t)} &\leq \int_0^t\abs{\frac{d\thetatil^{(\ell)}_{i_\ell,i_{\ell+1}}(s)}{dt} - \frac{d\thetabar^{(\ell)}_{i_\ell,i_{\ell+1}}(s)}{dt}}ds.\nonumber\\
&\leq \overset{(a)}{\overbrace{\int_0^t \alpha(s)\abs{N^2\frac{\partial \L_N(\thetatil_N(s))}{\partial\thetatil^{(\ell)}_{i_\ell,i_{\ell+1}}} - N^2\frac{\partial \L_N(\thetabar_N(s))}{\partial\thetabar^{(\ell)}_{i_\ell,i_{\ell+1}}}}ds} }\\
&\quad\quad\quad +\overset{(b)}{\overbrace{\int_0^t \abs{N^2\alpha(s)\frac{\partial \L_N(\thetabar_N(s))}{\partial\thetabar^{(\ell)}_{i_\ell,i_{\ell+1}}} + \frac{d\thetabar^{(\ell)}_{i_\ell,i_{\ell+1}}(s)}{dt}} ds}}.\nonumber
\end{align}
We now work on finding good upper bounds for $(a)$ and $(b)$. For $(a)$, we need the fact that, according to remark \ref{rem:lipschitz-y_N}, for any choice of $\ell\in[1:L-1]$ and $(i_\ell,i_{\ell+1})\in[1:N]^2$, the functions $N^2\frac{\partial \L_N(\,\cdot\,)}{\partial\thetatil^{(\ell)}_{i_\ell,i_{\ell+1}}}$ are Lipschitz with a common constant $C_{\ref{rem:lipschitz-y_N}}$. This means that
\[
(a)\leq C\cdot C_{\ref{rem:lipschitz-y_N}} \cdot\int_0^t\Lnorm{\thetatil_N(s) - \thetabar_N(s)}ds
\]
Now, according to Lemma \ref{lem:nearlygrad}, with probability greater than $1-e^{-u^2}$ it holds that
\[\abs{N^2\alpha(t)\frac{\partial \L_N(\thetabar_N(t))}{\partial\thetabar^{(\ell)}_{i_\ell,i_{\ell+1}}} + \frac{d\thetabar^{(\ell)}_{i_\ell,i_{\ell+1}}(t)}{dt} }\leq \sqrt{\frac{C_{\ref{lem:nearlygrad}}}{N}}\parn{u + \sqrt{Ld\log(5)}} .
\]
This means we can make:
\[(b)\leq t\cdot\sqrt{\frac{C_{\ref{lem:nearlygrad}}}{N}}\parn{u + \sqrt{Ld\log(5)}}\leq T\cdot\sqrt{\frac{C_{\ref{lem:nearlygrad}}}{N}}\parn{u + \sqrt{Ld\log(5)}}.\]

Notice that both these bounds are independent of $\ell\in[1:L-1]$ and of $(i_\ell,i_{\ell+1})\in[1:N]^2$. Since for $\ell \in\{0,L\}$ the difference is always zero, we can apply the supremum on the LHS of (\ref{eq:a-b-inequality}) together with the previous bounds to get:
\[
\Lnorm{\thetatil_N(s) - \thetabar_N(s)} \leq  C\cdot C_{\ref{rem:lipschitz-y_N}} \cdot\int_0^t\Lnorm{\thetatil_N(s) - \thetabar_N(s)}ds + T\cdot\sqrt{\frac{C_{\ref{lem:nearlygrad}}}{N}}\parn{u + \sqrt{Ld\log(5)}}.
\]

Follow that up with a Gronwall Inequality to get:
\[
\Lnorm{\thetatil_N(s) - \thetabar_N(s)} \leq  T^2\cdot\sqrt{\frac{C_{\ref{lem:nearlygrad}}}{N}}\parn{u + \sqrt{Ld\log(5)}} e^{C\cdot C_{\ref{rem:lipschitz-y_N}} \cdot\ T}.
\]

Recall that this inequality holds in a set of probability higher than $1-e^{-u^2}$. To close things off, we recall that according to \ref{lem:L_N-lipschitz}, $\L_N$ is a
Lipschitz function. Then, we invoke Lemma \ref{lem:expected-upper-bound} to find:
\[
\Exp{\mu_0}{\abs{\L_N(\thetatil_N(s)) - \L_N(\thetabar_N(s)})}\leq C_{\ref{lem:L_N-lipschitz}}\cdot
\Exp{\mu_0}{\Lnorm{\thetatil_N(s) - \thetabar_N(s)}} \leq  \frac{C_{\ref{eq:error-bound-ctgd-ideal}}\sqrt{d}}{\sqrt{N}},
\]
where $C_{\ref{eq:error-bound-ctgd-ideal}} = C_{\ref{lem:L_N-lipschitz}}\cdot\sqrt{C_{\ref{lem:nearlygrad}}}\parn{\frac{\sqrt{\pi}}{2} + \sqrt{L\log(5)}} e^{C\cdot C_{\ref{rem:lipschitz-y_N}} \cdot\ T}\cdot T^2$.
\end{proof}

Finally, we need to find a proper bound for $(III)$, which is achieved by the following lemma:

\begin{lemma}
\label{lem:error-bound-ideal-vlasov}
Let $\thetabar_N$ be the ideal particle process defined in section \ref{sec:ideal} and $\soln_{[0,T]}$ be the unique solution to the McKean-Vlasov problem 
 under assumptions \ref{assump:initialization} and \ref{assump:activations}. Then, for $t\in[0:T]$ the following inequality holds:
\begin{equation}
\label{eq:error-bound-ideal-vlasov}
\Exp{\mu_0}{\abs{\L_N(\thetabar_N(t)) - \Lbar(\soln_t)}} \leq \frac{C_{\ref{eq:error-bound-ideal-vlasov}}\sqrt{d}}{\sqrt{N}}
\end{equation}
\end{lemma}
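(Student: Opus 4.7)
The plan is to reduce the difference of losses to a difference of network outputs, then further to the discrepancy between $z^{(L+1)}_1(X,\bthetabar_N(t))$ and $\zbar^{(L+1)}(X,\soln_t)$, which is precisely what Lemma~\ref{lem:zmeanfield} controls.

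First, I would expand the squared losses using the identity $|Y-\yhat|^2-|Y-\ybar|^2=(\ybar-\yhat)^\dag(2Y-\yhat-\ybar)$, and invoke Assumption~\ref{assump:activations} (which makes $Y$, $\sigma^{(L+1)}$, and hence $\yhat$ and $\ybar$ uniformly bounded by a constant depending on $C$) to obtain
\[
|\L_N(\bthetabar_N(t))-\Lbar(\soln_t)|\leq K\,\Exp{(X,Y)\sim P}{\bigl|\yhat(X,\bthetabar_N(t))-\ybar(X,\soln_t)\bigr|}
\]
for some $K=K(C)$. Next, since $\sigma^{(L+1)}$ is $C$-Lipschitz,
\[
\bigl|\yhat(X,\bthetabar_N(t))-\ybar(X,\soln_t)\bigr|\leq C\,\bigl|\Delta z^{(L+1)}_1(X,\bthetabar_N(t),\soln_t)\bigr|,
\]
so taking total expectations (over $\mu_0$ and $(X,Y)\sim P$) reduces the task to bounding $\Ex{\bigl|\Delta z^{(L+1)}_1\bigr|}$ by $O(\sqrt{d/N})$.

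The key input is Lemma~\ref{lem:zmeanfield} applied at $\ell=L+1$, $i_{L+1}=1$: for every $x\in\R^{d_X}$ and every $\xi>0$,
\[
\log\Ex{\exp\bigl(\xi\,|\Delta z^{(L+1)}_1(x,\bthetabar_N(t),\soln_t)|\bigr)}\leq L\,d\log 5+\frac{K_{L+1}\,\xi^2}{2N}.
\]
From this subgaussian-type MGF bound I would extract an $L^1$ estimate by the standard trick: Jensen gives $\xi\,\Ex{|\Delta z^{(L+1)}_1|}\leq L d\log 5+K_{L+1}\xi^2/(2N)$, and optimizing over $\xi$ yields
\[
\Ex{\bigl|\Delta z^{(L+1)}_1(x,\bthetabar_N(t),\soln_t)\bigr|}\leq \sqrt{\frac{2L\,d\log 5\cdot K_{L+1}}{N}}.
\]
Since this estimate is uniform in $x$, I can integrate against $P$ and combine with the first two steps to conclude (\ref{eq:error-bound-ideal-vlasov}) with $C_{\ref{eq:error-bound-ideal-vlasov}}$ depending only on $C$ and $L$ (here $T$ enters only implicitly through the fact that we evaluate at a fixed $t\in[0,T]$).

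There is no real obstacle: this lemma is a direct consequence of the already-proven control on the propagation of the $z$'s through the network. The only small technical point is the MGF-to-$L^1$ conversion described above, which is routine; all other steps are applications of Lipschitz continuity and boundedness granted by Assumption~\ref{assump:activations}.
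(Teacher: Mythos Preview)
Your proposal is correct and follows essentially the same route as the paper: reduce the loss difference to $|\yhat-\ybar|$ by boundedness, then to $|\Delta z^{(L+1)}_1|$ by the Lipschitz property of $\sigma^{(L+1)}$, and finally invoke Lemma~\ref{lem:zmeanfield}. The only cosmetic difference is in the last step: the paper converts the MGF bound to a tail bound and then applies Lemma~\ref{lem:expected-upper-bound} to pass to expectation, whereas you go directly from the MGF to $L^1$ via Jensen and optimization over $\xi$; both are routine and yield the same $\sqrt{d/N}$ rate.
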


\begin{proof}
To start the proof of this lemma, we rewrite the Loss function difference as:
\begin{align*}
\L_N(\thetabar_N(t)) - \Lbar(\soln_t)&=\frac{1}{2} \mathbb{E}_{(X,Y)\sim P}\left[\abs{\yhat(X,\thetabar_N(t))}^2 - \abs{\ybar(X,\soln_t)}^2\right.\\
&\quad\quad\quad\quad\quad\quad\quad\quad\quad\left.+ 2Y^\dag(\ybar(X,\soln_t) - \yhat(X,\thetabar_N(t)))\right].
\end{align*}
Then, using the fact that $Y,\yhat, \ybar$ are $C $-bounded we get:
\[
\abs{\L_N(\thetabar_N(t)) - \Lbar(\soln_t)}\leq 4\cdot C  \cdot\Exp{(X,Y)\sim P}{\abs{\ybar(X,\soln_t)-\yhat(X,\thetabar_N(t))}},
\]
Now, by recalling the definition of $\yhat$ in (\ref{eq:outputNN}), the definition of $\ybar$ in (\ref{eq:defzbarL+1}), and using the fact that $\sigma^{(L+1)}$ is $C$-Lipschitz according to
Assumption \ref{assump:activations}, we get:
\[
\abs{\L_N(\thetabar_N(t)) - \Lbar(\soln_t)} \leq 4\cdot C^2\cdot\Exp{(X,Y)\sim P}{\abs{z^{(L+1)}_{1}(x,\bthetabar_N(t)) - \zbar^{(L+1)}(x,\soln_t)}}
\]

Finally, we invoke lemma \ref{lem:zmeanfield} together with lemma \ref{lem:expected-upper-bound} to conclude:
\[
\Exp{\mu_0}{\abs{\L_N(\thetabar_N(t)) - \Lbar(\soln_t)}} \leq \frac{C_{\ref{eq:error-bound-ideal-vlasov}}\sqrt{d}}{\sqrt{N}},
\]
where $C_{\ref{eq:error-bound-ideal-vlasov}} = 4\cdot C^2\sqrt{2 K_{L+1}}\parn{\frac{\sqrt{\pi}}{2} + \sqrt{L\cdot \log(5)}}$
\end{proof}

\subsection{Approximation by ideal particles}\label{sub:proof:microscopic} To prove Theorem \ref{thm:microscopic}, we first note that, by symmetry,
\[\Exp{\stackrel{(X_k,Y_k) \stackrel{\tiny{iid}}{\sim} P}{\btheta_N(0)\sim \mu_0}}{\sum_{\ell=0}^L|\theta^{(\ell)}_{i_\ell,i_{\ell+1}}(k) - \thetabar^{(\ell)}_{i_\ell,i_{\ell+1}}(k\eps)|}\]
does not depend on the specific choice of input-to-output path. Therefore, we only need to bound:
\[\Exp{\stackrel{(X_k,Y_k) \stackrel{\tiny{iid}}{\sim} P}{\btheta_N(0)\sim \mu_0}}{\sum_{\ell=0}^L 
\parn{\sum_{i_\ell=1}^N\sum_{i_{\ell+1}=1}^N
\abs{\theta^{(\ell)}_{i_\ell,i_{\ell+1}}(k) - \thetabar^{(\ell)}_{i_\ell,i_{\ell+1}}(k\eps)}}}.\]

This was done implictly in the proofs of the three lemmas. We omit the details.

\section{Conclusion}
\label{sec:conclusion}
From our point of view, the key contribution of our paper is to deal with some of the intricacies of mean-field limits for deep neural networks. Chief among those is the fact that DNNs lead to McKean-Vlasov problems involving conditional densities. Our results are obtained at the cost of considering that weights close to the input and output are "random features" that are not learned. However, as noted in Remark \ref{rem:timescales}, our results should extend to the setting where weights close to input and output have a higher learning rate. We also believe that one could add noise to our analysis as in \cite{MeiArxiv2018}. This is the approach of \cite{Sirignano2019,Nguyen2019}. Another challenge would be to remove the boundedness and smoothness conditions on the nolinearities (cf. Assumption \ref{assump:activations}), and also to ameliorate the dimensional dependence of our bounds (cf. \cite{Mei2019}). 

To our mind, however, there are other, more pressing problems. One is to find a good way to model other DNN architectures.   An obvious first goal, which is also related to supervised learning, is to study convolutional layers in DNNs \cite{Goodfellow2016,lecun2015}. These are fundamentally different even at the heuristic level. Another would be to develop methods to study the long-term behavior of DNNs, following the lead of Mei et al \cite{Mei2018,MeiArxiv2018}. The main challenge here is that, as far as we can see, the PDEs for our densities have not been studied before. 

A different direction is to develop a theory of nonparametric learning and optimization inspired by our result. Theorem \ref{thm:error-approx} implies that, after $O(1/\epsilon)$ steps of training, our network computes (in the limit of large $N$) a function $\ybar$ which is a {\em composition} of functions of the form
\[z^{(\ell)}\mapsto \int_{\R^{D_\ell}}\,h^{(\ell)}\left(z^{(\ell)},\theta\right)\,d\nu^{(\ell)}(\theta),\]
where the $h^{(\ell)}$ are defined by the activation functions and the $\nu^{(\ell)}$ are given by marginals of $\soln_t$ at a specific time $t$. It would be interesting to study the {\em general problem} of learning  such functions, that is, of choosing the measures $\nu^{(\ell)}$ (with the $h^{(\ell)}$ fixed) so as to approximate $Y$ well -- in terms of algorithmic and sample complexity. From this perspective, the DNNs are simply one method to approximate this problem. \\

\noindent{\bf Acknowledgements.} During the late stages of the preparation of this paper, RIO was a participant of~"The rough high dimensional landscape problem" program in the Kavli Institute for Theoretical Physics (KITP) in Santa Barbara, CA. We thank program organizers Giulio Biroli, Chiara Cammarota, Patrick Charbonneau and Andrea Montanari for the invitation. We also thank other participants for their questions regarding this work, and the KITP and its staff for their hospitality.

\appendix

\section{Some technical lemmas}
\label{sub:technicalities}
\subsection{Proofs of technical lemmas on ODE-type problems} We prove here some technical results on Cauchy ODE-type problems needed in Section \ref{sec:apriori_space}.

\subsubsection{Proof of Proposition \ref{prop:Picardsimple}}\label{sub:proofPicardsimple} This result was stated and used in the proof of Lemma \ref{lem:specialclosedpsi}, which showed that the set of $R$-special measures (Definition \ref{def:Rspecialmeasure}) are closed under the map $\psi$ from Corollary \ref{cor:mappingwelldefined}.
\begin{proof}Recall that we assume $V>0$, $R\geq 2V$ and \[H:[0,T]\times \R^{D_{L-1}}\times \R^{D_{L}}\to \R^{D_{L-1}}\]
satisfy the following properties:
\begin{enumerate}
\item $H$ is uniformly bounded by $V>0$ in norm;
\item for each fixed $t\in [0,T]$ and $a^{(L-1)}\in \R^{D_{L-1}}$, the map
\[a^{(L)}\in \R^{D_{L}}\mapsto H(t,a^{(L-1)},a^{(L)})\] is $V\,e^{Rt}$-Lipschitz;
\item for each fixed $t\in [0,T]$ and $a^{(L)}\in \R^{D_{L}}$,
\[a^{(L-1)}\in \R^{D_{L}}\mapsto H(t,a^{(L-1)},a^{(L)})\] is $V$-Lipschitz.
\end{enumerate}

Under these conditions, the Picard-Lindel\"{o}f Theorem implies that, for each pair $a=(a^{(L-1)},a^{(L)})\in \R^{D_{L-1}}\times \R^{D_{L}}$, there exists a unique solution $G(a)\in C([0,T],\R^{D_{L}})$ to the problem:
\[\left\{\begin{array}{llll}x(0) &=& a^{(L-1)}\\
\frac{dx}{dt}(t) &=& H(t,x(t),a^{(L)}) & (t\in [0,T]).\end{array}\right.\]
In particular, $G:a\mapsto G(a)$ defines a function from $\R^{D_{L-1}}\times \R^{D_L}$ to $C([0,T],\R^{D_{L}})$.

We claim that $G$ is $R$-special, as per Definition \ref{def:Rspecialfunction} for any $R\geq 2V$. To see this, notice first that $G(a)(0)=a^{(L-1)}$ and $G(a)$ is $V$-Lipschitz because $H$ is bounded by $V$ in norm.

What is left is to show that $a\mapsto G(a)(t)$ is $e^{Rt}$-Lipschitz for each $t\in [0,T]$. To this end, consider $\epsilon>0$, $t\in [0,T]$, and define:
\[\Delta_{\epsilon}(t):= \sup\left\{\frac{|G(a)(t) - G(b)(t)|}{\epsilon}\,:\,a,b\in \R^{D_{L-1}}\times \R^{D_L}, |a-b|\leq \epsilon.\right\}.\]
We will show that \[\Delta_{\epsilon}(t)\leq e^{Rt} \mbox{ for all }t\in [0,T]\mbox{ and }\epsilon>0,\]  which implies the desired Lipschitz property.

We will bound $\Delta_{\epsilon}(t)$ via a Gronwall-style argument. First note that $\Delta_{\epsilon}(0)\leq 1$. Also, since, $t\mapsto G(a)(t)$ is $V$-Lipschitz for all $a\in\R^{D_{L-1}}\times \R^{D_L}$, one can show that $\Delta_{\epsilon}(t)\leq 1+2Rt/\epsilon<+\infty$ for all $t$, and moreover $\Delta_{\epsilon}(t)$ is Lipschitz continuous. For $s\in [0,T]$ and $a,b\in\R^{D_{L-1}}\times \R^{D_L}$ with $|a-b|\leq \epsilon$, our assumptions on $H$ imply:
\begin{eqnarray*} \frac{|H(s,G(a)(s),a^{(L)}) - H(s,G(b)(s),b^{(L)})|}{\epsilon}& \leq &\frac{V\,|G(a)(s) - G(b)(s)|}{\epsilon}\\ & & +  \frac{V\, e^{Rs}|a^{(L)}-b^{(L)}|}{\epsilon}\\  & \leq & V\,(\Delta_{\epsilon}(s)+e^{Rs}).\end{eqnarray*}
Therefore, for all $t\in [0,T]$ and $a,b$ as above:
\begin{eqnarray*}\frac{|G(a)(t) - G(b)(t)|}{\epsilon} &\leq & \frac{|a^{(L-1)} - b^{(L-1)}|}{\epsilon} \\ & & + \int_{0}^{t} \frac{|H(s,G(a)(s),a^{(L)}) - H(s,G(b)(s),b^{(L)})|}{\epsilon}\,ds \\ &\leq & 1 +  \int_{0}^{t}V\,(\Delta_{a,b}(s) +e^{Rs})\,ds,\end{eqnarray*}
which implies:
\[\forall t\in [0,T]\,:\,\Delta_{\epsilon}(t)\leq  1 +  \int_{0}^{t}V\,(\Delta_{\epsilon}(s) +e^{Rs})\,ds.\]
Since $t\mapsto \Delta_{\epsilon}(t)$ is continuous, one can use Picard iteration with the above expression and deduce that \[\forall t\in [0,T]\,:\,\Delta_{\epsilon}(t)\leq \xi(t),\] where $\xi(\cdot)$ is the unique solution of the Cauchy Problem:
\[\xi(t) = 1 + \int_0^t\, V\,(\xi(s)+e^{Rs})\,ds\,\,(t\geq 0).\]
It is easily seen that, if $R>V$, then
\[(e^{-Vt}\xi(t))' = V\,e^{(R-V)t}\Rightarrow \xi(t) = \left(\frac{V}{R-V}\right)\,e^{Rt} + \left(1-\frac{V}{R-V}\right)\,e^{Vt}.\]
In particular, since $R\geq 2V$, we have that $\Delta_{\epsilon}(t)\leq \xi(t)\leq e^{Rt}$, as desired. \end{proof}

\subsubsection{Proof of Proposition \ref{prop:Picardcomplicated}}\label{sub:proofPicardcomplicated} This result was stated and used in the proof of Lemma \ref{lem:specialcontainssoln}, where we showed that all solutions to the McKean-Vlasov problem in Definition \ref{def:McKeanVlasov} are $R$-special measures (cf. Definition \ref{def:Rspecialmeasure}).
\begin{proof} \underline{Step 1: preliminaries.} Recall the definition of $\sU$ in (\ref{eq:defsU}) and of $\dist(\cdot,\cdot\cdot;t)$ in (\ref{eq:distU}) (also recalled in (\ref{eq:distU2}) below). The following simple fact will be convenient.
\begin{fact}[Proof omitted]\label{fact:Ucomplete} For any $g_1,g_2\in\sU$ and $t\in [0,T]$,
\begin{equation}\label{eq:distU2} \dist(g_1,g_2;t):=\sup\limits_{a\in \R^{D_{L-1}}\times \R^{D_L}}|g_1(a)(t) - g_2(a)(t)|\leq 2Rt<+\infty\end{equation}
because $g_1(a)(0)=g_2(a)(0)$ and both $g_1,g_2$ are $R$-Lipschitz. For the same reason, $t\mapsto  \dist(g_1,g_2;t)$ is continuous for any fixed $g_1,g_2\in\sU$. Finally,
\[ (g_1,g_2)\in\sU^2\mapsto \dist(g_1,g_2):=\max_{t\in[0,T]}\dist(g_1,g_2;t)\]
is a metric over $\sU$, and $(\sU,\dist)$ is a complete metric space.\end{fact}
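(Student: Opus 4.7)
The plan is to address the three assertions in turn, since each follows from elementary manipulations exploiting the defining properties of $\sU$: common initial value $a^{(L-1)}$, measurability, and $R$-Lipschitz trajectories.

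For the first claim, I would fix $a \in \R^{D_{L-1}}\times \R^{D_L}$ and observe that the triangle inequality, together with $g_1(a)(0) = g_2(a)(0) = a^{(L-1)}$ and the $R$-Lipschitz property of each trajectory, gives
\[
|g_1(a)(t) - g_2(a)(t)| \leq |g_1(a)(t) - g_1(a)(0)| + |g_2(a)(0) - g_2(a)(t)| \leq 2Rt.
\]
Taking the supremum over $a$ yields $\dist(g_1,g_2;t) \leq 2Rt$. For the second claim, a nearly identical estimate shows that for $s,t \in [0,T]$, $|g_i(a)(t) - g_i(a)(s)| \leq R|t-s|$, so
\[
|\dist(g_1,g_2;t) - \dist(g_1,g_2;s)| \leq 2R|t-s|,
\]
meaning $t \mapsto \dist(g_1,g_2;t)$ is in fact $2R$-Lipschitz, hence continuous.

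For the third claim, the metric axioms are routine. Non-negativity and symmetry of $\dist$ are immediate from the supremum/maximum of the norm. The triangle inequality is inherited from that of $|\cdot|$ by taking successive suprema and maxima. For identity of indiscernibles, $\dist(g_1,g_2) = 0$ forces $g_1(a)(t) = g_2(a)(t)$ for all $a,t$, hence $g_1 = g_2$ as functions in $\sU$. Thus $(\sU,\dist)$ is a metric space.

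The main work lies in completeness. Given a Cauchy sequence $\{g_n\}_{n\in\N} \subset \sU$, I would first observe that for every pair $(a,t)$, the sequence $\{g_n(a)(t)\}_n \subset \R^{D_{L-1}}$ is Cauchy, uniformly in $(a,t)$, so a pointwise limit $g(a)(t) := \lim_n g_n(a)(t)$ exists and $g_n \to g$ uniformly in both $a$ and $t$. It remains to verify $g \in \sU$. The identity $g(a)(0) = a^{(L-1)}$ passes to the limit trivially. The $R$-Lipschitz property in $t$ is preserved under pointwise limits. For measurability, note that uniform convergence in $t$ implies $g_n(a) \to g(a)$ in $C([0,T],\R^{D_{L-1}})$ for every $a$, so $g$ is a pointwise limit (in the Polish space $C([0,T],\R^{D_{L-1}})$) of measurable functions, and is therefore itself Borel measurable. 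Finally, the Cauchy condition together with the uniformity of convergence gives $\dist(g_n,g) \to 0$, establishing completeness. I do not anticipate a real obstacle here; the only mildly delicate point is verifying measurability of the limit, but uniform convergence into a Polish space makes this standard.
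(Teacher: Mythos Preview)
Your proof is correct. The paper explicitly omits the proof of this fact (it is labeled ``Proof omitted''), so there is no approach in the paper to compare against; your argument is the standard and expected one, and the only point requiring any care---measurability of the limit---you handle correctly by noting that $g$ is a pointwise limit of measurable maps into the Polish space $C([0,T],\R^{D_{L-1}})$.
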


Also recall our assumptions: $2R\geq V>0$ are constants and
\[\Xi:[0,T]\times \R^{D_{L-1}}\times \R^{D_L}\times \sU\to \R^{D_{L-1}}\] satisfies the following properties:
\begin{enumerate}
\item $\Xi$ is uniformly bounded by $V$.
\item If two functions $g_1,g_2\in\sU$ are such that \[g_1(a^{(L-1)},a^{(L)})=g_2(a^{(L-1)},a^{(L)})\] for $\mu_0^{(L-1,L)}$-a.e. $(a^{(L-1)},a^{(L)})$, then:
\[\Xi(t,a^{(L-1)},a^{(L)},g_1)=\Xi(t,a^{(L-1)},a^{(L)},g_2)\]
for $\mu_0^{(L-1,L)}$-a.e. $(a^{(L-1)},a^{(L)})$;
\item for any fixed $g\in \sU$, the map
\[(t,a^{(L-1)},a^{(L)})\in  [0,T]\times \R^{D_{L-1}}\times \R^{D_L}\mapsto \Xi(t,a^{(L-1)},a^{(L)},g)\in \R^{D_{L-1}}\]
is measurable; if $a^{(L)}$ is also fixed, then $(t,a^{(L-1)})\mapsto \Xi(t,a^{(L-1)},a^{(L)},g)$ is continuous;
\item for any fixed $t\in [0,T]$, $g\in \sU$ and $a,b\in \R^{D_{L-1}}\times \R^{D_L}$ with $|a-b|\leq \epsilon$
\begin{eqnarray*}& |\Xi(t,a^{(L-1)},a^{(L)},g) - \Xi(t,b^{(L-1)},b^{(L)},g)|\\ \leq & V\,\sup\limits_{a',b'\in\R^{D_{L-1}}\times \R^{D_L}\,:\, |a'-b'|\leq \epsilon}|g(a')(t)-g(b')(t)|;\end{eqnarray*}
\item for any fixed $t\in [0,T]$, $g_1,g_2\in \sU$ and $a\in \R^{D_{L-1}}\times \R^{D_L}$:
\[|\Xi(t,a^{(L-1)},a^{(L)},g_1) - \Xi(t,a^{(L-1)},a^{(L)},g_2)|\leq V\,\dist(g_1,g_2;t).\]
\end{enumerate}

For $g\in\sU$, we define a new function $\sT(g)\in \sU$, so that, if $a = (a^{(L-1)},a^{(L)})\in \R^{D_{L-1}}\times \R^{D_L}$, $\sT(g)(a)\in \sU$
is a function satisfying:
\begin{equation}\label{eq:defTsoln}\forall t\in [0,T]\,:\,\sT(g)(a)(t) = a^{(L-1)} + \int_0^t\,\Xi(s,g(a)(t),a^{(L)},g)\,ds.\end{equation}
Notice that this integral is a Riemann integral, since $\Xi$ is continuous in the first two variables.
Also observe that that $\sT(g)$ is measurable, $\sT(g)(a)(0)=a^{(L-1)}$, and $t\mapsto \sT(g)(a)(t)$ is $V$-Lipschitz (and also $R$-Lipschitz) ecause $\Xi$ is bounded by $V$ in norm. We conclude that $\sT(g)\in\sU$ for all $g\in\sU$, so $\sT:\sU\to\sU$ is well defined. \\

\noindent \underline{Step 2: existence, uniqueness and convergence to fixed point.} We will show that:
\begin{equation}\label{eq:goal2specialcontainssoln}\mbox{\bf Goal (step 2): } \exists G_o\in\sU\;\forall g\in \sU\,:\,\lim_{j\to +\infty}\dist(\sT^{j}(g),G_o)=0,\end{equation}
which implies in particular that $G_o$ is the {\em unique} fixed point of $\sT$. 

We achieve this goal this via a Picard-type iteration. Note that for all $g_1,g_2\in\sU$, item (5) in the assumptions and equation (\ref{eq:defTsoln}) imply:
\[\dist(\sT(g_1),\sT(g_2);t)\leq  V\,\int_0^t\,\dist(g_1,g_2;s)\,ds.\]
Since $\dist(\cdot,\cdot\cdot;t)$ is a continuous function of $t$ that is upper bounded by $\dist$, we may iterate this inequality to deduce that
\[\dist(\sT^j(g_1),\sT^j(g_2);t)\leq V^j\,\idotsint\limits_{0\leq t_j\leq t_{j-1}\leq \dots \leq t_1\leq t}\dist(g_1,g_2;t_j)\,dt_j\,dt_{j-1}\dots dt_1,\]
from which we obtain
\[\dist(\sT^j(g_1),\sT^j(g_2))\leq \frac{(VT)^j}{j!}\,\dist(g_1,g_2).\]
Recall from Fact \ref{fact:Ucomplete} that $(\sU,\dist)$ is complete. Therefore, the Banach fixed point theorem shows that $\sT$ has a unique fixed point $G$ and that $\lim_j\dist(\sT^j(g),G_o)= 0$ for all $g\in\sU$. In particular, this $G_o$ must satisfy the following identity for all $(t,a)$ in the appropriate set.
\begin{equation}\label{eq:defTsoln}G_o(a)(t) = a^{(L-1)} + \int_0^t\,\Xi(s,G_o(a)(t),a^{(L)},G)\,ds.\end{equation}

\noindent \underline{Step 3: any $\tilde{G}$ with $\tilde{G}=\sT(\tilde{G})$ is a.e. equal to $G_o$}. This follows directly from (\ref{eq:goal2specialcontainssoln}) and assumption (2), which implies that $\sT^j(\tilde{G})=\tilde{G}$ a.s. for all $j\in\N$. \\

\noindent \underline{Step 4: $G_o$ is $R$-special.} Since $G_o\in\sU$, it suffices to show that
\[\Delta_\epsilon(t):= \frac{1}{\epsilon}\sup\{|G_o(a)(t)-G_o(b)(t)|\,:\, a,b\in \R^{D_{L-1}}\times\R^{D_L},\,|a-b|\leq \epsilon\}.\]
satisfies $\Delta_\epsilon(t)\leq e^{2Vt}$ (recall the assumption that $R\geq 2V$.

To estimate this quantity, we note first that $\Delta_\epsilon(0)\leq 1$ and $\Delta_{\epsilon}(t)\leq 1 + 2Rt/\epsilon$ because $G_o(a)\in C([0,T],\R^{D_{L-1}})$ is $R$-Lipschitz for each $a$. For the same reason, $\Delta_{\epsilon}$ is $2R$-Lipschitz. Using assumption (4), and given $a,b$ with $|a-b|\leq \epsilon$, we estimate:
\begin{multline*}\frac{|\Xi(t,G_o(a)(t),a^{(L)},G_o) - \Xi(t,G_o(b)(t),b^{(L)},G_o)|}{\epsilon}\\  \leq V\,\frac{|G_o(a)(t) - G_o(b)(t)| + |a^{(L)} - b^{(L)}|}{\epsilon}\leq V\,(\Delta_\epsilon(t) + 1).\end{multline*}
We conclude
\begin{multline*}\frac{|G_o(a)(t) - G_o(b)(t)|}{\epsilon} \leq  \frac{|a^{(L-1)} - b^{(L-1)}|}{\epsilon} \\  + \int_0^t\frac{|\Xi(s,G_o(a)(s),a^{(L)},G_o) - \Xi(s,G_o(b)(s),b^{(L)},G_o)|}{\epsilon}\,ds\\ \leq  1 + V\int_0^t\,(\Delta_\epsilon(s)+1)\,ds.\end{multline*}
Therefore,
\[\Delta_\epsilon(t)\leq 1 + V\int_0^t\,(\Delta_\epsilon(s)+1)\,ds.\]
A simple Gronwall argument reveals:
\[\Delta_\epsilon(t)\leq 2e^{Vt}-1\leq e^{2Vt}.\]\end{proof}

\subsection{Lemmata on moment-generating function}

\begin{lemma}
\label{lem:MGF}
Suppose $X_1,\dots,X_m$ are independent random variables that are uniformly bounded by $c>0$. Then for any $\xi\in \R$:
\[
	\log \Ex{\exp\left[\xi \frac{1}{m}\sum_{i=1}^m\,(X_i - \Ex{X_i})\right]}\leq  \frac{c^2\xi^2}{2m}.
\]
\end{lemma}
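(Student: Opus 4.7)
My plan is to derive this from a standard sub-Gaussian moment bound (Hoeffding's lemma) applied to each centered summand. Let $Y_i := X_i - \mathbb{E}[X_i]$. Since $|X_i| \leq c$, each $Y_i$ is mean-zero and takes values in an interval of length at most $2c$. Hoeffding's lemma then yields, for every $t \in \mathbb{R}$,
\[
\mathbb{E}\bigl[\exp(t Y_i)\bigr] \leq \exp\!\Bigl(\tfrac{t^2 c^2}{2}\Bigr),
\]
i.e., each $Y_i$ is sub-Gaussian with variance proxy $c^2$. If Hoeffding's lemma is not taken for granted, I would include a short derivation via convexity: for $Y \in [a,b]$ with $\mathbb{E}[Y]=0$, write $Y = \lambda a + (1-\lambda) b$ with $\lambda = (b-Y)/(b-a)$, apply convexity of $x \mapsto e^{tx}$, take expectations, then optimize over $t$ after taking logs; this gives the bound $\exp(t^2(b-a)^2/8)$, which specializes to the above since $b-a \leq 2c$.

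Next, I would exploit independence to factorize the moment generating function of the normalized sum. With $t = \xi/m$, independence gives
\[
\mathbb{E}\!\left[\exp\!\Bigl(\tfrac{\xi}{m}\sum_{i=1}^m Y_i\Bigr)\right] = \prod_{i=1}^m \mathbb{E}\bigl[\exp(\tfrac{\xi}{m} Y_i)\bigr] \leq \prod_{i=1}^m \exp\!\Bigl(\tfrac{\xi^2 c^2}{2 m^2}\Bigr) = \exp\!\Bigl(\tfrac{\xi^2 c^2}{2 m}\Bigr).
\]
Taking logarithms on both sides yields exactly the claimed inequality.

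There is essentially no obstacle here: the statement is a textbook Hoeffding-type MGF bound, and the only modest point is to make sure the variance proxy is $c^2$ (not $(2c)^2$) so that the $\tfrac{1}{2m}$ constant in the conclusion comes out correctly — this is why I emphasize the "interval of length $2c$" framing of Hoeffding's lemma rather than a cruder $|Y_i| \leq 2c$ bound. Everything else is a one-line factorization using independence.
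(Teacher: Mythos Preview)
Your proposal is correct and follows essentially the same approach as the paper: both invoke Hoeffding's lemma on each centered term $(X_i - \Ex{X_i})/m$, noting it lies in an interval of length $2c/m$, then factor the MGF using independence and sum the $m$ resulting bounds of $c^2\xi^2/(2m^2)$. Your brief sketch of why Hoeffding's lemma gives variance proxy $c^2$ (rather than $(2c)^2$) is a nice addition that the paper omits.
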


\begin{proof}
Hoeffding's Lemma implies that, since
\[
	-\frac{2c}{m}\leq \frac{X_i - \Ex{X_i}}{m} \leq \frac{2c}{m},
\]
we have
\[
	\log \Ex{\exp\left[\xi \,\frac{(X_i - \Ex{X_i})}{m}\right]}\leq \frac{c^2}{2m^2}.
\]
Applying this to each term in
\[
	\log \Ex{\exp\left[\xi \left(\frac{1}{m}\sum_{i=1}^m\,(X_i - \Ex{X_i})\right)\right]} = \sum_{i=1}^m\log \Ex{\exp\left[\xi \,\frac{(X_i - \Ex{X_i})}{m}\right]}
\]
finishes the proof.
\end{proof}

\begin{lemma}[MGF for vectors] Suppose we have a sum of $m$ $d$-dimensional random vectors $Y_1,\dots,Y_m$. Assume $c>0$ is such that $|Y_i|\leq c$ a.s. for all $1\leq i\leq m$. Then
\[
	\log \Ex{\exp\left[\xi \left|\frac{1}{m}\sum_{i=1}^m\,(Y_i - \Ex{Y_i})\right|\right]}\leq d\log 5 + \frac{2c^2\xi^2}{m}.
\]
\end{lemma}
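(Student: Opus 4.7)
The plan is to reduce the vector-valued MGF bound to a scalar MGF bound by discretizing the unit sphere. The obstacle, such as it is, is bookkeeping the constants so that the final bound comes out exactly as stated.

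First I would invoke the standard volumetric covering-number estimate to pick a $\tfrac{1}{2}$-net $\mathcal{N} \subset S^{d-1}$ of the Euclidean unit sphere with $|\mathcal{N}| \le 5^d$. For any $v \in \R^d$, if $u^\star \in \mathcal{N}$ is chosen within distance $\tfrac{1}{2}$ of $v/|v|$, then $\langle u^\star, v\rangle \ge |v| - \tfrac{1}{2}|v| = \tfrac{1}{2}|v|$, so
\[
|v| \le 2 \max_{u \in \mathcal{N}} \langle u, v\rangle.
\]
Applying this with $v = \tfrac{1}{m}\sum_i (Y_i - \Ex{Y_i})$ yields
\[
\exp\!\left[\xi\left|\tfrac{1}{m}\sum_{i=1}^m(Y_i - \Ex{Y_i})\right|\right] \le \sum_{u \in \mathcal{N}} \exp\!\left[2\xi \cdot \tfrac{1}{m}\sum_{i=1}^m \langle u, Y_i - \Ex{Y_i}\rangle\right].
\]

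Next, for each fixed $u \in \mathcal{N}$, the scalars $X_i^u := \langle u, Y_i\rangle$ are independent and satisfy $|X_i^u| \le |u||Y_i| \le c$, so the preceding scalar MGF lemma (Lemma on $\log \Ex{\exp[\xi \cdot \tfrac{1}{m}\sum (X_i - \Ex{X_i})]} \le c^2\xi^2/(2m)$) applied with parameter $2\xi$ in place of $\xi$ gives
\[
\log \Ex{\exp\!\left[2\xi \cdot \tfrac{1}{m}\sum_{i=1}^m (X_i^u - \Ex{X_i^u})\right]} \le \frac{c^2 (2\xi)^2}{2m} = \frac{2c^2\xi^2}{m}.
\]
Taking expectations in the union-bound display and using this estimate termwise gives
\[
\Ex{\exp\!\left[\xi\left|\tfrac{1}{m}\sum_{i=1}^m(Y_i - \Ex{Y_i})\right|\right]} \le |\mathcal{N}| \cdot e^{2c^2\xi^2/m} \le 5^d e^{2c^2\xi^2/m},
\]
and taking logs yields the claimed bound $d\log 5 + 2c^2\xi^2/m$. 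The only place where I need to be careful is matching the factor of $2$ from the covering argument with the scalar MGF inequality, which is why the constant in the variance term doubles from $c^2/(2m)$ to $2c^2/m$.
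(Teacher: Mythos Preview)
Your proof is correct and follows essentially the same route as the paper: a $\tfrac{1}{2}$-net of the sphere with cardinality at most $5^d$, the inequality $|v|\le 2\max_{u\in\mathcal N}\langle u,v\rangle$, a union bound over the net, and then the scalar MGF lemma applied to $\langle u,Y_i\rangle$ with the parameter doubled. The only cosmetic difference is that the paper absorbs the factor $2$ into the variables (setting $X_i=2\langle w,Y_i\rangle$ with bound $2c$) rather than into $\xi$, which of course yields the same constant $2c^2\xi^2/m$.
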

\begin{proof}Let $\sN$ be an $1/2$-net of the unit sphere in $\R^d$ of cardinality $\leq 5^d$ (such nets exist by a simple volumetric argument). It is well-known that for any vector $v\in\R^d$, $|v|\leq 2\max_{w\in\sN}\langle w,v\rangle$. Therefore,
\[\Ex{e^{\xi \left|\frac{1}{m}\sum_{i=1}^m\,(Y_i - \Ex{Y_i})\right|}}\leq \sum_{w \in \sN}\Ex{e^{\xi \frac{1}{m}\sum_{i=1}^m\,2\langle w,Y_i - \Ex{Y_i}\rangle}}
\]
Each MGF in the RHS corresponds to taking $X_i=2\langle w,Y_i\rangle$ in the previous Lemma, with $2c$ replacing $c$. Applying the bound from that result and the fact that $|\sN|\leq 5^d$ finishes the proof.\end{proof}

\begin{lemma}[MGF for martingales]
\label{lem:azuma}
	Let $(Y_k)_{k\geq0}$ be a $d$-dimensional martingale with respect to the filtration $(\sF_k)_{k\geq0}$ with $Y_0 = 0$. Assume $c>0$ is such that $|Y_k - Y_{k-1}|\leq c$ a.s. for all $k>0$. Then,
	\[
		\log \Ex{e^{\xi |Y_m|}}\leq d\log 5 + 2\,m\,c^2\xi^2.
	\]
	We also have,
	\[
		\Pr{\max_{k\leq m} |Y_k| \geq 2\,c\,\sqrt{2\,m\,(u^2 + d\,\log{5})}} \leq e^{-u^2}
	\]
\end{lemma}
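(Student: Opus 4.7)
The plan is to treat this as a vector-valued Azuma-Hoeffding inequality, reducing the $d$-dimensional problem to scalar martingales through a net argument identical in spirit to the proof of the preceding vector MGF lemma, and then combining with Doob's maximal inequality for the probability bound.

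First, I would choose an $\tfrac{1}{2}$-net $\sN$ of the unit sphere in $\R^d$ with $|\sN|\leq 5^d$, and use the standard fact that $|v|\leq 2\max_{w\in \sN}\langle w,v\rangle$ for all $v\in\R^d$. This gives
\[
\Ex{e^{\xi|Y_m|}}\;\leq\;\sum_{w\in\sN}\Ex{e^{2\xi\langle w,Y_m\rangle}}.
\]
For each fixed unit $w\in\sN$, the scalar process $S^w_k:=\langle w,Y_k\rangle$ is a martingale with $|S^w_k-S^w_{k-1}|\leq|Y_k-Y_{k-1}|\leq c$ a.s.\ (by Cauchy--Schwarz). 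I then apply Hoeffding's lemma conditionally: for each $k$,
\[
\Ex{e^{2\xi(S^w_k-S^w_{k-1})}\mid\sF_{k-1}}\;\leq\;e^{(2\xi)^2 c^2/2}\;=\;e^{2c^2\xi^2},
\]
and iterating the tower property yields $\Ex{e^{2\xi S^w_m}}\leq e^{2mc^2\xi^2}$. Summing over $w\in\sN$ and taking logs proves the first inequality: $\log\Ex{e^{\xi|Y_m|}}\leq d\log 5+2mc^2\xi^2$.

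For the maximal probability bound, I would combine the same net reduction with Doob's submartingale inequality. Fix $t>0$ and observe that
\[
\Big\{\max_{k\leq m}|Y_k|\geq t\Big\}\;\subseteq\;\bigcup_{w\in\sN}\Big\{\max_{k\leq m}S^w_k\geq t/2\Big\}.
\]
For each $w$, $e^{\eta S^w_k}$ is a positive submartingale, so Doob's inequality combined with the scalar MGF bound $\Ex{e^{\eta S^w_m}}\leq e^{m c^2\eta^2/2}$ gives, after optimizing $\eta=t/(2mc^2)$,
\[
\Pr{\max_{k\leq m}S^w_k\geq t/2}\;\leq\;\exp\!\left(-\frac{t^2}{8mc^2}\right).
\]
A union bound over $|\sN|\leq 5^d$ elements yields $\Pr{\max_{k\leq m}|Y_k|\geq t}\leq \exp\!\left(d\log 5-t^2/(8mc^2)\right)$. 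Setting $t=2c\sqrt{2m(u^2+d\log 5)}$ makes the exponent equal to $-u^2$, which is exactly the stated bound.

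This proof is essentially routine once the preceding MGF lemma for vectors is in hand: the only conceptual ingredients are (i) the same net trick used there, and (ii) the observation that $\langle w,Y_k\rangle$ inherits a scalar martingale structure with bounded differences, so that both the classical Azuma MGF bound and Doob's maximal inequality apply directly. There is no real obstacle; the main care needed is to track the factor of $2$ coming from the net (which is why we evaluate the MGF at $2\xi$ rather than $\xi$), and to verify the algebra that the choice $t=2c\sqrt{2m(u^2+d\log 5)}$ matches the target probability $e^{-u^2}$.
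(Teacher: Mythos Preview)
Your proof is correct and, for the MGF bound, essentially identical to the paper's: the same $\tfrac12$-net reduction followed by conditional Hoeffding and the tower property. The paper's proof actually stops after the MGF bound and does not address the maximal probability inequality at all; your Doob-plus-union-bound argument fills in that gap correctly, with the algebra matching the stated threshold exactly.
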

\begin{proof}
	To work in $\R^d$, we set $\sN$ to be an $1/2$-net just as in the proof of the previous lemma. As before, we have
	\[
		\Ex{e^{\xi |Y_m|}}\leq \sum_{w \in \sN}\Ex{e^{\xi \,2\langle w,Y_m\rangle}}.
	\]
	A simple modification of Hoeffding's Lemma implies that for $k> 0$,
	\[
		\Ex{e^{\xi \,2\langle w,Y_k - Y_{k-1}\rangle} \mid \sF_{k-1}} \leq e^{2\,(\xi\,c)^2}.
	\]
	Then, applying this bound recursively gives
	\begin{eqnarray*}
		\Ex{e^{\xi \,2\langle w,Y_m\rangle}} &=& \Ex{\Ex{e^{\xi \,2\langle w,(Y_m - Y_{m-1})\rangle} \mid \sF_{m-1}}\, e^{\xi \,2\langle w,Y_{m-1}\rangle}}\\
		&\leq& e^{2\,(\xi\,c)^2}\,\Ex{e^{\xi \,2\langle w,Y_{m-1}\rangle}} \leq e^{2\,m\,(\xi\,c)^2}.
	\end{eqnarray*}
	Using the fact that $|\sN|\leq 5^d$ finishes the proof.
\end{proof}

To use the upper bounds in probability to get upper bound in expected value, we use the following elementary lemma:

\begin{lemma}
\label{lem:expected-upper-bound}
Let $Z$ be a positive random variable for which we know that, for some $a,b>0$,
$\quad Z \leq a + b\cdot t\quad$
holds with probability greater than $1 - e^{-t^2}$. 

Then $\Ex{Z}\leq a + b\cdot \frac{\sqrt{\pi}}{2}$
\end{lemma}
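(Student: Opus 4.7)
The plan is to use the layer-cake / tail integral representation of the expectation of a nonnegative random variable. Since $Z \geq 0$, we have
\[
\Ex{Z} = \int_0^\infty \Pr{Z > s}\,ds.
\]
I would split this integral at the threshold $s=a$, bounding $\Pr{Z > s} \leq 1$ on $[0,a]$ to get
\[
\Ex{Z} \leq a + \int_a^\infty \Pr{Z > s}\,ds.
\]

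Next, I would change variables via $s = a + bt$ (with $ds = b\,dt$), which transforms the remaining integral into
\[
\int_a^\infty \Pr{Z > s}\,ds = b\int_0^\infty \Pr{Z > a + bt}\,dt.
\]
The hypothesis says precisely that $\Pr{Z > a + bt} \leq e^{-t^2}$ for every $t>0$, so the integrand is dominated by the Gaussian $e^{-t^2}$. Using the standard value $\int_0^\infty e^{-t^2}\,dt = \sqrt{\pi}/2$, I obtain
\[
\int_a^\infty \Pr{Z > s}\,ds \leq b\cdot \frac{\sqrt{\pi}}{2}.
\]
Combining the two displays yields $\Ex{Z} \leq a + b\cdot \sqrt{\pi}/2$, as required.

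There is essentially no obstacle here: the only mild point worth checking is that the hypothesis holds for every $t>0$ (so that the substitution is valid on the full half-line), which is how the statement is phrased. The positivity of $Z$ is what justifies the tail integral formula; positivity of $a$ and $b$ ensures the substitution maps $[0,\infty)$ onto $[a,\infty)$ in the correct direction.
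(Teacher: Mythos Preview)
Your proof is correct and is exactly the standard layer-cake argument; the paper in fact omits the proof entirely, labeling the lemma as elementary, so there is nothing to compare against beyond noting that your approach is the intended one.
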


\section{Estimates on Lipschitz constants}
\label{sub:lipshitz}
In this section, we present a collection of Lipschitz estimates for several functions used throughout many of our results. These estimates are specially usefull to work through the many nonlinearities in our model. We start by setting a framework that make computations for our results cleaner.

Since many of our maps act on measure spaces, for instance the ones in Section \ref{sub:McKeanVlasov} (the mean-field versions of the model components), is necessary to set a metric for these spaces. We always use the Wasserstein $L^1$ metric. Let us briefly review the definition of this metric:

\begin{definition}[Wasserstein $L^1$ metric]
Given a Polish space $(M,\rho)$, and two probability measures $\mathbb{P}, \Q \in \Probspace(M)$. We define
\begin{equation}\label{def:W1}
  W(\mathbb{P},\Q) = \inf
  \left\{\int_{M\times M} \rho(\theta, \gamma) d\nu(\theta, \gamma);
  \quad \nu\in\Coup(\mathbb{P},\Q),
  \right\}
\end{equation}
where $\Coup(\mathbb{P},\Q)$ denotes the set of couplings of $\mathbb{P}$ and $\Q$. Recall that $\nu\in\Coup(\mathbb{P},\mathbb{Q})$ whenever $\nu$ is a probability measure over $M\times M$ satisfying
\begin{equation}
	\nu(A\times M) = \mathbb{P}(A)\quad \nu(M\times A) = \Q(A)
	\quad\quad \text{ for all }A\in\sB(M).
\end{equation}
\end{definition}

In our setting, we will need to consider two different Polish spaces $M$: $\R^D$ and $C([0,T],\R^D)$

\begin{remark}
It is a standard result in Optimal Transport that, when our probability measures are defined over a Polish space, then there actually exists a coupling that achieves the infimum in the definition. In our setting, this means two things. Given $\mu,\nu$ measures in $\R^D$ there exists $\eta\in\Coup(\mu,\nu)$ so that
\[
W(\mu,\nu) = \int_{\R^D\times\R^D} \|a - b\|d\eta(a,b).
\]
Also, given probability measures $\mu_{[0,T]}$ and $\nu_{[0,T]}$ over $C([0,T],\R^D)$, there exists $\eta_{[0,T]}\in\Coup(\mu_{[0,T]},\nu_{[0,T]})$ such that:
\[
W(\mu_{[0,T]},\nu_{[0,T]}) = \int_{C([0,T],\R^D)\times C([0,T],\R^D)} \left(\sup_{u\in[0,T]}\abs{\theta(u) - \gamma(u)}\right)\,d\eta_{[0,T]}(\theta,\gamma).
\]
\end{remark}

\begin{remark}
  For more readable computations, we also denote the integral of a function $f$ over some measure $\mu$ as
  \[\mu\left[ f(\theta) \right] = \int f(\theta) d\mu(\theta).\]
\end{remark}

\begin{remark}
\label{lem:timemeasureupperb}
Let $\mu_{[0,T]},\nu_{[0,T]}$ be measures over $C([0,T],\R^D)$ and fix $t<T$. We consider the following marginals: the marginal  'up to time t', i.e., $\mu_{[0,t]},\nu_{[0,t]}$ measures over $C([0,t],\R^D)$, and the marginal 'at time t', i.e., $\mu_t,\nu_t$ measures over $\R^D$. Then the Wasserstein distance between these objects satisfy the following inequality:
\[W\parn{\mu_{t},\nu_{t}}\leq W\parn{\mu_{[0,t]},\nu_{[0,t]}}\leq W\parn{\mu_{[0,T]},\nu_{[0,T]}}\]
\end{remark}

\subsection{General upper bounds on Lipschitz semi-norm}
Here we present some auxiliary results using the concept of Lipschitz semi-norm. These summarize typical computations, making our subsequent results more concise.

\begin{definition}[Lipschitz semi-norm]
Let $(M_{in},d_{in}), (M_{out},d_{out})$ be two metric spaces, and let $f:M_{in}\to M_{out}$ be a Lipschitz function. We define
\begin{equation}
  \lipnorm{f} = \sup
  \left\{\frac{d_{out}(f(u), f(v))}{d_{in}(u,v)}
  ;\quad u,v\in M_{in},\,\,u\neq v
  \right\}.
\end{equation}
Notice that this quantity is finite iff $f$ is Lipschitz. Also, if $V\geq\lipnorm{f}$ then
\[
 d_{out}(f(u),f(v)) \leq V \cdot d_{in}(u,v)
\]
\end{definition}

The following two lemmas are well known results regarding the Lipschitz semi-norm; we omit their proofs.

\begin{lemma}\label{lem:sum-prod-lipschitz}
  Let $f,g: M \rightarrow \R^{n_{out}}$ bounded Lipschitz functions, $M$ metric space. Then,
  \begin{enumerate}
    \item $\lipnorm{f \cdot g} \leq \supnorm{f}\cdot\lipnorm{g} + \supnorm{g}\cdot \lipnorm{f}$,
    \item $\lipnorm{f+g} \leq \lipnorm{f} + \lipnorm{g}$.
  \end{enumerate}
\end{lemma}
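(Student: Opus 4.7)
The plan is to prove both inequalities by direct manipulation using the triangle inequality together with the definition of $\lipnorm{\cdot}$ and $\supnorm{\cdot}$. Since the functions $f,g$ map into a normed space $\R^{n_{out}}$, everything reduces to bounding $|(f\cdot g)(u) - (f\cdot g)(v)|$ and $|(f+g)(u) - (f+g)(v)|$ by a constant times $d(u,v)$ and then dividing through and taking the supremum over $u \neq v$.

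For part (2), given any $u \neq v$ in $M$, the triangle inequality gives
\[
|(f+g)(u) - (f+g)(v)| \leq |f(u) - f(v)| + |g(u) - g(v)| \leq \bigl(\lipnorm{f} + \lipnorm{g}\bigr)\,d(u,v),
\]
where each summand is controlled by the definition of $\lipnorm{f}$ and $\lipnorm{g}$ respectively. Dividing by $d(u,v)$ and taking the supremum over all such pairs yields the claim.

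For part (1), the standard trick is to add and subtract a crossed term $f(u)\cdot g(v)$ (or equivalently $f(v)\cdot g(u)$), giving
\[
(f\cdot g)(u) - (f\cdot g)(v) = f(u)\cdot\bigl(g(u) - g(v)\bigr) + \bigl(f(u) - f(v)\bigr)\cdot g(v).
\]
Applying the triangle inequality and bounding each factor by its supremum norm or Lipschitz semi-norm as appropriate,
\[
|(f\cdot g)(u) - (f\cdot g)(v)| \leq \supnorm{f}\cdot\lipnorm{g}\cdot d(u,v) + \supnorm{g}\cdot\lipnorm{f}\cdot d(u,v).
\]
Dividing through by $d(u,v)$ and taking the supremum over $u\neq v$ concludes the proof.

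There is no real obstacle here: the only subtlety is to specify what the product $f\cdot g$ means when the codomain is $\R^{n_{out}}$ with $n_{out} > 1$. The argument works uniformly for any bilinear product for which $|a\cdot b| \leq |a|\,|b|$ (componentwise product, matrix product when dimensions match, inner product, etc.), so the estimate transfers verbatim to every setting in which the lemma is invoked elsewhere in the paper.
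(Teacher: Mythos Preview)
Your proof is correct and is exactly the standard argument; the paper itself omits the proof, stating that the result is well known. Your remark about the bilinear product needing to satisfy $|a\cdot b|\le |a|\,|b|$ is a helpful clarification that the paper leaves implicit.
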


\begin{lemma}\label{lem:compos-lipschitz}
  For Lipschitz functions $f: M_1 \rightarrow M_2$, $g: M_2 \rightarrow M_3$, we have
  \begin{equation}
    \lipnorm{f\circ g} \leq \lipnorm{f}\cdot\lipnorm{g}
  \end{equation}
\end{lemma}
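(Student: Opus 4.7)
The plan is to unwind the definition of the Lipschitz semi-norm and apply the two Lipschitz bounds in sequence. Note first what appears to be a minor typographical issue in the statement: with $f:M_1\to M_2$ and $g:M_2\to M_3$, the composition that makes sense is $g\circ f:M_1\to M_3$, so I will interpret $\lipnorm{f\circ g}$ as $\lipnorm{g\circ f}$ (the inequality is symmetric in the roles of $f,g$ so this is inessential).

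The proof is a one-line calculation. Fix arbitrary distinct points $u,v\in M_1$ and let $d_i$ denote the metric on $M_i$ for $i=1,2,3$. First I would apply the Lipschitz bound for $g$ at the points $f(u),f(v)\in M_2$ to obtain
\[
d_3\bigl(g(f(u)),\,g(f(v))\bigr)\;\leq\;\lipnorm{g}\cdot d_2\bigl(f(u),f(v)\bigr).
\]
Then I would apply the Lipschitz bound for $f$ at $u,v\in M_1$ to bound the right-hand side by $\lipnorm{g}\cdot\lipnorm{f}\cdot d_1(u,v)$. Dividing by $d_1(u,v)>0$ and taking the supremum over all such pairs $u\ne v$ yields the claimed inequality $\lipnorm{g\circ f}\leq \lipnorm{f}\cdot\lipnorm{g}$.

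There is essentially no obstacle here: the result is a direct consequence of the definition, and it holds regardless of whether either Lipschitz constant is finite (if either is infinite, the bound is vacuous; if both are finite, the argument shows $g\circ f$ is itself Lipschitz with the claimed constant). Consequently the ``proof'' in the paper can be omitted or reduced to the two-line chain of inequalities above.
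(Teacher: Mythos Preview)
Your proof is correct and complete; the paper itself omits the proof entirely, stating that this is a well-known result regarding the Lipschitz semi-norm. Your observation about the typographical mismatch between the domains/codomains and the order of composition is also valid and worth noting.
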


This next lemma is key in our estimates.

\begin{lemma}\label{lem:mean-lipschitz}
Consider the metric space $(\R^{n_{in}} \times \sM \times \R^n, \rho)$, where $\sM\subset\Probspace(\R^n)$, and $\rho$ is defined as:
  \[
  	\rho\Big((x,\mu,a), (y,\nu,b)\Big) = \norm{x-y} + W(\mu,\nu) + \norm{a-b}
  \]
Now, let $f:\R^{n_{in}} \times \sM \times \R^n \rightarrow \R^{n_{out}}$ be a Lipschitz function, such that for any fixed $\mu\in\sM$, we have:
\[
F(x,\mu):= \int_{\R^n} f(x, \mu, a) d\mu(a) < \infty.
\]
Then, by considering the metric space $(\R^{n_{in}}\times\sM,\tilde\rho)$ with metric
\[
\tilde\rho\Big((x,\mu),(y,\nu)\Big) = \norm{x-y} + W(\mu,\nu)
\]
we have that the function $F:\R^{n_{in}}\times\sM\to\R^{n_{out}}$ defined above is also Lipschitz, with $\lipnorm{F}\leq 2\lipnorm{f}$.
\end{lemma}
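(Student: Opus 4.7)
The plan is a direct coupling argument. Fix two points $(x,\mu)$ and $(y,\nu)$ in $\R^{n_{in}}\times \sM$. By the remark preceding the lemma, there exists an optimal coupling $\eta\in\Coup(\mu,\nu)$ such that
\[
  W(\mu,\nu) \;=\; \int_{\R^n\times\R^n}\norm{a-b}\,d\eta(a,b).
\]
Since $\eta$ has marginals $\mu$ and $\nu$, one can rewrite the difference as a single integral against $\eta$:
\[
  F(x,\mu)-F(y,\nu)\;=\;\int_{\R^n\times\R^n}\bigl(f(x,\mu,a)-f(y,\nu,b)\bigr)\,d\eta(a,b).
\]

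Next I would apply the Lipschitz bound for $f$ with respect to the product metric $\rho$ pointwise under the integral:
\[
  \norm{f(x,\mu,a)-f(y,\nu,b)}\;\leq\;\lipnorm{f}\bigl(\norm{x-y}+W(\mu,\nu)+\norm{a-b}\bigr).
\]
Integrating this estimate against $\eta$ and using the identity for the Wasserstein distance above, the three contributions become
\[
  \norm{F(x,\mu)-F(y,\nu)}\;\leq\;\lipnorm{f}\bigl(\norm{x-y}+W(\mu,\nu)+W(\mu,\nu)\bigr)\;=\;\lipnorm{f}\bigl(\norm{x-y}+2\,W(\mu,\nu)\bigr).
\]
Since $\norm{x-y}+2W(\mu,\nu)\leq 2\,\tilde\rho\bigl((x,\mu),(y,\nu)\bigr)$, we conclude $\lipnorm{F}\leq 2\lipnorm{f}$, as desired.

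There is no real obstacle here; the only subtlety is the standard use of the optimal coupling to represent the difference of the two integrals as a single integral, which neatly absorbs the $\norm{a-b}$ term into $W(\mu,\nu)$. The factor of $2$ arises because the Wasserstein term is counted twice: once through the explicit dependence of $f$ on its measure argument and once through the coupling of the integration variables.
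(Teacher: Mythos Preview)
Your proof is correct and follows essentially the same approach as the paper: take an optimal coupling $\eta$, rewrite $F(x,\mu)-F(y,\nu)$ as a single integral against $\eta$, apply the pointwise Lipschitz bound for $f$, and observe that integrating $\norm{a-b}$ against $\eta$ recovers $W(\mu,\nu)$. The only cosmetic difference is that the paper divides through by $\norm{x-y}+W(\mu,\nu)$ before bounding, arriving at $\lipnorm{f}\bigl(1+\tfrac{W(\mu,\nu)}{\norm{x-y}+W(\mu,\nu)}\bigr)\leq 2\lipnorm{f}$, whereas you bound the numerator directly; the content is identical.
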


\begin{proof}
Let $\eta$ be the optimal coupling between $(\mu,\nu)$. Then:
  \begin{align}
    \lipnorm{F} & = \sup_{(x,\mu)\neq (y, \nu)}
    \left\{\frac{\norm{F(x,\mu) - F(y, \nu)}}{\norm{x - y} + W(\mu, \nu)}
    \right\}\\
    & \leq \sup_{(x,\mu)\neq (y, \nu)}
    \left\{\eta\left[\frac{\norm{f(x,\mu,a) - f(y,\nu,b)}}{\norm{x - \tilde{x}} + W(\mu, \nu)}\right]\right\}\label{step:triangular}\\
    & \leq \sup_{(x,\mu)\neq (y, \nu)}
    \left\{\lipnorm{f}\left( 1 + \eta\left[\frac{\norm{a-b}}{\norm{x - y} + W(\mu, \nu)}\right] \right)\right\}\label{step:lipschitz}\\
    & \leq \sup_{(x,\mu)\neq (y, \nu)} \left\{\lipnorm{f}\left( 1 + \frac{W(\mu,\nu)}{\norm{x - y} + W(\mu, \nu)} \right)\right\} \leq 2\,\lipnorm{f},\label{step:upperbound}
\end{align}
 where step \ref{step:triangular} is simply the triangular inequality for integrals, \ref{step:lipschitz} uses the fact that $f$ is Lipschitz, and \ref{step:upperbound} is an obvious upper bound.
\end{proof}

\subsection{Lipschitz bounds for the mean-field maps}
We now proceed to the Lipschitz estimates for the maps described in section \ref{sub:McKeanVlasov}. We assume that conditions \ref{assump:initialization} and \ref{assump:activations} hold.

\begin{lemma}\label{lem:zbar-lipschitz}
  The maps $\{\zbar^{(\ell)},\ell\in[2:L+1]\}$ are bounded by C, and for $\ell \in [2:L+1]\setminus\{L\}$ we have
  \begin{equation}\label{eq:zbar-order}
    \lipnorm{\zbar^{\ell}} = C_{\ref{eq:zbar-order}}(\ell),
  \end{equation}
  where $C_{\ref{eq:zbar-order}}(\ell)=\bigoh{C^\ell}$
\end{lemma}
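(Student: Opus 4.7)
The plan is to prove both claims by induction on $\ell$, leaning on the bounded and Lipschitz nature of the activations from Assumption~\ref{assump:activations} and using the composition lemma (Lemma~\ref{lem:compos-lipschitz}) together with the mean-integration lemma (Lemma~\ref{lem:mean-lipschitz}) to propagate these properties through the mean-field integrals.

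Boundedness is immediate and uniform in $\ell$. For each $\ell\in[2:L+1]$, the definition of $\zbar^{(\ell)}$ expresses it as an integral of $\sigma^{(\ell-1)}$ applied to some argument against a probability measure (or, in the case $\ell=L$, a conditional probability measure). Since $\supnorm{\sigma^{(\ell-1)}}\leq C$ by Assumption~\ref{assump:activations}, the integrand is pointwise bounded by $C$, and hence so is the integral, giving $|\zbar^{(\ell)}|\leq C$ on the whole of its domain.

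The Lipschitz bound for $\ell\in[2:L-1]$ follows by a clean induction. For the base case $\ell=2$, the integrand $(x,(a^{(0)},a^{(1)}))\mapsto \sigma^{(1)}\parn{\sigma^{(0)}(x,a^{(0)}),a^{(1)}}$ is a composition of two $C$-Lipschitz, $C$-bounded maps, hence $\bigoh{C^2}$-Lipschitz by Lemma~\ref{lem:compos-lipschitz}; applying Lemma~\ref{lem:mean-lipschitz} to the marginal $\mu^{(0,1)}$ then yields $\lipnorm{\zbar^{(2)}}=\bigoh{C^2}$. For the inductive step with $\ell\in[3:L-1]$, the integrand of $\zbar^{(\ell)}$ is $\sigma^{(\ell-1)}\parn{\zbar^{(\ell-1)}(x,\mu),a^{(\ell-1)}}$; combining the inductive hypothesis $\lipnorm{\zbar^{(\ell-1)}}=\bigoh{C^{\ell-1}}$ with the fact that $\sigma^{(\ell-1)}$ is $C$-Lipschitz in both arguments (and invoking Lemmas~\ref{lem:sum-prod-lipschitz} and \ref{lem:compos-lipschitz}) shows this integrand is $\bigoh{C^\ell}$-Lipschitz jointly in $(x,\mu,a^{(\ell-1)})$. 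A final application of Lemma~\ref{lem:mean-lipschitz} against $\mu^{(\ell-1)}$ produces $\lipnorm{\zbar^{(\ell)}}=\bigoh{C^\ell}$, closing the induction on this range.

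The hard part will be the case $\ell=L+1$: the naive induction breaks because $\zbar^{(L)}(x,\mu,a^{(L)})$ depends on $\mu$ through the conditional measure $\mu^{(L-1\mid L)}$, which is not continuous in $\mu$ under the Wasserstein topology (this is exactly the phenomenon flagged in Remark~\ref{rem:discontinuity}), so one cannot plug $\zbar^{(L)}$ into Lemma~\ref{lem:mean-lipschitz} as the inner function. My plan here is to work within the class $\Probspecial_R$ of special measures (Definition~\ref{def:Rspecialmeasure}), because Lemma~\ref{lem:estimatezbarL} already delivers a Wasserstein-Lipschitz estimate on $\zbar^{(L)}$ integrated against $\mu_0^{(L)}$ precisely in that class. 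Then a coupling of the outer $\mu^{(L)}$ with $\nu^{(L)}$, combined with the $C$-Lipschitz continuity of $\sigma^{(L)}$ and the estimate of Lemma~\ref{lem:estimatezbarL}, should yield $\lipnorm{\zbar^{(L+1)}}=\bigoh{C^{L+1}}$ on $\Probspecial_R$, where the extra $e^{Rt}$ factor from the conditional estimate is absorbed into the constant. Outside of $\Probspecial_R$ the claim is subtle, and controlling the oscillation of conditional measures under Wasserstein is the essential obstacle that the rest of the paper sidesteps by restricting to special measures throughout the existence and uniqueness arguments.
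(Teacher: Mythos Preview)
Your treatment of boundedness and of the range $\ell\in[2:L-1]$ matches the paper's proof essentially line for line: induction, with Lemma~\ref{lem:mean-lipschitz} handling the integral and Lemma~\ref{lem:compos-lipschitz} handling the composition with $\sigma^{(\ell-1)}$.

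Where you diverge is the case $\ell=L+1$, and here your plan is off track. First, a type mismatch: $\Probspecial_R$ is a subset of $\Probspace(C([0,T],\R^D))$ (trajectory measures), whereas the $\zbar^{(\ell)}$ in this lemma are functions of $\mu\in\Probspace(\R^D)$; Lemma~\ref{lem:estimatezbarL} is stated for time-marginals of special trajectory measures and does not apply to generic $\mu\in\Probspace(\R^D)$. Second, and more importantly, restricting to special measures would not prove the lemma as stated, which asserts a Lipschitz bound on the full domain.

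The paper's argument for $\ell=L+1$ avoids the special-measure machinery entirely. The point you are missing is that $\zbar^{(L+1)}$ integrates $\sigma^{(L)}\bigl(\zbar^{(L)}(x,\mu,a^{(L)}),a^{(L)}\bigr)$ against $\mu^{(L)}$, so one never needs a pointwise Wasserstein-Lipschitz estimate on $\zbar^{(L)}$; an \emph{averaged} estimate suffices. Taking an optimal coupling $\eta$ of $\mu$ and $\nu$ on all of $\R^D$ (which in particular couples the $(L-1,L)$ marginals), the paper bounds $\eta\bigl[\,|\zbar^{(L)}(x,\mu,a^{(L)})-\zbar^{(L)}(y,\nu,b^{(L)})|\,\bigr]$ by writing each $\zbar^{(L)}$ as a conditional expectation under $\eta$ (legitimate because the $a$-marginal of $\eta$ is $\mu$) and applying Jensen together with the Lipschitz property of $\sigma^{(L-1)}$ and the inductive bound on $\lipnorm{\zbar^{(L-1)}}$; see equation~(\ref{eq:mean-zbar-L}). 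The outer integration against $\mu^{(L)}$ effectively undoes the conditioning, which is why the discontinuity flagged in Remark~\ref{rem:discontinuity} does not bite here. Feeding this averaged bound into the expression for $\zbar^{(L+1)}$ then gives $\lipnorm{\zbar^{(L+1)}}\leq C\bigl(1+C\,\lipnorm{\zbar^{(L-1)}}\bigr)=\bigoh{C^{L+1}}$, with no restriction on $\mu$.
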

\begin{proof}
  The boundness of $\zbar^{(\ell)}$ follow from the assumption of boundedness of $\sigma^{(\ell-1)}$ in \ref{assump:activations}.

 To obtain the Lipschitz upper bound, we prove (\ref{eq:zbar-order}) by induction up until $L-1$, and then work the case $\zbar^{(L+1)}$ separately.

  For $\ell=2$, we apply lemmas \ref{lem:mean-lipschitz} and \ref{lem:compos-lipschitz} and obtain
  $$ \lipnorm{\zbar^{(2)}} \leq 2 \cdot \lipnorm{\sigma^{(1)}\circ \sigma^{(0)}} \leq 2 \cdot\lipnorm{\sigma^{(1)}}\lipnorm{\sigma^{(0)}} = 2\,C^2=:C_{\ref{eq:zbar-order}}(2) $$

  When (\ref{eq:zbar-order}) holds for $\ell$, we observe that
  $$ \lipnorm{\zbar^{(\ell+1)}} \leq 2 \,\lipnorm{\sigma^{(\ell)}} \lipnorm{\zbar^{\ell}} \leq 2 C\cdot C_{\ref{eq:zbar-order}}(\ell) =: C_{\ref{eq:zbar-order}}(\ell+1),$$
  where clearly $ C_{\ref{eq:zbar-order}}(\ell+1) = \bigoh{C^{\ell+1}}.$

This argument works up until $L-1$. It fails for $\zbar^{(L)}$ for it is defined differently. Nevertheless, we can work through it and prove that the result holds for $\zbar^{(L+1)}$ as well. Let $\eta$ be the optimal coupling between $(\mu,\nu)$ and notice that:
\begin{align}\label{eq:mean-zbar-L}
    \nonumber
    \eta\left[\norm{\zbar^{(L)}(x, \mu, a^{(L)}) - \zbar^{(L)}(y, \nu, b^{(L)})}\right]
    \leq&
    C\eta\left[\eta\left[\norm{\zbar^{(L-1)}(x,\mu) -
     \zbar^{(L-1)}(y,\nu)}\right.\right.\\
    \,&
    + \left.\left.\norm{a^{(L-1)} - b^{(L-1)}} \right|\left. a^{(L)}, b^{(L)}\right] \right]\\
      \nonumber \leq& C\lipnorm{\zbar^{(L-1)}} (\|x - y\| + W(\mu, \nu))
  \end{align}

  Then, we can obtain the last estimative
  \begin{align*}
    \lipnorm{\zbar^{(L+1)}} &\leq \sup_{(x,\mu)\neq (y, \nu)}\left\{ C\, \eta\left[ \frac{\norm{\zbar^{(L)}(x, \mu, a^{(L)}) -
     \zbar^{(L)}(\tilde{x}, \nu, b^{(L)})} + \norm{a^{(L)} - b^{(L)}}}{\|x - y\| + W(\mu, \nu)} \right]\right\}\\
     &\leq C (1 + C\lipnorm{\zbar^{(L-1)}}) =:  C_{\ref{eq:zbar-order}}(L+1,)
  \end{align*}
  showing $C_{\ref{eq:zbar-order}}(L+1)=\bigoh{C^{L+1}}$.
\end{proof}

\begin{lemma}\label{lem:Mbar-lipschitz}
  The collection of maps $\{\Mbar^{(\ell)}, \ell\in[2:L+1]\}$ and $\{\gbar^{(\ell)}, \ell\in[1:L-1]\}$ are bounded and Lipschitz as described below
  \begin{enumerate}
    \item For $\ell\in[2:{L+1}]\backslash\{L\}$
    $$\supnorm{\Mbar^{(\ell)}} \leq C^{L+2-\ell}, \quad\quad  \lipnorm{\Mbar^{(\ell)}} = \bigoh{C^{2(L+1)-\ell+1}}.$$
    \item For $\ell\in[1:{L-1}]$
    $$\supnorm{\gbar^{(\ell)}} \leq C^{L+2-\ell},$$
    and for $\ell\in[1:{L-2}]$
    $$\lipnorm{\gbar^{(\ell)}} = \bigoh{C^{2(L+1)-\ell+1}}.$$
  \end{enumerate}
\end{lemma}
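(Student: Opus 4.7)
The plan is to obtain both bounds by backward induction on $\ell$, starting from $\ell = L+1$ and descending, using the recursive definitions \eqref{eq:defMbarLplus1}--\eqref{eq:defMbarell} together with Lemma \ref{lem:zbar-lipschitz} and the three elementary Lipschitz tools (\ref{lem:sum-prod-lipschitz}, \ref{lem:compos-lipschitz}, \ref{lem:mean-lipschitz}). Throughout, we will use the uniform bounds on $\sigma^{(\ell)}$ and their Fr\'{e}chet derivatives from Assumption \ref{assump:activations}, and all Lipschitz semi-norms are taken with respect to the product metric $\|x-y\|+W(\mu,\nu)$ on $\R^{d_X}\times \Probspace(\R^D)$ (with an additional summand $\|a^{(L)}-b^{(L)}\|$ when the third argument is present).

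For the sup-norm bounds, a one-line induction suffices: $\supnorm{\Mbar^{(L+1)}}\leq C$; each recursive step in \eqref{eq:defMbarL-Lminus1}, \eqref{eq:defMbarLminus1}, \eqref{eq:defMbarell} introduces a factor $D_z\sigma^{(\cdot)}$ of norm $\leq C$, giving $\supnorm{\Mbar^{(\ell)}}\leq C^{L+2-\ell}$. The bounds for $\gbar^{(\ell)}$ then follow immediately because each $\gbar^{(\ell)}$ is a product of some $\Mbar^{(\ell+1)}$ (or $\Mbar^{(L)}$ when $\ell=L-1$) with a $D_\theta\sigma^{(\ell)}$-type factor of norm $\leq C$.

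For the Lipschitz bounds, the base case $\lipnorm{\Mbar^{(L+1)}}\leq \lipnorm{D\sigma^{(L+1)}}\cdot\lipnorm{\zbar^{(L+1)}}=O(C^{L+2})$ is just Lemma \ref{lem:compos-lipschitz} plus Lemma \ref{lem:zbar-lipschitz}. For $\ell\in[2:L-2]$, the recurrence $\Mbar^{(\ell)}=\Mbar^{(\ell+1)}\cdot B^{(\ell)}$ with $B^{(\ell)}(x,\mu):=\int D_z\sigma^{(\ell)}(\zbar^{(\ell)}(x,\mu),a^{(\ell)})\,d\mu^{(\ell)}(a^{(\ell)})$ allows us to combine the inductive estimate on $\Mbar^{(\ell+1)}$ with $\lipnorm{B^{(\ell)}}=O(C^{\ell+1})$ (via Lemmas \ref{lem:mean-lipschitz} and \ref{lem:sum-prod-lipschitz}); plugging into Lemma \ref{lem:sum-prod-lipschitz} and simplifying yields the claimed order $O(C^{2(L+1)-\ell+1})$. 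The Lipschitz bounds on $\gbar^{(\ell)}$ for $\ell\in[1:L-2]$ are then immediate from the product structure and the estimate on $\Mbar^{(\ell+1)}$, where for $\ell=1$ one treats the $\sigma^{(0)}$ composition via Lemma \ref{lem:compos-lipschitz}.

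The main obstacle is the case $\ell=L-1$, because $\Mbar^{(L-1)}$ is built from $\Mbar^{(L)}$, which is \emph{not} claimed to be Lipschitz pointwise in $a^{(L)}$ (this is precisely the conditional-measure discontinuity flagged in Remark \ref{rem:discontinuity}, and the reason $L$ is excluded from the statement). Rather than bounding $\lipnorm{\Mbar^{(L)}}$ directly, we will bound $\lipnorm{A}$ for $A(x,\mu):=\int\Mbar^{(L)}(x,\mu,a^{(L)})\,d\mu^{(L)}(a^{(L)})$ by integrating increments against an optimal coupling $\eta$ of $\mu^{(L)}$ and $\nu^{(L)}$, and invoking the \emph{integrated} estimate \eqref{eq:mean-zbar-L} from the proof of Lemma \ref{lem:zbar-lipschitz} to control the $\zbar^{(L)}$-increment. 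This gives $\lipnorm{A}=O(C^{L+3})$, which combined with $\lipnorm{B^{(L-1)}}=O(C^L)$ via Lemma \ref{lem:sum-prod-lipschitz} yields $\lipnorm{\Mbar^{(L-1)}}=O(C^{L+4})$, matching $C^{2(L+1)-(L-1)+1}$. This same circumvention explains why $\gbar^{(L-1)}$ (which contains the bare $\Mbar^{(L)}(x,\mu,\theta^{(L)})$) gets a sup-norm bound but no Lipschitz bound.
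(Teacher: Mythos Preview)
Your proposal is correct and follows essentially the same approach as the paper: backward induction from $\ell=L+1$ using the product/composition/averaging Lipschitz lemmas and Lemma~\ref{lem:zbar-lipschitz}, with the $\ell=L-1$ step handled by bounding the \emph{integrated} quantity $\int \Mbar^{(L)}(x,\mu,a^{(L)})\,d\mu^{(L)}(a^{(L)})$ via an optimal coupling and the estimate \eqref{eq:mean-zbar-L}, exactly as the paper indicates. Your write-up is in fact more explicit than the paper's about why the $\ell=L$ case must be circumvented and why $\gbar^{(L-1)}$ is excluded from the Lipschitz claim.
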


\begin{proof}
  The boundness property of item (1) also comes directly from the assumption \ref{assump:activations}, since the maps $\Mbar$ and $\gbar$ are products of derivative terms
  \begin{equation*}
      \supnorm{\Mbar^{(\ell)}} \leq \prod_{k=\ell}^{L+1} \supnorm{D_z \sigma^{(k)}} \leq C^{L+2-\ell},
  \end{equation*}
  \begin{equation*}
    \supnorm{\gbar^{(\ell)}} \leq \prod_{k=\ell+1}^{L+1} \supnorm{D_z \sigma^{(k)}} \supnorm{D_\theta\sigma^{(\ell)}} \leq C^{L+2-\ell}.
  \end{equation*}

  For the Lipschitz estimate we do induction in the inverse order.

  For $\ell = L+1$, we have
    $$\lipnorm{\Mbar^{(L+1)}} \leq \lipnorm{D\sigma^{(L+1)} \circ \zbar^{(L+1)}} \leq C\, \lipnorm{\zbar^{(L+1)}} = \bigoh{C^{L+2}}.$$
  When item (1) holds for $\ell+1$ we obtain
  \begin{align*}
    \lipnorm{\Mbar^{(\ell)}} &\leq \supnorm{\Mbar^{(\ell+1)}} \, 2 \lipnorm{D_z\sigma^{(\ell)} \circ \zbar^{(\ell)}} +
                                  \lipnorm{\Mbar^{(\ell+1)}} \supnorm{D_z\sigma^{(\ell)}}\\
                            &\leq C^{L+1-\ell}\, 2 C\, \lipnorm{\zbar^{(\ell)}} + C\, \lipnorm{\Mbar^{(\ell+1)}}\\
                            &\leq \bigoh{C^{L+2}} + C \, \bigoh{C^{2(L+1)-\ell}} = \bigoh{C^{2(L+1)-\ell+1}}.
  \end{align*}
  For the special case of $\ell = L$, the term $\mu\left[\Mbar(x,\mu,\theta^{L})\right]$ can be shown to be Lipschitz in $(x,\mu)$ just as we did in (\ref{eq:mean-zbar-L}).

  For the maps $\gbar$ we have
  $$\lipnorm{\gbar^{(1)}} \leq \lipnorm{\Mbar^{(2)} \cdot (D_\theta \sigma^{(1)} \circ \sigma^{(0)})} \leq C^L\, C^2 + C\,\lipnorm{\Mbar^{(2)}} = \bigoh{C^{2(L+1)}},$$
  and for $\ell\in[2:L-2]$
  \begin{align*}
    \lipnorm{\gbar^{(\ell)}} &\leq \lipnorm{\Mbar^{(\ell+1)} \cdot (D_\theta \sigma^{(\ell)} \circ \zbar^{(\ell)})}\\
                               &\leq C^{L+1-\ell}\, C \,\lipnorm{\zbar^{(\ell)}} + C\,\lipnorm{\Mbar^{(\ell+1)}}\\
                               & = C^{L+2-\ell} \,\bigoh{C^\ell} + C\,\bigoh{C^{2(L+1)-\ell}} = \bigoh{C^{2(L+1)-\ell+1}}
  \end{align*}
\end{proof}

\subsection{Lipschitz bounds for the existence of McKean-Vlasov process}

In this subsection, we isolate the Lipschitz computations which will be needed when proving the existence of the McKean-Vlasov process. In order to better present these computations, we define the following function:
\[
\Grad^{(\ell)}(\mu,a):=(Y-\ybar(X,\mu))\gbar^{(\ell)}(X,\mu,a)
\]

\begin{lemma}\label{lem:unif-lip-gen-l}
Fix $(X,Y)\in \R^{d_{in}}\times\R^{d_{out}}$. Then, for $\ell\in[2:L-2]$ the maps $\Grad^{(\ell)}$ are Lipschitz, uniformly in $(X,Y)$.
\end{lemma}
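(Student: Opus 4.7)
The plan is to view $\Grad^{(\ell)}(\mu,a)=(Y-\ybar(X,\mu))\cdot \gbar^{(\ell)}(X,\mu,a)$ as a product of two bounded Lipschitz factors on the metric space $\Probspace(\R^D)\times \R^{D_\ell}$ (equipped with $W+|\cdot|$), and then invoke Lemma~\ref{lem:sum-prod-lipschitz}. The whole point is to keep track that every constant that appears is controlled purely by $C$ and $L$, and in particular does not depend on the fixed data point $(X,Y)$.

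First I would handle the scalar factor $f(\mu):=Y-\ybar(X,\mu)$. Boundedness is immediate: $|Y|\leq C$ by Assumption~\ref{assump:activations}, and $|\ybar(X,\mu)|=|\sigma^{(L+1)}(\zbar^{(L+1)}(X,\mu))|\leq C$ by the same assumption, so $\supnorm{f}\leq 2C$ uniformly in $(X,Y)$. For the Lipschitz constant in $\mu$ (with $X$ frozen), the $Y$ piece is constant, and $\mu\mapsto \ybar(X,\mu)$ is the composition of $\sigma^{(L+1)}$ (which is $C$-Lipschitz) with $\zbar^{(L+1)}(X,\cdot)$, whose Lipschitz constant in $\mu$ is bounded by $C_{\ref{eq:zbar-order}}(L+1)$ uniformly in $X$ by Lemma~\ref{lem:zbar-lipschitz}. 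Applying Lemma~\ref{lem:compos-lipschitz} gives $\lipnorm{f}\leq C\cdot C_{\ref{eq:zbar-order}}(L+1)$, a quantity depending only on $C$ and $L$.

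Next I would handle the vector factor $g(\mu,a):=\gbar^{(\ell)}(X,\mu,a)$. For $\ell\in[2:L-2]$, Lemma~\ref{lem:Mbar-lipschitz} provides exactly what is needed: $\supnorm{g}\leq C^{L+2-\ell}$ and $\lipnorm{g}=\bigoh{C^{2(L+1)-\ell+1}}$, both uniform in $X$ (the Lipschitz constant there already absorbs the dependence on the first variable). Combining via Lemma~\ref{lem:sum-prod-lipschitz},
\[
\lipnorm{\Grad^{(\ell)}}\ \leq\ \supnorm{f}\,\lipnorm{g}+\supnorm{g}\,\lipnorm{f},
\]
and each quantity on the right has been bounded by an expression depending only on $C$ and $L$, which proves the lemma.

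The only thing to be vigilant about is the quantifiers: the Lipschitz-norms supplied by Lemmas~\ref{lem:zbar-lipschitz} and~\ref{lem:Mbar-lipschitz} are taken with respect to the \emph{joint} metric on $\R^{d_X}\times \Probspace(\R^D)$ (or with $a$ appended), so when we freeze $X$ those estimates immediately downgrade to bounds on the Lipschitz norm in $(\mu,a)$ with the same constants. There is no real obstacle here; the lemma is essentially a bookkeeping exercise confirming that the estimates of the previous two subsections combine well. The restriction $\ell\in[2:L-2]$ is exactly what excludes the more delicate cases $\ell=1$ and $\ell=L-1$ (where $\gbar^{(\ell)}$ involves $\sigma^{(0)}$ at the input side or the conditional object $\zbar^{(L)}$ on the output side), which is precisely why Lemma~\ref{lem:unif-lip-L-1} is needed separately.
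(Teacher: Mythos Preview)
Your proposal is correct and follows essentially the same approach as the paper's proof: factor $\Grad^{(\ell)}$ as a product of the bounded Lipschitz map $Y-\ybar(X,\mu)$ with the bounded Lipschitz map $\gbar^{(\ell)}(X,\mu,a)$ from Lemma~\ref{lem:Mbar-lipschitz}, and apply Lemma~\ref{lem:sum-prod-lipschitz}. You supply more detail than the paper (which is a one-line appeal to those two lemmas), but the logic is identical. One small inaccuracy in your closing remark: $\ell=1$ is not actually a delicate case here, since Lemma~\ref{lem:Mbar-lipschitz} already provides a Lipschitz bound for $\gbar^{(1)}$; only $\ell=L-1$ genuinely requires the separate treatment of Lemma~\ref{lem:unif-lip-L-1}.
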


\begin{proof}
Follows directly from the fact that $\ybar$ and $\{\gbar^{(\ell)},\ell\in[2:L-2]\}$ are Bounded and Lipschitz  (Lemma \ref{lem:Mbar-lipschitz}). Then result follows from Lemma \ref{lem:sum-prod-lipschitz}.
\end{proof}

The only gradient missing in previous Lemma is $\gbar^{(L-1)}$. It needs a different analysis for it has a different structure than the others. Although it may not necessarily be Lipschitz, we can prove the following upper bound, which turns out to be good enough for our purposes:

\begin{lemma}\label{lem:unif-lip-L-1}
Fix $(X,Y)\in\R^{d_{in}}\times\R^{d_{out}}$, where $Y$ is assumed to be Bounded. Then, there exists a constant $V_{\ref{lem:unif-lip-L-1}}>0$ such that:
\begin{align*}
\abs{\Grad^{(L-1)}(\mu,a) - \Grad^{(L-1)}(\nu,b)} \leq V_{\ref{lem:unif-lip-L-1}} \Big(W(\mu,\nu)  + |a^{(L-1)} - b^{(L-1)}| +\\
\abs{\zbar^{(L)}(X,\mu,a^{(L)}) - \zbar^{L}(X,\nu,b^{(L)})}\Big)
\end{align*}
\end{lemma}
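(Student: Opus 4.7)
The plan is to decompose the difference $\Grad^{(L-1)}(\mu,a) - \Grad^{(L-1)}(\nu,b)$ by the standard add-and-subtract trick and bound each piece via the estimates of Lemmas \ref{lem:zbar-lipschitz} and \ref{lem:Mbar-lipschitz}, with the $\zbar^{(L)}$-dependent piece being the only one that cannot be absorbed into a Wasserstein contribution. First I would write
\[
\Grad^{(L-1)}(\mu,a) - \Grad^{(L-1)}(\nu,b) = \bigl(\ybar(X,\nu) - \ybar(X,\mu)\bigr)^{\dag}\gbar^{(L-1)}(X,\mu,a) + \bigl(Y - \ybar(X,\nu)\bigr)^{\dag}\bigl(\gbar^{(L-1)}(X,\mu,a) - \gbar^{(L-1)}(X,\nu,b)\bigr).
\]
The first summand is easily handled: $|\ybar(X,\nu) - \ybar(X,\mu)| \leq C\cdot\lipnorm{\zbar^{(L+1)}}\cdot W(\mu,\nu)$ by Assumption \ref{assump:activations} and Lemma \ref{lem:zbar-lipschitz}, while $\gbar^{(L-1)}$ is uniformly bounded by Lemma \ref{lem:Mbar-lipschitz}. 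In the second summand, $|Y-\ybar(X,\nu)|$ is uniformly bounded (using that $Y$ is bounded and $\ybar$ inherits boundedness from $\sigma^{(L+1)}$), so it suffices to estimate $|\gbar^{(L-1)}(X,\mu,a) - \gbar^{(L-1)}(X,\nu,b)|$.

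Recall from (\ref{eq:defgbarL-1}) and (\ref{eq:defMbarL-Lminus1}) that
\[
\gbar^{(L-1)}(X,\mu,a) = \Mbar^{(L+1)}(X,\mu)\cdot D_z\sigma^{(L)}\bigl(\zbar^{(L)}(X,\mu,a^{(L)}),a^{(L)}\bigr)\cdot D_\theta\sigma^{(L-1)}\bigl(\zbar^{(L-1)}(X,\mu),a^{(L-1)}\bigr),
\]
so I would telescope over these three factors, replacing them one at a time. Factors one and three are controlled by $W(\mu,\nu)$: Lemma \ref{lem:Mbar-lipschitz} gives $|\Mbar^{(L+1)}(X,\mu)-\Mbar^{(L+1)}(X,\nu)| \leq \lipnorm{\Mbar^{(L+1)}}\,W(\mu,\nu)$, and the Lipschitz property of $D_\theta\sigma^{(L-1)}$ combined with Lemma \ref{lem:zbar-lipschitz} yields a bound of the form $C\bigl(\lipnorm{\zbar^{(L-1)}}\,W(\mu,\nu) + |a^{(L-1)}-b^{(L-1)}|\bigr)$ for the contribution of the third factor. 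The middle factor is bounded using the Lipschitz property of $D_z\sigma^{(L)}$ from Assumption \ref{assump:activations}, producing a contribution proportional to $|\zbar^{(L)}(X,\mu,a^{(L)}) - \zbar^{(L)}(X,\nu,b^{(L)})|$ (plus a $|a^{(L)}-b^{(L)}|$ piece that can be absorbed into the $\zbar^{(L)}$-difference by enlarging the constant, and which in any case vanishes in the applications of the lemma where the outermost layer is frozen).

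The main obstacle is exactly this middle factor: this is precisely where the discontinuity flagged in Remark \ref{rem:discontinuity} shows up, because $\zbar^{(L)}$ is defined through the conditional distribution $\mu^{(L-1\mid L)}$ and hence is not Lipschitz in $\mu$ under Wasserstein distance in general. The virtue of the present lemma is that we are allowed to keep the $\zbar^{(L)}$-difference on the right-hand side, postponing its control to Lemma \ref{lem:estimatezbarL} (which estimates it by Wasserstein distance only once we restrict to the special set $\Probspecial_R$). To finish, I would collect the three telescoped bounds and take $V_{\ref{lem:unif-lip-L-1}}$ to be the maximum of the constants appearing, which depends only on $C$, $L$, and the Lipschitz bounds from Lemmas \ref{lem:zbar-lipschitz} and \ref{lem:Mbar-lipschitz}.
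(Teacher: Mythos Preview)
Your proposal is correct and follows essentially the same route as the paper: the same outer add-and-subtract on $(Y-\ybar)^\dag\gbar^{(L-1)}$, followed by the same three-factor telescoping of $\gbar^{(L-1)} = \Mbar^{(L+1)}\cdot D_z\sigma^{(L)}\cdot D_\theta\sigma^{(L-1)}$, with the $\zbar^{(L)}$-difference left explicit on the right-hand side for later handling by Lemma~\ref{lem:estimatezbarL}. Your observation about the extra $|a^{(L)}-b^{(L)}|$ contribution from $D_z\sigma^{(L)}$ is in fact more careful than the paper's write-up, which silently drops it; note however that it cannot literally be ``absorbed into the $\zbar^{(L)}$-difference'' --- the correct resolution is the one you also give, namely that $a^{(L)}=b^{(L)}$ in every use of the lemma because the layer-$L$ weights are frozen and the coupling matches initial conditions.
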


\begin{proof}
We start by making the following computations:
\begin{align}
\abs{\Grad^{(L-1)}(\mu,a) - \Grad^{(L-1)}(\nu,b)}&\leq \supnorm{\gbar^{(L-1)}}\abs{\ybar(X,\mu) - \ybar(X,\nu)} +\nonumber\\
&\supnorm{Y - \ybar(X,\mu)}\abs{\gbar^{(L-1)}(X,\mu,a) - \gbar^{(L-1)}(X,\nu,b)}\\
&\leq V_1\cdot \Big(W(\mu,\nu) + \abs{\gbar^{(L-1)}(X,\mu,a) - \gbar^{(L-1)}(X,\nu,b)}\Big),\nonumber
\end{align}
where $V_1 \geq \max\left\{\supnorm{\gbar^{(L-1)}}\cdot\lipnorm{\ybar}\,,\,\supnorm{Y}+\supnorm{\ybar}\right\}$.

Now, it suffices to show that, for some constant $V_2$, it holds that:
\begin{align*}
\abs{\gbar^{(L-1)}(X,\mu,a) - \gbar^{(L-1)}(X,\nu,b)}\leq V_2 \Big(W(\mu,\nu) + |a^{(L-1)} - b^{(L-1)}| + \\
\abs{\zbar^{(L)}(X,\mu,a^{(L)}) - \zbar^{L}(X,\nu,b^{(L)})}\Big)
\end{align*}

By recalling definition (\ref{eq:defgbarL-1}), using the upper bounds from previous section and some seriously tedious calculations, one may show that:
\begin{align*}
&\abs{\gbar^{(L-1)}(X,\mu,a) - \gbar^{(L-1)}(X,\nu,b)}\leq \\
&\quad\quad\quad\supnorm{\Mbar^{(L)}}\lipnorm{D_\theta\sigma^{(L-1)}}\Big(\lipnorm{\zbar^{(L-1)}}\cdot W(\mu,\nu) + \abs{a^{(L-1)} - b^{(L-1)}}\Big) +\\
&\quad\quad\quad + \supnorm{D_\theta\sigma^{(L-1)}}\Big(\supnorm{\Mbar^{(L+1)}}\cdot\lipnorm{D_z\sigma^{(L)}}\abs{\zbar^{(L)}(X,\mu,a^{(L)}) - \zbar^{(L)}(X,\nu,b^{(L)})}+\quad\quad\\
&\quad\quad\quad + \supnorm{D_z\sigma^{(L)}}\lipnorm{\Mbar^{(L+1)}}\cdot W(\mu,\nu)\Big)\quad\quad\quad\quad\\
&\leq V_2 \cdot \parn{W(\mu,\nu) + \abs{a^{(L-1)}-b^{(L-1)}} + \abs{\zbar^{(L)}(X,\mu,a^{(L)}) - \zbar^{(L)}(X,\nu,b^{(L)})}},
\end{align*}
where
\begin{equation}
V_2 \geq \max\left\{
\begin{array}{c}
\supnorm{D_\theta\sigma^{(L-1)}}\supnorm{\Mbar^{(L+1)}}\lipnorm{D_z\sigma^{(L)}},\\ \supnorm{D_\theta\sigma^{(L-1)}}\supnorm{D_z\sigma^{(L)}}\lipnorm{\Mbar^{(L+1)}},\\ \supnorm{\Mbar^{(L)}}\lipnorm{D_\theta\sigma^{(L-1)}} \lipnorm{\zbar^{(L-1)}},\\ \supnorm{\Mbar^{(L)}}\lipnorm{D_\theta\sigma^{(L-1)}}
\end{array}
\right\}.
\end{equation}
\end{proof}

\subsection{Lipschitz bounds for DNN maps}\label{sec:lipschitz-DNN}
We also present Lipschitz estimates for the maps defined in section \ref{sec:setup}. The estimates and the proofs are similar to the ones obtained for their mean-field versions.

\begin{lemma}For $\ell \in [1:L+1]$ and any choice of $i_\ell \in [1:N_\ell]$
    $$\supnorm{z_{i_\ell}^{(\ell)}} \leq C, \quad\quad  \lipnorm{z_{i_\ell}^{(\ell)}} = \bigoh{C^{\ell}}.$$
\end{lemma}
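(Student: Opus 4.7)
My plan is to prove this by induction on $\ell$, mirroring the inductive scheme already used for the mean-field maps in Lemma~\ref{lem:zbar-lipschitz}. The supremum bound $\supnorm{z^{(\ell)}_{i_\ell}} \leq C$ is essentially immediate from Assumption~\ref{assump:activations}: for $\ell=1$ we have $z^{(1)}_{i_1}(x,\btheta_N) = \fsigma{0}{x,\theta^{(0)}_{1,i_1}}$, so the uniform bound on $\sigma^{(0)}$ suffices; for $\ell \geq 2$, the recursive formula \eqnref{defzell} writes $z^{(\ell+1)}_{i_{\ell+1}}$ as an average over $i_\ell$ of terms of the form $\sigma^{(\ell)}(\cdot,\cdot)$, each of norm at most $C$ by assumption, so the bound passes through the convex combination.

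For the Lipschitz estimate I would work with the natural metric $|x - \tilde x| + \Lnorm{\btheta_N - \tilde\btheta_N}$ on the argument space, in the spirit of the norm introduced in \S\ref{sec:SGDtoCTGD}. The base case $\ell = 1$ follows directly from the joint $C$-Lipschitz property of $\sigma^{(0)}$, giving $\lipnorm{z^{(1)}_{i_1}} \leq C$. For the inductive step, assuming $\lipnorm{z^{(\ell)}_{i_\ell}} \leq C_\ell$ uniformly in $i_\ell$ and using the joint $C$-Lipschitz property of $\sigma^{(\ell)}$, I would estimate
\begin{equation*}
\bigl|z^{(\ell+1)}_{i_{\ell+1}}(x,\btheta_N) - z^{(\ell+1)}_{i_{\ell+1}}(\tilde x,\tilde\btheta_N)\bigr| \leq \frac{C}{N_\ell}\sum_{i_\ell=1}^{N_\ell}\Bigl(\bigl|z^{(\ell)}_{i_\ell}(x,\btheta_N) - z^{(\ell)}_{i_\ell}(\tilde x,\tilde\btheta_N)\bigr| + \bigl|\theta^{(\ell)}_{i_\ell,i_{\ell+1}} - \tilde\theta^{(\ell)}_{i_\ell,i_{\ell+1}}\bigr|\Bigr),
\end{equation*}
where the first sum is bounded by $C \cdot C_\ell \cdot (|x-\tilde x| + \Lnorm{\btheta_N - \tilde\btheta_N})$ by the inductive hypothesis, and the second is a single-layer empirical average of weight differences that fits naturally into the definition of $\Lnorm{\cdot}$. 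This produces a recursion of the shape $C_{\ell+1} \leq C(1 + C_\ell)$, which resolves to $C_\ell = \bigoh{C^\ell}$ as claimed.

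The one real subtlety will be the averaging over a single edge endpoint: the definition (\ref{eq:normParam}) of $\Lnorm{\cdot}$ averages over both $i_\ell$ and $i_{\ell+1}$, whereas the term $\frac{1}{N_\ell}\sum_{i_\ell} |\theta^{(\ell)}_{i_\ell,i_{\ell+1}} - \tilde\theta^{(\ell)}_{i_\ell,i_{\ell+1}}|$ appearing above is only a one-sided average and in general dominates $\Lnorm{\btheta_N - \tilde\btheta_N}$ by a factor that could grow with $N_{\ell+1}$. I would sidestep this by allowing the Lipschitz constant to depend on $i_{\ell+1}$ (which is harmless, since in every downstream use -- in Sections~\ref{sec:ideal} and~\ref{sec:coupling} -- the estimate is followed by a further average or maximum over $i_{\ell+1}$), or, equivalently, by working with a layer-indexed family of norms and invoking the equivalence (\ref{eq:equiv-norms}). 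Once this bookkeeping is fixed, the propagation of Lipschitz constants through composition with the $\sigma^{(\ell)}$ is a direct parallel of the mean-field computation in Lemma~\ref{lem:zbar-lipschitz}, and contributes one factor of $C$ per layer.
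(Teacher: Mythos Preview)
Your approach is essentially the same as the paper's: boundedness from $\supnorm{\sigma^{(\ell)}}\le C$, and the Lipschitz estimate by induction on $\ell$, picking up one factor of $C$ per layer via the composition rule. The paper's inductive step is written simply as
\[
\lipnorm{z_{i_{\ell+1}}^{(\ell+1)}} \;\le\; \frac{1}{N_\ell}\sum_{i_\ell=1}^{N_\ell}\lipnorm{\sigma^{(\ell)}}\parn{\lipnorm{z_{i_\ell}^{(\ell)}}+1},
\]
which is exactly your recursion $C_{\ell+1}\le C(C_\ell+1)$.

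The one genuine difference is the choice of norm on the input side. The paper leaves the norm implicit and treats $\btheta_N\mapsto \theta^{(\ell)}_{i_\ell,i_{\ell+1}}$ as a $1$-Lipschitz coordinate projection (hence the ``$+1$''), which is correct for any of the standard $\ell^p$ norms on $\R^{p_N}$; the $\Lnorm{\cdot}$-specific analysis is handled separately in Lemma~\ref{lem:grad-lnorm}. By contrast you commit to $\Lnorm{\cdot}$ from the outset, and the ``subtlety'' you flag --- that the one-sided average $\frac{1}{N_\ell}\sum_{i_\ell}|\theta^{(\ell)}_{i_\ell,i_{\ell+1}}-\tilde\theta^{(\ell)}_{i_\ell,i_{\ell+1}}|$ is not controlled by $\Lnorm{\btheta_N-\tilde\btheta_N}$ --- is an artifact of that choice rather than an intrinsic difficulty. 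Your proposed workaround (carrying the residual one-sided term and absorbing it in the next average over $i_{\ell+1}$) is exactly what the paper does inside the proof of Lemma~\ref{lem:grad-lnorm}, so both routes are valid; the paper's is just shorter here because it defers the $\Lnorm{\cdot}$ bookkeeping to where it is actually needed.
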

\begin{proof}Boundness again comes up from $\supnorm{\sigma^{(\ell)}}\leq C$.

  For the Lipschitz estimate, we apply induction on $\ell$. The case $\ell=1$ is clear. If the result holds for $\ell$ and any $i_\ell \in [1:N_\ell]$. Then, for $i_{\ell+1} \in [1:N_{\ell+1}]$ we can show that
  $$\lipnorm{z_{i_{\ell+1}}^{(\ell+1)}} \leq \frac{1}{N_\ell}\sum_{i_\ell=1}^{N_\ell} \lipnorm{\sigma^{(\ell)}}\,\parn{\lipnorm{z_{i_\ell}^{(\ell)}}+1} \leq \bigoh{C^{\ell+1}}.$$
\end{proof}

\begin{remark}
\label{rem:bounded-yhat}
  The previous lemma also implies that $\yhat$ is bounded and Lipschitz,
  $$\supnorm{\yhat}\leq C ,\quad\quad \lipnorm{\yhat} \leq \bigoh{C^{L+2}}.$$
\end{remark}

\begin{lemma}For $\ell \in [1:L+1]$ and indices $\boldj{\ell}{L+1} \in [1:\boldN{\ell}{L+1}]$
    $$\supnorm{M_{\boldj{\ell}{L+1}}^{(\ell)}} \leq C^{L+2-\ell}, \quad\quad  \lipnorm{M_{\boldj{\ell}{L+1}}^{(\ell)}} = \bigoh{C^{2(L+1)-\ell+1}}.$$
\end{lemma}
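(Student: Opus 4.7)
The plan is to prove both bounds by backward induction on $\ell$, running from $\ell = L+1$ down to $\ell = 1$, mirroring the recursive definition in \eqnref{defM}. The boundedness part is immediate: since $M^{(\ell)}_{\boldj{\ell}{L+1}}$ is, by construction, a product of $L+2-\ell$ derivative factors each of operator norm at most $C$ (by Assumption \ref{assump:activations}), the bound $\supnorm{M^{(\ell)}_{\boldj{\ell}{L+1}}} \leq C^{L+2-\ell}$ falls out directly. The argument is entirely parallel to the one used for the mean-field version $\Mbar^{(\ell)}$ in Lemma \ref{lem:Mbar-lipschitz}; it is in fact slightly simpler, because there is no conditional measure to deal with and nothing special happens at $\ell = L$.

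For the Lipschitz estimate the base case is $\ell = L+1$, where $M^{(L+1)}_1 = D\sigma^{(L+1)} \circ z_1^{(L+1)}$. Since $D\sigma^{(L+1)}$ is $C$-Lipschitz (Assumption \ref{assump:activations}) and $\lipnorm{z_1^{(L+1)}} = \bigoh{C^{L+1}}$ by the preceding lemma in \S\ref{sec:lipschitz-DNN}, Lemma \ref{lem:compos-lipschitz} gives $\lipnorm{M^{(L+1)}_1} = \bigoh{C^{L+2}}$, matching $2(L+1) - (L+1) + 1 = L+2$.

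For the inductive step, I would factor
\[
M^{(\ell)}_{\boldj{\ell}{L+1}}(x,\btheta_N) = M^{(\ell+1)}_{\boldj{\ell+1}{L+1}}(x,\btheta_N)\,\cdot\,\Psi^{(\ell)}_{j_\ell,j_{\ell+1}}(x,\btheta_N),
\]
where $\Psi^{(\ell)}_{j_\ell,j_{\ell+1}} := D_z\sigma^{(\ell)}\bigl(z^{(\ell)}_{j_\ell}(\cdot,\cdot),\theta^{(\ell)}_{j_\ell,j_{\ell+1}}\bigr)$, and apply the product estimate Lemma \ref{lem:sum-prod-lipschitz}. By the inductive hypothesis, $\supnorm{M^{(\ell+1)}_{\boldj{\ell+1}{L+1}}} \leq C^{L+1-\ell}$ and $\lipnorm{M^{(\ell+1)}_{\boldj{\ell+1}{L+1}}} = \bigoh{C^{2(L+1)-\ell}}$. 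For the second factor, the map $(x,\btheta_N) \mapsto \bigl(z^{(\ell)}_{j_\ell}(x,\btheta_N),\theta^{(\ell)}_{j_\ell,j_{\ell+1}}\bigr)$ has Lipschitz constant $\bigoh{C^{\ell}} + 1$ (the $z$ part coming from the preceding lemma, the $\theta$ part being a coordinate projection), and composition with the $C$-Lipschitz, $C$-bounded $D_z\sigma^{(\ell)}$ gives $\supnorm{\Psi^{(\ell)}_{j_\ell,j_{\ell+1}}} \leq C$ and $\lipnorm{\Psi^{(\ell)}_{j_\ell,j_{\ell+1}}} = \bigoh{C^{\ell+1}}$ via Lemma \ref{lem:compos-lipschitz}. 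Combining via Lemma \ref{lem:sum-prod-lipschitz} yields
\[
\lipnorm{M^{(\ell)}_{\boldj{\ell}{L+1}}} \leq C^{L+1-\ell}\cdot \bigoh{C^{\ell+1}} + C\cdot \bigoh{C^{2(L+1)-\ell}} = \bigoh{C^{2(L+1)-\ell+1}},
\]
closing the induction.

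There is no real obstacle here; the only thing to watch is the exponent bookkeeping, and the fact that the $\Psi^{(\ell)}$ factor depends on $\btheta_N$ only through a coordinate projection onto $\theta^{(\ell)}_{j_\ell,j_{\ell+1}}$, so its Lipschitz constant in the full parameter vector is controlled purely by $\lipnorm{z^{(\ell)}_{j_\ell}}$ plus a harmless $+1$. Because the recursion adds one $C$ per layer to the Lipschitz constant of $z^{(\ell)}$ and simultaneously one factor of $C$ from the new derivative, the dominant term is always the one propagated from $\lipnorm{M^{(\ell+1)}_{\boldj{\ell+1}{L+1}}}$, which explains why the exponent grows by exactly one at each inductive step.
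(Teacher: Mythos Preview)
Your proposal is correct and follows essentially the same approach as the paper: boundedness via the product of $L+2-\ell$ derivative factors, and the Lipschitz estimate via backward induction using the product rule (Lemma \ref{lem:sum-prod-lipschitz}) together with $\lipnorm{z^{(\ell)}_{j_\ell}} = \bigoh{C^\ell}$. The only cosmetic difference is that you name the second factor $\Psi^{(\ell)}_{j_\ell,j_{\ell+1}}$ explicitly, whereas the paper writes the same decomposition inline.
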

\begin{proof}
  For boundness we just notice that
  \begin{equation*}
    \supnorm{M_{\boldj{\ell}{L+1}}^{(\ell)}} \leq \prod_{k=\ell}^{L+1} \supnorm{D_z \sigma^{(k)}} \leq C^{L+2-\ell}.
  \end{equation*}
  And for the Lipschitz property we estimate using induction,
  \[\lipnorm{M_1^{(L+1)}} \leq \lipnorm{D\sigma^{(L+1)}}\,\lipnorm{z^{(L+1)}} = C\,\bigoh{C^{L+1}} = \bigoh{C^{L+2}}.  \]
  If the estimate holds for $\ell+1$ and any sequence $\boldj{\ell+1}{L+1} \in [1:\boldN{\ell+1}{L+1}]$.
  Then,
  \begin{eqnarray*}
    \lipnorm{M_{\boldj{\ell}{L+1}}^{(\ell)}} &\leq& \lipnorm{M_{\boldj{\ell+1}{L+1}}^{(\ell+1)}}\,
    \supnorm{D_z \sigma^{(\ell)}} + \supnorm{M_{\boldj{\ell+1}{L+1}}^{(\ell+1)}}\,
    \lipnorm{D_z \sigma^{(\ell)}}(\lipnorm{z_{j_\ell}^\ell} + 1)\\
    &\leq& \bigoh{C^{2(L+1)-\ell}}\,C + C^{L+1-\ell}\,C\,(\bigoh{C^\ell} + 1) = \bigoh{C^{2(L+1)-\ell+1}}
  \end{eqnarray*}
\end{proof}

\begin{remark}
\label{rem:lipschitz-y_N}
  Using that the average of Lipschitz functions of same order is still Lipschitz, we can see that the gradients obtained through backpropagation are also Lipschitz, and bounded.
  $$\supnorm{N^2\,\frac{\partial \yhat}{\partial \theta_{i_\ell,i_{\ell+1}}^{(\ell)}}}\leq C^{L+2-\ell}\quad\quad
  \lipnorm{N^2\,\frac{\partial \yhat}{\partial \theta_{i_\ell,i_{\ell+1}}^{(\ell)}}} \leq \bigoh{C^{2(L+1)-\ell+1}}.$$
\end{remark}

%
%

Our last lemma for this section deals with the special norm $\Lnorm{\cdot}$,
\begin{equation}
\label{eq:normParam}
\Lnorm{\btheta_N} = \sup_{\ell \in [0:L]}\left\{\frac{1}{N_\ell\cdot N_{\ell+1}}\sum_{i_{\ell+1} = 1}^{N_{\ell+1}}\sum_{i_\ell = 1}^{N_\ell}\abs{\theta^{(\ell)}_{i_\ell,i_{\ell+1}}}\right\},
\end{equation}
which is used in section \ref{sec:SGDtoCTGD}. Unfortunately, the Lipschitz property for this case do not quite follows the results obtained for the $L^1$ norm. We need to exploit the averaging structure of our functions, as the norm itself does it.  In the next lemma, we omit the dependency on the variables $X$ and $Y$. We remark that the estimates are also uniform in these entries, which can be proved using the same strategies used during this section.

\begin{lemma}[$\Gradh_N$ is Lipschitz with $\Lnorm{\cdot}$]\label{lem:grad-lnorm}
  For $a_N, b_N \in \R^D$ weight vectors, there exists a constant $K$ depending only on C and L, such that
  \begin{equation}
    \Lnorm{\Gradh_N(a_N) - \Gradh_N(b_N)} \leq K\;\Lnorm{a_N - b_N}.
  \end{equation}
\end{lemma}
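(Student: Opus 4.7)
The plan is to exploit the averaging structure baked into the DNN recursions. The key observation is that both $z^{(\ell)}_{i_\ell}$ and the partially-averaged backpropagation quantities
\[
\tilde M^{(\ell)}_{i_\ell}(X,\btheta_N) := \frac{1}{\prod_{k=\ell+1}^{L+1} N_k}\sum_{\boldj{\ell+1}{L+1}} M^{(\ell)}_{(i_\ell,\boldj{\ell+1}{L+1})}(X,\btheta_N)
\]
depend on the weights only through layer-wise averages, which is exactly what $\Lnorm{\cdot}$ measures. Using (3.14), the gradient can be rewritten as
\[
\Gradh^{(\ell)}_{i_\ell,i_{\ell+1}}(X,Y,\btheta_N) = (Y-\yhat(X,\btheta_N))^\dag\, \tilde M^{(\ell+1)}_{i_{\ell+1}}(X,\btheta_N)\, D_\theta\sigma^{(\ell)}\!\left(z^{(\ell)}_{i_\ell}(X,\btheta_N),\theta^{(\ell)}_{i_\ell,i_{\ell+1}}\right),
\]
for $\ell\in[1{:}L-1]$, and vanishes for $\ell\in\{0,L\}$.

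The first step is a forward induction on $\ell\in[1{:}L+1]$ showing that
\[
\frac{1}{N_\ell}\sum_{i_\ell}\left|z^{(\ell)}_{i_\ell}(X,\btheta_N) - z^{(\ell)}_{i_\ell}(X,\btheta_N')\right| \leq C_\ell \,\Lnorm{\btheta_N-\btheta_N'},
\]
with $C_\ell=\bigoh{C^\ell}$. The base case $\ell=1$ is immediate from the Lipschitz property of $\sigma^{(0)}$ in Assumption 4.2. The inductive step uses (3.2): the $\theta^{(\ell)}$-contribution, once averaged over $i_{\ell+1}$ as well, becomes exactly the $\ell$-th entry of $\Lnorm{\cdot}$, while the $z^{(\ell)}$-contribution is absorbed by the inductive hypothesis. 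Because $N_{L+1}=1$, the case $\ell=L+1$ yields $|\yhat-\yhat'|\leq C\cdot C_{L+1}\,\Lnorm{\btheta_N-\btheta_N'}$.

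The second step is a \emph{backward} induction on $\ell$ establishing the analogous estimate for the $\tilde M$-operators,
\[
\frac{1}{N_\ell}\sum_{i_\ell}\left\|\tilde M^{(\ell)}_{i_\ell}(X,\btheta_N) - \tilde M^{(\ell)}_{i_\ell}(X,\btheta_N')\right\| \leq K_\ell\,\Lnorm{\btheta_N-\btheta_N'},
\]
using the recursion
\[
\tilde M^{(\ell)}_{i_\ell} = \frac{1}{N_{\ell+1}}\sum_{i_{\ell+1}} \tilde M^{(\ell+1)}_{i_{\ell+1}}\cdot D_z\sigma^{(\ell)}\!\left(z^{(\ell)}_{i_\ell},\theta^{(\ell)}_{i_\ell,i_{\ell+1}}\right),
\]
together with the uniform bound $\|\tilde M^{(\ell+1)}_{i_{\ell+1}}\|\leq C^{L+1-\ell}$, the Lipschitz property of $D_z\sigma^{(\ell)}$, and the first step to handle $z^{(\ell)}_{i_\ell}$. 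The base case $\ell=L+1$ is $\tilde M^{(L+1)}_1 = D\sigma^{(L+1)}(z^{(L+1)})$, which is Lipschitz via step one.

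With (i) and (ii) in hand, the final step is a triple add-subtract in the three factors of $\Gradh^{(\ell)}_{i_\ell,i_{\ell+1}}$. On the "frozen" factors I apply the uniform bounds $|Y|,\supnorm{\yhat}\leq C$, $\|\tilde M^{(\ell+1)}\|\leq C^{L+1-\ell}$, $\supnorm{D_\theta\sigma^{(\ell)}}\leq C$; on the "varying" factor I use either $|\yhat-\yhat'|$, $\|\tilde M^{(\ell+1)}_{i_{\ell+1}}-\tilde M'^{(\ell+1)}_{i_{\ell+1}}\|$, or the Lipschitz bound on $D_\theta\sigma^{(\ell)}$. Averaging the resulting inequality over $(i_\ell,i_{\ell+1})$ produces a sum of three pieces, each of which is controlled by $\Lnorm{\btheta_N-\btheta_N'}$ via steps one and two. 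Taking the supremum over $\ell\in[0{:}L]$ (the cases $\ell\in\{0,L\}$ being trivial) completes the proof with $K$ depending only on $C$ and $L$.

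The only real obstacle is bookkeeping: every sum over a neuron index must be kept in the form $\frac{1}{N_\ell}\sum_{i_\ell}|\cdots|$ so that it can be matched to $\Lnorm{\cdot}$, rather than the weaker $\sup_{i_\ell}|\cdots|$ that naive Lipschitz estimates would yield. Whenever a weight difference $|\theta^{(\ell)}_{i_\ell,i_{\ell+1}}-\theta'^{(\ell)}_{i_\ell,i_{\ell+1}}|$ appears under a single index sum, I must average over the \emph{other} index before invoking $\Lnorm{\cdot}$; this is precisely why the ``sup-of-averages'' design of $\Lnorm{\cdot}$ in (7.1) is the right norm for this lemma.
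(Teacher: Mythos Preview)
Your proposal is correct and follows essentially the same strategy as the paper: a forward recursion controlling the layer activations $z^{(\ell)}$, a backward recursion controlling the backpropagation operators, and a final add--subtract in the three factors of $\Gradh^{(\ell)}_{i_\ell,i_{\ell+1}}$, with the observation that every weight difference becomes a layer-average once one sums over the missing index.

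The one organizational difference worth noting is that you package the backward quantities as the partially averaged $\tilde M^{(\ell)}_{i_\ell}$ and run both inductions directly for the averages $\frac{1}{N_\ell}\sum_{i_\ell}|\cdot|$, whereas the paper bounds the \emph{individual} $z^{(\ell)}_{i_\ell}$ and the \emph{unaveraged} $M^{(\ell)}_{\boldj{\ell}{L+1}}$, carrying along explicit index-dependent ``residual'' terms such as $\frac{C}{N_{\ell-1}}\sum_{i_{\ell-1}}|a^{(\ell-1)}_{i_{\ell-1},i_\ell}-b^{(\ell-1)}_{i_{\ell-1},i_\ell}|$ and only averaging at the very end. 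Your version is a bit cleaner and hides less under the rug; the paper's version gives slightly finer pointwise information (e.g.\ the bound on a single $|\Gradh^{(\ell)}_{i_\ell,i_{\ell+1}}(a_N)-\Gradh^{(\ell)}_{i_\ell,i_{\ell+1}}(b_N)|$ before averaging), which is not needed for the lemma as stated. Either route yields the same constant $K=K(C,L)$.
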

\begin{proof}
  First, we state a few inequalities for the $z^{(\ell)}_{i_\ell}$ that can be obtained recursively:
  \begin{equation}
    \abs{z^{(1)}_{i_1}(a_N) - z^{(1)}_{i_1}(b_N)} \leq C\;\abs{a^{(1)}_{1,i_1} - b^{(1)}_{1,i_1}};
  \end{equation}
  for $\ell\in[2:L+1]$ and $i_\ell\in [1:N_\ell]$ we have
  \begin{align}
    \nonumber \abs{z^{(\ell)}_{i_\ell}(a_N) - z^{(\ell)}_{i_\ell}(b_N)} \leq& \frac{1}{N_{\ell-1}} \sum_{i_{\ell-1} = 1}^{N_{\ell-1}} C\parn{\abs{z^{(\ell-1)}_{i_{\ell-1}}(a_N) - z^{(\ell-1)}_{i_{\ell-1}}(b_N)} + \abs{a^{(\ell-1)}_{i_{\ell-1},i_\ell} - b^{(\ell-1)}_{i_{\ell-1},i_\ell}}}\\
    \leq& \sum_{k=0}^{\ell-2} \frac{C^{\ell-k}}{N_k N_{k+1}} \sum_{i_k=1}^{N_k} \sum_{i_{k+1}=1}^{N_{k+1}} \abs{a^{(k)}_{i_{k},i_{k+1}} - b^{(k)}_{i_{k},i_{k+1}}} + \frac{C}{N_{\ell-1}} \sum_{i_{\ell-1} = 1}^{N_{\ell-1}} \abs{a^{(\ell-1)}_{i_{\ell-1},i_\ell} - b^{(\ell-1)}_{i_{\ell-1},i_\ell}}\\
    \leq& \ell\;C^\ell \Lnorm{a_N - b_N} + \frac{C}{N_{\ell-1}} \sum_{i_{\ell-1} = 1}^{N_{\ell-1}} \abs{a^{(\ell-1)}_{i_{\ell-1},i_\ell} - b^{(\ell-1)}_{i_{\ell-1},i_\ell}}.
  \end{align}

  These also implies
  \begin{equation}\label{eq:lnorm-yhat}
    \abs{\yhat(a_N) - \yhat(b_N)} \leq L\;C^{L+2}\Lnorm{a_N - b_N}.
  \end{equation}

  Next, we observer the behavior of the gradient terms:
  \begin{align}\label{eq:compare-grad}
    \nonumber\abs{N^2\;\frac{\partial \yhat}{\partial \theta_{i_\ell,i_{\ell+1}}^{(\ell)}}(a_N) - N^2\;\frac{\partial \yhat}{\partial \theta_{i_\ell,i_{\ell+1}}^{(\ell)}}(b_N)} \leq& \frac{C}{\prod_{k={\ell+2}}^{L+1}N_k} \sum\limits_{\boldj{\ell+2}{L+1}\in[1:\boldN{\ell+2}{L+1}]} \abs{ M^{(\ell+1)}_{(i_{\ell+1},\boldj{\ell+2}{L+1})}(a_N) -
     M^{(\ell+1)}_{(i_{\ell+1},\boldj{\ell+2}{L+1})}(b_N)} +\\
    & + C^{L+1-\ell} \parn{ \abs{z^{(\ell)}_{i_\ell}(a_N) - z^{(\ell)}_{i_\ell}(b_N)} + \abs{a^{(\ell)}_{i_{\ell},i_{\ell+1}} - b^{(\ell)}_{i_{\ell},i_{\ell+1}} }}.
  \end{align}
  We first take care of the first part of the RHS:
  \begin{align}
    \nonumber\abs{ M^{(\ell)}_{\boldj{\ell}{L+1}}(a_N) - M^{(\ell)}_{\boldj{\ell}{L+1}}(b_N)} \leq& C\abs{ M^{(\ell+1)}_{\boldj{\ell+1}{L+1}}(a_N) - M^{(\ell+1)}_{\boldj{\ell+1}{L+1}}(b_N)} +\\
    &+\; C^{L+2-\ell} \parn{ \abs{z^{(\ell)}_{j_\ell}(a_N) - z^{(\ell)}_{j_\ell}(b_N)} + \abs{a^{(\ell)}_{j_{\ell},j_{\ell+1}} - b^{(\ell)}_{j_{\ell},j_{\ell+1}} }}\\
    \leq& C^{L+2-\ell} \parn{ \sum_{k=\ell}^{L+1} \abs{z^{(k)}_{j_k}(a_N) - z^{(k)}_{j_k}(b_N)} +  \sum_{k=\ell}^{L} \abs{a^{(k)}_{j_{k},j_{k+1}} - b^{(k)}_{j_{k},j_{k+1}} } }\\
    \leq& \tilde{C} \Lnorm{a_N - b_N} + \sum_{k=\ell}^{L+1} \frac{1}{N_{k-1}} \sum_{i_{k-1}=1}^{N_{k-1}} \abs{a^{(k-1)}_{i_{k-1},j_{k}} - b^{(k-1)}_{i_{k-1},j_{k}}}
  \end{align}
  Now, replacing the above inequality and the ones for $z^{(\ell)}_{i_\ell}$ in (\ref{eq:compare-grad}) we obtain
  \begin{align}
    \nonumber\abs{N^2\;\frac{\partial \yhat}{\partial \theta_{i_\ell,i_{\ell+1}}^{(\ell)}}(a_N) - N^2\;\frac{\partial \yhat}{\partial \theta_{i_\ell,i_{\ell+1}}^{(\ell)}}(b_N)} \leq&
    K_1\Lnorm{a_N - b_N} + \frac{K_2}{N_{\ell-1}} \sum_{j_{\ell-1}=1}^{N_{\ell-1}} \abs{a^{(\ell-1)}_{j_{\ell-1},i_{\ell}} - b^{(\ell-1)}_{j_{\ell-1},i_{\ell}}} +\\
    &+\; K_3 \abs{a^{(\ell)}_{i_{\ell},i_{\ell+1}} - b^{(\ell)}_{i_{\ell},i_{\ell+1}}} + \frac{K_4}{N_{\ell}} \sum_{j_{\ell}=1}^{N_{\ell}} \abs{a^{(\ell)}_{j_{\ell},i_{\ell+1}} -
    b^{(\ell)}_{j_{\ell},i_{\ell+1}}}
  \end{align}
  If we combine this estimate with the one in (\ref{eq:lnorm-yhat}), then we can update the constants $K_1$ to $K_4$ obtaining
  \begin{align}
    \nonumber \abs{\Gradh^{(\ell)}_{i_\ell,i_{\ell+1}}(a_N) - \Gradh^{(\ell)}_{i_\ell,i_{\ell+1}}(b_N)} \leq&
    K_1\Lnorm{a_N - b_N} + \frac{K_2}{N_{\ell-1}} \sum_{j_{\ell-1}=1}^{N_{\ell-1}} \abs{a^{(\ell-1)}_{j_{\ell-1},i_{\ell}} - b^{(\ell-1)}_{j_{\ell-1},i_{\ell}}} +\\
    &+\; K_3 \abs{a^{(\ell)}_{i_{\ell},i_{\ell+1}} - b^{(\ell)}_{i_{\ell},i_{\ell+1}}} + \frac{K_4}{N_{\ell}} \sum_{j_{\ell}=1}^{N_{\ell}} \abs{a^{(\ell)}_{j_{\ell},i_{\ell+1}} -
    b^{(\ell)}_{j_{\ell},i_{\ell+1}}}.
  \end{align}

  Is not hard to see that if we average the terms on the RHS, as in the $\Lnorm{\cdot}$, then each term can be bounded by $\Lnorm{a_N - b_N}$. This concludes the proof.
\end{proof}

We'll also need the following result:
\begin{lemma}[$L_N$ is Lipschitz with $\Lnorm{\cdot}$]
\label{lem:L_N-lipschitz}
There is a constant $C_{\ref{lem:L_N-lipschitz}}$ such that
\[
\sup_{\btheta_N\in\R^{p_N}}\abs{ L_N(\btheta_N)}\leq C_{\ref{lem:L_N-lipschitz}}\quad
\text{ and }\quad
\abs{L_N(\btheta_N) - L_N(\thetatil_N)}\leq C_{\ref{lem:L_N-lipschitz}}\Lnorm{\btheta_N - \thetatil_N}
\]
\end{lemma}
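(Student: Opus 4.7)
The plan is to reduce both claims to properties of $\yhat$ that are already established in Section \ref{sec:lipschitz-DNN}, in particular those used in the proof of Lemma \ref{lem:grad-lnorm}. For boundedness, Assumption \ref{assump:activations} gives $|Y|\leq C$ almost surely, while Remark \ref{rem:bounded-yhat} yields $\supnorm{\yhat}\leq C$. Consequently $|Y - \yhat(X,\btheta_N)|^2\leq (2C)^2$ pointwise, so $L_N(\btheta_N)\leq 2C^2$ uniformly in $\btheta_N$.

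For the Lipschitz estimate, I would first rewrite the difference of squared losses via the identity $|a|^2 - |b|^2 = \langle a-b,\, a+b\rangle$ applied to $a = Y - \yhat(X,\btheta_N)$ and $b = Y - \yhat(X,\thetatil_N)$, obtaining
\[
L_N(\btheta_N) - L_N(\thetatil_N) = \tfrac{1}{2}\,\Ex{\langle \yhat(X,\thetatil_N) - \yhat(X,\btheta_N),\ 2Y - \yhat(X,\btheta_N) - \yhat(X,\thetatil_N)\rangle}.
\]
The second vector has Euclidean norm at most $4C$ by the previous boundedness bounds, so Cauchy--Schwarz yields
\[
\abs{L_N(\btheta_N) - L_N(\thetatil_N)} \leq 2C\cdot \Ex{\abs{\yhat(X,\btheta_N) - \yhat(X,\thetatil_N)}}.
\]

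The final step is to invoke the uniform estimate
\[
\abs{\yhat(X,\btheta_N) - \yhat(X,\thetatil_N)} \leq L\,C^{L+2}\,\Lnorm{\btheta_N - \thetatil_N},
\]
which holds for every $X\in\R^{d_X}$ and was already derived as equation \eqref{eq:lnorm-yhat} in the course of proving Lemma \ref{lem:grad-lnorm} (the derivation there is pointwise in $X$). Setting $C_{\ref{lem:L_N-lipschitz}} := \max\{2C^2,\, 2L\,C^{L+3}\}$ then gives both conclusions of the lemma simultaneously. No significant obstacle is anticipated: the substantive work was already carried out in the recursive estimates for $|z^{(\ell)}_{i_\ell}(a_N) - z^{(\ell)}_{i_\ell}(b_N)|$ in Section \ref{sec:lipschitz-DNN}, and the present lemma is just a repackaging of that Lipschitz bound at the level of the mean-squared loss.
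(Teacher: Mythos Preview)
Your proof is correct and follows essentially the same approach as the paper: both use the boundedness of $Y$ and $\yhat$ from Assumption \ref{assump:activations} and Remark \ref{rem:bounded-yhat}, expand the difference of squared losses, and then invoke the $\Lnorm{\cdot}$-Lipschitz bound \eqref{eq:lnorm-yhat} for $\yhat$. Your use of the polarization identity $|a|^2-|b|^2=\langle a-b,a+b\rangle$ is slightly cleaner than the paper's term-by-term expansion and yields a sharper constant ($2C$ in place of the paper's $4C$, and the correct $2C^2$ for the sup bound where the paper writes $2C$), but the substance is identical.
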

\begin{proof}
Recall that $L_N(\btheta_N) = \Exp{(X,Y)\sim P}{\abs{Y-\yhat(X,\btheta_N)}^2}$. According to Remark \ref{rem:bounded-yhat} and the Assumptions in \ref{assump:activations}, we see that $\yhat$ and $Y$ are $C$-bounded, implying
\[\abs{L_N(\btheta_N)}\leq 2 C.\]
To show the other inequality, consider:

\begin{align*}
\abs{L_N(\btheta_N) - L_N(\thetatil_N)}&\leq C_{\ref{lem:L_N-lipschitz}}\\
&=\abs{\Exp{(X,Y)\sim P}{2\dotprod{Y, \yhat(X,\btheta_N)-\yhat(X,\thetatil_N)} + \abs{\yhat(X,\btheta_N)}^2 - \abs{\yhat(X,\thetatil_N)}^2}}\\
& \leq 4C\cdot \Exp{(X,Y\sim P)}{\abs{\yhat(X,\btheta_N)-\yhat(X,\thetatil_N)}}
\end{align*}
Now we use observation (\ref{eq:lnorm-yhat}) to get:
\[
\abs{L_N(\btheta_N) - L_N(\thetatil_N)}\leq 4C\cdot L\;C^{L+2}\Lnorm{\btheta_N - \thetatil_N}
\]
Pick $C_{\ref{lem:L_N-lipschitz}} = \max\{4L\;C^{L+3}, 2C\}$ to finish lemma.
\end{proof}

\section{The set of special probability measures is closed}
\label{sec:Probspecialisclosed}
We prove here that the set $\Probspecial_R$ defined in \S \ref{sub:towardsfixedpoint} is closed under weak convergence.

\begin{proof}[Proof of Theorem \ref{thm:Probspecialisclosed}] Let $\Probspecial_R$ denote the set of $R$ special probability measures (cf. Definition \ref{def:Rspecialmeasure}. Consider a sequence $\{\mu_{j,[0,T]}\}_{j\in\N}\subset \Probspecial_R$. Each measure $\mu_j$ in the sequence has a corresponding $R$-special function $F_j:=F_{\mu_{j}}$, with the properties required by Definition \ref{def:Rspecialfunction}. 

Assume that the sequence $\{\mu_{j,[0,T]}\}_{j\in\N}$ converges to some $\mu_{[0,T]}\in \Probspace(C([0,T],\R^D))$. We wish to show that $\mu_{[0,T]}\in\Probspecial_R$. The fact that $\mu_0$ is a product and the $L$-th coordinate is a.s. constant under $\mu_{[0,T]}$ are straightforward to show, and we omit them. What is left, then, is to prove that there exists a $R$-special function $F=F_{\mu}$ (as per Definition \ref{def:Rspecialfunction}) which attests that $\mu$ is a special measure (as per Definition \ref{def:Rspecialmeasure}). 

We will use the following Claim.
\begin{claim}There exist a subsequence $\{j_k\}_{k\in\N}\subset \N$ a function $F:\R^{D_{L-1}}\times \R^{D_L}\to C([0,T],\R^{D_{L-1}})$ such that $F_{j_k}\to F$ uniformly over compact sets when $k\to +\infty$.\end{claim}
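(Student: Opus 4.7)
The plan is to establish the Claim by an Arzelà–Ascoli / diagonalization argument, and then use the limit function $F$ together with a Skorokhod coupling to verify that $F$ represents $\mu_{[0,T]}$ in the sense of Definition \ref{def:Rspecialmeasure}.

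First, I would view each $F_j$ as a continuous map $\widetilde F_j:\R^{D_{L-1}}\times\R^{D_L}\times[0,T]\to\R^{D_{L-1}}$ defined by $\widetilde F_j(a,t):=F_j(a)(t)$. The $R$-special property implies that for every fixed $t$, $a\mapsto\widetilde F_j(a,t)$ is $e^{RT}$-Lipschitz, and for every fixed $a$, $t\mapsto\widetilde F_j(a,t)$ is $R$-Lipschitz. Since $\widetilde F_j(a,0)=a^{(L-1)}$, we also get the pointwise bound $|\widetilde F_j(a,t)|\le|a^{(L-1)}|+RT$. Consequently, on every compact set $K\subset\R^{D_{L-1}}\times\R^{D_L}\times[0,T]$, the family $\{\widetilde F_j\}_j$ is equicontinuous and uniformly bounded. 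Choosing an exhausting sequence $K_1\subset K_2\subset\cdots$ of compacta whose union is the whole space, Arzelà–Ascoli together with a standard diagonal extraction produces a subsequence $\{j_k\}$ and a continuous $\widetilde F:\R^{D_{L-1}}\times\R^{D_L}\times[0,T]\to\R^{D_{L-1}}$ with $\widetilde F_{j_k}\to\widetilde F$ uniformly on each $K_m$. Setting $F(a)(t):=\widetilde F(a,t)$ yields a map into $C([0,T],\R^{D_{L-1}})$, and the convergence $F_{j_k}\to F$ is uniform on compact subsets of $\R^{D_{L-1}}\times\R^{D_L}$ (in the sup-over-$[0,T]$ topology). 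This proves the Claim.

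Next, by passing to limits in the two Lipschitz inequalities satisfied by each $F_{j_k}$ and using $F_{j_k}(a)(0)=a^{(L-1)}\to a^{(L-1)}$, the limit $F$ is itself $R$-special: $t\mapsto F(a)(t)$ is $R$-Lipschitz with $F(a)(0)=a^{(L-1)}$, and $a\mapsto F(a)(t)$ is $e^{Rt}$-Lipschitz. So $F$ is a legitimate candidate for $F_\mu$.

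It remains to verify that if $\Theta\sim\mu_{[0,T]}$, then $\Theta^{(L-1)}(t)=F(\Theta^{(L-1)}(0),\Theta^{(L)}(0))(t)$ almost surely for all $t\in[0,T]$. Here I would use Skorokhod's representation theorem on the Polish space $C([0,T],\R^D)$: since $\mu_{j_k,[0,T]}\to\mu_{[0,T]}$ weakly, one can realize random trajectories $\Theta_{j_k}\sim\mu_{j_k,[0,T]}$ and $\Theta\sim\mu_{[0,T]}$ on a common probability space so that $\Theta_{j_k}\to\Theta$ a.s.\ in the sup-norm. For each $k$, $\Theta_{j_k}^{(L-1)}(t)=F_{j_k}(\Theta_{j_k}^{(L-1)}(0),\Theta_{j_k}^{(L)}(0))(t)$ almost surely, simultaneously for all $t\in[0,T]$. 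On the event of almost sure trajectory convergence, $(\Theta_{j_k}^{(L-1)}(0),\Theta_{j_k}^{(L)}(0))$ converges to $(\Theta^{(L-1)}(0),\Theta^{(L)}(0))$, hence lies eventually in a compact set on which $F_{j_k}\to F$ uniformly. Combined with the equicontinuity of the $F_{j_k}(\cdot)(t)$, this lets us pass to the limit: $\Theta^{(L-1)}(t)=F(\Theta^{(L-1)}(0),\Theta^{(L)}(0))(t)$ a.s. for every $t$, and continuity in $t$ upgrades this to a statement holding simultaneously for all $t\in[0,T]$. Therefore $\mu_{[0,T]}\in\Probspecial_R$, proving the theorem.

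The main obstacle I anticipate is the last step: justifying the a.s.\ passage to the limit in $\Theta^{(L-1)}(t)=F_{j_k}(\Theta_{j_k}^{(L-1)}(0),\Theta_{j_k}^{(L)}(0))(t)$ requires combining \emph{two} sources of convergence (Skorokhod convergence of the random initial data and uniform-on-compacta convergence of the deterministic maps $F_{j_k}$), which is cleanly handled by using the uniform Lipschitz bounds on the $F_{j_k}$ to control the additive error from plugging in slightly perturbed arguments. Once this is in place, the rest is bookkeeping.
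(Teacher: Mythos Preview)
Your proposal is correct and follows essentially the same strategy as the paper: Arzel\`a--Ascoli to extract a uniformly convergent subsequence of the $F_j$, then Skorokhod representation plus the uniform Lipschitz bounds to pass to the limit in the identity $\Theta_{j_k}^{(L-1)}=F_{j_k}(\Theta_{j_k}^{(L-1)}(0),\Theta_{j_k}^{(L)}(0))$. The only cosmetic difference is that the paper applies Arzel\`a--Ascoli directly to the maps $F_j:\R^{D_{L-1}}\times\R^{D_L}\to C([0,T],\R^{D_{L-1}})$ (using a version with metric-space target and checking precompactness of each $\sH(x)$ via classical Arzel\`a--Ascoli in $C([0,T],\R^{D_{L-1}})$), whereas you flatten to $\widetilde F_j:\R^{D_{L-1}}\times\R^{D_L}\times[0,T]\to\R^{D_{L-1}}$ and diagonalize over an exhaustion---your route is slightly more elementary but equivalent.
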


In what follows, we first show that the Claim implies that $\mu_{[0,T]}\in\Probspecial_R$, and then prove the Claim.

\underline{Proof that the Claim implies $\mu_{[0,T]}\in\Probspecial_R$.} By passing to a subsequence, we may assume that $F_j\to F$ uniformly over compact sets. We will use this to show that we may take $F_{}:=F$ in Definition \ref{def:Rspecialmeasure} to certify that $\mu_{[0,T]}$ is $R$-special. 

To do this, we first argue that $F$ is $R$-special. This is simple: inspection of Definition \ref{def:Rspecialfunction} reveals that the properties required of a $R$-special function are preserved under pointwise convergence, and thus also under uniform convergence over compact sets.

We now argue that $F$ attests that $\mu$ is $R$-special: that is, we will show that, if $\Theta\sim \mu_{[0,T]}$, then $\Theta_{[0,T]}^{(L-1)}=F(\Theta^{(L-1)}(0),\Theta^{(L)}(0))$ a.s.. 

The Skorohod representation theorem for weak convergence implies that we can find random elements $\Theta_{j,[0,T]}\sim \mu_{j}$ ($j\in \N$) and $\Theta_{[0,T]}\sim \mu$, defined over some common probability space, so that $\|\Theta_j - \Theta\|_\infty\to 0$ almost surely. This implies that:
\[(\Theta_j^{(L-1)}(0),\Theta_j^{(L)}(0),\Theta_{j,[0,T]}^{(L-1)})\to (\Theta^{(L-1)}(0),\Theta^{(L)}(0),\Theta^{(L-1)}_{[0,T]})\mbox { a.s.}.\]
Now, when $j\to +\infty$, the pair $(\Theta_j^{(L-1)(0)},\Theta_j^{(L)}(0))$ is a.s. convergent, and there a.s. is some (random) $R\in [0,+\infty)$ with: 
\[\sup_{j\in\N}|(\Theta_j^{(L-1)}(0) ,\Theta_j^{(L)}(0))|\leq R\mbox{ almost surely.}\]
This guarantees:
\begin{eqnarray*}& \limsup_j\|F_j(\Theta_j^{(L-1)}(0),\Theta_j^{(L)}(0)) -F(\Theta_j^{(L-1)}(0),\Theta_j^{(L)}(0))\|_{\infty}\\ \leq& (\limsup_j\sup_{|(a,b)|\leq R}\|F_j(a,b) - F(a,b)\|_{\infty})\\ =& 0\mbox{ a.s. because $F_j\to F$ uniformly over compacts.}\end{eqnarray*}
For each $j\in\N$, we know that $F_j(\Theta_j^{(L-1)}(0),\Theta_j^{(L)}(0)) = \Theta^{(L-1)}_{j,[0,T]}$ a.s. for each $j\in\N$. This means we may replace $F_j(\Theta_j^{(L-1)}(0),\Theta_j^{(L)}(0))$ with $\Theta^{(L-1)}_{j,[0,T]}$ in the previous display and obtain:  
\[\|\Theta_{j,[0,T]}^{(L-1)} -F(\Theta_j^{(L-1)}(0),\Theta_j^{(L)}(0))\|_{\infty} \to 0\mbox{ a.s..}\]
Using continuity of $F$ and the convergence of $\Theta_j$, we obtain:
\[\|\Theta_{[0,T]}^{(L-1)} -F(\Theta^{(L-1)}(0),\Theta^{(L)}(0))\|_{\infty}=0\mbox{ a.s.},\]
which is what we wished to show.

\underline{Proof of the Claim.} We will need the following form of the Ascoli-Arz\`{e}la Theorem.

\begin{theorem}[Ascoli-Arz\`{e}la Theorem] Suppose $(X,\rho_X)$ and $(Y,\rho_Y)$ are metric spaces and $M>0$. Assume that all closed balls in $X$ are compact. Assume $\sH=\{f_j\}_{j\in\N}$ is a sequence of equicontinuous mappings $f:X\to Y$ such that the sets
\[\sH(x):=\{f_j(x)\,:\,j\in\N\}\;\;(x\in X)\]
are precompact\footnote{Recall that a set in a topological space is precompact if its closure is compact, or equivalently, if it is contained in a compact set.} subsets of $Y$. Then there exists a subsequence $\{f_{j_k}\}_{k\in\N}$ and a continuous function $f:X\to Y$ such that $f_{j_k}\to f$ uniformly over compact sets of $X$ when $k\to +\infty$.\end{theorem}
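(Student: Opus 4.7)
The plan is the standard Ascoli--Arz\`ela argument: extract a subsequence that converges on a countable dense set by a Cantor diagonal procedure, then upgrade this to uniform convergence on compact sets by exploiting the equicontinuity of $\sH$. First I would set up the ambient structure. Since every closed ball in $(X,\rho_X)$ is compact, fixing any basepoint $x_0\in X$ gives $X=\bigcup_n \overline{B}(x_0,n)$, which is $\sigma$-compact and hence separable; let $D=\{x_n\}_{n\in\N}$ be a countable dense subset. Note also that the precompactness of each $\sH(x)$ implies that $\overline{\sH(x)}$ is a compact (hence complete) subset of $Y$, which is what will allow Cauchy sequences taking values in $\sH(x)$ to converge in $Y$.

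Second, I would run the diagonal argument. For each $n$, the sequence $\{f_j(x_n)\}_{j\in\N}$ lies in the precompact set $\sH(x_n)$, so it has a convergent subsequence. Extract nested subsequences $\N\supset J_1\supset J_2\supset\cdots$ so that $\{f_j(x_n)\}_{j\in J_n}$ converges in $Y$ for each $n$. The diagonal sequence $g_k:=f_{j_k}$, with $j_k$ the $k$-th element of $J_k$, then satisfies that $\{g_k(x_n)\}_k$ converges in $Y$ for every $x_n\in D$.

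Third, I would use equicontinuity to promote pointwise convergence on $D$ to uniform convergence on an arbitrary compact $K\subset X$. Fix $\varepsilon>0$. By equicontinuity of $\sH$, for every $y\in K$ there is $\delta_y>0$ such that $\rho_Y(f_j(y),f_j(y'))<\varepsilon/3$ for all $j$ whenever $\rho_X(y,y')<\delta_y$; cover $K$ by finitely many balls $B(y_i,\delta_{y_i}/2)$, $i=1,\dots,N$, and, using density of $D$, pick $x_{n(i)}\in D\cap B(y_i,\delta_{y_i}/2)$. Since convergence of $\{g_k(x_{n(i)})\}_k$ at these finitely many points is Cauchy, there exists $K_0$ with $\rho_Y(g_k(x_{n(i)}),g_\ell(x_{n(i)}))<\varepsilon/3$ for all $k,\ell\geq K_0$ and every $i$. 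For an arbitrary $x\in K$, picking the $i$ with $x\in B(y_i,\delta_{y_i}/2)$ (which also contains $x_{n(i)}$), the triangle inequality yields
\[
\rho_Y(g_k(x),g_\ell(x))\leq \rho_Y(g_k(x),g_k(x_{n(i)}))+\rho_Y(g_k(x_{n(i)}),g_\ell(x_{n(i)}))+\rho_Y(g_\ell(x_{n(i)}),g_\ell(x))<\varepsilon
\]
uniformly in $x\in K$. Thus $\{g_k\}$ is uniformly Cauchy on $K$.

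Finally, the same triangle-inequality argument, applied at a single point $x\in X$ (with $K=\{x\}$), shows $\{g_k(x)\}$ is Cauchy in the compact set $\overline{\sH(x)}$ and therefore converges to some $f(x)\in Y$; define $f$ this way. Uniform convergence $g_k\to f$ on each compact set then follows by letting $\ell\to\infty$ in the uniform-Cauchy estimate, and uniform convergence on compacts plus continuity of each $g_k$ (a consequence of equicontinuity) forces $f$ to be continuous on $X$. The main technical step I expect to need care is the equicontinuity-plus-finite-cover argument of the third paragraph, where one must choose the $\delta_{y_i}$ and the dense points $x_{n(i)}$ in a single cover so that the triangle inequality closes uniformly over $K$; everything else is either bookkeeping (diagonal extraction, separability) or a routine use of completeness inside the compact closures $\overline{\sH(x)}$.
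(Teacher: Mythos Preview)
Your argument is correct and in fact considerably more detailed than what the paper provides: the paper does not give a proof at all, but simply remarks that the statement ``follows from the usual formulation of Ascoli--Arz\`ela for the case when $X$ is itself compact'' and omits the details. The implicit route there is to write $X$ as an increasing union of compact closed balls, apply the classical compact-domain Ascoli--Arz\`ela on each, and diagonalize over the exhaustion; you instead diagonalize directly over a countable dense set and use equicontinuity to upgrade. Both are standard and equivalent in strength; your version has the advantage of being self-contained.

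One minor cosmetic point: in your third step the triangle inequality does not quite close with the constant $\varepsilon/3$. Equicontinuity is stated \emph{at} the centers $y_i$, so for $x,x_{n(i)}\in B(y_i,\delta_{y_i}/2)$ you only get $\rho_Y(g_k(x),g_k(x_{n(i)}))<2\varepsilon/3$ by passing through $y_i$, and the final bound comes out as $5\varepsilon/3$. Either insert $y_i$ explicitly into the chain (five terms instead of three) or start with $\varepsilon/5$; this is a bookkeeping fix, not a gap, and you already flagged this step as the one needing care.
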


This follows from the usual formulation of Ascoli-Arz\`{e}la for the case when $X$ is itself compact. We omit the details.

To apply the above Theorem, we take $X=\R^{D_{L-1}}\times \R^{D_L}$, $Y=C([0,T],\R^{D_{\ell-1}})$ and $\sH=\{F_j\}_{j\in\N}$. In order to deduce the Claim, we need to show that $\sH$ is equicontinous and that $\sH(x)$ is precompact for each $x\in X$.

We first show that $\sH$ is equicontinuous. Note that, if $a,b\in \R^{D_{L-1}}\times \R^{D_{L}}$ and $t\in [0,T]$, property 1 in the definition of special measure implies:
\[\|F_j(a) - F_j(b)\|_{\infty} = \sup_{0\leq t\leq T}|F_j(a)(t) - F_j(b)(t)|\leq \sup_{0\leq t\leq T}e^{Rt}|a-b|\leq e^{RT}|a-b|.\]
In particular, all $F_j$ are $e^{RT}$-Lipschitz mappings from the set $\R^{D_{L-1}}\times \R^{D_{L}}$ to $C([0,T],\R^{D_{L-1}})$.

To check precompactness of the images $\sH(x)$, we use Property 2 of Definition \ref{def:Rspecialmeasure}, which ensures that, for each $x=(a^{(L-1)},a^{(L)})\in \R^{D_{L-1}}\times \R^{D_L}$,
\[\sH(x)\subset U(x):=\{f\in C([0,T],\R^{D_{\ell-1}})\,:\, f(0) = a^{(L-1)}\mbox{ and }f\mbox{ is $R$-Lipschitz.}\}.\]
The set $U(x)$ is a closed subset of $C([0,T],\R^{D_{\ell-1}})$ that consists of uniformly bounded and equicontinuous functions. The Ascoli-Arz\`{e}la theorem in its classical form guarantees that $U(x)$ is a compact subset of $Y=C([0,T],\R^{D_{\ell-1}})$. This implies that $\sH(x)$ is precompact, which finishes the proof of the claim.\end{proof}

\bibliography{measureNN}

\begin{thebibliography}{10}

\bibitem{Ambrosio2005}
L.~Ambrosio, N.~Gigli, and G.~Savare.
\newblock {\em Gradient Flows: In Metric Spaces and in the Space of Probability
  Measures}.
\newblock Lectures in Mathematics. ETH Z{\"u}rich. Birkh{\"a}user Basel, 2005.

\bibitem{Anthony2009}
Martin Anthony and Peter~L. Bartlett.
\newblock {\em Neural Network Learning: Theoretical Foundations}.
\newblock Cambridge University Press, New York, NY, USA, 1st edition, 2009.

\bibitem{belkin2018}
Mikhail Belkin, Daniel Hsu, and Partha Mitra.
\newblock Overfitting or perfect fitting? {R}isk bounds for classification and
  regression rules that interpolate.
\newblock In {\em Advances in Neural Information Processing Systems 31}, 2018.

\bibitem{Chizat2018}
Lenaic {Chizat} and Francis {Bach}.
\newblock {A Note on Lazy Training in Supervised Differentiable Programming}.
\newblock {\em arXiv e-prints}, page arXiv:1812.07956, Dec 2018.

\bibitem{Du2018.2}
Simon~S. {Du}, Jason~D. {Lee}, Haochuan {Li}, Liwei {Wang}, and Xiyu {Zhai}.
\newblock {Gradient Descent Finds Global Minima of Deep Neural Networks}.
\newblock {\em arXiv e-prints}, page arXiv:1811.03804, Nov 2018.

\bibitem{Du2018}
Simon~S. Du, Xiyu Zhai, Barnab{\'{a}}s P{\'{o}}czos, and Aarti Singh.
\newblock Gradient descent provably optimizes over-parameterized neural
  networks.
\newblock {\em CoRR}, abs/1810.02054, 2018.

\bibitem{Engel2001}
A.~Engel and C.~Van~den Broeck.
\newblock {\em Statistical Mechanics of Learning}.
\newblock Cambridge University Press, 2001.

\bibitem{Geiger2019}
Mario {Geiger}, Arthur {Jacot}, Stefano {Spigler}, Franck {Gabriel}, Levent
  {Sagun}, St{\'e}phane {d'Ascoli}, Giulio {Biroli}, Cl{\'e}ment {Hongler}, and
  Matthieu {Wyart}.
\newblock {Scaling description of generalization with number of parameters in
  deep learning}.
\newblock {\em arXiv e-prints}, page arXiv:1901.01608, Jan 2019.

\bibitem{Goodfellow2016}
Ian Goodfellow, Yoshua Bengio, and Aaron Courville.
\newblock {\em Deep Learning}.
\newblock MIT Press, 2016.
\newblock \url{http://www.deeplearningbook.org}.

\bibitem{goodfellow2014}
Ian~J. Goodfellow, Jean Pouget-Abadie, Mehdi Mirza, Bing Xu, David
  Warde-Farley, Sherjil Ozair, Aaron Courville, and Yoshua Bengio.
\newblock Generative adversarial nets.
\newblock In Zoubin Ghahramani, Max Welling, Corinna Cortes, Neil~D. Lawrence,
  and Kilian~Q. Weinberger, editors, {\em Advances in Neural Information
  Processing Systems 27 (NIPS 2014)}, pages 2672--2680. Curran Associates,
  Inc., 2014.

\bibitem{Jacot2018}
Arthur Jacot, Franck Gabriel, and Cl{\'e}ment Hongler.
\newblock Neural tangent kernel: Convergence and generalization in neural
  networks.
\newblock In {\em Proceedings of the 32Nd International Conference on Neural
  Information Processing Systems}, NIPS'18, pages 8580--8589, USA, 2018. Curran
  Associates Inc.

\bibitem{Kac1959}
M.~Kac.
\newblock {\em Probability and Related Topics in Physical Sciences}.
\newblock Lectures in applied mathematics (American Mathematical Society) ;
  1.A. American Mathematical Society, 1959.

\bibitem{Kolokoltsov2010}
Vassili~N. Kolokoltsov.
\newblock {\em Nonlinear Markov Processes and Kinetic Equations}.
\newblock Cambridge Tracts in Mathematics. Cambridge University Press, 2010.

\bibitem{Krizhevsky2017}
Alex Krizhevsky, Ilya Sutskever, and Geoffrey~E. Hinton.
\newblock Imagenet classification with deep convolutional neural networks.
\newblock {\em Commun. ACM}, 60(6):84--90, May 2017.

\bibitem{lecun2015}
Yann LeCun, Yoshua Bengio, and Geoffrey Hinton.
\newblock Deep learning.
\newblock {\em Nature}, 521:436 EP --, 05 2015.

\bibitem{McKean1967}
H.~P. McKean.
\newblock A class of markov processes associated with nonlinear parabolic
  equations.
\newblock {\em Proceedings of the National Academy of Sciences},
  56(6):1907--1911, 1966.

\bibitem{Mei2019}
Song {Mei}, Theodor {Misiakiewicz}, and Andrea {Montanari}.
\newblock {Mean-field theory of two-layers neural networks: dimension-free
  bounds and kernel limit}.
\newblock {\em arXiv e-prints}, page arXiv:1902.06015, Feb 2019.

\bibitem{Mei2018}
Song Mei, Andrea Montanari, and Phan-Minh Nguyen.
\newblock A mean field view of the landscape of two-layer neural networks.
\newblock {\em Proceedings of the National Academy of Sciences},
  115(33):E7665--E7671, 2018.

\bibitem{MeiArxiv2018}
Song {Mei}, Andrea {Montanari}, and Phan-Minh {Nguyen}.
\newblock A mean field view of the landscape of two-layers neural networks.
\newblock {\em arXiv e-prints}, page arXiv:1804.06561, Apr 2018.

\bibitem{Mezard2009}
Marc Mezard and Andrea Montanari.
\newblock {\em Information, Physics, and Computation}.
\newblock Oxford University Press, Inc., New York, NY, USA, 2009.

\bibitem{Nguyen2019}
Phan-Minh {Nguyen}.
\newblock {Mean Field Limit of the Learning Dynamics of Multilayer Neural
  Networks}.
\newblock {\em arXiv e-prints}, page arXiv:1902.02880, Feb 2019.

\bibitem{Rachev2006}
S.T. Rachev and L.~R{\"u}schendorf.
\newblock {\em Mass Transportation Problems: Applications}.
\newblock Probability and Its Applications. Springer New York, 2006.

\bibitem{Rahmi2008}
Ali Rahimi and Benjamin Recht.
\newblock Random features for large-scale kernel machines.
\newblock In J.~C. Platt, D.~Koller, Y.~Singer, and S.~T. Roweis, editors, {\em
  Advances in Neural Information Processing Systems 20}, pages 1177--1184.
  Curran Associates, Inc., 2008.

\bibitem{Rotskoff2018}
Grant~M. {Rotskoff} and Eric {Vanden-Eijnden}.
\newblock {Neural Networks as Interacting Particle Systems: Asymptotic
  Convexity of the Loss Landscape and Universal Scaling of the Approximation
  Error}.
\newblock {\em arXiv e-prints}, page arXiv:1805.00915, May 2018.

\bibitem{Sirignano2018}
Justin {Sirignano} and Konstantinos {Spiliopoulos}.
\newblock {Mean Field Analysis of Neural Networks}.
\newblock {\em arXiv e-prints}, page arXiv:1805.01053, May 2018.

\bibitem{Sirignano2018.2}
Justin {Sirignano} and Konstantinos {Spiliopoulos}.
\newblock {Mean Field Analysis of Neural Networks: A Central Limit Theorem}.
\newblock {\em arXiv e-prints}, page arXiv:1808.09372, Aug 2018.

\bibitem{Sirignano2019}
Justin {Sirignano} and Konstantinos {Spiliopoulos}.
\newblock {Mean Field Analysis of Deep Neural Networks}.
\newblock {\em arXiv e-prints}, page arXiv:1903.04440, Mar 2019.

\bibitem{Spohn2012}
Herbert Spohn.
\newblock {\em Large Scale Dynamics of Interacting Particles}.
\newblock Theoretical and Mathematical Physics. Springer Berlin Heidelberg,
  2012.

\bibitem{Sznitman1991}
Alain-Sol Sznitman.
\newblock Topics in propagation of chaos.
\newblock In Paul-Louis Hennequin, editor, {\em Ecole d'Et{\'e} de
  Probabilit{\'e}s de Saint-Flour XIX --- 1989}, pages 165--251, Berlin,
  Heidelberg, 1991. Springer Berlin Heidelberg.

\bibitem{Venturi2018}
Luca {Venturi}, Afonso~S. {Bandeira}, and Joan {Bruna}.
\newblock {Spurious Valleys in Two-layer Neural Network Optimization
  Landscapes}.
\newblock {\em arXiv e-prints}, page arXiv:1802.06384, Feb 2018.

\bibitem{Zhang2016}
Chiyuan {Zhang}, Samy {Bengio}, Moritz {Hardt}, Benjamin {Recht}, and Oriol
  {Vinyals}.
\newblock {Understanding deep learning requires rethinking generalization}.
\newblock {\em arXiv e-prints}, page arXiv:1611.03530, Nov 2016.

\end{thebibliography}
\bibliographystyle{plain}

\end{document}